\definecolor{blue}{rgb}{0.00,0.00,1.00}
\definecolor{red}{rgb}{1.00,0.00,0.00}
\renewcommand{\baselinestretch}{1.2}
\def\bq{\begin{equation}}
\def\eq{\end{equation}}
\def\ba{\begin{array}{ccc}}
\def\bal{\begin{array}{lll}}
\def\ea{\end{array}}
\def\({\left(}\def\){\right)}
\def\[{\left[}\def\]{\right]}
\def\<{\langle}\def\>{\rangle}
    \def \C   {\mathbb{C}}
    \def \R   {\mathbb{R}}
    \def\z    {\mathbf{z}}
    \def\e    {\mathbf{e}}
    \def\S    {\mathbb{S}}
    \def\eps  {\epsilon}
    \def\intr {\int_{\R^3}}
    \def\ints {\int_{\S^2}}
    \def\intt {\int^t_0}
\def\BB{\mathbb{B}}
\def\AA{\mathbb{A}}
    \def \pt   {\partial}
    \def \Dt   {\frac{\rm d}{{\rm d}t}}
    \def \dt    {\partial_t}
    \def \dx    {\partial_x}
    \def \dxa   {\partial^{\alpha}_x}
    \def \dvb   {\partial^{\beta}_v}
    \def \divx  {{\rm div}_x}
    \def\Tdx   {\nabla_x}
    \def\Tdv   {\nabla_v}
       \def\bq{\begin{equation}}
       \def\eq{\end{equation}}
       \def\be{\begin{equation}}
       \def\ee{\end{equation}}
       \def\bma#1\ema{{\allowdisplaybreaks\begin{align}#1\end{align}}}
       \def\bmas#1\emas{{\allowdisplaybreaks\begin{align*}#1\end{align*}}}
       \def\bln#1\eln{{\allowdisplaybreaks\begin{aligned}#1\end{aligned}}}
       \def\nnm{\notag}
       \def\bgr#1\egr{\allowdisplaybreaks\begin{gather}#1\end{gather}}
       \def\bgrs#1\egrs{\allowdisplaybreaks\begin{gather*}#1\end{gather*}}
       \theoremstyle{plain}
       \newtheorem{lem}{\bf Lemma}[section]
       \newtheorem{thm}[lem]{\textbf{Theorem}}
       \newtheorem{remark}[lem]{\bf Remark}
\begin{document}

\title{ Diffusion Limit with Optimal Convergence Rate of Classical Solutions to the  Vlasov-Maxwell-Boltzmann System }
\author{  Tong Yang$^1$, Mingying Zhong$^2$\\[2mm]
 \emph{\small\it $^1$Department of Applied Mathematics, The Hong Kong Polytechnic University,  Hong
    Kong.}\\
    {\small\it E-mail: t.yang@polyu.edu.hk} \\
    {\small\it  $^2$School of  Mathematics and Information Sciences,
    Guangxi University,  China.}\\
    {\small\it E-mail:\ zhongmingying@gxu.edu.cn}\\[5mm]
    }
\date{ }

\pagestyle{myheadings}
\markboth{Vlasov-Maxwell-Boltzmann system }%
{ T. Yang, M.-Y. Zhong }

 \maketitle

 \thispagestyle{empty}

\begin{abstract}\noindent
We study the diffusion limit of the classical solution to the Vlasov-Maxwell-Boltzmann (VMB)  system with initial data near a global Maxwellian. By introducing a new decomposition of the solution to identify the essential components for generating the initial layer, we prove the convergence and establish the opitmal convergence rate of the classical solution to the VMB system to the solution of the  Navier-Stokes-Maxwell  system based on the spectral analysis.

\medskip
 {\bf Key words}.  Vlasov-Maxwell-Boltzmann system,  spectral analysis, diffusion limit, convergence rate.

\medskip
 {\bf 2010 Mathematics Subject Classification}. 76P05, 82C40, 82D05.
\end{abstract}

%

\tableofcontents

\section{Introduction}
The Vlasov-Maxwell-Boltzmann (VMB) system is a fundamental model in plasma physics  describing the time evolution of dilute charged particles,
such as electrons and ions,  under the influence of the self-induced Lorentz forces governed by Maxwell equations, cf. \cite{ChapmanCowling}
for derivation and the physical background.
The rescaled two-species VMB system  in the  incompressible diffusive regime takes the form \cite{Arsenio}
 \be \label{VMB1a}
 \left\{\bln
&\dt F^{+}_{\eps}+\frac{1}{\eps}v\cdot\Tdx F^{+}_{\eps}+\frac{1}{\eps}(\alpha E_{\eps}+\beta v\times B_{\eps})\cdot\Tdv F^{+}_{\eps}=\frac{1}{\eps^2}Q(F^{+}_{\eps},F^{+}_{\eps})+\frac{\delta^2}{\eps^2}Q(F^{+}_{\eps},F^{-}_{\eps}),\\
&\dt F^{-}_{\eps}+\frac{1}{\eps}v\cdot\Tdx F^{-}_{\eps}-\frac{1}{\eps}(\alpha E_{\eps}+\beta v\times B_{\eps})\cdot\Tdv F^{-}_{\eps}=\frac{1}{\eps^2}Q(F^{-}_{\eps},F^{-}_{\eps})+\frac{\delta^2}{\eps^2}Q(F^{-}_{\eps},F^{+}_{\eps}), \\
&\gamma\dt E_{\eps}-\Tdx\times B_{\eps}=- \frac{\beta}{\eps^2}\intr (F^{+}_{\eps}-F^{-}_{\eps})vdv, \\
&\gamma\dt B_{\eps}+\Tdx\times E_{\eps}=0,\\
&\Tdx\cdot E_{\eps}= \frac{\alpha}{\eps^2}\intr (F^{+}_{\eps}-F^{-}_{\eps})dv,\quad \Tdx\cdot B_{\eps}=0,
 \eln\right.
\ee
where $\eps>0$ is a small parameter with $\epsilon^2$ proportional to  the mean free path, $\delta>0$ measures the strength of interactions, and $\alpha,\beta,\gamma$ have the following
physical meanings:
 \begin{itemize}
\item  $\alpha$ measures the electric repulsion according to Gauss  law;
\item $\beta$ measures the magnetic induction according to Amp\`{e}re  law;
\item $\gamma$ is the ratio of the bulk velocity to the speed of light.
 \end{itemize}
Notice that the parameters $\alpha,\beta,\gamma$ satisfy the relation
$$\beta=\frac{\alpha\gamma}{\eps}.$$
In addition, in \eqref{VMB1a}, $F^{\pm}_{\eps}=F^{\pm}_{\eps}(t,x,v)$ are the
density distribution functions of charged particles at $(t,x,v)\in \R_+\times \R^3_x\times\R^3_v$, and $E_{\eps}(t,x)$, $B_{\eps}(t,x)$ denote the electro and magnetic fields respectively.
Since we study the diffusive limit when $\epsilon $ tends to zero, we assume $\eps\in(0,1)$ in the following analysis. As usual,  $Q(F,G)$  is the  Boltzmann collision operator for hard sphere model given by
 \bq
 Q(F,G)=\intr\ints
 |(v-v_*)\cdot\omega|(F(v')G(v'_*)-F(v)G(v_*))dv_*d\omega,\label{binay_collision}
 \eq
where  $v,v_*$ are the velocities of gas particles before collision and $v',v'_*$ are the velocities after
collision:
$$
 v'=v-[(v-v_*)\cdot\omega]\omega,\quad
 v'_*=v_*+[(v-v_*)\cdot\omega]\omega,\quad \omega\in\S^2.
$$
That is,  the collisions are elastic so that the following conservation  of momentum and energy hold
\be
v+v_* =v'+v'_*,\quad
|v|^2+|v_*|^2 =|v'|^2+|v'_*|^2. \label{conser}
\ee

In this paper, we consider a typical case when $\alpha = \eps, \beta = 1, \gamma= 1,\delta = 1$, i.e.,
 \be \label{VMB1}
 \left\{\bln
&\dt F^{+}_{\eps}+\frac{1}{\eps}v\cdot\Tdx F^{+}_{\eps}+\frac{1}{\eps}(\eps E_{\eps}+ v\times B_{\eps})\cdot\Tdv F^{+}_{\eps}=\frac{1}{\eps^2}[Q(F^{+}_{\eps},F^{+}_{\eps})+Q(F^{+}_{\eps},F^{-}_{\eps})],\\
&\dt F^{-}_{\eps}+\frac{1}{\eps}v\cdot\Tdx F^{-}_{\eps}-\frac{1}{\eps}(\eps E_{\eps}+ v\times B_{\eps})\cdot\Tdv F^{-}_{\eps}=\frac{1}{\eps^2}[Q(F^{-}_{\eps},F^{+}_{\eps})+Q(F^{-}_{\eps},F^{-}_{\eps})], \\
& \dt E_{\eps}-\Tdx\times B_{\eps}=- \frac{1}{\eps^2}\intr (F^{+}_{\eps}-F^{-}_{\eps})vdv, \\
& \dt B_{\eps}+\Tdx\times E_{\eps}=0,\\
&\Tdx\cdot E_{\eps}= \frac{1}{\eps}\intr (F^{+}_{\eps}-F^{-}_{\eps})dv,\quad \Tdx\cdot B_{\eps}=0.
 \eln\right.
\ee

The Vlasov-Maxwell-Boltzmann system has been intensively studied and many important progress has been made in \cite{Duan4,Duan5,Guo4,Jang,Strain}.
For instance, the global existence of unique strong solution with initial data near the normalized global Maxwellian was obtained in spatial period domain~\cite{Guo4} and  in three dimensional  space  \cite{Strain} for hard sphere collision, and then in \cite{Duan1,Duan2} for general collision kernels with or without angular cut-off assumption.
For  the long time behaviors, it was shown in \cite{Duan5} that the total energy of the linearized one-species VMB system decays at the rate $(1+t)^{-\frac38}$, and in \cite{Duan4} that the total energy of nonlinear two-species VMB system decays at the rate $(1+t)^{-\frac34}$.
 The spectrum structure and the optimal decay rate of the global solution  to the VMB systems  for both one-spices and two-spices were investigated in \cite{Li3}.  The diffusive limit for two-species VMB system  was shown in \cite{Arsenio,Jang,Jiang2}.

On the other hand, the diffusion limit to the Boltzmann equation is a classical problem with pioneer work by Bardos-Golse-Levermore in \cite{FL-1}, and significant  progress on the limit of renormalized solutions to Leray solution to Navier-Stokes system in \cite{Golse-SR}.  One effective approach to study the fluid dynamic in the perturbative framework is based on the spectral analysis. For example,  Ellis-Pinsky \cite{Ellis} first studied the linear compressible Euler limit of the linear Boltzmann equation and showed the convergence rate outside the initial layer. The initial layer in the fluid limit arises from the incompatibility of the initial data, in particular due to the high oscillation of the eigen-modes in the system.  For the Boltzmann equation,
Bardos-Ukai \cite{FL-3} firstly studied  the incompressible Navier-Stokes limit with the estimation on the initial layer for the linear  Boltzmann equation. The analysis in \cite{FL-3} uses an estimate on the semigroup with highly oscillating eigen-modes in \cite{Ukai2} which is about the incompressible limit of  the compressible Euler equation. Note that the estimation on the initial layer is not optimal in these papers.
In contrast to the extensive study   on Boltzmann equation~\cite{FL-1,FL-2,FL-3,Guo5,FL-4,FL-5}, the VPB system \cite{Guo2,Li1,Wang1} and the VMB system \cite{Arsenio,Jang,Jiang2},  the convergence rate of the classical solution to the  VMB system \eqref{VMB1}  towards its fluid dynamical limits and the estimation of the initial layer have not been given despite of its importance.

Let $ F_{\eps}=F^+_{\eps}+F^-_{\eps}$ and $ G_{\eps}=F^+_{\eps}-F^-_{\eps}$. Then the system \eqref{VMB1} become to
\be
 \left\{\bln     \label{VMB2}
&\dt F_{\eps}+\frac{1}{\eps}v\cdot\Tdx F_{\eps}+\frac{1}{\eps}(\eps E_{\eps}+v\times B_{\eps})\cdot\Tdv G_{\eps}
=\frac{1}{\eps^2}Q(F_{\eps},F_{\eps}),\\
&\dt G_{\eps}+\frac{1}{\eps}v\cdot\Tdx G_{\eps}+\frac{1}{\eps}(\eps E_{\eps}+v\times B_{\eps})\cdot\Tdv F_{\eps}
=\frac{1}{\eps^2}Q(G_{\eps},F_{\eps}),\\
& \dt E_{\eps}-\Tdx\times B_{\eps}=- \frac{1}{\eps^2}\intr G_{\eps}vdv,\\
& \dt B_{\eps}+\Tdx\times E_{\eps}=0,\\
& \Tdx\cdot E_{\eps} =\frac{1}{\eps}\intr G_{\eps}dv,\quad \Tdx\cdot B_{\eps}=0.
\eln\right.
 \ee

In this paper, we study the diffusion limit
of the strong solution to the rescaled VMB system~\eqref{VMB2}  with initial data near the equilibrium $(F_*,G_*,E_*,B_*)=(M(v),0,0,0)$, where $M(v)$ is the  normalized Maxwellian given by
$$
 M=M(v)=\frac1{(2\pi)^{3/2}}e^{-\frac{|v|^2}2},\quad v\in\R^3.
$$
Hence, we define the perturbation of $(F_{\eps},G_{\eps},E_{\eps},B_{\eps})$ as
$$
 F_{\eps}=M+\eps\sqrt{M}f_{\eps},\quad G_{\eps}= \eps\sqrt{M}g_{\eps} .
$$
Then Cauchy problem of the VMB system~\eqref{VMB2} for $ (f_{\eps},g_{\eps},E_{\eps},B_{\eps})$ can be rewritten as
 \bma
 &\dt f_{\eps}+\frac{1}{\eps}v\cdot\Tdx f_{\eps} -\frac{1}{\eps^2}Lf_{\eps}=H^1_{\eps},\label{VMB4}\\
 &\dt g_{\eps}+\frac{1}{\eps}v\cdot\Tdx g_{\eps}-\frac{1}{\eps^2}L_1g_{\eps}- \frac{1}{\eps}v\sqrt{M}\cdot E_{\eps}=H^2_{\eps},\label{VMB4a}\\
&\dt E_{\eps}-\Tdx\times B_{\eps}=- \frac{1}{\eps}\intr g_{\eps}v\sqrt{M}dv,  \label{VMB4b}\\
&\dt B_{\eps}+\Tdx\times E_{\eps}=0,\label{VMB4c}\\
&\Tdx\cdot E_{\eps}= \intr g_{\eps}\sqrt{M}dv,\quad \Tdx\cdot B_{\eps}=0,\label{VMB4d}
\ema
where the nonlinear terms $H^1_{\eps},H^2_{\eps}$ are defined by
\bma
&H^1_{\eps}=\frac12 (v\cdot E_{\eps})g_{\eps}-\(E_{\eps}+\frac{1}{\eps} v\times B_{\eps}\)\cdot\Tdv g_{\eps}+\frac{1}{\eps} \Gamma(f_{\eps},f_{\eps}),\\
&H^2_{\eps}=\frac12 (v\cdot E_{\eps})f_{\eps}-\(E_{\eps}+\frac{1}{\eps} v\times B_{\eps}\)\cdot\Tdv f_{\eps}+\frac{1}{\eps} \Gamma(g_{\eps},f_{\eps}). \label{VMB5}
\ema
The initial condition is given by
\be (f_{\eps},g_{\eps})(0,x,v)=(f_{0},g_0)(x,v)  ,\quad  (E_{\eps},B_{\eps})(0,x)=(E_0, B_0)(x) ,  \label{VMB2i}\ee
which is independent of $\eps$. On the other hand, the initial data should satisfy the compatibility conditions
\be  \Tdx\cdot E_0(x)=\intr g_{0}\sqrt{M}dv,\quad  \Tdx\cdot B_0(x)=0. \label{com}\ee
In \eqref{VMB4}--\eqref{VMB5}, the linear operators $L,L_1$ and the nonlinear operator $\Gamma(f,g)$ are defined by
\be \label{gamma}
\left\{\bln
Lf&=\frac1{\sqrt M}[Q(M,\sqrt{M}f)+Q(\sqrt{M}f,M)],\\
L_1f&=\frac1{\sqrt M}Q(\sqrt{M}f,M),\\
\Gamma(f,g)&=\frac1{\sqrt M}Q(\sqrt{M}f,\sqrt{M}g).
\eln\right.
\ee

As usual, the linearized  operators $L$ and $L_1$ can be written as (cf. \cite{Cercignani,Yu})
\be\label{L_1}
\left\{\bln
(Lf)(v)&=(Kf)(v)-\nu(v) f(v),\quad (L_1f)(v)=(K_1f)(v)-\nu(v) f(v),\\
(Kf)(v)&=\intr k(v,v_*)f(v_*)dv_*,\quad (K_1f)(v)=\intr k_1(v,v_*)f(v_*)dv_*,\\
\nu(v)&=\sqrt{2\pi}\bigg(e^{-\frac{|v|^2}2}+\(|v|+\frac1{|v|}\)\int^{|v|}_0e^{-\frac{|u|^2}2}du\bigg),\\
k(v,v_*)&=\frac2{\sqrt{2\pi}|v-v_*|}e^{-\frac{(|v|^2-|v_*|^2)^2}{8|v-v_*|^2}-\frac{|v-v_*|^2}8}-\frac{|v-v_*|}{2\sqrt{2\pi}}e^{-\frac{|v|^2+|v_*|^2}4},\\
k_1(v,v_*)&=\frac2{\sqrt{2\pi}|v-v_*|}e^{-\frac{(|v|^2-|v_*|^2)^2}{8|v-v_*|^2}-\frac{|v-v_*|^2}8},
\eln\right.
\ee
where $\nu(v)$ is  the collision frequency,   $K$ and $K_1$ are self-adjoint compact operators
on $L^2(\R^3_v)$ with  real symmetric integral kernels $k(v,v_*)$ and $k_1(v,v_*)$.
In addition, $\nu(v)$ satisfies
\be
\nu_0(1+|v|) \leq\nu(v)\leq \nu_1(1+|v|). \label{nu}
\ee

The nullspace of the operator $L$  denoted by $N_0$  is a subspace
spanned by the orthonormal basis $\{\chi_j,\ j=0,1,\cdots,4\}$  given by
\bq \chi_0=\sqrt{M},\quad \chi_j=v_j\sqrt{M} \,\, (j=1,2,3), \quad
\chi_4=\frac{(|v|^2-3)\sqrt{M}}{\sqrt{6}},\label{basis}\eq
and the null space of the operator $L_1$  denoted by $N_1$  is
spanned only by $\sqrt{M}$.

Let   $L^2(\R^3)$ be a Hilbert space of complex-value functions $f(v)$
on $\R^3$ with the inner product and the norm
$$
(f,g)=\intr f(v)\overline{g(v)}dv,\quad \|f\|=\(\intr |f(v)|^2dv\)^{1/2}.
$$
And let $P_{ 0},P_{ d}$ be the projection operators from $L^2(\R^3_v)$ to the subspace $N_0, N_1$ with
\bma
 &P_{ 0}f=\sum_{j=0}^4(f,\chi_j)\chi_j,\quad P_1=I-P_{ 0}, \label{P10}
 \\
 &P_{ d}f=(f,\sqrt M)\sqrt M,   \quad P_r=I-P_{ d}. \label{Pdr}
 \ema
For any $U=(g,X,Y)\in L^2\times \R^3\times \R^3$, we denote
\be P_2U=(P_dg, X,Y), \quad P_3U=(I-P_2)U=(P_rg, 0,0). \label{P23} \ee

By Boltzmann  H-theorem, the linearized collision operators $L$ and $L_1$ are non-positive. Precisely,  there is a constant $\mu>0$ such that \bma
 (Lf,f)&\leq -\mu \| P_1f\|^2, \quad  \ f\in D(L),\\
 (L_1f,f)&\leq -\mu \|P_rf\|^2, \quad  \ f\in D(L_1),\label{L_4}
 \ema
where $D(L)$ and $D(L_1)$ are the domains of $L$ and $L_1$ given by
$$ D(L)=D(L_1)=\left\{f\in L^2(\R^3)\,|\,\nu(v)f\in L^2(\R^3)\right\}.$$
Without the loss of generality, we assume  $\nu(0)\ge \nu_0\ge \mu>0$.

This paper  aims to study  the optimal convergence rate of the classical solution $(f_{\eps},g_{\eps}, E_{\eps},B_{\eps})$ of the system \eqref{VMB4}--\eqref{VMB2i} to  $(u_1 , u_2,E,B)$, where $u_1=n \chi_0+m\cdot v\chi_0+q\chi_4$ and $u_2=\rho  \chi_0 $ with $(n,m,q,\rho ,E,B)(t,x)$ being the solution of the following bipolar incompressible Navier-Stokes-Maxwell-Fourier (NSMF) system:
\be \label{NSM_2}
\left\{\bal
\Tdx\cdot m=0,\quad n+ \sqrt{\frac23}q =0, \\
\dt m-\kappa_0\Delta_x m +\Tdx p=\rho E+j\times B-\Tdx\cdot (m\otimes m), \\
 \dt q -\kappa_1\Delta_x q= -\Tdx\cdot (q m), \\
 \dt E-\Tdx\times B=-j,\quad \Tdx\cdot E=\rho, \\
 \dt B+\Tdx\times E=0, \quad \Tdx\cdot B=0,\\
j=-\eta(\Tdx \rho -E)+(\rho m -\eta m\times B).
\ea\right.
\ee
Here,  $p$ is the pressure and the initial data $(n,m,q,\rho ,E,B)(0)$ satisfies
\be\label{NSP_5i}
\left\{\bal
m(0) = (f_{0},v\chi_0)-\Delta^{-1}_x\Tdx\divx (f_{0},v\chi_0),\\
n(0)=-\sqrt{\frac23} q(0)=\sqrt{\frac25}(f_0,\sqrt{\frac25}\chi_0-\sqrt{\frac35}\chi_4), \\
\rho(0)=\divx E_0, \quad
E(0)=E_0, \quad B(0)=B_0.
\ea\right.
\ee
Correspondingly,   the viscosity coefficients $\kappa_0$, $\kappa_1,\, \eta>0$ are defined by
\be\label{coe}
\left\{\bln
\kappa_0&=-(L^{-1}P_1(v_1\chi_2),v_1\chi_2),\quad \kappa_1=-\frac35(L^{-1}P_1(v_1\chi_4),v_1\chi_4), \\
\eta&=-(L^{-1}_1 (v_1\chi_0),v_1\chi_0).
\eln\right.
\ee

There have been extensive studies on the existence and solution behavior  about the incompressible Navier-Stokes-Maxwell system. Precisely,
the existence, uniqueness and an exponential growth estimate of global strong solutions were proved in \cite{Masmoudi} for a slightly different system.
  And the existence of global small mild solution was given in \cite{Ibrahim}.
In addition,  the authors in \cite{Luo,Jiang1,ZHANG}  studied the global  classical solutions to the two-fluid incompressible Navier-Stokes-Fourier-Maxwell system with Ohm's law with small initial data.  Regularity results for  the Cauchy problem of the incompressible Navier-Stokes-Maxwell system with Ohm's law in two and three space  dimensions were given in \cite{Wen}.

On the other hand, for the compressible Navier-Stokes-Maxwell system,   the Green's function to the linearized system  with applications were obtained in \cite{Duan3}.
 The existence and uniqueness of global strong solutions with large initial data and vacuum were given in \cite{Zhu}. And the large-time behavior of solutions to the outflow problem
 was studied in \cite{Zhu2}. Moreover,
the authors in \cite{Zhu3} studied  the large-time asymptotic behavior of solutions to the
superposition  of a viscous contact
wave with two rarefaction waves.

Back to the convergence of VMB to NSMF, the convergence is not uniform near $t=0$ because of  initial layer unless we impose extra assumption on  the initial data $(f_{0}, g_{0}, E_0,B_0)$:
\be \label{initial}
\left\{\bal
f_{0}(x,v)=n_{0}(x)\chi_0+m_{0}(x)\cdot v\chi_0+q_{0}(x)\chi_4,\\
g_{0}(x,v)=\rho_{0}(x)\chi_0, \  \ \Tdx\cdot E_0(x)=\rho_0(x), \\
\Tdx\cdot m_{0}=0, \ \ n_{0} + \sqrt{\frac23}  q_{0}=0, \ \ \Tdx\cdot B_0=0.
\ea\right.
\ee

In order to estimate the initial layer, we need the following decomposition.
For  $U=U(x)\in \R^3$, we denote the Helmholtz's decomposition $U=U_{||}+U_{\bot} $ with
\be U_{||}=\Delta^{-1}_x\Tdx\divx U, \quad U_{\bot}=\Delta^{-1}_x(\Tdx\times \Tdx\times U). \label{div}\ee
Correspondingly, for   $f=f(x,v)\in L^2$, we define the following  projections
\be \label{div1}
\left\{\bln
P_{||}f&=(f,v\chi_0)_{||}v\chi_0+(f,\tilde{h}_1) \tilde{h}_1,\\
P_{\bot}f&=(f,v\chi_0)_{\bot}v\chi_0+ (f,\tilde{h}_0 ) \tilde{h}_0+P_1f,
\eln\right.
\ee
where
\be \tilde{h}_0=\sqrt{\frac25}\chi_0-\sqrt{\frac35}\chi_4, \quad \tilde{h}_1=\sqrt{\frac35}\chi_0+\sqrt{\frac25}\chi_4. \ee


 To state the main results, we need the following  notations. First of all, $C, c$ denote some generic constants. For any $\alpha=(\alpha_1,\alpha_2,\alpha_3)\in \mathbb{N}^3$ and $\beta=(\beta_1,\beta_2,\beta_3)\in \mathbb{N}^3$, set
$$\dxa=\pt^{\alpha_1}_{x_1}\pt^{\alpha_2}_{x_2}\pt^{\alpha_3}_{x_3},\quad \dvb=\pt^{\beta_1}_{v_1}\pt^{\beta_2}_{v_2}\pt^{\beta_3}_{v_3}.$$
The Fourier transform of $h=h(x)$
is denoted by
$$\hat{h}(\xi)=\mathcal{F}h(\xi)=\frac1{(2\pi)^{3/2}}\intr h(x)e^{-  i x\cdot\xi}dx,$$
where and throughout this paper we denote $i=\sqrt{-1}$.

For any $q\in [1,\infty]$,  the Sobolev Space $L^{q}=L^q_x(L^2_v)$  for function $f=f(x,v)$ or $L^{q}=L^q_x(L^2_v)\times L^q_x\times L^q_x$ for vector $U=(g(x,v),E(x),B(x)) $
is defined  with the norms
\bmas
\|f\|_{L^{q}}&=\bigg(\intr\bigg(\intr|f(x,v)|^2 dv\bigg)^{q/2}dx\bigg)^{1/q},\\
\|U\|_{L^q}&=\bigg(\intr\bigg(\intr|g(x,v)|^2 dv\bigg)^{q/2}dx\bigg)^{1/q}+\bigg(\intr (|E(x)|^q+|B(x)|^q) dx\bigg)^{1/q}.
\emas
Also for an  integer $k\ge 1$ and $q\in [1,\infty]$,  the Sobolev Space $H^{k}=H^{k}_x(L^2_v)$ (or $H^{k}=H^{k}_x(L^2_v)\times H^{k}_x\times H^{k}_x$) for a function $f=f(x,v)$ (or a  vector $U=(g(x,v),E(x),B(x)) $) is defined with the norms
\bmas
\|f\|_{H^k}&=\(\intr (1+|\xi|^2)^k\intr |\hat{f}|^2dv d\xi \)^{1/2}, \\
\|U\|_{H^k}&=\(\intr (1+|\xi|^2)^k \(\intr |\hat{g}|^2dv+ |\hat E|^2+| \hat B|^2\)d\xi\)^{1/2};
\emas
and the Sobolev space $ W^{k,q}=W^{k,q}_x(L^2_v)  $ (or $W^{k,q}=W^{k,q}_x(L^2_v)\times W^{k,q}_x\times W^{k,q}_x$) is defined with the norms
\bmas
\|f\|_{W^{k,q}}&=\sum_{|\alpha|\le k}\(\intr  \(\intr |\dxa f|^2dv\)^{q/2} dx\)^{1/q}, \\
\|U\|_{W^{k,q}}&=\sum_{|\alpha|\le k}\(\intr  \(\intr |\dxa g|^2dv+ |\dxa E|^2+|\dxa B|^2\)^{q/2} dx\)^{1/q}.
\emas
Some weighted Sobolev space $ D^k_{l} $ ($D^k=D^k_0$)  will also be used with the norms given by
\bmas
 \|f\|_{D^k_{l}}&=\sum_{|\alpha|+|\beta|\le k}\|\nu^l\dxa\dvb f\|_{L^2 },\\
 \|U\|_{D^k_{l}}&=\sum_{|\alpha|+|\beta|\le k}\|\nu^l\dxa\dvb g\|_{L^2 }+ \sum_{|\alpha| \le k}(\|\dxa E\|^2_{L^2_x }+\| \dxa B\|^2 _{L^2_x })
\emas



We now state the following two main results in this paper.

\begin{thm}\label{thm1.1}
 For any $\eps\in (0,1)$, there exists a small constant $\delta_0>0$ such that if the initial data $U_0=(f_0,g_0,E_0,B_0)$ satisfy that $\|U_{0}\|_{D^6_1 }+\|U_{0}\|_{L^1 } \le \delta_0$,  then the VMB system \eqref{VMB4}--\eqref{VMB2i}  admits a unique global solution  $U_{\eps}(t,x,v)= (f_{\eps},g_{\eps},E_{\eps},B_{\eps})$ satisfying the following  time-decay estimate:
\be
   \|U_{\eps}(t)\|_{D^4_1} \le C\delta_0 (1+t)^{-\frac34}.
\ee

 Also, there exists a small constant $\delta_0>0$ such that if $\|U_{0}\|_{H^4 }+\|U_{0}\|_{L^1 } \le \delta_0$, then
the NSMF system \eqref{NSM_2}--\eqref{NSP_5i} admits a unique global solution $\tilde{U}(t,x)=(n,m,q,\rho,E,B) $  satisfying the following  time-decay estimate:
\be
   \|\tilde{U}(t)\|_{H^4_x} \le C\delta_0 (1+t)^{-\frac34}. 
\ee
\end{thm}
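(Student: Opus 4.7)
The plan is to prove both assertions of Theorem \ref{thm1.1} by a spectrum-plus-energy method carried out uniformly in $\eps\in(0,1)$. The linearized semigroup of each system will supply the sharp $(1+t)^{-3/4}$ decay through spectral analysis, a higher-order weighted energy estimate will control the nonlinear contributions, and these two ingredients will be closed by Duhamel's formula together with a standard continuation argument.

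For the VMB statement, I first rewrite \eqref{VMB4}--\eqref{VMB4d} as $\dt U_\eps = \frac{1}{\eps^2}\mathcal B_\eps U_\eps + \mathcal N(U_\eps)$ with $U_\eps=(f_\eps,g_\eps,E_\eps,B_\eps)$, where $\mathcal B_\eps$ gathers the streaming term, the two collision operators $L,L_1$, the coupling $v\sqrt M\cdot E_\eps$, and the linear Maxwell block, while $\mathcal N$ collects the quadratic pieces in \eqref{VMB5}. Using the projections $P_0,P_1$ in \eqref{P10} and $P_d,P_r$ in \eqref{Pdr} together with the Fourier transform in $x$, one analyzes $\widehat{\mathcal B_\eps}(\xi)$ and separates a finite set of low-frequency diffusive eigen-modes, corresponding to the hydrodynamic variables and the Maxwell block, from a dissipative remainder controlled by \eqref{L_4}. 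The spectral picture developed in \cite{Li3} then yields the linear bound
\bmas
\|\dxa e^{t\mathcal B_\eps/\eps^2}U_0\|_{L^2}\le C(1+t)^{-\frac34-\frac{|\alpha|}{2}}\bigl(\|U_0\|_{L^1}+\|U_0\|_{H^{|\alpha|}}\bigr),
\emas
uniformly in $\eps$, which gives the $(1+t)^{-3/4}$ rate at $|\alpha|=0$.

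Next I would build a weighted energy functional $\mathcal E(t)=\sum_{|\alpha|+|\beta|\le 4}\|\nu^{1/2}\dxa\dvb U_\eps\|_{L^2}^2$ with an associated dissipation functional $\mathcal D(t)$ incorporating the coercivity $\mu\|P_1 f_\eps\|^2$ and $\mu\|P_r g_\eps\|^2$ from \eqref{L_4}. Because the Maxwell block is not directly dissipative, I would adapt the Kawashima compensating technique by adding cross multipliers such as $(\Tdx\rho_\eps,E_\eps)$, $(E_\eps,m_\eps)$, and $(\Tdx\times B_\eps,E_\eps)$ in order to recover dissipation for $(E_\eps,B_\eps)$. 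The nonlinear terms are handled via the bilinear bound $\|\Gamma(f,g)\|_{L^2}\le C\|f\|_{D^1_1}\|g\|_{D^1_1}$ together with Moser-type inequalities, with the singular factor $1/\eps$ absorbed through the micro-macro split. Combining the Duhamel representation
\bmas
U_\eps(t)=e^{t\mathcal B_\eps/\eps^2}U_0+\int_0^t e^{(t-s)\mathcal B_\eps/\eps^2}\mathcal N(U_\eps(s))\,ds
\emas
with an energy inequality of the form $\Dt\mathcal E(t)+c\mathcal D(t)\le C(\mathcal E(t))^{1/2}\mathcal D(t)$ and a time-weighted bootstrap $\sup_{s\le t}(1+s)^{3/2}\|U_\eps(s)\|_{L^2}^2\le C\delta_0^2$ then closes the small-data continuation and delivers the claimed decay. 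The NSMF statement is proved by the same template applied to \eqref{NSM_2}: spectral analysis of the linearization at zero yields the same $(1+t)^{-3/4}$ rate, and an $H^4$-energy estimate controls the quadratic interactions $\Tdx\cdot(m\otimes m)$, $\Tdx\cdot(qm)$, $\rho E$, $j\times B$, $\rho m$, and $\eta m\times B$.

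The main obstacle is the degenerate dissipation of the Maxwell block combined with the singular factor $1/\eps$ appearing in the Lorentz terms $v\times B_\eps\cdot\Tdv f_\eps$ and $v\times B_\eps\cdot\Tdv g_\eps$: without a careful choice of test functional, these pieces resist uniform-in-$\eps$ absorption and would destroy the decay. This is precisely what motivates the decomposition \eqref{div1} with the special basis $\tilde h_0,\tilde h_1$, which isolates the components of $f_\eps,g_\eps$ that resonate with $E_\eps,B_\eps$ through the Maxwell equations and permits cancellation of the $1/\eps$-singular contributions after integration by parts in $v$.
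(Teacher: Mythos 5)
Your overall architecture matches the paper's: sharp $(1+t)^{-3/4}$ linear decay from spectral analysis, a uniform-in-$\eps$ weighted energy estimate with Kawashima-type cross multipliers, and a Duhamel-plus-bootstrap closure for the nonlinearity. In the paper this is carried out through Lemmas \ref{macro-en}--\ref{energy1} (the energy inequality $\Dt\mathcal E_N(U_\eps)+D_N(U_\eps)\le 0$), Lemmas \ref{time} and \ref{time6a} (semigroup decay rates), and Lemma \ref{time7} (the bootstrap for the VMB solution), and similarly Lemma \ref{timev} and Lemma \ref{energy3} for the NSMF system. One point of presentational difference is that you package the full linear operator into a single $\mathcal B_\eps$, while the paper exploits the fact that at linear level the $f_\eps$ equation (Boltzmann operator $\BB_\eps$) decouples from the $(g_\eps,E_\eps,B_\eps)$ block (operator $\AA_\eps$), so that the spectral analysis factors into two independent pieces with quite different structure; this is not a logical gap but it is the structural reason the paper can lean on the existing spectral picture for $\BB_\eps$ and develop a new one only for $\AA_\eps$.

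There are two genuine issues. First, your closing paragraph misattributes the purpose of the decomposition \eqref{div1} with $\tilde h_0,\tilde h_1$: that $P_{||}/P_{\bot}$ splitting plays no role whatsoever in the proof of Theorem \ref{thm1.1}. It is introduced exclusively for Theorem \ref{thm1.2}, where one must combine an $L^\infty$ bound on the hyperbolic ($P_{||}$) part with an $L^2$ bound on the parabolic ($P_{\bot}$) part to get time-integrable convergence-rate estimates in the initial layer, as the paper explains in the passage on $j_\eps\times B_\eps-j\times B$. Second, the bootstrap quantity you propose, $\sup_{s\le t}(1+s)^{3/2}\|U_\eps(s)\|^2_{L^2}\le C\delta_0^2$, is not strong enough to close the argument in the presence of the term $\tfrac1\eps P_0(v\times B_\eps)\cdot\Tdv P_r g_\eps$ in $G_1$. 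To absorb that $1/\eps$ uniformly one must prove and track the extra $\eps$-smallness of the microscopic component of $g_\eps$; the paper's functional $Q_\eps(t)$ explicitly includes $\|P_rg_\eps(s)\|_{H^{N-3}}$ weighted by $\big(\eps(1+s)^{-3/4}+e^{-bs/4\eps^2}\big)^{-1}$, and this refined decay (coming from the $\frac{1}{\eps^2}$-weighted microscopic dissipation in $D_{N,1}$, together with the semigroup estimate \eqref{time2a}/\eqref{time4a}) is what makes \eqref{h1} integrable in time. Without this ingredient the bootstrap simply does not close uniformly in $\eps$, so your sketch needs to be augmented to produce and propagate that extra factor of $\eps$ on $P_rg_\eps$.
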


\begin{thm}\label{thm1.2}
There exist small positive constants $\delta_0$ and $\eps_0$ such that if the initial data $U_0=(f_0,g_0,E_0,B_0)$ satisfies that $\|U_{0}\|_{D^9_1 }+\|U_{0}\|_{L^1 } \le \delta_0$, then there exists a unique function $U_1=(u_1,V_1) $ such that for any $\eps\in (0,\eps_0)$, the solution $U_{\eps}=(f_{\eps},V_{\eps})$ with $V_{\eps}=(g_{\eps},E_{\eps},B_{\eps})$ to the VMB system \eqref{VMB4}--\eqref{VMB2i} satisfies
\bma &\quad \|P_{||}f_{\eps} (t) \|_{W^{2,\infty} }+\|P_{\bot}f_{\eps} (t)-u_1 (t)\|_{H^{2} }+\| V_{\eps} (t)-V_1 (t)\|_{H^{2} } \nnm\\
&\le C\delta_0\bigg(\eps|\ln\eps|^2 (1+t)^{-\frac12} +\(1+ \frac{t}{\eps}\)^{-1}\bigg), \label{limit0}
\ema
where $u_1=n \chi_0+m\cdot v\chi_0+q\chi_4$ and $V_1=(\rho  \chi_0,E,B) $ with $(n,m,q,\rho ,E,B)(t,x)$ being the solution to the incompressible NSMF system \eqref{NSM_2}--\eqref{NSP_5i}.

Moreover, if the initial data $U_0$ satisfies \eqref{initial} and   $\|U_{0}\|_{H^9} +\|U_{0}\|_{L^1}\le \delta_0$, then we have
\be
  \|P_{||}f_{\eps} (t) \|_{W^{2,\infty} }+\|P_{\bot}f_{\eps} (t)-u_1 (t)\|_{H^{2} }+\| V_{\eps} (t)-V_1 (t)\|_{H^{2} }\le C\delta_0 \eps|\ln\eps| (1+t)^{-\frac12} . \label{limit_1a}
\ee
\end{thm}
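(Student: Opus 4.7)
The plan is to exploit the decomposition $I = P_{||}+P_\bot$ of \eqref{div1}, which by design mirrors the spectrum of the linearized operator $A_\eps$ governing \eqref{VMB4}--\eqref{VMB4d}. Taking the Fourier transform in $x$, the symbol $\hat A_\eps(\xi)$ is expected to split near $\xi=0$ into three classes: (i) dissipative eigenvalues $\lambda_{NS}(\xi)\sim -c|\xi|^2$ generating the NSMF semigroup as $\eps\to 0$; (ii) highly oscillating eigenvalues $\lambda_\pm(\xi)\sim -c|\xi|^2\pm i c'|\xi|/\eps$ carrying the acoustic and plasma waves; (iii) a uniform spectral gap for the microscopic modes. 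The range of $P_{||}$ is precisely the span of the oscillating eigenmodes in the limit $\eps\to 0$, so $P_{||}f_\eps$ will generate only the initial layer while $P_\bot f_\eps$ is expected to converge to $u_1$ at the parabolic rate.

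\textbf{Two semigroup bounds.} From this spectral picture I would establish two complementary estimates on $e^{tA_\eps}$. For $P_\bot$-type data, a parabolic bound
\[
\|e^{tA_\eps}P_\bot h\|_{H^2}\le C(1+t)^{-1/2}\bigl(\|h\|_{H^4}+\|h\|_{L^1}\bigr),
\]
obtained by combining the microscopic spectral gap with the $|\xi|^2$ dissipation of $\lambda_{NS}$. For $P_{||}$-type data, an oscillatory bound
\[
\|e^{tA_\eps}P_{||}h\|_{W^{2,\infty}}\le C(1+t/\eps)^{-1}\bigl(\|h\|_{D^9_1}+\|h\|_{L^1}\bigr),
\]
proved by a stationary/non-stationary-phase analysis of $\int e^{ix\cdot\xi+\lambda_\pm(\xi)t}\hat h(\xi)\,d\xi$; the $L^1$ control of $h$ supplies the pointwise bound on $\hat h$ needed for the $L^\infty$-output, while the $D^9_1$ regularity pays for the velocity moments produced by derivatives of the symbol.

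\textbf{Duhamel comparison with the NSMF solution.} Setting $W_\eps:=(P_\bot f_\eps-u_1,\,V_\eps-V_1)$ and substituting $(u_1,V_1)$ into the kinetic system, the NSMF equations \eqref{NSM_2} together with a Chapman--Enskog-type identity of the form $u_1=P_0u_1+\eps L^{-1}(v\cdot\Tdx u_1)+O(\eps^2)$ reveal a residual $R_\eps$ of size $O(\eps)$ in the norms already controlled by Theorem~\ref{thm1.1}. Duhamel gives
\[
W_\eps(t)=e^{tA_\eps}W_\eps(0)+\int_0^t e^{(t-s)A_\eps}\bigl(R_\eps+N_\eps\bigr)(s)\,ds,
\]
with $N_\eps$ gathering the bilinear interactions $\Gamma(f_\eps,f_\eps)/\eps$, $E_\eps\cdot\Tdv(\cdot)$, the Lorentz term $(v\times B_\eps)\cdot\Tdv(\cdot)/\eps$ and the quadratic fluid terms in $U_1$. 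The nonlinear contribution is absorbed via the uniform decay $\|U_\eps(t)\|_{D^4_1}\le C\delta_0(1+t)^{-3/4}$ from Theorem~\ref{thm1.1} and the analogous bound on $\tilde U$. Integrating the parabolic semigroup against $R_\eps$ produces the $\eps(1+t)^{-1/2}$ contribution, whereas the two logarithms in $\eps|\ln\eps|^2$ arise from the frequency crossover $|\xi|\sim\eps^{-1}$ where oscillation and dissipation become comparable, and from an additional intermediate-frequency cutoff needed to transfer the $P_{||}$-part estimate into $H^2$. Direct application of the oscillatory bound to $P_{||}f_\eps$ itself supplies the $(1+t/\eps)^{-1}$ initial layer.

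\textbf{Main obstacle and the sharper estimate.} The hardest step will be establishing the sharp $(1+t/\eps)^{-1}$ decay of $P_{||}f_\eps$ in $W^{2,\infty}$ uniformly in $\eps$: the stationary-phase argument must accommodate all frequency regimes simultaneously, which is what forces the high regularity $D^9_1$ on the data. Controlling the Lorentz coupling $(v\times B_\eps)\cdot\Tdv(\cdot)/\eps$ without paying a $1/\eps$ loss at leading order is an additional delicate point, to be resolved by integrating by parts in $v$ to convert the singular prefactor into a velocity moment that the collision dissipation on $P_1f_\eps$ can absorb. Finally, under the compatibility conditions \eqref{initial} one has $P_{||}f_0\equiv 0$ together with the divergence-free and Maxwell matchings at $t=0$, so the oscillating modes are not excited, the initial-layer term disappears, and only a single Duhamel pass through the transition frequency is required; this saves one logarithm and delivers the improved rate \eqref{limit_1a}.
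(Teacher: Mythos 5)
Your proposal is in the right spirit, but two of the load-bearing mechanisms are mischaracterized and a third is missing entirely.

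\textbf{The spectral picture conflates two decoupled operators.} The linearized system for $(f_\eps,g_\eps,E_\eps,B_\eps)$ does not have a single symbol $\hat A_\eps(\xi)$ with oscillating, dissipative, and gapped branches. It decouples into the Boltzmann operator $\BB_\eps$ acting on $f_\eps$ and the VMB operator $\AA_\eps$ acting on $(g_\eps,E_\eps,B_\eps)$. The low-frequency eigenvalues of $\BB_\eps(\xi)$ do have the form $i\mu_j\eps|\xi|-a_j\eps^2|\xi|^2$ with $\mu_{\pm1}\ne 0$, so after dividing the time by $\eps^2$ they oscillate with frequency $|\xi|/\eps$: these are the acoustic waves, and $P_{||}$ is precisely the projection onto them inside $N_0$. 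But the low-frequency eigenvalues of $\AA_\eps(\xi)$ (Lemma \ref{eigen_4a}) are $\lambda_j\approx\eps^2 b_j(|\xi|)$ with $b_0=-\eta(1+s^2)$ and $b_{1,3}=-\eta/2\pm\sqrt{\eta^2-4s^2}/2$; after rescaling these have \emph{no} $1/\eps$ oscillation. The fast electromagnetic oscillations $\pm i|\xi|$ appear only at high frequency $\eps|\xi|\ge r_1$ (Lemma \ref{eigen_4}) and are treated there by a completely different argument. Your proposed ``oscillatory bound'' $\|e^{tA_\eps}P_{||}h\|_{W^{2,\infty}}\le C(1+t/\eps)^{-1}(\cdots)$ would therefore need to be a statement purely about $e^{t\BB_\eps/\eps^2}$ acting on $f$, not about the full operator, and $P_{||}$ acts only on the $f$-component, not on the electromagnetic fields. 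The paper accordingly develops two separate families of fluid-approximation lemmas (Lemma \ref{limit5} for $\BB_\eps$ in mixed $L^\infty$-$L^2$, Lemmas \ref{fl1}--\ref{fl2} for $\AA_\eps$ in $L^2$).

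\textbf{The $P_{||}/P_\bot$ split is not only to tag the initial layer; it is what makes the bootstrap close.} If you estimate everything in $W^{2,\infty}$, the Duhamel term $\int_0^t Y_1(t-s)\big(\Tdx(\rho_\eps-\rho)\times B_\eps\big)ds$ produces a time kernel $(t-s)^{-5/4}$, which is not integrable. The crucial structural identities $P_{||}Y_1(t)=0$ and $P_\bot Y_1(t)=Y_1(t)$ mean that \emph{all} Duhamel contributions live in the $P_\bot$-range and can be measured in $H^2$, giving only $(t-s)^{-1/2}$. Your proposal never uses this, and without it the nonlinear Lorentz coupling $j_\eps\times B_\eps - j\times B$ cannot be absorbed. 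Relatedly, your suggested parabolic bound $\|e^{tA_\eps}P_\bot h\|_{H^2}\le C(1+t)^{-1/2}(\cdots)$ is not a semigroup estimate in the paper: the semigroup difference decays like $\eps(1+t)^{-5/4}$ (Lemma \ref{limit5}(1)), and $(1+t)^{-1/2}$ emerges only \emph{after} integrating the Duhamel kernel against the slowly decaying source. The logarithms likewise arise from the time integrals $\int_0^t(1+(t-s)/\eps)^{-1}(1+s)^{-3/2}ds$ and $\int_0^t(t-s)^{-1/2}(1+s/\eps)^{-1}(1+s)^{-3/4}ds$, not from a frequency crossover at $|\xi|\sim\eps^{-1}$ as you suggest.

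\textbf{What you do get right, and where you diverge usefully.} The non-stationary-phase estimate of $P_{||}e^{t\BB_\eps/\eps^2}$ is indeed the hard point (paper's Lemma \ref{S3a}), and the compatibility condition $P_{||}f_0=0$ under \eqref{initial} does kill the initial layer, as you note. Your integration-by-parts-in-$v$ idea for the singular Lorentz term $\eps^{-1}(v\times B_\eps)\cdot\Tdv$ is an alternative to the paper's mechanism, which instead splits it so that the $\eps^{-1}$ sits in front of $P_0(v\times B_\eps)\cdot\Tdv P_r g_\eps$ and exploits the smallness $\|P_r g_\eps\|\lesssim\eps(1+t)^{-3/4}+e^{-bt/(4\eps^2)}$ from the energy method; both routes rely on the microscopic part being $O(\eps)$. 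Your ``Chapman--Enskog residual $R_\eps$ of size $O(\eps)$'' is a legitimate reformulation of the same content, but the paper instead compares the two Duhamel representations directly and estimates semigroup differences; you would still need to reproduce the $P_{||}/P_\bot$-compatible $H^2$ estimates to close the argument, which is where the proposal as written would stall.
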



\begin{remark}
By Sobolev Embedding Theorem, we have the following estimates.

{\rm (1)} Under the first assumption in Theorem \ref{thm1.2},
 the solution $U_{\eps} $ to the VMB system \eqref{VMB4}--\eqref{VMB2i} satisfies
\be  \| U_{\eps} (t)-U_1 (t)\|_{L^{\infty} }\le C\delta_0\bigg(\eps|\ln\eps|^2 (1+t)^{-\frac12} +\(1+ \frac{t}{\eps}\)^{-1}\bigg).
\ee

{\rm (2)} Under the second assumption of Theorem \ref{thm1.2}, it holds that
\be
 \| U_{\eps} (t)-U_1 (t)\|_{L^{\infty} }\le C\delta_0\eps|\ln\eps| (1+t)^{-\frac12} .
\ee
\end{remark}

Before the rest of the introduction, we will briefly present the main ideas and the approach of the analysis in the proof.
 The convergence rates given in Theorem~\ref{thm1.2}  of diffusion limit of the VMB system  are proved based on the spectral analysis \cite{Li4} and the ideas inspired by \cite{FL-3,Li1}.
First of all, the solution $U_{\eps}(t)=(f_{\eps},V_{\eps} )(t)$ with $V_{\eps}(t)=( g_{\eps},E_{\eps},B_{\eps})(t)$ to the VMB system \eqref{VMB4}--\eqref{VMB2i} can be represented by
$$
\left\{\bln
f_{\eps}(t)&=e^{\frac{t}{\eps^2}\BB_{\eps}}f_0+\intt e^{\frac{t-s}{\eps^2}\BB_{\eps}}H^1_{\eps}(s) ds,
\\
V_{\eps}(t)&=e^{\frac{t}{\eps^2}\AA_{\eps}}V_0+\intt e^{\frac{t-s}{\eps^2}\AA_{\eps}} (H^2_{\eps}(s),0,0)  ds;
\eln\right.
$$
and the solution $U(t)=(u_1,V_1 )(t)$ with $u_1 =n\chi_0+m\cdot v\chi_0+q\chi_4$ and $V_1=(\rho \chi_0,E,B)$ to the NSMF system  \eqref{NSM_2}--\eqref{NSP_5i} can be represented by
$$
\left\{\bln
u_1(t)&=Y_1(t)P_0f_0+\intt Y_1(t-s)H_4(s) ds,
\\
V_1(t)&=Y_2(t)P_2V_0+\intt Y_2(t-s) H_5(s) ds,
\eln\right.
$$
where   $\BB_{\eps}$ and $\AA_{\eps}$ are the linear Boltzmann and VMB operators defined by \eqref{B4} and \eqref{B5}, $Y_1(t)$ and $Y_2(t)$ are two semigroups defined in \eqref{v1} and \eqref{v1a}, and $H_4, H_5$ are nonlinear terms given by
$$
\left.\bln
&H_4=(\rho E+j\times B)\cdot v\chi_0-\Tdx\cdot (m\otimes m)\cdot v\chi_0-\frac53 \Tdx\cdot (qm)\chi_4,\\
&H_5=-(\Tdx\cdot(\rho m-\eta m\times B)\chi_0,\rho m-\eta m\times B,0) .
\eln\right.
$$
The main idea of the proof is to first estimate the convergence rates from $e^{\frac{t}{\eps^2}\BB_{\eps}}$ to $Y_1(t)$ and from $e^{\frac{t}{\eps^2}\AA_{\eps}}$ to $Y_2(t)$ separately by using spectral analysis. Then to obtain  the convergence rates from $(f_{\eps},V_{\eps} )(t)$ to $(u_1,V_1 )(t)$  is based on the convergence rates on semigroups and a bootstrap argument.  Note that the a priori estimates on the solutions $u_1(t)$ and $V_1(t)$ can be closed in
	$H^2$. However, even though   $L^\infty$-norm also works for $u_1(t)$, it is not suitable for $V_1(t)$.

Note that the linear Boltzmann operator $\BB_{\eps}(\xi)$ given by \eqref{B2} satisfies the scaling transformation $\BB_{\eps}(\xi)=\BB_1(\eps\xi)$. This implies that the eigenvalues $\gamma_j(|\xi|,\eps)$ of $\BB_{\eps}(\xi)$ depend only on $\eps|\xi|$ and satisfy  $\gamma_j(|\xi|,\eps)=\tilde{\gamma}_j(\eps|\xi|)$ for $\eps|\xi|$ small, where $\tilde{\gamma}_j(|\xi|)$  are the eigenvalues  of $\BB_1(\xi)$ in the low frequency regime. Precisely, there exist five eigenvalues $\gamma_{j}(|\xi|,\eps)$ for $\eps |\xi| $  small and they satisfy
 \be
 \gamma_{j}(|\xi|,\eps) =  i\mu_j\eps|\xi|-a_{j}\eps^2|\xi|^2+O\( \eps^3|\xi|^3\), \label{expan1}
\ee
where $\mu_{\pm1}=\pm \sqrt{\frac53}$, $\mu_k=0$ $(k=0,2,3)$, and $a_j>0$ are some constants.

Moreover, we can decompose the semigroup $e^{ \frac{t}{\eps^2}\BB_{\eps}(\xi)}$ into the fluid part and the remainder part, where the remainder part has the decay rate $e^{-\frac{bt}{\eps^2}}$ (See Theorem \ref{rate2}). Then by applying the expansion \eqref{expan1} to the fluid part, we can rewrite $e^{ \frac{t}{\eps^2}\BB_{\eps}(\xi)}$ as
\bmas
P_{||}(e^{ \frac{t}{\eps^2}\BB_{\eps}(\xi)}\hat{f}_0)&=\sum_{j=\pm1} e^{\frac{i\mu_j |\xi|}{\eps}t-a_{j} |\xi|^2t }P_{0j}\hat{f}_0+O(\eps e^{-c|\xi|^2t})+O(e^{-\frac{bt}{\eps^2}}), \\
P_{\bot}(e^{ \frac{t}{\eps^2}\BB_{\eps}(\xi)}\hat{f}_0)&=\sum_{j=0,2,3}  e^{-a_{j} |\xi|^2t }P_{0j}\hat{f}_0 +O(\eps e^{-c|\xi|^2t})+O(e^{-\frac{bt}{\eps^2}}),
\emas
where $P_{0j}$, $j=-1,0,1,2,3$ are the first order eigenprojections corresponding to $\gamma_j$.
Thus, by using the following key estimate:
$$\left\|\mathcal{F}^{-1}\(e^{\frac{\pm i|\xi|}{\eps}t}(1+|\xi|)^{-3}\)\right\|_{L^{\infty}_x}\le C \(\frac{t}{\eps}\)^{-1} ,$$
we can establish the optimal convergence rate of the  semigroup $e^{\frac{t}{\eps^2}\BB_{\eps}}$  to its first and second order fluid limits in a combination of $L^2-L^\infty$ norm ($L^\infty$ norm for $P_{||}$ part and $L^2$ norm for $P_{\bot}$ part) as given in Lemma \ref{limit5}. 

On the other hand, due to the influence of the electric-magnetic field,  the linear VMB operator $\AA_{\eps}(\xi)$ given in \eqref{B3} has no scaling property.
To study the corresponding eigenvalue problem, we will use  a new non-local implicit function theorem to show that there exist five eigenvalues $\lambda_j(|\xi|,\eps)$, $j=0,1,2,3,4$ of $\AA_{\eps}(\xi)$ for $\eps(1+|\xi|)$ being small and they satisfy (See Lemma \ref{eigen_4a}):
 \bma
 \lambda_{0}(|\xi|,\eps) &= \eps^2b_{0}(|\xi|) +O(\eps^4 (1+|\xi|^2)^2), \label{expan2}\\
 \lambda_k(|\xi|,\eps)&=\eps^2b_k(|\xi|)+\left\{\bal
O( \eps^4|b_k(|\xi|)|), & |\eta^2-4|\xi|^2|\ge r_0,\\
O(\eps^3),& |\eta^2-4|\xi|^2|< r_0,
\ea\right.\label{expan3}
 \ema
where $k=1,2,3,4,$ and $b_j(|\xi|)$ are defined by \eqref{bj}.  

Moreover, we can decompose the semigroup $e^{ \frac{t}{\eps^2}\AA_{\eps}(\xi)}$ into two fluid parts for low frequency and high frequency and the remainder part, where the remainder part has the decay rate $e^{-\frac{bt}{\eps^2}}$ (See Theorem \ref{rate1}). Then by applying the expansion \eqref{expan2}--\eqref{expan3} to the fluid part, we obtain
$$
e^{ \frac{t}{\eps^2}\AA_{\eps}(\xi)}\hat{V}_0=\sum^4_{j=0} e^{b_j(|\xi|)t }\tilde{P}_{0j}\hat{V}_0 +O(\eps e^{-{\rm Re}b_jt}) +O( e^{-\frac{ct}{\eps|\xi|}})1_{\{|\xi|\ge \frac{r_1}{\eps}\}}+O(e^{-\frac{bt}{\eps^2}}),
$$
where $\tilde{P}_{0j}$, $j=0,1,2,3,4$ are first order eigenprojections corresponding to $\lambda_j$.  Thus,  we can establish the optimal convergence rate of the semigroup $e^{\frac{t}{\eps^2}\AA_{\eps}}$ to its first and second order fluid limits in $L^2$ norm as listed in Lemmas \ref{fl1} and \ref{fl2}.

By using the estimates on the convergence rates for the fluid limits of the linear Boltzmann and VMB systems, we can  prove the convergence and establish the optimal convergence rate of the strong solution $(f_{\eps},V_{\eps})$ to the nonlinear VMB system towards the solution $(u_1,V_1)$ to the NSMF system.  Hence, we obtain the precise estimation on the initial layer. To illustrate why the Helmholtz's decomposition \eqref{div1} is necessary, we consider the term
$$\intt Y_1(t-s)(j_{\eps}\times B_{\eps}-j\times B)ds=\intt Y_1(t-s)(\Tdx(\rho_{\eps}-\rho)\times B_{\eps})ds+\cdots.$$
If one directly estimate this term in  $ L^\infty$, we have  non-integrability in time because
$$\left\|\intt Y_1(t-s)(\Tdx(\rho_{\eps}-\rho)\times B_{\eps})ds \right\|_{W^{2,\infty}}\le \intt (t-s)^{-\frac54}\|\rho_{\eps}-\rho\|_{H^2}\|B_{\eps}\|_{H^2}ds.$$
However, if we apply the  $L^2-L^\infty$ by noting that   $P_{||}Y_1(t)=0$, $P_{\bot}Y_1(t)=Y_1(t)$, then  the time integrability holds because
$$\left\|\intt Y_1(t-s)(\Tdx(\rho_{\eps}-\rho)\times B_{\eps})ds \right\|_{H^2}\le \intt (t-s)^{-\frac12}\|\rho_{\eps}-\rho\|_{H^2}\|B_{\eps}\|_{H^2}ds.$$

The rest of this paper will be organized as follows.
In Section~\ref{sect2}, we will present the results about the spectrum analysis of the linear operator related to the linearized VMB system. In Section~\ref{sect3}, we  will establish  the first and second order fluid approximations of the solution to the  linearized VMB system.
In Section~\ref{sect4}, we will prove the convergence and establish the convergence rate of the global solution to the original nonlinear VMB system to the solution to the nonlinear NSMF system.

\section{Spectral analysis}
\setcounter{equation}{0}
\label{sect2}

In this section, we will study  the spectral analysis of the linear VMB operator $\AA_{\eps}(\xi)$ defined in
\eqref{Axi}. 
From the system \eqref{VMB4}--\eqref{VMB4d}, we have the following linearized VMB system for $f_{\eps}$ and $U_{\eps}=(g_{\eps},E_{\eps},B_{\eps})^T$:
\be\label{LVMB0}
\left\{\bln
 &\eps^2\dt f_{\eps}=\BB_{\eps}f_{\eps},\quad t>0,\\
 & f_{\eps}(0,x,v)=f_0(x,v),
 \eln\right.
\ee
and
\be\label{LVMB1}
\left\{\bln
&\eps^2\dt U_{\eps}=\AA_{\eps}U_{\eps},\quad t>0,\\
&\Tdx\cdot E_{\eps}=(g_{\eps},\chi_0),\quad \Tdx\cdot B_{\eps}=0,\\
 &U_{\eps}(0,x,v)=U_0(x,v)=(g_0,E_0,B_0),
 \eln\right.
\ee
where
\bma
\BB_{\eps}&=L-\eps v\cdot\Tdx, \label{B4}\\
\AA_{\eps}&=\left( \ba
L_1-\eps v\cdot\Tdx & \eps v\chi_0 &0\\
-\eps P_m &0 &\eps^2\Tdx\times\\
0 &-\eps^2\Tdx\times &0
\ea\right) \label{B5}
\ema with $P_mh=(h,v\chi_0)$ for any $h\in L^2(\R^3_v)$.

Taking Fourier transform to \eqref{LVMB0} and \eqref{LVMB1} in $x$ yields
\be\label{LVMB2}
\left\{\bln
 &\eps^2\dt \hat{f}_{\eps}=\BB_{\eps}(\xi)\hat{f}_{\eps},\quad t>0,\\
 & \hat{f}_{\eps}(0,\xi,v)=\hat{f}_0(\xi,v),
 \eln\right.
\ee
and
\be\label{LVMB3}
\left\{\bln
 &\eps^2\dt \hat{U}_{\eps}= \AA_{\eps}(\xi)\hat U_{\eps},\quad t>0,\\
&i\xi\cdot \hat{E}_{\eps}=(\hat{g}_{\eps},\chi_0),\quad i\xi\cdot \hat{B}_{\eps}=0,\\
  &\hat{U}_{\eps}(0,\xi,v)=\hat{U}_0(\xi,v)=(\hat{g}_0,\hat{E}_0,\hat{B}_0),
   \eln\right.
\ee
where  
\bma
\BB_{\eps}(\xi)&=L- i\eps v\cdot\xi, \label{B2}\\
\AA_{\eps}(\xi)&=\left( \ba
L_1- i\eps v\cdot\xi & \eps v\chi_0 &0\\
-\eps P_m &0 &i\eps^2\xi\times\\
0 &-i\eps^2\xi\times &0
\ea\right). \label{B3}
\ema

By the identity $X=(X\cdot y)y-y\times y\times X$ for any $X\in \R^3$ and $y\in \mathbb{S}^2$, we can transform the system \eqref{LVMB3} to a new system for $\hat V_{\eps}=(\hat g_{\eps},\omega\times\hat  E_{\eps},\omega\times\hat B_{\eps})$ with $\omega=\xi/|\xi|$ as
 \bq
 \left\{\bln            \label{LVMB2a}
 &\dt \hat{V}_{\eps}=\tilde{\AA}_{\eps}(\xi)\hat{V}_{\eps}, \quad t>0,\\
 &\hat{V}_{\eps}(0,\xi,v)=\hat{V}_0(\xi,v)=(\hat{g}_0,\omega\times\hat{E}_0,\omega\times\hat{B}_0),
 \eln\right.
 \eq
with 
\be \label{Axi}
\tilde{\AA}_{\eps}(\xi)=\left( \ba
\tilde\BB_{\eps}(\xi) &-\eps v\chi_0\cdot\omega\times &0\\
-\eps\omega\times P_m &0 &i\eps^2\xi\times\\
0 &-i\eps^2\xi\times &0
\ea\right).
\ee
Here,  for $\xi\ne0$,
 \be
\tilde\BB_{\eps}(\xi) =L_1-i\eps v\cdot\xi -i\eps \frac{ v\cdot\xi }{|\xi|^2}P_{ d}.  \label{B(xi)}
 \ee

\begin{remark}\label{rem1.1}
The eigenvalues of the operator $\AA_{\eps}(\xi)$ are same as those
of $\tilde{\AA}_{\eps}(\xi)$,  and the eigenfunctions of $\AA_{\eps}(\xi)$ can be obtained as linear combinations
 of those for  $\tilde{\AA}_{\eps}(\xi)$. In fact, let   $\beta$  be an eigenvalue with
 the corresponding eigenvector denoted by $\mathcal{U}=(\phi,E,B)$ of $\tilde{\AA}_{\eps}(\xi)$. Then $U=(\phi,-i\frac{\xi}{|\xi|^2}(\phi,\chi_0)-\frac{\xi}{|\xi|}\times E,-\frac{\xi}{|\xi|}\times B)$ is the corresponding
 eigenvector with the eigenvalue  $\beta$ for $\AA_{\eps}(\xi)$ .
\end{remark}

\subsection{Spectrum structure of $\AA_{\eps}(\xi)$}

\subsubsection{Spectrum structure}

As usual, we denote a weighted Hilbert space $L^2_{\xi}(\R^3)$ for $\xi\ne 0$
as
$$
 L^2_{\xi}(\R^3)=\{f\in L^2(\R^3)\,|\,\|f\|_{\xi}=\sqrt{(f,f)_{\xi}}<\infty\},
$$
with the inner product defined by
$$
 (f,g)_{\xi}=(f,g)+\frac1{|\xi|^2}(P_{d} f,P_{d} g).
$$
For any fixed $\xi\ne 0$, we define a subspace of $\C^3$ by
$$\mathbb{C}^3_{\xi}=\{y\in \mathbb{C}^3\,|\, y\cdot \xi=0\}.$$

For any vectors $U=(f,E_1,B_1),V=(g,E_2,B_2)\in L^2_\xi(\R^3_v)\times \mathbb{C}^3\times \mathbb{C}^3$,  a weighted inner product with the corresponding
norm is defined by
$$ (U,V)_\xi=(f,g)_\xi+(E_1,E_2)+(B_1,B_2),\quad \|U\|_\xi=\sqrt{(U,U)_\xi}. $$
Another $L^2$ inner product and norm  is denoted by
$$ (U,V)=(f,g)+(E_1,E_2)+(B_1,B_2),\quad \|U\|=\sqrt{(U,U)}. $$

Since $P_{d}$ is a self-adjoint projection operator, it follows that
 $(P_{d} f,P_{d} g)=(P_{d} f, g)=( f,P_{d} g)$ and hence
 \bq (f,g)_{\xi}=(f,g+\frac1{|\xi|^2}P_{d}g)=(f+\frac1{|\xi|^2}P_{d}f,g).\label{C_1a}\eq
By
\eqref{C_1a}, we have for any $f,g\in L^2_{\xi}(\R^3_v)\cap D(\tilde\BB_{\eps}(\xi))$,
 \be
 (\tilde\BB_{\eps}(\xi)f,g)_{\xi}=(\tilde\BB_{\eps}(\xi) f,g+\frac1{|\xi|^2}P_{d} g)=(f,\tilde\BB_{\eps}(-\xi)g)_{\xi}. \label{L_7}
\ee
Moreover,  $\tilde\BB_{\eps}(\xi)$ is a dissipate operator in $L^2_{\xi}(\R^3)$:
 \be
 {\rm Re}(\tilde\BB_{\eps}(\xi)f,f)_{\xi}=(L_1f,f)\le 0. \label{L_8}
\ee

Note that $\tilde\BB_{\eps}(\xi)$ is a linear operator from the space $L^2_{\xi}(\R^3)$ to itself, and  for $ y\in \mathbb{C}^3_{\xi}$,
\bq \frac{\xi}{|\xi|}\times\frac{\xi}{|\xi|}\times y=-y.\label{rotat}\eq
Hence,  $L^2_{\xi}(\R^3_v)\times \mathbb{C}^3_{\xi}\times \mathbb{C}^3_{\xi}$ is an invariant subspace of the operator $\tilde{\AA}_{\eps}(\xi)$. Thus,  $\tilde{\AA}_{\eps}(\xi)$ can
be regarded as a linear operator on $L^2_{\xi}(\R^3_v)\times \mathbb{C}^3_{\xi}\times \mathbb{C}^3_{\xi}$ and it satisfies for any $U=(g,X,Y) \in L^2_{\xi}(\R^3_v)\times \mathbb{C}^3_{\xi}\times \mathbb{C}^3_{\xi}$ that
 \be
 {\rm Re}(\tilde\AA_{\eps}(\xi)U,U)_{\xi}=(L_1g,g)\le 0 . \label{L_8a}
\ee

Denote the spectrum of the operator $A$ by  $\sigma(A)$.
The essential spectrum of $A$, denoted by $\sigma_{ess}(A)$, is the set
$\{\lambda\in \C \,|\, \lambda-A~{\rm is~not~a~Fredholm~operator}\}$ (cf. \cite{Kato}). The discrete spectrum
of $A$, denoted by $\sigma_d(A)$, is the set $\sigma(A)\setminus \sigma_{ess} (A)$ which consists of  all isolated eigenvalues with finite multiplicity.
 And  $\rho(A)$ denotes its resolvent set.

We first have the following lemma about $\tilde{\AA}_{\eps}(\xi)$.


\begin{lem}\label{SG_1}
The operator $\tilde{\AA}_{\eps}(\xi)$ generates a strongly continuous contraction semigroup on
$L^2_{\xi}(\R^3)\times \C^3_{\xi}\times \C^3_{\xi}$, which satisfies
$$
\|e^{t\tilde{\AA}_{\eps}(\xi)}U\|_{\xi}\le\|U\|_{\xi}, \quad \forall\, t>0,\,U\in
L^2_{\xi}(\R^3_v)\times \C^3_{\xi}\times \C^3_{\xi}.
$$
Moreover,  $\rho(\tilde{\AA}_{\eps}(\xi))\supset\{\lambda\in \C\,|\, {\rm Re}\lambda>0\}$.
\end{lem}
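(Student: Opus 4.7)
\textbf{Proof plan for Lemma~\ref{SG_1}.}
The plan is to apply the Lumer--Phillips theorem on the Hilbert space $\mathcal{H}_\xi := L^2_\xi(\R^3_v)\times \C^3_\xi\times \C^3_\xi$ endowed with the inner product $(\cdot,\cdot)_\xi$. Three items have to be checked: $(i)$ $\tilde{\AA}_\eps(\xi)$ is densely defined and closed; $(ii)$ it is dissipative; $(iii)$ the range condition $\mathrm{Range}(\lambda I-\tilde{\AA}_\eps(\xi))=\mathcal{H}_\xi$ holds for some $\lambda>0$. Item $(ii)$ is already supplied by the identity \eqref{L_8a}, which gives $\mathrm{Re}(\tilde{\AA}_\eps(\xi)U,U)_\xi=(L_1g,g)\le 0$. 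Moreover, once these three points are verified, the spectral inclusion for generators of contraction semigroups immediately yields $\rho(\tilde{\AA}_\eps(\xi))\supset\{\mathrm{Re}\lambda>0\}$, completing the statement.

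For $(i)$, I would take the natural domain $D(\tilde{\AA}_\eps(\xi))=\{U=(g,X,Y)\in\mathcal{H}_\xi\,:\,\nu(v)g\in L^2(\R^3_v)\}$. Since the orthogonality conditions defining $\C^3_\xi$ are preserved by the operator (each occurrence of $\omega\times$ or $\xi\times$ produces a vector orthogonal to $\xi$, and $P_m g\in\C^3$ is post-multiplied by $\omega\times$), $\tilde{\AA}_\eps(\xi)$ indeed maps $D(\tilde{\AA}_\eps(\xi))$ into $\mathcal{H}_\xi$. Density is clear since the domain contains, e.g., $C_c^\infty$-elements. Closedness follows because the only unbounded piece is the multiplication part $M g:=(-\nu(v)-i\eps v\cdot\xi)g$, which is a closed (normal) multiplication operator, and the remaining additions are bounded.

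For $(iii)$, I would split $\tilde{\AA}_\eps(\xi)=T+S$, where $T$ acts only on the $g$-component as the multiplication operator $M$ (and trivially on the $(X,Y)$ block), and $S$ collects all other terms: $K_1$, the finite-rank piece $-i\eps(v\cdot\xi/|\xi|^2)P_d$, the velocity-moment coupling $-\eps v\chi_0\cdot\omega\times$, $-\eps\omega\times P_m$, and $\pm i\eps^2\xi\times$. Each of these is a bounded operator on $\mathcal{H}_\xi$: $K_1$ is compact by \eqref{L_1}, $P_d$ is a rank-one projection, and the coupling blocks involve multiplication by $v\chi_0\in L^2_v$ or by fixed scalars in $|\xi|$. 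Since $(\lambda I-T)^{-1}$ is given by pointwise multiplication by $(\lambda+\nu(v)+i\eps v\cdot\xi)^{-1}$, an operator whose norm on $\mathcal{H}_\xi$ is at most $1/\lambda$, for $\lambda>\|S\|_{\mathcal{B}(\mathcal{H}_\xi)}$ the Neumann series gives the invertibility of $I-(\lambda I-T)^{-1}S$, hence
\[
\lambda I-\tilde{\AA}_\eps(\xi)=(\lambda I-T)\bigl(I-(\lambda I-T)^{-1}S\bigr)
\]
is boundedly invertible on $\mathcal{H}_\xi$. This establishes $(iii)$; combined with $(i)$ and $(ii)$, Lumer--Phillips yields a strongly continuous contraction semigroup $e^{t\tilde{\AA}_\eps(\xi)}$ on $\mathcal{H}_\xi$, from which $\|e^{t\tilde{\AA}_\eps(\xi)}U\|_\xi\le\|U\|_\xi$ and the inclusion $\rho(\tilde{\AA}_\eps(\xi))\supset\{\mathrm{Re}\lambda>0\}$ both follow.

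The main bookkeeping obstacle, and the step I would allocate most care to, is verifying that each component of $S$ is genuinely bounded \emph{in the $\xi$-weighted norm} $\|\cdot\|_\xi$ (not merely in the $L^2$ norm) and that the invariance of $\C^3_\xi$ is preserved, so that the Neumann series actually lives in $\mathcal{H}_\xi$. Once these routine but $\xi$-dependent bounds are in place, everything else follows mechanically from the standard semigroup framework.
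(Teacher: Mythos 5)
Your proof is correct, but it takes a genuinely different route from the paper's. The paper computes the formal adjoint $\tilde{\AA}^*_\eps(\xi)$ explicitly from the duality \eqref{L_7}, observes from \eqref{L_8a} that both $\tilde{\AA}_\eps(\xi)$ and $\tilde{\AA}^*_\eps(\xi)$ are dissipative, and then cites Pazy's Corollary~4.4, which asserts that a densely defined closed operator whose adjoint is also dissipative generates a contraction semigroup; the range condition never has to be checked by hand because dissipativity of the adjoint supplies it. You keep the adjoint out of the picture entirely and establish the range condition directly, splitting $\tilde{\AA}_\eps(\xi)=T+S$ into a normal multiplication operator plus a bounded perturbation and summing the Neumann series for $(\lambda I-T)^{-1}S$ at large $\lambda$. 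Both are legitimate instances of the Lumer--Phillips framework; the paper trades a perturbation estimate for an explicit adjoint formula, and yours does the opposite. One small correction to your sketch: multiplication by $(\lambda+\nu(v)+i\eps v\cdot\xi)^{-1}$ does not commute with the projection $P_d$, so $(\lambda I-T)^{-1}$ is \emph{not} a $1/\lambda$-contraction in the weighted norm $\|\cdot\|_\xi$; by \eqref{eee} its $\xi$-norm carries an extra factor of order $(1+|\xi|^{-2})^{1/2}$. This still tends to zero as $\lambda\to\infty$ for each fixed $\xi\neq 0$, so the Neumann series argument goes through, but the threshold $\lambda>\|S\|$ you quoted must be replaced by a $\xi$-dependent one.
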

\begin{proof}
We first show that both $\tilde{\AA}_{\eps}(\xi)$ and $\tilde{\AA}^*_{\eps}(\xi)$ are
dissipative operators on $L^2_{\xi}(\R_v^3)\times \C^3_{\xi}\times \C^3_{\xi}$. By \eqref{L_7}, for any $U,V\in D(\tilde\BB_{\eps}(\xi))\times \mathbb{C}^3_{\xi}\times \mathbb{C}^3_{\xi}$
it holds that
$$
 (\tilde{\AA}_{\eps}(\xi)U,V)_{\xi}=(U,\tilde{\AA}^*_{\eps}(\xi)V)_{\xi},
$$
where
\be
\tilde{\AA}^*_{\eps} (\xi)=\left( \ba
\tilde\BB_{\eps}(-\xi) &\eps v\chi_0\cdot\omega\times   &0\\
\eps\omega\times P_m &0 & -i\eps^2\xi\times\\
0 &  i\eps^2\xi\times &0
\ea\right).
\ee

By \eqref{L_8a}, both $\tilde{\AA}_{\eps}(\xi)$ and $\tilde{\AA}^*_{\eps}(\xi)$ are dissipative,  namely,
$$\mathrm{Re}(\tilde{\AA}_{\eps}(\xi)U,U)_{\xi}=\mathrm{Re}(\tilde{\AA}^*_{\eps}(\xi)U,U)_{\xi}=(L_1g,g)\leq0,\quad \forall\, U=(g,X,Y).$$
Since $\tilde{\AA}_{\eps}(\xi)$ is a densely defined closed operator, it
follows from Corollary 4.4 in \cite{Pazy} that the operator $\tilde{\AA}_{\eps}(\xi)$
generates a $C_0$-contraction semigroup on $L^2_{\xi}(\R^3_v)\times \mathbb{C}^3_{\xi}\times \mathbb{C}^3_{\xi}$. In addition, it holds that $\rho(\tilde{\AA}_{\eps}(\xi))\supset\{\lambda\in \C\,|\, {\rm Re}\lambda>0\}$.
\end{proof}

Now we denote by $T$ a linear operator on $L^2(\R^3_v)$ or
$L^2_{\xi}(\R^3_v)$ with norms by
$$
 \|T\|=\sup_{\|f\|=1}\|Tf\|,\quad
 \|T\|_{\xi}=\sup_{\|f\|_{\xi}=1}\|Tf\|_{\xi}.
$$
Obviously,
 \be
(1+|\xi|^{-2})^{-1/2}\|T\|\le \|T\|_{\xi}\le (1+|\xi|^{-2})^{1/2}\|T\|.\label{eee}
 \ee

Set
 \bq  D_{\eps}(\xi)=-\nu(v)-i\eps v\cdot\xi, \quad \mathbb{B}_2(\xi)=\left(\ba  0 & i\xi\times \\ -i\xi\times & 0 \ea\right)_{6\times6}.\label{B1}\eq
Since $\mathbb{C}^3_{\xi}\times \mathbb{C}^3_{\xi}$ is an invariant subspace of the operator $\mathbb{B}_2(\xi)$, we can regard $\mathbb{B}_2(\xi)$ as an operator on $\mathbb{C}^3_{\xi}\times \mathbb{C}^3_{\xi}$. 
 Moreover, the operator $\lambda-\mathbb{B}_2(\xi)$ is invertible on $\mathbb{C}^3_{\xi}\times \mathbb{C}^3_{\xi}$  for any $\lambda\ne \pm i|\xi|$ and satisfies (cf. \cite{Li4})
\bq  \|(\lambda-\mathbb{B}_2(\xi))^{-1}\|= \max_{j=\pm1}|\lambda-ji|\xi||^{-1}.\label{b_1(xi)}\eq

We now study  the spectrum of $\tilde{\AA}_{\eps}(\xi)$.

\begin{lem}\label{Egn}
The following statements hold for all $\xi\ne 0$ and $\eps\in [0,1)$.
 \begin{enumerate}
\item[\rm (1)]
$\sigma_{ess}(\tilde{\AA}_{\eps}(\xi))\subset \{\lambda\in \mathbb{C}\,|\, {\rm Re}\lambda\le -\nu_0\}$ and $\sigma(\tilde{\AA}_{\eps}(\xi))\cap \{\lambda\in \mathbb{C}\,|\, -\nu_0<{\rm Re}\lambda\le 0\}\subset \sigma_{d}(\tilde{\AA}_{\eps}(\xi))$.
\item[\rm (2)]
 If $\lambda$ is an eigenvalue of $\tilde{\AA}_{\eps}(\xi)$, then ${\rm Re}\lambda<0$ for any $\eps \ne 0$  and $ \lambda=0$ if and only if $\eps=0$.
 \end{enumerate}
\end{lem}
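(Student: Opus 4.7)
The plan is to handle the two claims separately: (1) is a Weyl-type essential-spectrum argument based on decomposing $\tilde{\AA}_\eps(\xi)$ into a diagonal reference operator plus a compact perturbation, and (2) is a dissipativity-plus-null-space argument that exploits the orthogonality constraint defining $\mathbb{C}^3_\xi$.

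For (1), I would write $\tilde{\AA}_\eps(\xi)=\mathcal{D}_\eps(\xi)+\mathcal{K}_\eps(\xi)$, where
\[
\mathcal{D}_\eps(\xi)=\left(\begin{array}{ccc} -\nu(v)-i\eps v\cdot\xi & 0 & 0 \\ 0 & 0 & i\eps^2\xi\times \\ 0 & -i\eps^2\xi\times & 0 \end{array}\right)
\]
isolates the multiplicative part of $\tilde\BB_\eps(\xi)$ and the Maxwell rotation, while $\mathcal{K}_\eps(\xi)$ collects $K_1$, the rank-one term $-i\eps(v\cdot\xi/|\xi|^2)P_d$, and the two finite-rank couplings $-\eps v\chi_0\cdot\omega\times$ and $-\eps\omega\times P_m$. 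Since $K_1$ is compact on $L^2(\R^3_v)$ and each of the other three pieces factors through the rank-$\le 3$ projections $P_d$ or $P_m$ (or through the finite-dimensional target $\mathbb{C}^3$), $\mathcal{K}_\eps(\xi)$ is compact on $L^2_\xi(\R^3_v)\times\mathbb{C}^3_\xi\times\mathbb{C}^3_\xi$. The first block of $\mathcal{D}_\eps(\xi)$ is a multiplication operator whose spectrum is the essential range $\{-\nu(v)-i\eps v\cdot\xi:v\in\R^3\}\subset\{\mathrm{Re}\lambda\le-\nu_0\}$ by \eqref{nu}; the Maxwell block on $\mathbb{C}^3_\xi\times\mathbb{C}^3_\xi$ is finite-dimensional and contributes only the isolated eigenvalues $\pm i\eps^2|\xi|$, which lie in the discrete spectrum. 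Hence $\sigma_{ess}(\mathcal{D}_\eps(\xi))\subset\{\mathrm{Re}\lambda\le-\nu_0\}$, and Weyl's theorem transfers this bound to $\tilde{\AA}_\eps(\xi)$. The second inclusion in (1) is then automatic, since any $\lambda$ in $\sigma(\tilde{\AA}_\eps(\xi))$ with $-\nu_0<\mathrm{Re}\lambda\le0$ sits outside $\sigma_{ess}$ and therefore in $\sigma_d$.

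For (2), let $\tilde{\AA}_\eps(\xi)U=\lambda U$ with $U=(g,X,Y)\ne 0$. Pairing with $U$ in $(\cdot,\cdot)_\xi$ and using \eqref{L_8a} gives $(\mathrm{Re}\lambda)\|U\|_\xi^2=(L_1g,g)\le 0$, so $\mathrm{Re}\lambda\le 0$. Assume toward contradiction that $\mathrm{Re}\lambda=0$; then $(L_1g,g)=0$, and \eqref{L_4} forces $P_rg=0$, i.e., $g=c_0\sqrt{M}$. Substituting into the three component equations, using $L_1\sqrt{M}=0$, $P_d(c_0\sqrt{M})=c_0\sqrt{M}$, and $(g,v\chi_0)=0$, the system reduces to
\[
\{-i\eps c_0(v\cdot\xi)(1+|\xi|^{-2})-\eps v\cdot(\omega\times X)-\lambda c_0\}\sqrt{M}=0,\quad i\eps^2\xi\times Y=\lambda X,\quad -i\eps^2\xi\times X=\lambda Y.
\]
Separating $v$-constant and $v$-linear parts of the first equation yields $\lambda c_0=0$ and $\omega\times X=-ic_0\xi(1+|\xi|^{-2})$; since the left side is perpendicular to $\xi$ while the right side is parallel to $\xi$, we must have $c_0=0$ and $\omega\times X=0$. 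Combined with $X\in\mathbb{C}^3_\xi$ (so $X\cdot\xi=0$), the latter forces $X=0$. Plugging $X=0$ into the second equation gives $\eps^2\xi\times Y=0$, so for $\eps\ne 0$ we have $Y\parallel\xi$; combining with $Y\in\mathbb{C}^3_\xi$ yields $Y=0$, contradicting $U\ne 0$. Therefore $\mathrm{Re}\lambda<0$ whenever $\eps\ne 0$. Finally, when $\eps=0$ the operator $\tilde{\AA}_0(\xi)$ is block-diagonal with $L_1$ in the first slot and zeros in the Maxwell slots, so $\lambda=0$ is plainly an eigenvalue; the above contradiction argument shows the converse.

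The main obstacle lies in the bookkeeping: first, ensuring that the off-diagonal couplings and the $P_d$-term really are compact on the weighted product space $L^2_\xi\times\mathbb{C}^3_\xi\times\mathbb{C}^3_\xi$ (not merely bounded), so that Weyl's theorem is legitimate; and second, at the end of the dissipativity argument, the orthogonality constraint $X,Y\in\mathbb{C}^3_\xi$ must be invoked at exactly the right moment, because without it the purely imaginary eigenvalues $\pm i\eps^2|\xi|$ originating from the decoupled Maxwell block could persist and defeat the conclusion $\mathrm{Re}\lambda<0$.
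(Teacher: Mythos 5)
Your decomposition $\mathcal{D}_\eps(\xi)+\mathcal{K}_\eps(\xi)$ is exactly the paper's $G^1_\eps(\xi)+G^2_\eps(\xi)$ from \eqref{A1}--\eqref{A2}, and the Weyl-theorem argument for (1) and the dissipativity-plus-nullspace argument for (2) are the same as in the paper (your version just spells out the separation of $v$-constant and $v$-linear parts, and the role of the constraint $X,Y\in\mathbb{C}^3_\xi$, a bit more explicitly). The proof is correct and takes essentially the same route as the paper.
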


\begin{proof}
We decompose $\tilde{\AA}_{\eps}(\xi)$ into
\be \tilde{\AA}_{\eps}(\xi)=G^1_{\eps}(\xi)+G^2_{\eps}(\xi), \ee
where
 \bma
G^1_{\eps}(\xi)&=\left( \ba
D_{\eps}(\xi) &0 &0\\
0 &0 &i\eps^2\xi\times\\
0 &-i\eps^2\xi\times &0
\ea\right), \label{A1}\\
G^2_{\eps}(\xi)&=\left( \ba
K_1-i\eps \frac{v\cdot\xi}{|\xi|^2}P_{d} &-\eps v\chi_0\cdot\omega\times &0\\
-\eps\omega\times P_m &0 &0\\
0 &0 &0
\ea\right). \label{A2}
 \ema
By  \eqref{b_1(xi)} and \eqref{nu}, the operator $\lambda-G^1_{\eps}(\xi)$ is invertible  on $L^2_{\xi}(\R^3_v)\times \mathbb{C}^3_{\xi}\times \mathbb{C}^3_{\xi}$ for ${\rm
Re}\lambda>-\nu_0$ and $\lambda\ne \pm i\eps^2|\xi|$, and it satisfies
$$
(\lambda-G^1_{\eps}(\xi))^{-1}= \left(\ba (\lambda -D_{\eps}(\xi))^{-1} & 0 \\  0 & (\lambda -\eps^2\mathbb{B}_2(\xi))^{-1} \ea\right)_{7\times7}.
$$
 Since $G^2_{\eps}(\xi)$ is a compact
operator on $L^2_{\xi}(\R^3_v)\times \mathbb{C}^3_{\xi}\times \mathbb{C}^3_{\xi}$ for any fixed $\xi\ne 0$,
$\tilde{\AA}_{\eps}(\xi)$ is a compact perturbation of $G^1_{\eps}(\xi)$. Hence, it follows from
 Weyl's Theorem (Theorem 5.35 in \cite{Kato}) that  
 $$\sigma_{ess}(\tilde{\AA}_{\eps}(\xi))=\sigma_{ess}(G^1_{\eps}(\xi))=R(D_{\eps}(\xi))\subset \{\lambda\in \C\,|\, {\rm Re}\lambda\le -\nu_0\}.$$
 Thus
 the spectrum of $\tilde{\AA}_{\eps}(\xi)$ in the domain ${\rm Re}\lambda>-\nu_0$ consists of
discrete eigenvalues with possible accumulation
points only on the line ${\rm Re}\lambda= -\nu_0$. This and Lemma \ref{SG_1} prove the part (1).

We claim that for any  $\lambda \in \sigma_d(\tilde{\AA}_{\eps}(\xi))$  in the region $\mathrm{Re}\lambda>-\nu_0$, it holds that ${\rm Re}\lambda <0$
for $\eps \ne 0$. Indeed, set $\xi=s\omega$ and let $U=(f,E,B)\in L^2_{\xi}(\R^3_v)\times \mathbb{C}^3_{\xi}\times \mathbb{C}^3_{\xi}$ be the
eigenvector corresponding to the eigenvalue $\lambda$ so that
 \bq           \label{L_6}
  \left\{\bal
 \lambda f=L_1f-i\eps s(v\cdot\omega) (f+\frac1{s^2}P_{d} f )-\eps v\chi_0\cdot(\omega\times E),\\
 \lambda E=-\eps\omega\times (f,v\chi_0)+i \eps^2\xi\times B,\\
 \lambda B=-i\eps^2\xi\times E.
 \ea\right.
 \eq
Taking the inner product  $\eqref{L_6}_1$ with $f+\frac1{s^2}P_df$, we have
$$
\text{Re}\lambda\(\|f\|^2_{\xi} +|E|^2+|B|^2\)= (L_1f,f)\le 0,
$$
which  implies $\text{Re}\lambda\leq 0$.

Furthermore, if there exists an eigenvalue $\lambda$ with ${\rm
Re}\lambda=0$, then it follows from the above that $(L_1f,f)=0$,
namely, 
$f=C_0\sqrt M\in N_1$. Substitute this into \eqref{L_6}, we obtain
$$
 \lambda C_0\sqrt M=-i\eps  (v\cdot\omega)\Big(s+\frac1{s} \Big)C_0\chi_0-\eps v\chi_0\cdot(\omega\times E),
$$
which   implies that $C_0\ne 0$ and $\omega\times E\ne 0$ unless $\eps= 0$ and $\lambda= 0$. When $\eps\ne 0$, it holds that $C_0= 0$ and $\omega\times E= 0$ and hence $f=0$ and $E=0$. Substitute this into \eqref{L_6}, we obtain $B\equiv0$. This is a contradiction and
thus it holds $\text{Re}\lambda<0$ for $\eps \ne 0$. This proves the part (2) and then it completes the proof of the lemma.
\end{proof}

We now consider the spectrum and resolvent sets of $\tilde{\AA}_{\eps}(\xi)$ for $\eps|\xi|$ large. For ${\rm Re}\lambda>-\nu_0$ and $\lambda\ne \pm i\eps^2|\xi|$, we  decompose  $\lambda-\tilde{\AA}_{\eps}(\xi)$ into
\bma
\lambda-\tilde{\AA}_{\eps}(\xi)&=\lambda-G^1_{\eps}(\xi)-G^2_{\eps}(\xi)\nnm\\
 &=\(I-G^2_{\eps}(\xi)(\lambda-G^1_{\eps}(\xi))^{-1}\)\(\lambda-G^1_{\eps}(\xi)\), \label{B_d}
\ema
where $G^1_{\eps}(\xi)$ and $G^2_{\eps}(\xi)$ are defined by \eqref{A1} and \eqref{A2} respectively.
A direct computation shows that
\be\label{X_2}
\left\{\bln
&G^2_{\eps}(\xi)(\lambda-G^1_{\eps}(\xi))^{-1}= \left(\ba X^1_{\eps}(\lambda,\xi) & X^2_{\eps}(\lambda,\xi) \\  X^3_{\eps}(\lambda,\xi) & 0 \ea\right)_{7\times7},\\
&X^1_{\eps}(\lambda,\xi)=\(K_1-i\eps\frac{ v\cdot\xi}{|\xi|^2} P_d\)(\lambda -D_{\eps}(\xi))^{-1},
\\
& X^2_{\eps}(\lambda,\xi)=\(\eps v\chi_0\cdot\omega\times,0_{1\times3}\)_{1\times6}(\lambda -\eps^2\mathbb{B}_2(\xi))^{-1},
\\
 &X^3_{\eps}(\lambda,\xi)=\left(\ba -\eps\omega\times P_m(\lambda-D_{\eps}(\xi))^{-1} \\ 0_{3\times1} \ea\right)_{6\times1}.
 \eln\right.
\ee

Then, we have the estimates on the  terms on the right hand side of \eqref{X_2} as follows.

\begin{lem} \label{LP03}
The following estimates hold.
\begin{enumerate}
\item[\rm (1)] For any $\delta>0$, if ${\rm Re}\lambda\ge -\nu_0+\delta$, then we have
 \be
  \|K_1(\lambda-D_{\eps}(\xi))^{-1}\| \le  C\delta^{-\frac12}(1+\eps|\xi|)^{-\frac12}.   \label{T_7}
 \ee
 \item[\rm (2)] For any $\delta>0,\, \tau_0>0$, if  ${\rm Re}\lambda\ge -\nu_0+\delta$ and $\eps|\xi|\le \tau_0$, then we have
 \be
 \|K_1(\lambda-D_{\eps}(\xi))^{-1}\| \leq C\delta^{-1}(1+\tau_0)^{\frac12}(1+|{\rm Im}\lambda|)^{-\frac12}.\label{T_8}
 \ee

 \item[\rm (3)]  For any $\delta>0$,  if ${\rm Re}\lambda\ge -\nu_0+\delta$, then we have
 \bma
    \|P_{m}(\lambda-D_{\eps}(\xi))^{-1}\| &\le
 C\delta^{-\frac12}(1+\eps|\xi|)^{-\frac12} , \label{L_9}\\
   \|P_{m}(\lambda-D_{\eps}(\xi))^{-1}\| &\le
 C(\delta^{-1}+1) (1+\eps|\xi|)|\lambda|^{-1}. \label{L_9a}
  \ema

   \item[\rm (4)]  For any $\delta>0$,  if ${\rm Re}\lambda\ge -\nu_0+\delta$, then we have
 \bma
 \|(v\cdot\xi)|\xi|^{-2}P_{d}(\lambda-D_{\eps}(\xi))^{-1}\|
 &\leq C \delta^{-1} |\xi|^{-1}, \label{L_9b}
 \\
    \|(v\cdot\xi)|\xi|^{-2}P_{d}(\lambda-D_{\eps}(\xi))^{-1}\|
 &\leq C(\delta^{-1}+1)(|\xi|^{-1}+1)  |\lambda|^{-1}. \label{L_9c}
  \ema
\end{enumerate}
\end{lem}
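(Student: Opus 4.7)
The plan is to treat $(\lambda - D_\eps(\xi))^{-1}$ as pointwise multiplication by $1/(\lambda + \nu(v) + i\eps v\cdot\xi)$ and to exploit, under the hypothesis ${\rm Re}\lambda \ge -\nu_0 + \delta$, the elementary pointwise bound
$$|\lambda + \nu(v) + i\eps v\cdot\xi|^2 \ge \delta^2 + ({\rm Im}\lambda + \eps v\cdot\xi)^2.$$
All four estimates then reduce to one-dimensional oscillatory integrals along the direction $\omega = \xi/|\xi|$, combined — for the bounds carrying a $|\lambda|^{-1}$ factor — with one application of the resolvent identity $(\lambda - D_\eps(\xi))^{-1} = \lambda^{-1} + \lambda^{-1} D_\eps(\xi)(\lambda - D_\eps(\xi))^{-1}$.

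For (1) and (2), I would apply Schur's test to the kernel $k_1(v,v_*)/(\lambda + \nu(v_*) + i\eps v_*\cdot\xi)$. For (1), decomposing $v_* = s\omega + v_*^\perp$ and evaluating
$$\int_\R \frac{ds}{(\delta + \nu_0|s|)^2 + ({\rm Im}\lambda + \eps s|\xi|)^2}$$
by a direct change of variable gives a bound of order $\delta^{-1}(1+\eps|\xi|)^{-1}$; combined with integration of $k_1(v,v_*)^2$ in $v_*^\perp$, this produces the claimed $\delta^{-1/2}(1+\eps|\xi|)^{-1/2}$. For (2), when $\eps|\xi| \le \tau_0$ I would split the $v_*$-integration into a small-velocity region where $|{\rm Im}\lambda + \eps v_*\cdot\xi| \ge |{\rm Im}\lambda|/2$ (producing the $(1+|{\rm Im}\lambda|)^{-1/2}$ gain via a similar one-dimensional integration) and a large-velocity region where the Gaussian-type decay of $k_1$ in \eqref{L_1} absorbs the $(1+\tau_0)^{1/2}$ dependence.

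For (3) and (4), the operators $P_m(\lambda - D_\eps(\xi))^{-1}$ and $(v\cdot\xi/|\xi|^2)P_d(\lambda - D_\eps(\xi))^{-1}$ have range in $\C^3$ (respectively $\C$) and admit the explicit $L^2_v$-kernel $\phi(v)/(\lambda + \nu(v) + i\eps v\cdot\xi)$, with $\phi(v) = v\chi_0$ or $\phi(v) = (v\cdot\omega)|\xi|^{-1}\chi_0$. Their squared operator norms equal the $L^2_v$-norm squared of this kernel, which upon decomposing $v = s\omega + v^\perp$ reduces to the same one-dimensional integral as above and delivers \eqref{L_9} and \eqref{L_9b}. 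For \eqref{L_9a} and \eqref{L_9c}, the resolvent identity yields
$$P_m(\lambda - D_\eps(\xi))^{-1} = \lambda^{-1} P_m + \lambda^{-1} P_m D_\eps(\xi)(\lambda - D_\eps(\xi))^{-1},$$
and the direct computation $P_m D_\eps(\xi) f = -\int v\chi_0(\nu(v) + i\eps v\cdot\xi) f\, dv$ gives $\|P_m D_\eps(\xi)\| \le C(1 + \eps|\xi|)$; combined with $\|(\lambda - D_\eps(\xi))^{-1}\| \le \delta^{-1}$, this produces the $(1+\eps|\xi|)|\lambda|^{-1}$ bound, while the analog for $(v\cdot\xi/|\xi|^2)P_d$ picks up an additional $|\xi|^{-1}$ from the kernel, producing \eqref{L_9c}.

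The principal technical obstacle is the $v_*$-integration in item (1): one must simultaneously control the mild singularity of $k_1(v,v_*)$ at $v = v_*$ and the non-integrability of $1/\nu(v_*)^2$ at infinity while preserving the $(1+\eps|\xi|)^{-1/2}$ factor. I would handle this via a weighted Schur test, using an auxiliary bound of the form $\int k_1(v,v_*)^2 (1+|v_*|)^{-1}\, dv_* \le C(1+|v|)^{-1}$, which follows from the explicit formula for $k_1$ in \eqref{L_1} and trades the $1+|v_*|$ factor in $\nu(v_*)$ against weights on $k_1$.
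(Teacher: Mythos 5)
Your overall architecture is sound and mostly parallels the paper: a Schur-type argument for items (1) and (2), a direct kernel computation for the first bounds in (3)--(4), and the resolvent identity $(\lambda - D_\eps)^{-1}=\lambda^{-1}+\lambda^{-1}D_\eps(\lambda-D_\eps)^{-1}$ for the $|\lambda|^{-1}$ bounds in (3)--(4). Your treatment of (2) is essentially the paper's (split $v_*$ into $|v_*|\le R$ where $|{\rm Im}\lambda+\eps v_*\cdot\xi|\gtrsim|{\rm Im}\lambda|$, and $|v_*|>R$ where the decay $\int k_1(v,u)\,du\lesssim(1+|v|)^{-1}$ applies, then optimize over $R$). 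For (3)--(4) the paper only cites Lemma~3.5 of [Li4], so your direct computations --- treating $P_m(\lambda-D_\eps)^{-1}$ and $(v\cdot\xi)|\xi|^{-2}P_d(\lambda-D_\eps)^{-1}$ as rank-three/rank-one operators with explicit $L^2_v$ kernel $\phi(v)/(\lambda+\nu+i\eps v\cdot\xi)$, and bounding $\|P_mD_\eps(\xi)\|\le C(1+\eps|\xi|)$ --- are a legitimate alternative and in fact carry more detail. Your one-dimensional integral $\int ds/[(\delta+\nu_0|s|)^2+({\rm Im}\lambda+\eps s|\xi|)^2]\lesssim\delta^{-1}(1+\eps|\xi|)^{-1}$, which retains the linear growth $\nu\ge\nu_0(1+|s|)$, is also a slightly cleaner version of the paper's estimate, giving a bound uniform in $\eps|\xi|$.

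The gap is in item (1), and it is exactly at the ``principal technical obstacle'' you identify. Once you lower--bound $\nu(v_*)$ by a function of $s=v_*\cdot\omega$ alone in order to extract the $(1+\eps|\xi|)^{-1}$ gain from the oscillation, the weight $|\lambda+\nu(v_*)+i\eps v_*\cdot\xi|^{-2}$ is constant in $v_*^\perp$, so the $v_*^\perp$-integrability must come entirely from the kernel. Your recipe puts $k_1^2$ there, i.e.\ you are effectively taking a Hilbert--Schmidt (or $k_1^2$-weighted Schur) bound, and this does not close. Concretely: the candidate bound $\|K_1(\lambda-D_\eps)^{-1}f\|^2\le\big(\int_v\int_u k_1(v,u)^2\,|a(u)|^2\,du\,dv\big)\|f\|^2$ with $a(u)=(\lambda+\nu(u)+i\eps u\cdot\xi)^{-1}$ reduces, using $\int_v k_1(v,u)^2dv\le C$, to $\int_u |a(u)|^2du$, which diverges (for fixed $s$ the $u^\perp$-integral of $[(\delta+\nu_0|s|)^2+({\rm Im}\lambda+\eps s|\xi|)^2]^{-1}$ is infinite, and with the full $\nu(u)$ the spherical integral still grows like $\int r^2(1+r)^{-2}dr$). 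Your proposed auxiliary estimate $\int k_1(v,v_*)^2(1+|v_*|)^{-1}dv_*\le C(1+|v|)^{-1}$, while itself plausible, does not help: inserting it leaves $\int_v(1+|v|)^{-1}dv=\infty$ (or, with the weight on the other factor, $\int_u(1+|u|)|a(u)|^2du=\infty$). The paper's resolution is Cauchy--Schwarz with weight $\sqrt{k_1}$, namely $|\int k_1\,a\,f\,du|^2\le\int k_1|a|^2du\cdot\int k_1|f|^2du$, which yields $\|K_1(\lambda-D_\eps)^{-1}\|^2\le C\sup_v\int k_1(v,u)|a(u)|^2du$. The crucial point is that $k_1$ appears there to the \emph{first} power; the pointwise bound $|k_1(v,u)|\le C|\bar v-\bar u|^{-1}e^{-|v-u|^2/8}$ (after rotating $\xi\parallel e_1$) then factors, the $\bar u$-integral of $|\bar v-\bar u|^{-1}e^{-|\bar v-\bar u|^2/8}$ is a finite constant, and the $u_1$-integral of $|a|^2$ is exactly your one-dimensional integral. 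You should replace the $k_1^2$-based Schur step by this $\sqrt{k_1}$-weighted Cauchy--Schwarz; otherwise the estimate for (1) does not close.
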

\begin{proof}The estimates \eqref{L_9}--\eqref{L_9c} are proved in Lemma 3.5 of \cite{Li4}. Thus, we only prove \eqref{T_7} and \eqref{T_8} in the following. Let $\lambda=x+iy$  with $(x,y) \in \R\times \R$.
Then
\bma
&\|K_1(\lambda-D_{\eps}(\xi))^{-1}f\|^2\nnm\\
=&\intr\left|\intr k_1(v,u)(\nu(u)+\lambda+i\eps u\cdot\xi)^{-1}f(u)du\right|^2dv\nnm\\
\le& \intr\(\intr k_1(v,u)\frac{1}{|\nu(u)+\lambda+i\eps u\cdot\xi|^{2}}du\)\(\intr k_1(v,u)|f(u)|^2 du\)dv\nnm\\
\le& C\sup_{v\in\R^3}\intr k_1(v,u)\frac1{(\nu(u)+x)^2+(y+ \eps u\cdot\xi)^2}du\|f\|^2.\label{K2}
\ema

From \eqref{L_1}, one has
\be |k_1(v,u)|\le C\frac{1}{|\bar{v}-\bar{u}|}e^{-\frac{|v-u|^2}8} ,\quad \bar{u}=(u_2,u_3).\label{K1}\ee
Let $\mathbb{O}$ be a rotation in $\R^3$ satisfying $\mathbb{O}^T\xi=(|\xi|,0,0)$. By changing variables $v\to \mathbb{O} v$, $u\to \mathbb{O} u$, we obtain from \eqref{K2} and \eqref{K1} that for $\delta=\nu_0+x >0$,
\bmas
&\|K_1(\lambda-D_{\eps}(\xi))^{-1}f\|^2\nnm\\
\le& C\sup_{v\in\R^3}\intr k_1(v,u)\frac1{(\nu(u)+x)^2+(y+ \eps u_1|\xi|)^2}du\|f\|^2\nnm\\
\le& C\sup_{v\in\R^3}\int_{\R}\frac1{(\nu_0+x)^2+(y+ \eps u_1|\xi|)^2}du_1\int_{\R^2}\frac{1}{|\bar{v}-\bar{u}|}e^{-\frac{|\bar{v}-\bar{u}|^2}8} d\bar{u}\|f\|^2\nnm\\
\le& C\frac1{\eps|\xi|}\int_{\R} \frac1{(\nu_0+x)^2+u_1^2}  du_1\|f\|^2\le C\delta^{-1}(\eps|\xi|)^{-1}\|f\|^2.
\emas
This gives \eqref{T_7}.

For \eqref{T_8}, we first decompose
\bma
\|K_1(\lambda-D_{\eps}(\xi))^{-1}f\|^2
&\le 2\intr \left|\int_{|u|\le R} k_1(v,u)(\nu(u)+\lambda+i\eps u\cdot\xi)^{-1}f(u)du\right|^2dv\nnm\\
 &\quad+ 2\intr \left|\int_{|u|\ge R} k_1(v,u)(\nu(u)+\lambda+i\eps u\cdot\xi)^{-1}f(u)du\right|^2dv\nnm\\
&= :I_1+I_2. \label{K3}
\ema
For $I_1$, it holds that
\bmas
I_1
\le& C\sup_{v\in\R^3}\int_{|u|\le R} k_1(v,u)\frac1{(\nu(u)+x)^2+(y+\eps u_1|\xi|)^2}du\|f\|^2\nnm\\
\le& C\sup_{v\in\R^3}\int^R_{-R}\frac1{(\nu_0+x)^2+(y+\eps u_1|\xi|)^2}du_1\int^R_{-R}\int^R_{-R}k_1(v,u) d\bar{u}\|f\|^2\nnm\\
\le& C\int^R_{-R}\frac1{(\nu_0+x)^2+(y+\eps u_1|\xi|)^2}du_1\|f\|^2 .
\emas

If $\eps|\xi|\le \tau_0$, $|u|\le R$ and $|y|\ge 2\tau_0R$, we have
$$
|y+\eps u_1|\xi||\ge|y|-\tau_0R \geq \frac{|y|}{2}.
$$
Thus
\be
I_1 \le C\int^R_{-R}\frac{1}{\delta^2+y^2}du_1\|f\|^2=C\frac{1}{\delta^2+y^2}R\|f\|^2. \label{K4}
\ee
For $I_2$, since
$$\intr k_1(v,u)du \le C(1+|v|)^{-1},$$
we obtain
\bma
I_2\le& \intr\(\int_{|u|\ge R} k_1(v,u)\delta^{-2}du\)\(\int_{|u|\ge R} k(v,u)|f(u)|^2 du\)dv\nnm\\
\le& C\delta^{-2}\int_{|u|\ge R} \(\intr k_1(v,u) dv\)|f(u)|^2du \nnm\\
\le& C\delta^{-2}\int_{|u|\ge R}(1+|u|)^{-1}|f(u)|^2du\le C\delta^{-2}R^{-1}\|f\|^2 . \label{K5}
\ema
By choosing $R= |y|/\max\{2,2\tau_0\}$,  \eqref{T_8} follows from \eqref{K3}--\eqref{K5}. And this completes the proof of the lemma.
\end{proof}

We now state a lemma from \cite{Li4}.

\begin{lem}[\cite{Li4}]\label{inver}
Let $K_1,K_4$ be the operators on the space $X$ and $Y$, and $K_2,K_3$ be the operators  $Y\to X$ and $X\to Y$ respectively. Let $K$ be a matrix operator on $X\times Y$ defined by
$$
K=\left(\ba K_1 & K_2 \\  K_3 & K_4 \ea\right).
$$
 If the norms of $K_1,K_2,K_3$ and $K_4$ satisfy 
$$\|K_1\|<1,\quad \|K_4\|<1,\quad \|K_2\|\|K_3\|<(1-\|K_1\|)(1-\|K_4\|),$$
then the operator $I+K$ is invertible on $X\times Y$.
\end{lem}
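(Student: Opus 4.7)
The plan is to view $I+K$ as a block $2\times 2$ operator acting on $X\times Y$ and invert it by a Schur complement argument. Writing the equation $(I+K)(x,y)^T=(u,v)^T$ componentwise gives
\[
(I+K_1)x+K_2y=u,\qquad K_3x+(I+K_4)y=v.
\]
Since $\|K_1\|<1$ and $\|K_4\|<1$, the Neumann series shows that $I+K_1$ and $I+K_4$ are invertible on $X$ and $Y$ respectively, with $\|(I+K_1)^{-1}\|\le(1-\|K_1\|)^{-1}$ and $\|(I+K_4)^{-1}\|\le(1-\|K_4\|)^{-1}$. This is the starting input for the block reduction.

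Next I would eliminate $x$ from the first equation via $x=(I+K_1)^{-1}(u-K_2y)$ and substitute into the second, which produces the Schur-complement equation $Sy=v-K_3(I+K_1)^{-1}u$ with
\[
S=(I+K_4)-K_3(I+K_1)^{-1}K_2=(I+K_4)\bigl[I-(I+K_4)^{-1}K_3(I+K_1)^{-1}K_2\bigr].
\]
The inner bracket is exactly where the third hypothesis is used: one has
\[
\bigl\|(I+K_4)^{-1}K_3(I+K_1)^{-1}K_2\bigr\|\le\frac{\|K_2\|\|K_3\|}{(1-\|K_1\|)(1-\|K_4\|)}<1,
\]
so a Neumann series makes it invertible, and consequently $S$ is invertible on $Y$.

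Putting it together, $y=S^{-1}(v-K_3(I+K_1)^{-1}u)$ is uniquely determined, and then $x=(I+K_1)^{-1}(u-K_2y)$ follows. The resulting map $(u,v)\mapsto(x,y)$ is bounded because each factor $(I+K_1)^{-1}$, $(I+K_4)^{-1}$, and the Neumann series for $S^{-1}$ is bounded under the stated norm hypotheses, so $I+K$ is invertible on $X\times Y$. I do not anticipate any substantive obstacle: this is essentially the Banach-space version of the classical $2\times 2$ block matrix inversion formula, and the strict inequality $\|K_2\|\|K_3\|<(1-\|K_1\|)(1-\|K_4\|)$ is precisely the condition that guarantees convergence of the Schur-complement Neumann series. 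The only point to be slightly careful about is that one should not try to invert $I+K$ directly by a global Neumann series on $K$ (whose norm need not be less than $1$); the two-step reduction via $I+K_1$ and $I+K_4$ separately is what makes the argument work.
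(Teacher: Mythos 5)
Your proof is correct and complete. The Schur complement reduction you use is sound: the invertibility of $I+K_1$ and $I+K_4$ follows from the first two norm hypotheses via Neumann series; the factorization $S=(I+K_4)\bigl[I-(I+K_4)^{-1}K_3(I+K_1)^{-1}K_2\bigr]$ correctly isolates the role of the third hypothesis, which makes the bracket invertible; and the resulting formulas for $x$ and $y$ do give a bounded two-sided inverse of $I+K$. One small thing worth noting: the paper itself does not prove this lemma at all --- it is stated as a citation from \cite{Li4} and used as a black box --- so there is no in-paper proof to compare against. For what it is worth, an alternative route that some sources take is to introduce a weighted norm $\|(x,y)\|_r=\|x\|+r\|y\|$ on $X\times Y$; the three hypotheses are exactly what guarantee that one can choose $r$ with $\|K_2\|/(1-\|K_4\|)<r<(1-\|K_1\|)/\|K_3\|$, which makes $\|K\|_r<1$, and then $I+K$ is invertible by a single global Neumann series after all --- your closing remark that a global Neumann series cannot be used is only true in the original (unweighted) product norm. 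Both approaches are valid and roughly equivalent in effort; yours is perhaps more transparent about what the Schur complement is.
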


By Lemmas \ref{LP03} and \ref{inver}, we have the
following lemma about the  spectrum structure
of the operator $\tilde\AA_{\eps}(\xi)$ for $\eps|\xi|$ being large.

\begin{lem}
\label{LP01}Fixed $\eps\in (0,1)$.
The following statements hold.
 \begin{enumerate}
\item[\rm (1)]  For any $\delta>0$, there
exists $ R_1= R_1(\delta)>0$  such that for $\eps|\xi|>R_1$,
 \bq
 \sigma(\tilde{\AA}_{\eps}(\xi))\cap\{\lambda\in\mathbb{C}\,|\,\mathrm{Re}\lambda\ge-\frac{\nu_0}2\}
 \subset
\sum_{j=\pm1}\{\lambda\in\mathbb{C}\,|\,|\lambda-\eps^2ji|\xi||\le\eps^2\delta\}.\label{sg4}
 \eq
\item[\rm (2)] For any $r_1>r_0>0$, there
exists $\alpha =\alpha(r_0,r_1)>0$ such that for  $r_0\le \eps|\xi|\le r_1$,
\bq \sigma(\tilde{\AA}_{\eps}(\xi))\subset\{\lambda\in\mathbb{C}\,|\, \mathrm{Re}\lambda<-\alpha\} .\label{sg3}\eq
 \end{enumerate}
\end{lem}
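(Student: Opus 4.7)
The plan is to handle the two parts of Lemma~\ref{LP01} via the Fredholm-type decomposition \eqref{B_d}, which factors $\lambda-\tilde{\AA}_{\eps}(\xi)=(I-G^2_{\eps}(\xi)(\lambda-G^1_{\eps}(\xi))^{-1})(\lambda-G^1_{\eps}(\xi))$. The spectrum of the unperturbed operator $G^1_{\eps}(\xi)$ consists of the range $R(D_{\eps}(\xi))\subset\{{\rm Re}\lambda\le-\nu_0\}$ together with the two points $\pm\eps^2 i|\xi|$ coming from the Maxwell block $\eps^2\mathbb{B}_2(\xi)$, whose resolvent obeys $\|(\lambda-\eps^2\mathbb{B}_2(\xi))^{-1}\|=\max_{j=\pm 1}|\lambda-\eps^2 ji|\xi||^{-1}$ by \eqref{b_1(xi)}. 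Thus $(\lambda-G^1_{\eps}(\xi))^{-1}$ exists on $L^2_{\xi}\times\mathbb{C}^3_{\xi}\times\mathbb{C}^3_{\xi}$ whenever ${\rm Re}\lambda>-\nu_0$ and $\lambda\neq\pm\eps^2 i|\xi|$, and it suffices to show $I-G^2_{\eps}(\xi)(\lambda-G^1_{\eps}(\xi))^{-1}$ is invertible in the target regimes. For this, I will apply Lemma~\ref{inver} to its $2\times 2$ block form displayed in \eqref{X_2}, with $K_1=X^1_{\eps}$, $K_2=X^2_{\eps}$, $K_3=X^3_{\eps}$, and $K_4=0$.

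For part (1), fix $\delta>0$, restrict to ${\rm Re}\lambda\ge-\nu_0/2$ with $|\lambda\mp\eps^2 i|\xi||>\eps^2\delta$, and estimate each block. By \eqref{T_7} and \eqref{L_9b}, the diagonal block $X^1_{\eps}$ satisfies $\|X^1_{\eps}\|\le C(1+\eps|\xi|)^{-1/2}+C\eps|\xi|^{-1}$, which becomes arbitrarily small once $\eps|\xi|$ exceeds some $R_1=R_1(\delta)$. The other diagonal block vanishes. The off-diagonal blocks satisfy $\|X^2_{\eps}\|\le C\eps\cdot(\eps^2\delta)^{-1}=C/(\eps\delta)$ by the Maxwell resolvent bound and $\|X^3_{\eps}\|\le C\eps(1+\eps|\xi|)^{-1/2}$ by \eqref{L_9}; hence $\|X^2_{\eps}\|\|X^3_{\eps}\|\le C\delta^{-1}(1+\eps|\xi|)^{-1/2}\to 0$ as $\eps|\xi|\to\infty$. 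Lemma~\ref{inver} then yields invertibility of $I-G^2_{\eps}(\xi)(\lambda-G^1_{\eps}(\xi))^{-1}$ for $\eps|\xi|>R_1$, which via \eqref{B_d} places $\lambda$ in the resolvent set and proves \eqref{sg4}.

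For part (2), Lemma~\ref{Egn}(1) shows that any spectrum inside $\{{\rm Re}\lambda>-\nu_0\}$ consists of isolated eigenvalues of finite multiplicity, and Lemma~\ref{Egn}(2) guarantees each has strictly negative real part. Argue by contradiction: suppose sequences $\xi_n$ with $\eps|\xi_n|\in[r_0,r_1]$ and $\lambda_n\in\sigma_d(\tilde{\AA}_{\eps}(\xi_n))$ exist with ${\rm Re}\lambda_n\to 0^-$. Apply \eqref{T_8} (available because $\eps|\xi_n|\le r_1$) together with \eqref{L_9a}--\eqref{L_9c} and the Maxwell resolvent bound to see that for $|{\rm Im}\lambda|$ large (uniformly in $n$), every block of $G^2_{\eps}(\xi_n)(\lambda-G^1_{\eps}(\xi_n))^{-1}$ has small norm, so Lemma~\ref{inver} again forces $\lambda\in\rho(\tilde{\AA}_{\eps}(\xi_n))$. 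Therefore $|{\rm Im}\lambda_n|$ is bounded, and a subsequence yields $\xi_n\to\xi_*$ with $\eps|\xi_*|\in[r_0,r_1]$ and $\lambda_n\to\lambda_*$ with ${\rm Re}\lambda_*=0$. The analytic dependence of $\tilde{\AA}_{\eps}(\xi)$ on $\xi$ (type A family) together with uniform resolvent estimates on a small circle around $\lambda_*$ produces a nontrivial Riesz spectral projector at $\xi_*$, so $\lambda_*\in\sigma(\tilde{\AA}_{\eps}(\xi_*))$, contradicting Lemma~\ref{Egn}(2).

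The main obstacle will be the upper semicontinuity step in part (2), i.e., passing the qualitative negativity in Lemma~\ref{Egn}(2) at each individual $\xi$ to a uniform quantitative bound $\alpha(r_0,r_1)>0$. This needs both a uniform vertical-strip confinement of the discrete eigenvalues via the large-$|{\rm Im}\lambda|$ estimates of Lemma~\ref{LP03}(2), preventing $\lambda_*$ from escaping to infinity, and a careful analytic-perturbation argument on the compact annulus $r_0\le\eps|\xi|\le r_1$ showing that the limiting $\lambda_*$ genuinely lies in $\sigma(\tilde{\AA}_{\eps}(\xi_*))$ rather than merely arising as a spurious resolvent singularity.
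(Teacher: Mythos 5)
Your treatment of part (1) is essentially the paper's own proof: you run the factorization \eqref{B_d} through the block decomposition \eqref{X_2} and Lemma~\ref{inver}, using \eqref{T_7}, \eqref{L_9}, \eqref{L_9b} and the Maxwell resolvent bound \eqref{b_1(xi)} to make $\|X^1_\eps\|_\xi$ small and $\|X^2_\eps\|\|X^3_\eps\|$ small once $\eps|\xi|$ is large, with the lower bound $|\lambda\mp\eps^2 i|\xi||>\eps^2\delta$ controlling $\|X^2_\eps\|$. This matches the paper; the only cosmetic gap is that you state bounds in the unweighted operator norm while Lemma~\ref{inver} is applied in $\|\cdot\|_\xi$, but on $\eps|\xi|>R_1$ (hence $|\xi|>R_1$) the two are uniformly equivalent by \eqref{eee}, so this is harmless.

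For part (2) your confinement step (using \eqref{T_8} and the other estimates of Lemma~\ref{LP03} to push the spectrum into $|\mathrm{Im}\,\lambda|\le y_1$) again coincides with the paper. Where you genuinely diverge is the closing of the contradiction. The paper extracts a convergent subsequence of the normalized eigenvectors $U_n=(f_n,E_n,B_n)$ directly: compactness of $K_1$ gives strong convergence of $K_1 f_{n_j}$, the macroscopic data $(C_0^n,E_n,B_n,\xi_n)$ live in a bounded finite-dimensional set and converge along a subsequence, and the explicit resolvent formula $f_n=(\lambda_n+\nu+i\eps v\cdot\xi_n)^{-1}(\cdots)$ with a uniformly bounded kernel transfers this to strong $L^2$ convergence $f_{n_j}\to f_0$. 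This produces a genuine eigenvector at a purely imaginary $\lambda_0$, contradicting Lemma~\ref{Egn}(2). Your alternative — invoking type-A analytic dependence and Riesz projectors to pass the spectral point to the limit — is in principle a valid route, but it faces two difficulties you only partially identify. First, $\tilde{\AA}_\eps(\xi)$ acts on the $\xi$-dependent space $L^2_\xi\times\C^3_\xi\times\C^3_\xi$, so the notion of a type-A family (and norm-convergence of Riesz projectors) is not immediate; one would first have to reduce to a fixed space by rotating $\xi\mapsto|\xi|e_1$ and exploiting the rotational invariance of $L_1$. Second, the Riesz-projector step needs joint norm-continuity of the resolvent $(\lambda,\xi)\mapsto(\lambda-\tilde{\AA}_\eps(\xi))^{-1}$ on a circle enclosing $\lambda_*$; this is in fact available (the difference $(\lambda-D_\eps(\xi_1))^{-1}-(\lambda-D_\eps(\xi_2))^{-1}$ is controlled because $\nu(v)\gtrsim 1+|v|$ absorbs the linear-in-$|v|$ numerator), but it is a genuine step rather than something that can be asserted. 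The obstacle you name (``preventing $\lambda_*$ from escaping to infinity'' plus ruling out a ``spurious singularity'') is real, but the more concrete technical issue is the $\omega$-dependence of the ambient space and the need for quantitative resolvent continuity. The paper's compactness argument avoids both and is the shorter path; if you want to pursue the perturbation route you should make the rotation reduction explicit and then argue via openness of the resolvent set in $(\lambda,\xi)$ — if $(\lambda_*,\xi_*)$ were in the resolvent set, so would $(\lambda_n,\xi_n)$ for $n$ large, a contradiction — rather than via Riesz projectors, since all you need is $\lambda_*\in\sigma(\tilde{\AA}_\eps(\xi_*))$.
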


\begin{proof}
We prove \eqref{sg4} first.
By Lemma \ref{LP03}, \eqref{X_2}, \eqref{b_1(xi)} and \eqref{eee}, there exists $R_1=R_1(\delta)>0$ such that for $\mathrm{Re}\lambda\ge-\nu_0/2$, $ |\lambda-\eps^2ji|\xi||>\eps^2\delta$ and $\eps|\xi|>R_1$,
$$
\|X^1_{\eps}(\lambda,\xi)\|_{\xi}\leq 1/2,\quad \|X^2_{\eps}(\lambda,\xi)\|\le \eps^{-1}\delta^{-1}, \quad \| X^3_{\eps}(\lambda,\xi)\| \leq \eps\delta/4.
$$
This and Lemma \ref{inver} imply that the operator  $I-G^2_{\eps}(\xi)(\lambda-G^1_{\eps}(\xi))^{-1}$ is invertible on
$L^{2}_{\xi}(\R^3_v)\times \mathbb{C}^3_{\xi}\times \mathbb{C}^3_{\xi}$ and thus  $\lambda-\tilde{\AA}_{\eps}(\xi)$ is invertible on $L^{2}_{\xi}(\R^3_v)\times \mathbb{C}^3_{\xi}\times \mathbb{C}^3_{\xi}$ and
satisfies
$$
 (\lambda-\tilde{\AA}_{\eps}(\xi))^{-1}
 =\(\lambda-G^1_{\eps}(\xi)\)^{-1}\( I-G^2_{\eps}(\xi)(\lambda-G^1_{\eps}(\xi))^{-1}\)^{-1}.
$$
Therefore, it holds that for $ \eps|\xi|> R_1$,
\be \rho(\tilde{\AA}_{\eps}(\xi))\supset\{\lambda\in\mathbb{C}\,|\,\min_{j=\pm1}{|\lambda-\eps^2ji|\xi||}>\eps^2\delta,\, \mathrm{Re}\lambda\ge-\frac{\nu_0}2\}, \label{rb2}
\ee
which implies \eqref{sg4}.

Next, we turn to prove \eqref{sg3}.
By Lemma \ref{LP03}, \eqref{b_1(xi)}  and \eqref{eee}, there exists
$y_1=y_1(r_0,r_1)>0$ large enough such that for $\mathrm{Re}\lambda\geq -\nu_0/2$, $|\mathrm{Im}\lambda|>y_1 $ and $r_0\le \eps|\xi|\le r_1$,
$$
\|X^1_{\eps}(\lambda,\xi)\|_{\xi}\leq 1/6,\quad \|X^2_{\eps}(\lambda,\xi)\|\le 1/6, \quad \| X^3_{\eps}(\lambda,\xi)\| \leq 1/6.
$$
This implies that the operator $I-G^2_{\eps}(\xi)(\lambda-G^1_{\eps}(\xi))^{-1}$
is invertible on $L^2_{\xi}(\R^3_v)\times \mathbb{C}^3_{\xi}\times \mathbb{C}^3_{\xi}$, which together with
\eqref{B_d} yield that  $\lambda-\tilde{\AA}_{\eps}(\xi)$ is also invertible on $L^2_{\xi}(\R^3_v)\times \mathbb{C}^3_{\xi}\times \mathbb{C}^3_{\xi}$ when
$\mathrm{Re}\lambda\geq -\nu_0/2$,  $|\mathrm{Im}\lambda|>y_1 $ and $r_0\le \eps|\xi|\le r_1$.
Hence,  for $r_0\le \eps|\xi|\le r_1$ we have
\be
 \sigma(\tilde{\AA}_{\eps}(\xi))
 \cap\{\lambda\in\mathbb{C}\,|\,\mathrm{Re}\lambda\ge-\frac{\nu_0}2\}
\subset
 \{\lambda\in\mathbb{C}\,|\,\mathrm{Re}\lambda\ge
    -\frac{\nu_0}2,\,|\mathrm{Im}\lambda|\le y_1 \}.   \label{SpH}
 \ee

By \eqref{SpH}, it is sufficient to prove \eqref{sg3} holds for $|{\rm Im}\lambda|\le y_1$.
We prove this by contradiction.
 If it does not
hold, then there exists a sequence of
$\{(\xi_n,\lambda_n,U_n)\}$ satisfying $\eps|\xi_n|\in[r_0,r_1]$, $U_n=(f_n,E_n,B_n)\in
L^2_{\xi_n}(\R^3)\times \mathbb{C}^3_{\xi_n}\times \mathbb{C}^3_{\xi_n}$ with $\|U_n\|_{\xi_n} =1$, and $\lambda_nU_n=
 \tilde{\AA}_{\eps}(\xi_n)U_n$ with $|{\rm Im}\lambda_n|\le y_1$ and ${\rm Re}\lambda_n\to0$ as $n\to\infty$. That is,
 $$
 \left\{\bal
 \lambda_nf_n=(L_1-i\eps v\cdot\xi_n-i\eps\frac{v\cdot\xi_n}{|\xi_n|^2}P_{ d} )f_n-\eps v\chi_0\cdot( \omega_n\times E_n),\\
 \lambda_n E_n=-\eps\omega_n\times  (f_n,v\chi_0)+i\eps^2\xi_n\times B_n,\\
 \lambda_n B_n=-i\eps^2 \xi_n\times E_n.
\ea\right.
$$
Rewrite the first  equation as
 $$
 (\lambda_n+\nu+i\eps v\cdot\xi_n)f_n=K_1f_n-i\eps\frac{v\cdot\xi_n}{|\xi_n|^2}P_{ d} f_n- \eps v\chi_0\cdot( \omega_n\times E_n).
 $$
Since $K_1$ is a compact operator  on $L^2(\R^3)$, there exists a
subsequence $\{f_{n_j}\}$ of $\{f_n\}$ and $g_1\in L^2(\R^3)$ such that
$$
K_1f_{n_j}\rightarrow g_1 \quad \mbox{as}\quad j\to\infty.
$$
By using  the fact that  $\eps|\xi_n|\in[r_0,r_1]$ and  $P_{ d} f_n=C_0^n\sqrt{M}$ with
  $|C_0^n|^2+|E_n|^2+|B_n|^2\le 1$, there exists a subsequence of (still denoted by) $\{( \xi_{n_j},f_{n_j},E_{n_j},B_{n_j})\}$, and $( \xi_0,C_0,E_0, B_0)$ with $\eps|\xi_0|\in[r_0,r_1]$ and $|C_0|^2+|E_0|^2+|B_0|^2\leq1$
such that $(\xi_{n_j},C^{n_j}_0,E_{n_j},B_{n_j})\to (\xi_0,C_0,E_0,B_0)$ as $j\to\infty$. In particular,
$$
 \frac{v\cdot\xi_{n_j}}{|\xi_{n_j}|^{2}}P_{ d} f_{n_j}
 \rightarrow \frac{v\cdot\xi_0}{|\xi_0|^{2}}C_0\sqrt{M}=:g_2, \quad \frac{ \xi_{n_j}}{|\xi_{n_j}|}\times E_{n_j} \to \frac{ \xi_0}{|\xi_0|}\times E_0=:Y_0 \ \ \mbox{as} \ \ j\to\infty.
$$
Since $|\text{Im}\lambda_n|\leq y_1$ and ${\rm Re}\lambda_n\to 0$,
we can extract a subsequence of (still denoted by) $\{\lambda_{n_j}\}$
such that $\lambda_{n_j}\rightarrow \lambda_0$ with ${\rm
Re}\lambda_0=0$. Then
$$
 \lim_{j\rightarrow\infty}f_{n_j}
 =\lim_{j\rightarrow\infty}\frac{g_1-\eps g_2-\eps(v\cdot Y_0)\chi_0}{\lambda_{n_j}+\nu+i\eps(v\cdot\xi_{n_j})}
 =\frac{g_1-\eps g_2-\eps(v\cdot Y_0)\chi_0}{\lambda_0+\nu+i\eps(v\cdot\xi_0)}=:f_0 \ \ {\rm in} \ \ L^2.
$$
It follows that $\tilde{\AA}_{\eps}(\xi_0) U_0=\lambda_0 U_0$ with $U_0=(f_0,E_0,B_0)\in
L^2_{\xi_0}(\R^3)\times \mathbb{C}^3_{\xi_0}\times \mathbb{C}^3_{\xi_0}$ and $\lambda_0$ is an eigenvalue of $\tilde{\AA}_{\eps}(\xi_0)$ with ${\rm Re}\lambda_0=0$. This
 contradicts to the fact that  ${\rm Re} \lambda <0$
for $\eps \ne 0$ as stated in  Lemma~\ref{Egn}. Thus, the proof the lemma is completed.
\end{proof}

We now investigate the spectrum and resolvent sets of $\tilde{\AA}_{\eps}(\xi)$ for $\eps(1+|\xi|)$ small. Based on macro-micro decomposition,  we can split $\tilde\BB_{\eps}(\xi)$ into
\be\label{Qxi}
\left\{\bal
 \tilde\BB_{\eps}(\xi)=Q_{\eps}(\xi)+\eps \mathbb{B}_3(\xi),
\\
Q_{\eps}(\xi)=L_1-i\eps P_r(v\cdot\xi)P_r,\\
\mathbb{B}_3(\xi)=i P_d(v\cdot\xi)P_r+i (1+\frac1{|\xi|^2})P_r(v\cdot\xi)P_d.
\ea\right.
\ee
Thus, we can decompose  $\lambda-\tilde{\AA}_{\eps}(\xi)$ into
\bq
\lambda-\tilde{\AA}_{\eps}(\xi)=\lambda-G^3_{\eps}(\xi)-G^4_{\eps}(\xi),
\label{Bd3}\eq
where \bma
G^3_{\eps}(\xi)&=\left(\ba Q_{\eps}(\xi) & 0 & 0\\ 0 & 0 & i\eps^2 \xi \times \\ 0 & -i\eps^2\xi\times & 0 \ea\right),\\
G^4_{\eps}(\xi)&=\left(\ba \eps \mathbb{B}_3(\xi) & -\eps v\chi_0\cdot \omega\times & 0\\ -\eps\omega\times P_m & 0 & 0 \\ 0 & 0 & 0 \ea\right).
\ema

\begin{lem}\label{LP}
Let $\xi\neq0$ and $Q_{\eps}(\xi)$ defined by \eqref{Qxi}. We have
 \begin{enumerate}
\item[\rm (1)] If $\lambda\ne0$, then
\be
\bigg\|\lambda^{-1}\(1+\frac1{|\xi|^2}\)P_r(v\cdot\xi)P_{d}\bigg\|_{\xi}\le C(|\xi|+1)|\lambda|^{-1}.\label{S_2}
\ee

\item[\rm (2)] If $\mathrm{Re}\lambda>-\mu $, then the operator $\lambda P_{r}-Q_{\eps}(\xi)$ is invertible on $N_1^\bot$ and satisfies
\bma
\|(\lambda P_r-Q_{\eps}(\xi))^{-1}\|&\leq(\mathrm{Re}\lambda+\mu )^{-1},\label{S_3}\\
\|P_{d}(v\cdot\xi) (\lambda P_r-Q_{\eps}(\xi))^{-1}P_{r}\|_{\xi}
&\leq C(\mathrm{Re}\lambda+\mu )^{-1} (1+|\xi|)[1+ (1+\eps|\xi|)^{-1} |\lambda| ]^{-1} ,\label{S_5}\\
\|P_m(\lambda P_r-Q_{\eps}(\xi))^{-1}P_r\|&\leq C(\mathrm{Re}\lambda+\mu )^{-1} [1+(1+\eps|\xi|)^{-1}|\lambda|]^{-1}.\label{S_5a}
\ema
\end{enumerate}
\end{lem}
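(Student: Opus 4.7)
The plan is to tackle the two parts separately.

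For Part (1), the argument is a direct computation in the $\xi$-inner product. Starting from $P_d f = (f,\sqrt{M})\sqrt{M}$, I note that $P_r(v\cdot\xi)\sqrt{M} = (v\cdot\xi)\sqrt{M}$, because $(v\cdot\xi)\sqrt{M}$ is already orthogonal to $\sqrt{M}$ (odd moments of $M$ vanish). Hence $P_r(v\cdot\xi)P_d f = (f,\sqrt{M})(v\cdot\xi)\sqrt{M}$, which lies in $N_1^\bot$, so its $\xi$-norm coincides with its $L^2$-norm. A Gaussian moment computation gives $\|(v\cdot\xi)\sqrt{M}\| = |\xi|$, and from the definition of the $\xi$-norm one has $|(f,\sqrt{M})| \le \|P_df\| = (1+|\xi|^{-2})^{-1/2}\|P_df\|_\xi \le (1+|\xi|^{-2})^{-1/2}\|f\|_\xi$. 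Multiplying by $|\lambda|^{-1}(1+|\xi|^{-2})$ then collapses to $C|\lambda|^{-1}(1+|\xi|)\|f\|_\xi$, which is \eqref{S_2}.

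For Part (2), I would first establish invertibility and the resolvent bound \eqref{S_3} by a dissipativity argument. For any $f \in N_1^\bot \cap D(L_1)$, since $P_d f = 0$ the $\xi$-inner product reduces to the ordinary $L^2$ inner product, and
$$\mathrm{Re}\bigl((\lambda P_r - Q_\eps(\xi))f, f\bigr) = \mathrm{Re}\lambda\,\|f\|^2 - (L_1 f, f) + \eps\,\mathrm{Re}\bigl(i(v\cdot\xi) P_r f, P_r f\bigr) \ge (\mathrm{Re}\lambda + \mu)\|f\|^2,$$
by the spectral gap \eqref{L_4} and the purely imaginary nature of the last term. Applying the same estimate to the adjoint operator $\bar\lambda P_r - Q_\eps(-\xi)$ shows that $\lambda P_r - Q_\eps(\xi)$ is a linear isomorphism of $N_1^\bot$ whenever $\mathrm{Re}\lambda > -\mu$, and the Cauchy--Schwarz inequality applied to the coercivity estimate yields \eqref{S_3}.

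The refined bounds \eqref{S_5} and \eqref{S_5a} are the technical heart of the lemma. My plan is to set $f = (\lambda P_r - Q_\eps(\xi))^{-1} P_r g$ and rewrite the defining equation pointwise in $v$ as
$$f = (\lambda + \nu(v))^{-1}\bigl(K_1 f - i\eps P_r(v\cdot\xi)P_r f + P_r g\bigr),$$
and then apply $P_m$ (resp. $P_d(v\cdot\xi)|\xi|^{-2}$) to both sides. The resulting three terms are treated with the $D_\eps(\xi)$-resolvent estimates \eqref{L_9}--\eqref{L_9c} inherited from Lemma \ref{LP03} (which are exactly applicable here since $(\lambda + \nu(v))^{-1}$ coincides with $(\lambda - D_0(\xi))^{-1}$), combined with the crude bound $\|f\| \le (\mathrm{Re}\lambda + \mu)^{-1}\|P_r g\|$ from \eqref{S_3} to close the self-referential $P_r(v\cdot\xi)P_r f$ contribution. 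The multiplicative factor $1+|\xi|$ in \eqref{S_5} originates from the $v\cdot\xi$ weight, while the factor $[1+(1+\eps|\xi|)^{-1}|\lambda|]^{-1}$ should emerge from the sharpened large-$|\lambda|$ decay of $P_m(\lambda + \nu)^{-1}$ in the spirit of \eqref{L_9a}.

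The main obstacle is the careful bookkeeping required for \eqref{S_5}--\eqref{S_5a}: the operator $Q_\eps(\xi)$ differs from $D_\eps(\xi)$ by the non-local piece $i\eps P_r(v\cdot\xi)P_r$, and the self-referential term it produces must be absorbed into the left-hand side at the right scale, rather than controlled by \eqref{S_3} alone, otherwise the denominator factor $1+(1+\eps|\xi|)^{-1}|\lambda|$ is lost. Maintaining simultaneously the correct $\eps|\xi|$ dependence and the correct $|\lambda|$ decay throughout this absorption is the most delicate point.
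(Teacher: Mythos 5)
Your overall strategy matches the paper's, but with two notable deviations worth flagging: one useful, one potentially lossy.

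On the useful side, you supply direct proofs of \eqref{S_2} (Gaussian moment computation) and \eqref{S_3} (dissipativity plus the adjoint check, noting $Q_\eps(\xi)^* = Q_\eps(-\xi)$), whereas the paper simply cites Lemma~3.5 of \cite{Li3} for both. Your dissipativity argument is correct and self-contained.

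The substantive difference is in the mechanism used to extract the $|\lambda|^{-1}$ decay needed to produce the factor $[1+(1+\eps|\xi|)^{-1}|\lambda|]^{-1}$ in \eqref{S_5}--\eqref{S_5a}. The paper writes the scalar resolvent identity
$$(\lambda P_r-Q_{\eps}(\xi))^{-1}P_r=\tfrac1\lambda P_r+\tfrac1\lambda Q_{\eps}(\xi)(\lambda P_r-Q_{\eps}(\xi))^{-1}P_r,$$
and then bounds $\|P_d(v\cdot\xi)Q_\eps(\xi)f\|_\xi\le C(1+|\xi|)(1+\eps|\xi|)\|P_rf\|$ (and $\|P_mQ_\eps(\xi)f\|\le C(1+\eps|\xi|)\|P_rf\|$), which is $\lambda$-independent because $Q_\eps$ keeps $L_1=K_1-\nu$ together and $\nu(v\cdot\xi)\chi_0$, $\nu v\chi_0$ are still in $L^2$. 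Combining with \eqref{S_3} yields a single factor of $(\mathrm{Re}\lambda+\mu)^{-1}+1$ in Case~2, and then one simply takes the minimum of the two cases. You instead peel off the multiplication operator $(\lambda+\nu)^{-1}$ and put $K_1$ and the transport piece on the right side. This works, but the moment bounds on $(\lambda+\nu)^{-1}$ (your \eqref{L_9}--\eqref{L_9c} at $\eps=0$) carry a $(\mathrm{Re}\lambda+\nu_0)^{-1}$ factor that the paper's route avoids; unless $\nu_0>\mu$ strictly (so that $\mathrm{Re}\lambda+\nu_0$ is bounded below as $\mathrm{Re}\lambda\to-\mu$), this degrades $(\mathrm{Re}\lambda+\mu)^{-1}+1$ to $(\mathrm{Re}\lambda+\mu)^{-2}$ in Case~2 and the two-case combination then fails to reproduce the stated estimate near $\mathrm{Re}\lambda=-\mu$. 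Since in practice $\nu_0>\mu$ for hard spheres the conclusion survives, but the cleaner (and strictly more robust) move is the paper's: do not separate $-\nu$ from $K_1$; expand around the scalar $\lambda$.

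Finally, your stated ``main obstacle'' about needing to ``absorb the self-referential term into the left-hand side'' is misplaced: no absorption is required. The self-referential term is controlled by the crude bound \eqref{S_3} alone, exactly as you first proposed; the factor $[1+(1+\eps|\xi|)^{-1}|\lambda|]^{-1}$ comes not from any absorption but from taking the minimum of the crude Case~1 bound and the $\lambda^{-1}$-weighted Case~2 bound, which is precisely the paper's last step.
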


\begin{proof}
The estimates \eqref{S_2} and \eqref{S_3} are proved in Lemma 3.5 of \cite{Li3}.
By \eqref{S_3} and the fact that $\|P_{d}(v\cdot\xi) P_{r}f\|_{\xi}\le C(|\xi|+1)\|P_{r}f\|$, we have
 \be
 \| P_{d}(v\cdot\xi) (\lambda  P_{r}-Q_{\eps}(\xi))^{-1} P_{r}f\|_{\xi} \leq C(|\xi|+1)(\mathrm{Re}\lambda+\mu )^{-1}\|P_rf\|.   \label{2.33a}
 \ee
We now  decompose the operator $ P_{d}(v\cdot\xi) (\lambda
 P_{r}-Q_{\eps}(\xi))^{-1} P_{r}$ as
 $$
  P_{d}(v\cdot\xi) (\lambda  P_{r}-Q_{\eps}(\xi))^{-1} P_{r}=\frac1\lambda  P_{d}(v\cdot\xi) P_{r}+\frac1\lambda  P_{d}(v\cdot\xi) Q_{\eps}(\xi)(\lambda
 P_{r}-Q_{\eps}(\xi))^{-1} P_{r}.
 $$
This together with \eqref{S_3} and the fact that
 $ \| P_{d}(v\cdot\xi)  Q_{\eps}(\xi)f\|_{\xi}\leq C(|\xi|+1)(1+\eps|\xi|) \|P_{r}f\| $
give
 \be
 \| P_{d}(v\cdot\xi) (\lambda  P_{r}-Q_{\eps}(\xi))^{-1} P_{r}f\|_{\xi}
 \leq
C (|\xi|+1)(1+\eps|\xi|)|\lambda|^{-1}[(\mathrm{Re}\lambda+\mu )^{-1}+1] \|f\|. \label{2.33}
 \ee
The combination of the two cases \eqref{2.33a} and \eqref{2.33} yields \eqref{S_5}.
\eqref{S_5a} can be proved similiarly. This completes the proof of the lemma.
\end{proof}

\begin{lem}\label{spectrum2}For fixed $\eps\in (0,1)$, the following  holds.
 \begin{enumerate}
\item[\rm (1)]  For any  $\delta>0$, there are two constants
 $r_1=r_1(\delta),\,y_1=y_1(\delta)>0$ such that for all $|\xi|\ne 0$,
\bq \label{rb1}
 \rho(\tilde{\AA}_{\eps}(\xi))\supset
 \left\{\bln
 &\{\lambda\in\mathbb{C}\,|\,
     \mathrm{Re}\lambda\ge-\frac{\nu_0}{2},\, |\lambda\pm\eps^2 i|\xi||\ge \eps^2\delta\}
 \cup \C_+, \quad  \eps|\xi|\ge r_1; \\
 &\{\lambda\in\mathbb{C}\,|\,
     \mathrm{Re}\lambda\ge-\frac{\mu}{2},\,|\mathrm{Im}\lambda|\geq y_1\}
 \cup\C_+, \qquad\quad~~ \, \eps|\xi|\le r_1,
 \eln\right.
\eq where $\C_+=\{\lambda\in\mathbb{C}\,|\,\mathrm{Re}\lambda>0\}$.

\item[\rm (2)]  For any $\delta>0$, there exists $r_0=r_0(\delta)>0$ such that for $\eps(1+|\xi|)\leq r_0$,
 \bq
 \sigma(\tilde{\AA}_{\eps}(\xi))\cap\{\lambda\in\mathbb{C}\,|\,\mathrm{Re}\lambda\ge-\frac{\mu}{2}\}
 \subset
 \{\lambda\in\mathbb{C}\,|\,|\lambda|\le\delta\}.   \label{sg4a}
 \eq
\end{enumerate}
\end{lem}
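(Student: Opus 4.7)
The plan is to deduce both statements from the operator decomposition \eqref{Bd3}, namely
\[ \lambda-\tilde{\AA}_{\eps}(\xi) = \bigl(I - G^4_{\eps}(\xi)(\lambda-G^3_{\eps}(\xi))^{-1}\bigr)\bigl(\lambda-G^3_{\eps}(\xi)\bigr), \]
together with the resolvent bounds of Lemma~\ref{LP} on the microscopic block $Q_{\eps}(\xi)$, the Maxwell resolvent estimate \eqref{b_1(xi)}, and the Neumann-type invertibility criterion of Lemma~\ref{inver}. The first branch of \eqref{rb1}, valid for $\eps|\xi|\ge r_1$, is already provided by \eqref{rb2} in Lemma~\ref{LP01}, and the inclusion $\C_+\subset\rho(\tilde{\AA}_{\eps}(\xi))$ for every $\xi\ne 0$ is supplied by Lemma~\ref{SG_1}; only the second branch of \eqref{rb1} and statement~(2) remain to be verified.

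For these remaining cases I first invert $\lambda-G^3_{\eps}(\xi)$. Since $G^3_{\eps}(\xi)$ is block diagonal with blocks $Q_{\eps}(\xi)$ and $\eps^{2}\mathbb{B}_2(\xi)$, I split
\[ (\lambda-Q_{\eps}(\xi))^{-1} = \lambda^{-1}P_{d} + (\lambda P_{r}-Q_{\eps}(\xi))^{-1}P_{r}, \]
valid for $\lambda\ne 0$ and $\mathrm{Re}\lambda>-\mu$ by \eqref{S_3}, and invert the Maxwell block by \eqref{b_1(xi)} whenever $\lambda\ne\pm i\eps^{2}|\xi|$. Composing with $G^4_{\eps}(\xi)$, whose only nontrivial entries are $\eps\mathbb{B}_3(\xi)$, $-\eps v\chi_0\cdot\omega\times$ and $-\eps\omega\times P_m$, the identities $P_{d}P_{r}=P_{r}P_{d}=0$ and $P_m P_{d}=0$ (the latter because $(v\chi_0,\sqrt M)=0$) reduce the estimate of $G^4_{\eps}(\xi)(\lambda-G^3_{\eps}(\xi))^{-1}$ to four ingredients: \eqref{S_2} for $i\eps(1+|\xi|^{-2})\lambda^{-1}P_{r}(v\cdot\xi)P_{d}$, \eqref{S_5} for $i\eps P_{d}(v\cdot\xi)P_{r}(\lambda P_{r}-Q_{\eps}(\xi))^{-1}P_{r}$, \eqref{S_5a} for $-\eps\omega\times P_m(\lambda P_{r}-Q_{\eps}(\xi))^{-1}P_{r}$, and \eqref{b_1(xi)} for $-\eps v\chi_0\cdot\omega\times(\lambda-\eps^{2}\mathbb{B}_2(\xi))^{-1}$. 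For the second branch of Part~(1) I would then take $|\mathrm{Im}\lambda|\ge y_1$ with $y_1$ large enough so that the factors $|\lambda|^{-1}$ and $|\lambda\mp i\eps^{2}|\xi||^{-1}$ are as small as needed uniformly in $\eps|\xi|\le r_1$; for Part~(2) I instead hold $|\lambda|\ge\delta$ fixed and exploit the common smallness of $\eps(1+|\xi|)\le r_0$ sitting in front of every block. Once the four block norms satisfy the hypotheses of Lemma~\ref{inver}, the operator $I-G^4_{\eps}(\xi)(\lambda-G^3_{\eps}(\xi))^{-1}$ is invertible on $L^{2}_{\xi}(\R^{3}_v)\times\C^{3}_{\xi}\times\C^{3}_{\xi}$, which through \eqref{Bd3} places $\lambda$ into $\rho(\tilde{\AA}_{\eps}(\xi))$ and yields the inclusions \eqref{rb1} and \eqref{sg4a}.

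The main obstacle will be the simultaneous calibration of the four block norms in the weighted space $L^{2}_{\xi}(\R^{3}_v)\times\C^{3}_{\xi}\times\C^{3}_{\xi}$: the $|\xi|^{-2}$ weight in $\|\cdot\|_{\xi}$ must be reconciled with the $(1+|\xi|^{-2})$ prefactor built into $\mathbb{B}_3(\xi)$ via \eqref{eee}, the $(1+|\xi|)$ growth appearing in \eqref{S_5} has to be absorbed either by the smallness of $\eps(1+|\xi|)$ (in Part~(2)) or by the largeness of $|\mathrm{Im}\lambda|$ (in Part~(1)), and the product condition in Lemma~\ref{inver} forces one to dominate all four off-diagonal and diagonal blocks simultaneously rather than one at a time.
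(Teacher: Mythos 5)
Your proposal is correct and matches the paper's own argument essentially step for step: the first branch of \eqref{rb1} from \eqref{rb2}, the $\C_+$ inclusion from Lemma~\ref{SG_1}, the block-diagonal inversion of $\lambda-G^3_{\eps}(\xi)$ via $(\lambda-Q_{\eps}(\xi))^{-1}=\lambda^{-1}P_d+(\lambda P_r-Q_{\eps}(\xi))^{-1}P_r$ and \eqref{b_1(xi)}, and then bounding the three nontrivial blocks $X^4_{\eps},X^2_{\eps},X^5_{\eps}$ of $G^4_{\eps}(\xi)(\lambda-G^3_{\eps}(\xi))^{-1}$ by \eqref{S_2}, \eqref{S_5}, \eqref{S_5a}, \eqref{b_1(xi)}, driving them small via $|\mathrm{Im}\lambda|\ge y_1$ in Part~(1) and via $\eps(1+|\xi|)\le r_0$ with $|\lambda|\ge\delta$ in Part~(2), and finally invoking Lemma~\ref{inver} to invert $I-G^4_{\eps}(\xi)(\lambda-G^3_{\eps}(\xi))^{-1}$. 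Your explicit identification of which estimate controls which block, and your note that $P_dP_r=P_mP_d=0$, are accurate clarifications of what the paper leaves implicit, so there is nothing to correct.
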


\begin{proof}
By \eqref{rb2}, there exists $r_1=r_1(\delta)>0$ such that the first part of \eqref{rb1} holds. Thus, we only need to prove the second part
of \eqref{rb1}.
By Lemma \ref{LP}, we have for $\rm{Re}\lambda>-\mu $ and
$\lambda\neq0$  that the operator
$\lambda-Q_{\eps}(\xi)=\lambda P_{ d}+\lambda P_r-Q_{\eps}(\xi)$ is invertible on
$L^2_{\xi}(\R^3_v)$ and it satisfies
$$
  (\lambda -Q_{\eps}(\xi))^{-1} =\lambda^{-1}P_{ d}+(\lambda P_r-Q_{\eps}(\xi))^{-1}P_r.
$$
Here, we have used the fact that the operator $\lambda P_{ d}$ is orthogonal to $\lambda
P_r-Q_{\eps}(\xi)$. Thus, for ${\rm Re}\lambda \ge -\mu/2$ and $\lambda\ne0,\pm\eps^2i|\xi|$, the operator $\lambda-G^3_{\eps}(\xi)$ is invertible on
$L^2_{\xi}(\R^3_v)\times \mathbb{C}^3_{\xi}\times \mathbb{C}^3_{\xi}$ and satisfies
$$
(\lambda-G^3_{\eps}(\xi))^{-1}= \left(\ba \lambda^{-1} P_{ d}+(\lambda P_r-Q_{\eps}(\xi))^{-1}P_r & 0 \\  0 & (\lambda -\eps^2\mathbb{B}_2(\xi))^{-1} \ea\right)_{7\times7}.
$$
Therefore, we can rewrite \eqref{Bd3} as
$$
\lambda-\tilde{\AA}_{\eps}(\xi)=\(I-G^4_{\eps}(\xi)(\lambda-G^3_{\eps}(\xi))^{-1}\)\(\lambda-G^3_{\eps}(\xi)\),
$$
where
\be \label{Y_1}
\left\{\bln
&G^4_{\eps}(\xi)(\lambda-G^3_{\eps}(\xi))^{-1}= \left(\ba X^4_{\eps}(\lambda,\xi) & X^2_{\eps}(\lambda,\xi) \\  X^5_{\eps}(\lambda,\xi) & 0 \ea\right)_{7\times7},
\\
&X^4_{\eps}(\lambda,\xi)=i\eps P_{ d}(v\cdot\xi)(\lambda P_r-Q_{\eps}(\xi))^{-1}P_r+i\eps \lambda^{-1}\(1+\frac1{|\xi|^2}\)P_r(v\cdot\xi)P_{ d},
\\
 &X^5_{\eps}(\lambda,\xi)=\left(\ba -\eps\omega\times P_m(\lambda P_r-Q_{\eps}(\xi))^{-1}P_r \\ 0_{3\times1} \ea\right)_{6\times1}.
 \eln\right.
\ee
For  $\eps |\xi| \leq r_1$, by \eqref{X_2}, \eqref{Y_1} and \eqref{S_2}--\eqref{S_5a} we can choose $y_1=y_1(r_1)>0$ such that it holds
for $\mathrm{Re}\lambda\ge-\mu/2$ and $|\mathrm{Im}\lambda|\geq y_1$ that
\be
\|X^4_{\eps}(\lambda,\xi)\|_{\xi}+\|X^2_{\eps}(\lambda,\xi)\| +\|X^5_{\eps}(\lambda,\xi)\| \leq 1/2. \label{bound_1}
\ee
This implies that the operator $I-G^4_{\eps}(\xi)(\lambda-G^3_{\eps}(\xi))^{-1}$ is invertible on
$L^{2}_{\xi}(\R^3_v)\times \mathbb{C}^3_{\xi}\times \mathbb{C}^3_{\xi}$ and thus  $\lambda-\tilde{\AA}_{\eps}(\xi)$ is invertible on $L^{2}_{\xi}(\R^3_v)\times \mathbb{C}^3_{\xi}\times \mathbb{C}^3_{\xi}$ and it satisfies
$$
 (\lambda-\tilde{\AA}_{\eps}(\xi))^{-1}
 =\(\lambda-G^3_{\eps}(\xi)\)^{-1}\(I-G^4_{\eps}(\xi)(\lambda-G^3_{\eps}(\xi))^{-1}\)^{-1}.
$$
Therefore, $\rho(\tilde{\AA}_{\eps}(\xi))\supset \{\lambda\in\mathbb{C}\,|\,{\rm
Re}\lambda\ge-\mu/2, |{\rm Im}\lambda|\ge y_1\}$ for $\eps|\xi|\le
r_1$. This and Lemma \ref{SG_1} prove \eqref{rb1}.

Assume that $ |\lambda|>\delta$ and $\mathrm{Re}\lambda\ge-\mu/2$. Then, by \eqref{S_2}--\eqref{S_5a} we can choose $r_0=r_0(\delta)>0$ small enough so that \eqref{bound_1} still holds for $\eps(1+|\xi|)\leq r_0$.
This
implies that the operator
$\lambda-\tilde{\mathbb{A}}_{\eps}(\xi)$ is invertible on $L^{2}_{\xi}(\R^3)\times \mathbb{C}^3_{\xi}\times \mathbb{C}^3_{\xi}$.
Therefore, we have
 $\rho(\tilde{\AA}_{\eps}(\xi))\supset\{\lambda\in\mathbb{C}\,|\, |\lambda|>\delta,\mathrm{Re}\lambda\ge-\mu/2\}$
for $\eps(1+|\xi|)\leq r_0$, which gives \eqref{sg4a}. And this completes the proof of the lemma.
\end{proof}

\subsubsection{Eigenvalues in $\eps (1+|\xi|)\le r_0$}

Now we prove the existence and establish  the asymptotic expansions of the eigenvalues of $\tilde{\AA}_{\eps}(\xi)$ for $\eps(1+|\xi|)$ being small.
 In terms of \eqref{Axi}, the eigenvalue problem $\tilde{\AA}_{\eps}(\xi)U=\lambda  U$ for $U=(f,X,Y)\in L^{2}_{\xi}(\R^3_v)\times \mathbb{C}^3_{\xi}\times \mathbb{C}^3_{\xi}$
can be written as
 \bma
  \lambda f  &=\(L_1-i\eps v\cdot\xi  -i\eps\frac{ v\cdot\xi}{|\xi|^2}P_{d}\)f-\eps v\chi_0\cdot(\omega\times X),\label{L_2}\\
  \lambda X&=-\eps\omega\times (f,v\chi_0)+i\eps^2\xi\times Y,\label{L_2a}\\
  \lambda Y&=-i\eps^2\xi\times X,\quad |\xi|\ne0. \nnm
 \ema

We rewrite $f$ in the
form of $f=f_0+f_1$, where $f_0=P_{ d}f=C_0\sqrt M$ and $f_1=(I-P_{ d})f=P_rf$.
Then  \eqref{L_2} gives
 \bma
 &\lambda f_0=- i\eps P_{d}(v\cdot\xi)(f_0+f_1),\label{A_2}
\\
&\lambda f_1=L_1f_1- i\eps P_r(v\cdot\xi)(f_0+f_1)-i\eps\frac{v\cdot\xi}{|\xi|^2}f_0-\eps v\chi_0\cdot(\omega\times X).\label{A_3}
 \ema
By Lemma \ref{LP} and \eqref{A_3}, the microscopic part $f_1$ can be represented  by
 \bq
 f_1=i\eps R(\lambda,\eps \xi)(v\cdot\xi)\(1+\frac{1}{|\xi|^2}\)f_0+\eps R(\lambda,\eps \xi)v\chi_0\cdot(\omega\times X),  \quad   \text{Re}\lambda>-\mu, \label{A_4}
 \eq
 where
$$
R(\lambda,\xi)=(L_1-\lambda -i P_r(v\cdot\xi))^{-1}.
$$

Substituting \eqref{A_4} into \eqref{A_2} and \eqref{L_2a}, we obtain the eigenvalue problem  for  $(\lambda,C_0,X,Y)$ as
\bma
 \lambda C_0=&\eps^2(1+|\xi|^{-2})(R(\lambda,\eps\xi)(v\cdot\xi)\chi_0,(v\cdot\xi)\chi_0)C_0\nnm\\
 &-i\eps^2(R(\lambda,\eps\xi)v\chi_0\cdot(\omega\times X),(v\cdot\xi)\chi_0),  \label{A_6}
 \\
  \lambda X=&-i\eps^2\omega\times (1+ |\xi|^{-2} )(R(\lambda,\eps\xi)(v\cdot\xi)\chi_0,v\chi_0)C_0\nnm\\
  &-\eps^2\omega\times (R(\lambda,\eps\xi)v\chi_0\cdot(\omega\times X),v\chi_0)+i\eps^2\xi\times Y, \label{A_7}
  \\
  \lambda Y=&-i\eps^2\xi\times X. \label{A_8}
 \ema

Let $\mathbb{O}$ be a rotation in $\R^3$ satisfying $\mathbb{O}^T \xi=(|\xi|,0,0).$ By changing variable $v\to \mathbb{O} v$ and using the rotational invariance of the operator $L_1$, we have the following transformation:
\be
(R(\lambda,\xi)\chi_i,\chi_j)= \omega_i\omega_j(R(\lambda, se_1)\chi_1,\chi_1)+(\delta_{ij}-\omega_i\omega_j)(R(\lambda, se_1)\chi_2,\chi_2),\label{B_1a}
\ee where  $e_1=(1,0,0)$, $\xi=s\omega$ with $s=|\xi|,\, \omega\in \S^2$.

Substituting  \eqref{B_1a} into  \eqref{A_6} and \eqref{A_7}, we obtain
 \bma
 \lambda C_0=&\eps^2(1+s^2)(R(\lambda,\eps se_1)\chi_1,\chi_1)C_0,\label{A_9}
 \\
 \lambda X=&\eps^2(R(\lambda,\eps se_1)\chi_2,\chi_2)X+i\eps^2\xi\times Y.   \label{A_10}
  \ema
Multiplying \eqref{A_10} by $\lambda$ and using \eqref{A_8} and \eqref{rotat}, we obtain
$$ (\lambda^2-\eps^2(R(\lambda,\eps se_1)\chi_2,\chi_2)\lambda+\eps^4s^2)X=0.$$

Denote
 \bma
 D_0(z,s,\eps)&=:z- (1+s^2)(R(\eps^2z,\eps s)\chi_1,\chi_1),\label{D0}\\
 D_1(z,s,\eps)&=:z^2- (R(\eps^2z,\eps s)\chi_2,\chi_2)z+ s^2.   \label{D1}
 \ema

The eigenvalues $\lambda=\eps^2z$ can be obtained by solving $D_0(z,s,\eps)=0$ and $D_1(z,s,\eps)=0$.
The following two lemmas are about the solutions to the equations
$D_0(z,s,\eps)=0$ and $D_1(z,s,\eps)=0$ respectively. 

\begin{lem}\label{eigen} There are  two constants $r_0,r_1>0$ such that the equation $D_0(z,s,\eps)=0$ has a unique solution $z=z_0(s,\eps)$: $I\to J_0$ for  $I=\{(s,\eps)\in \R^2\,|\,\eps(1+|s|)\le r_0\}$ and $J_0=\{z\in\C\,|\,|z+\eta(1+s^2)|\le r_1(1+s^2)\} $, which is a $C^{\infty}$ function of $s$, $\eps$ and satisfies
\be
z_0(s,0)=-\eta(1+s^2) ,\quad \pt_\eps z_0(s,0)=0, \label{z1}
\ee
where $\eta>0$ is a constant given by \eqref{coe}.
In particular, $z_0(s,\eps)$ satisfies the following expansion for $ \eps(1+|s|) \le  r_0 $:
\be
z_0(s,\eps)=-\eta(1+s^2)+O(\eps^2(1+s^2)^2). \label{z2}
\ee
\end{lem}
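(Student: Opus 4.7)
The plan is to apply the implicit function theorem to $D_0=0$ after a rescaling that absorbs the $s$-dependence, and then to exploit parity to obtain the sharp $\eps^2$ error. First, since $R(0,0)=L_1^{-1}$ on $N_1^\bot$, the definition \eqref{coe} of $\eta$ gives
\begin{equation*}
(R(0,0)\chi_1,\chi_1)=(L_1^{-1}(v_1\chi_0),v_1\chi_0)=-\eta,
\end{equation*}
so $D_0(z,s,0)=z+\eta(1+s^2)$, whose unique root is $z_0(s,0)=-\eta(1+s^2)$. Introduce the rescaled unknown $\tilde z=z/(1+s^2)$ and set
\begin{equation*}
F(\tilde z,s,\eps)=\tilde z-\bigl(R(\eps^2(1+s^2)\tilde z,\eps s\, e_1)\chi_1,\chi_1\bigr).
\end{equation*}
Throughout the regime $\eps(1+|s|)\le r_0$, both $\eps^2(1+s^2)\tilde z$ and $\eps s$ are bounded by $O(r_0^2)$ and $O(r_0)$ respectively whenever $\tilde z$ stays near $-\eta$, so $R$ is well-defined and smooth in its two arguments by Lemma~\ref{LP} and a Neumann expansion about $L_1^{-1}$.

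Since $F(-\eta,s,0)=0$ and $\partial_{\tilde z}F(-\eta,s,0)=1\neq 0$, the implicit function theorem yields a unique $C^\infty$ solution $\tilde z_0(s,\eps)$ near $-\eta$, parametrized through the bounded combinations $\eps^2(1+s^2)$ and $\eps s$; setting $z_0(s,\eps)=(1+s^2)\tilde z_0(s,\eps)$ provides the desired root in $J_0$ upon choosing $r_1$ appropriately, with uniqueness inherited from the strict monotonicity $\partial_z D_0=1+O(\eps^2)>0$ on $J_0$. To establish $\partial_\eps z_0(s,0)=0$, implicit differentiation combined with the identity $\partial_{\xi_j}R(\lambda,\xi)=R(\lambda,\xi)(iP_r v_j)R(\lambda,\xi)$ yields
\begin{equation*}
\partial_\eps D_0(z_0(s,0),s,0)=-is(1+s^2)\bigl(L_1^{-1}P_r v_1 L_1^{-1}\chi_1,\chi_1\bigr).
\end{equation*}
Because the collision kernel $k_1(v,v_*)$ is invariant under the simultaneous reflection $v_1\mapsto -v_1,\,v_{*1}\mapsto -v_{*1}$, the operator $L_1$ preserves parity in $v_1$, so $L_1^{-1}\chi_1$ is odd in $v_1$; hence $v_1 L_1^{-1}\chi_1$ and $L_1^{-1}P_r(v_1 L_1^{-1}\chi_1)$ are both even in $v_1$, and their inner product with the odd function $\chi_1=v_1\sqrt M$ vanishes. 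Consequently $\partial_\eps z_0(s,0)=0$.

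The expansion \eqref{z2} then follows by Taylor-expanding $F(\tilde z_0,s,\eps)=0$ in $\eps$: the first-order term vanishes by the parity computation above, so $\tilde z_0+\eta=O(\eps^2(1+s^2)+(\eps s)^2)=O(\eps^2(1+s^2))$, and multiplying by $(1+s^2)$ gives the claimed $O(\eps^2(1+s^2)^2)$ remainder. I expect the main technical obstacle to be the uniform validity of the implicit function theorem in the whole region $\eps(1+|s|)\le r_0$, in which $s$ may be as large as $r_0/\eps$; this is exactly why the rescaling $\tilde z=z/(1+s^2)$ is indispensable, since it converts the implicit equation into one whose parameters $\eps^2(1+s^2)$ and $\eps s$ both lie in a genuinely small neighborhood of the origin. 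The parity-based vanishing of $\partial_\eps z_0(s,0)$ is the other essential ingredient, upgrading the naive $O(\eps(1+s^2))$ remainder to the sharp $O(\eps^2(1+s^2)^2)$ one.
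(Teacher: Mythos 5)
Your proof is correct and follows essentially the same strategy as the paper's: both are fixed-point arguments around the $\eps=0$ solution $b_0(s)=-\eta(1+s^2)$, with the $\eps^2$-order error coming from the vanishing of the first-order derivative in $\eps$. The paper sets up the fixed-point map $D(z,s,\eps)=(1+s^2)R_{11}(\eps^2 z,\eps s)$ and runs the contraction mapping estimates directly, tracking the $(1+s^2)$ factors explicitly, whereas you rescale $\tilde z=z/(1+s^2)$ and invoke the implicit function theorem on $F(\tilde z,s,\eps)$. These are equivalent; your rescaling is a tidy way of making the uniformity over the full region $\eps(1+|s|)\le r_0$ transparent, which the paper instead achieves by carrying the factors through the contraction estimate. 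Note, however, that your application of the implicit function theorem still needs to be read as uniform in the two small parameters $(\eps^2(1+s^2),\eps s)$ rather than as a pointwise IFT; the paper's contraction-mapping phrasing sidesteps this subtlety.

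One genuine improvement in your write-up: the paper asserts $\partial_2 R_{11}(0,0)=i(v_1L_1^{-1}\chi_1,L_1^{-1}\chi_1)=0$ (equation \eqref{a1b}) without justification. You supply a clean parity argument — since $L_1$ commutes with the reflection $v_1\mapsto -v_1$, the function $L_1^{-1}\chi_1$ is odd in $v_1$, so $v_1L_1^{-1}\chi_1$ is even, and its inner product against the odd $L_1^{-1}\chi_1$ (equivalently, $L_1^{-1}P_r(v_1L_1^{-1}\chi_1)$ against the odd $\chi_1$) vanishes. This is a worthwhile elaboration of a step the paper takes for granted.
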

\begin{proof}
By \eqref{D0}, the equation
\be D_0(z,s,0)=z+\eta(1+s^2)=0  \label{a}\ee
has a unique solution $b_0(s)=-\eta(1+s^2)$.

For any fixed $s$ and $\eps$, we define
$$D(z,s,\eps)=(1+s^2)R_{11}(\eps^2z,\eps s),$$
where $R_{11}(x,y)= ((L_1-x-iyP_rv_1)^{-1}\chi_1,\chi_1)$.
 It is straightforward to check that for any fixed $s$ and $\eps$, a solution of $D_0(z,s,\eps)=0$ is a fixed point of $D(z,s,\eps)$.

Since $R_{11}(x,y)$ is smooth for $(x,y)\in \mathbb{C}\times\R $ and satisfies
\be \label{a1b}
\left\{\bln
\partial_1 R_{11}(0,0)&=(L^{-2}_1\chi_1,\chi_1)>0 ,\\
 \partial_2 R_{11}(0,0)&=i(v_1L^{-1}_1\chi_1,L^{-1}\chi_1)=0 ,
 \eln\right.
\ee
it follows that
\bmas
|D(z,s,\eps)-b_0(s)|&= (1+s^2) \Big|R_{11}(\eps^2z,\eps s )-R_{11}(0,0 )\Big|\nnm\\
&\le C(1+s^2)(|\eps^2z|+\eps^2 s^2)\le r_1(1+s^2),
\\
|D(z_1,s,\eps)-D(z_2,s,\eps)|&\le C\eps^2(1+s^2)|z_1-z_2|\le \frac12|z_1-z_2|,
\emas
for $|z-b_0(s)|\le r_1(1+s^2)$ and $\eps(1+|s|)\le r_0$ with $r_0,r_1>0$ being sufficiently small.

Hence, by the contraction mapping theorem, there exists a unique fixed point $z_0(s,\eps):  I\to J_0$ such that $D(z_0(s,\eps),s, \eps)=z_0(s,\eps)$ for $(s,\eps)\in I$ and $z_0(s,0)=b_0(s)$. This is equivalent to  $D_0(z(s,\eps),s,\eps)=0$. Since $D_0(z,s,\eps)$ is $C^{\infty}$ with respect to $z\in J_0$ and $(s,\eps)\in I$, it follows that $z_0(s,\eps)$ is a $C^{\infty}$ function with respect to $(s,\eps)\in I$.
In particular, we obtain
$$
\pt_z D_0(z,s,0)=1,\quad \pt_\eps D_0(z,s,0)=-(1+s^2)s\partial_2 R_{11}(0,0)=0,
$$
which gives
\be
 \pt_\eps z_0(s,0)=- \frac{{\partial_\eps}D_0(b_0(s),s,0)}{{\partial_z}D_0(b_0(s),s,0)} =0. \label{b}
\ee
Combining \eqref{a} and \eqref{b} yields \eqref{z1}.

For \eqref{z2}, by \eqref{a1b}, we can obtain that for $|z-b_0(s)|\le r_1(1+s^2)$ and $\eps(1+|s|)\le r_0$,
$$
R_{11}(\eps^2z,\eps s)=-\eta+ O(1)(\eps^2|z|+\eps^2 s^2).
$$ Thus
\bmas
z_0(s,\eps)&=(1+s^2)R_{11}(\eps^2z_0(s,\eps),\eps s)\\
&=-\eta(1+s^2) +O(1)[\eps^2(|z_0(s,\eps)|+ s^2)(1+s^2)]\\
&=-\eta(1+s^2)+O(\eps^2(1+s^2)^2), \quad \eps(1+|s|)\le r_0.
\emas
Hence, the proof of  the lemma is completed.
\end{proof}

\begin{lem}\label{eigen_2a}
There are small constants $r_0,r_1>0$ such that the equation $D_1(z,s,\eps)=0$  has two continuous solutions $z_j=z_j(s,\eps): I\to J_1$, $j=\pm1$ for $I=\{(s,\eps)\in \R^2\,|\, \eps(1+|s|)\le r_0\}$ and $ J_1=\{z\in \C\,|\,  |z-b_j(s)|\le r_1|b_j(s)| \}$. In particular,   $z_j(s,\eps)$ are $C^{\infty}$ functions in $s, \eps$ for $ \eps(1+|s|) \le  r_0$ and $|\eta^2-4s^2|\ge r_0$, which satisfy
\be
z_j(s,0)=b_j(s),\quad \pt_\eps z_j(s,0)=0, \quad j=\pm 1,\label{z1a}
\ee
where
$$b_j(s)=-\frac{\eta}2+\frac{j\sqrt{\eta^2-4s^2}}{2}.$$
Moreover, $z_j(s,\eps)$ satisfies the following expansion for  $ \eps(1+|s|) \le  r_0 $:
\be\label{z2a}
z_j(s,\eps)=b_j(s)+\left\{\bal
O( \eps^2|b_j(s)|), & |\eta^2-4s^2|\ge r_0,\\
O(\eps),& |\eta^2-4s^2|< r_0.
\ea\right.
\ee
In addition, there exists a continuous real function $\vartheta_0(\eps): (-r_0,r_0)\to B(\eta/2,r_1)$ such that $z_1(s,\eps)=z_{-1}(s,\eps)$ if and only if $(s,\eps)=(\vartheta_0(\eps),\eps)$ and $\vartheta_0(0)=\eta/2$.
\end{lem}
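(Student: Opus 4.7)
The plan is to treat $D_1(z,s,\eps) = z^2 - f(z,s,\eps)z + s^2$, with $f(z,s,\eps) := (R(\eps^2 z,\eps s e_1)\chi_2,\chi_2)$, as a perturbation of its $\eps=0$ form $z^2+\eta z+s^2$. Since $R(0,0)=L_1^{-1}|_{N_1^\perp}$ and, by isotropy together with \eqref{coe}, $(L_1^{-1}\chi_2,\chi_2)=(L_1^{-1}v_1\chi_0,v_1\chi_0)=-\eta$, the unperturbed quadratic has precisely $b_{\pm1}(s)$ as its roots. The key analytic input I would develop is the resolvent identity $R(\eps^2 z,\eps s e_1)-L_1^{-1}=L_1^{-1}(\eps^2 z+i\eps s P_r v_1)R(\eps^2 z,\eps s e_1)$, which combined with the odd-in-$v_1$ cancellation $(v_1 L_1^{-1}\chi_2,L_1^{-1}\chi_2)=0$ (valid because by isotropy $L_1^{-1}\chi_2$ is of Burnett form $\psi(|v|^2)v_2\sqrt M$, hence even in $v_1$) yields $f(z,s,\eps)+\eta = O(\eps^2(|z|+s^2))$.

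To produce the two roots in $J_1$, I would either apply Rouch\'e's theorem on $\partial J_1$, comparing $|D_1(z,s,\eps)-D_1(z,s,0)|=|(f+\eta)z|$ against $|D_1(z,s,0)|=|z-b_j||z-b_{-j}|$, or run a contraction argument on $T_j(z):=\frac{1}{2}\bigl(f(z,s,\eps)+j\sqrt{f(z,s,\eps)^2-4s^2}\bigr)$ with branches of the square root chosen so that $T_j|_{\eps=0}=b_j$. Either way, the smallness of $f+\eta$ for $\eps(1+|s|)\le r_0$ small delivers a unique continuous root $z_j(s,\eps)\in J_1$.

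On the separated region $|\eta^2-4s^2|\ge r_0$, the classical implicit function theorem applies directly: $\partial_z D_1(b_j,s,0)=2b_j+\eta=j\sqrt{\eta^2-4s^2}$ is bounded away from zero, so $z_j$ is $C^\infty$ there, and $\partial_\eps z_j(s,0)=-\partial_\eps D_1/\partial_z D_1 = z\,\partial_\eps f/(j\sqrt{\eta^2-4s^2})=0$ by the same odd-in-$v_1$ cancellation, establishing \eqref{z1a}. For the expansion \eqref{z2a}, subtracting $b_j^2+\eta b_j+s^2=0$ from $z_j^2-fz_j+s^2=0$ produces the useful identity $(z_j-b_j)(z_j+b_j-f)=(f+\eta)b_j$; combined with $|z_j+b_j-f|\ge c\sqrt{r_0}$ and $f+\eta=O(\eps^2)$ on $J_1$, this yields the claimed $O(\eps^2|b_j|)$ bound in the separated region, whereas in the near-coincidence region $|\eta^2-4s^2|<r_0$ the denominator degenerates and one falls back on the quadratic formula, whose discriminant $f^2-4s^2=(\eta^2-4s^2)+O(\eps^2)$ directly gives the weaker $O(\eps)$ bound.

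Finally, coincidence $z_1=z_{-1}$ is equivalent to vanishing of the discriminant, i.e.\ $f(z,s,\eps)=-2s$ (the branch reducing to $-\eta$ at $(s,\eps)=(\eta/2,0)$), and the double root is then $z=f/2=-s$. Setting $G(s,\eps):=f(-s,s,\eps)+2s$, we have $G(\eta/2,0)=0$ and $\partial_s G(\eta/2,0)=2\neq 0$ (the $\partial_z f$ and $\partial_s f$ contributions vanish at $\eps=0$ since $f(z,s,0)\equiv-\eta$), so IFT yields a continuous $\vartheta_0(\eps)$ with $\vartheta_0(0)=\eta/2$. The main obstacle will be the branch-point behaviour near $s^2=\eta^2/4$: at coalescence smoothness in $\eps$ must fail, because $z_{\pm1}$ differ by $\sqrt{\text{discriminant}}=O(\eps)$, so one must choose branches of the square root with care to keep the labelling $z_{\pm1}$ continuous as $s$ crosses $\vartheta_0(\eps)$. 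This is precisely why only continuity, not $C^\infty$ smoothness, can be claimed globally, while $C^\infty$ regularity survives on the separated region.
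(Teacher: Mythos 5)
Your overall strategy --- perturb the unperturbed quadratic $z^2+\eta z+s^2$, fix-point on the branch function $T_j=G_j$, classical IFT and the identity $(z_j-b_j)(z_j+b_j-f)=(f+\eta)b_j$ in the separated region, discriminant/quadratic-formula near coincidence, and IFT on $G(s,\eps)=f(-s,s,\eps)+2s$ for $\vartheta_0$ --- is the same as the paper's, and the key cancellation you derive from the resolvent identity (that $f+\eta=O(\eps^2(|z|+s^2))$, coming from $(v_1L_1^{-1}\chi_2,L_1^{-1}\chi_2)=0$) is exactly the paper's $\partial_2 R_{22}(0,0)=0$. There are, however, two genuine gaps.

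First, you offer contraction on $T_j$ and Rouch\'e as interchangeable options, with ``either way \dots delivers a unique continuous root.'' That is not correct in the near-coincidence region $|\eta^2-4s^2|<r_0$: there the branch function $G_j(z,s,\eps)=\frac12\big(R_{22}(\eps^2z,\eps s)+j\sqrt{R_{22}^2-4s^2}\big)$ is only H\"older-$1/2$ in $z$ (the square root collapses, and the paper's modulus-of-continuity estimate is $|G_j(z_1)-G_j(z_2)|\le C\eps\sqrt{|z_1-z_2|}$), so the contraction mapping theorem does not apply. The paper replaces contraction with the Brouwer fixed point theorem there, which yields existence but not uniqueness, and then supplies a separate counting argument (an inverse-IFT producing $s(z,\eps)$ with $\partial_z s(-\eta/2,0)=0$, $\partial_{zz}s(-\eta/2,0)=-2/\eta\ne 0$, ruling out a third root by Rolle). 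Your proposal never establishes the ``exactly two'' part of the lemma. If you commit to Rouch\'e you can dispense with both Brouwer and the counting argument in one stroke --- $D_1(\cdot,s,\eps)$ is analytic in $z$ on a neighbourhood of $\overline{J_1}$, and on $\partial J_1$ (which near coincidence is a disk of fixed radius $\sim r_1\eta/2$ about $-\eta/2$) one compares $|(f+\eta)z|=O(\eps^2)$ against $|z-b_1||z-b_{-1}|\gtrsim (r_1\eta/2)^2/4$ --- and that would actually be a cleaner route than the paper's, but you must carry it out rather than present it as an interchangeable alternative.

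Second, the conclusion that $z_1(s,\eps)=z_{-1}(s,\eps)$ iff $s=\vartheta_0(\eps)$ for a \emph{real} $\vartheta_0$ needs a reality check. The implicit function theorem applied to $D_2(s,\eps)=R_{22}(-\eps^2 s,\eps s)+2s$ in the complex variable $s$ a priori gives a $\C$-valued branch; one must observe the conjugation symmetry $\overline{D_2(s,\eps)}=D_2(\bar s,\eps)$ (valid because $\overline{R_{22}(\bar x,\bar y)}=R_{22}(x,y)$ for real $\eps$) to conclude $\vartheta_0(\eps)=\overline{\vartheta_0(\eps)}$. Without this step the ``if and only if'' statement in the lemma is unsupported for real $s$.
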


\begin{proof}
By \eqref{D1} and noting that $\eta=-(L^{-1}_1\chi_2,\chi_2)$, the equation
$$ D_1(z,s,0)=z^2+\eta z+s^2=0
$$
 have two solutions $b_j(s)=-\eta/2+j \sqrt{\eta^2-4s^2}/2$ for $j=\pm1$.

For any fixed $s$ and $\eps$, we define
\bq G_j(z,s,\eps)=\frac12\Big( R_{22}(\eps^2z,\eps s)+j\sqrt{R_{22}(\eps^2z,\eps s)^2-4s^2}\Big),\quad j=-1,1,\label{fp1}\eq
where $ R_{22}(x,y)=((L_1-x-iyP_rv_1)^{-1}\chi_2,\chi_2). $
It is straightforward to check that for any fixed $s$ and $\eps$, a solution of $D_1(z,s,\eps)=0$ is a fixed point of $G_j(z,s,\eps)$.
We consider the existence of the solutions of $D_1(z,s,\eps)=0$ for two case:  $(s,\eps)\in I_1$ and $(s,\eps)\in I_2$ with
     \bmas
I_1&=\big\{(s,\eps)\,|\,\eps(1+|s|) \le r_0, |\eta^2-4s^2|\ge r_0\big\}, \\
I_2&=\big\{(s,\eps)\,|\,\eps(1+|s|) \le r_0, |\eta^2-4s^2|\le r_0 \big\}.
\emas

First, we will study the existence of the solutions of $D_1(z,s,\eps)=0$ for $(s,\eps)\in I_1$.
Since $R_{22}(x,y)$ is smooth for $(x,y)\in \mathbb{C}\times\R $ and satisfies
\be \label{a2}
\left\{\bln
\pt_1 R_{22}(0,0)&=(L^{-2}_1\chi_2,\chi_2)>0,\\
 \pt_2 R_{22}(0,0)&=i(  v_1L^{-1}_1 \chi_2,L^{-1}_1\chi_2)=0,
 \eln\right.
\ee
it follows that for $\eps^2|z|\le r_0$ and $\eps|s|\le r_0$ with $r_0\ll1$,
\be R_{22}(\eps^2z,\eps s) =-\eta+O(1)(\eps^2|z|+ \eps^2s^2). \label{a2a}\ee
Thus, it holds that for $|z -b_j(s)|\le r_1|b_j(s)| $, $\eps(1+|s|) \le r_0 $ and $|\eta^2-4s^2|\ge r_0$ with $r_0,r_1\ll1$,
\be
|R_{22}(\eps^2z,\eps s)^2-4s^2|\ge |\eta^2-4s^2|-C\eps^2|z|-C\eps^2 |s|^2 \ge \frac12 r_0 .\label{a3}
\ee

From \eqref{fp1}--\eqref{a3}, we obtain that for $|z-b_j(s)|\le r_1|b_j(s)|$, $\eps(1+|s|) \le r_0 $ and $|\eta^2-4s^2|\ge r_0$  with $r_0,r_1\ll1$,
\bmas
|G_j(z,s,\eps)-b_j(s)|&\le \frac12\big|R_{22}(\eps^2z,\eps s)-R_{22}(0,0)\big|\\
&\quad+ \frac{ | R_{22}(\eps^2z,\eps s)^2 -R_{22}(0,0)^2 |}{ 2|\sqrt{R_{22}(\eps^2z,\eps s)^2-4s^2}|+2|\sqrt{R_{22}(0,0)^2-4s^2}|} \\
&\le C\eps^{2}(|z|+ s^{ 2}) \le r_1|b_j(s)| ,
\\
|G_j(z_1,s,\eps)-G_j(z_2,s,\eps)|&\le \frac12\big|R_{22}(\eps^2z_1,\eps s)-R_{22}(\eps^2z_2,\eps s)\big|\\
&\quad+\frac{ | R_{22}(\eps^2z,\eps s)^2 -R_{22}(\eps^2z_1,\eps s)^2 |}{ 2|\sqrt{R_{22}(\eps^2z,\eps s)^2-4s^2}|+2|\sqrt{R_{22}(\eps^2z_1,\eps s)^2-4s^2}|} \\
&\le C \eps^{2}|z_1-z_2|  \le \frac12|z_1-z_2|.
\emas
Hence by contraction mapping theorem, there exists a unique fixed point $z_j(s,\eps): I_1\to J_1$, $j=\pm1$
such that $G_j(z_j(s,\eps),s, \eps)=z_j(s,\eps)$ for $(s,\eps)\in  I_1$ and $z_j(s,0)=b_j(s)$.
This is equivalent to  $D_1(z_j(s,\eps),s,\eps)=0$. Since $G_j(z,s,\eps)$ is $C^{\infty}$ with respect to $z\in J_1$ and $(s,\eps)\in I_1$,
it follows that $z_j(s,\eps)$ is a $C^{\infty}$ function with respect to $(s,\eps)\in I_1$. In particular, it holds that
$$ \pt_z D_1(z,s,0)=2z-\eta, \quad \pt_{\eps}D_1(z,s,0)=0, $$
which leads to
$$ \pt_{\eps}z_j(s,0)=\frac{\pt_{\eps}D_1(b_j(s),s,0)}{\pt_z D_1(b_j(s),s,0)}=0, \quad 2|s|\ne \eta. $$
Thus, we obtain \eqref{z1a}. 
By \eqref{a2a},  we obtain that for $(s,\eps)\in I_1$,
\bma
2z_j(s,\eps)&= R_{22}(\eps^2z_j(s,\eps),\eps s)+j\sqrt{R_{22}(\eps^2z_j(s,\eps),\eps s)^2-4s^2} \nnm\\
&=-\eta+O(1)(\eps^2|z_j(s,\eps)|+\eps^2s^2 )+j\sqrt{\eta^2-4s^2}\nnm\\
&\quad+O(1)(\eps^{2}|z_j(s,\eps)|+ \eps^{2}s^{2} )/\sqrt{\eta^2-4s^2}\nnm\\
&=2b_j(s)+O\(\frac{\eps^{2}(|b_j(s)|+s^2)}{\sqrt{\eta^2-4s^2}}\), \quad j=-1,1, \label{zj}
\ema which gives \eqref{z2a} for $|\eta^2-4s^2|\ge r_0$.

Next, we will study the existence of the solutions of $D_1(z,s,\eps)=0$ for $(s,\eps)\in I_2$.
For any  $z,z_1,z_2\in J_1$ and  $(s,\eps)\in I_2$, we obtain
\bma
|G_j(z,s,\eps)-b_j(s)|&\le \frac12\big|R_{22}(\eps^2z,\eps s)-R_{22}(0,0)\big| \nnm\\
&\quad+ \frac12\sqrt{|R_{22}(\eps^2z,\eps s)^2-R_{22}(0,0)^2|} \nnm\\
&\le C\eps  \sqrt{|z|}+C\eps |s|  , \label{xxy}
\\
|G_j(z_1,s,\eps)-G_j(z_2,s,\eps)|&\le \frac12\big|R_{22}(\eps^2z_1,\eps s)-R_{22}(\eps^2z_2,\eps s)\big|\nnm\\
&\quad+\frac12\sqrt{|R_{22}(\eps^2z_1,\eps s)^2-R_{22}(\eps^2z_2,\eps s)^2|} \nnm\\
&\le C\eps \sqrt{|z_1-z_2|} , \label{xxy1}
\ema
where we have used the inequality $|\sqrt x-\sqrt y|\le \sqrt{|x-y|}$.
This implies that for any fixed  $(s,\eps)\in I_2$,  $G_j(z,s,\eps)$ is a continuous mapping in $z\in J_1$.
Hence by Brouwer fixed point theorem, there exists at least a fixed point $z_j(s,\eps): I_2\to J_1$, $j=\pm1$ such that $G_j(z_j(s,\eps),s, \eps)=z_j(s,\eps)$ for $(s,\eps)\in I_2$ and $z_j(s,0)=b_j(s)$. This is equivalent to  $D_1(z_j(s,\eps),s,\eps)=0$. Moreover, by \eqref{xxy} and \eqref{xxy1},   $z_j(s,\eps)$ is a continuous function with respect to $(s,\eps)\in I_2$ and satisfies
$$
2z_j(s,\eps) =2b_j(s)+ O(1)(\eps \sqrt{|z_j(s,\eps)|}+\eps |s|)=2b_j(s)+ O(\eps), \quad j=-1,1,
$$
which gives \eqref{z2a} for $|\eta^2-4s^2|\le r_0$.

We claim that there are at most two differential solutions $z_j=z_j(s,\eps)$, $j=\pm 1$ of the equation $D_1(z,s,\eps)=0$  for $(s,\eps)\in I_2$. Indeed, it is easy to check that $D_1(z,s,\eps)$ is $C^\infty$ with respect to $(z,s,\eps)\in \C^2\times (-1,1)$ and satisfies
$$  D_1(-\eta/2, \eta/2,0)= 0, \quad \pt_s D_1(-\eta/2, \eta/2,0)= \eta.$$
Thus by the implicit function theorem, there exists a unique $C^\infty$ function $s(z,\eps): (z,\eps)\in B( -\eta/2,r_0)\times (-r_0,r_0)\to B( \eta/2,r_1)$ such that $D_1(z,s(z,\eps), \eps)=0$  and $s( -\eta/2,0)=\eta/2$, where $B(x,r)$ is a ball in $\C$ given by
 $$B(x,r)=\{y\in \C\,|\, |y-x|<r\}, \quad (x,r)\in \C\times \R_+. $$
 In particular, it holds that
$$  \pt_z D_1(-\eta/2, \eta/2,0)=0,  \quad \pt_{zz} D_1(-\eta/2, \eta/2,0)=2,$$
which gives
$$ \pt_z s( -\eta/2,0)=0,\quad \pt_{zz} s(-\eta/2,0)=-2/\eta. $$
Moreover, if $s(z,\eps)$ is real, then $z$ must be a solution of $D_1(z,s, \eps)=0$.

If there are three differential solution $z_j=z_j(s,\eps)$, $j=1,2,3$ of $D_1(z_j,s, \eps)=0$ for $(s,\eps)\in I_2$,  then  there exists a point $(s,\eps)\in B(\eta/2,r_0)\times (-r_0,r_0)$ such that $z_1(s,\eps)\ne z_2(s,\eps)\ne z_3(s,\eps)\in B( -\eta/2,r_0).$ This implies that
$s(z_1,\eps)=s(z_2,\eps)=s(z_3,\eps).$  By mean value theorem, there exist $z_4,z_5\in B( -\eta/2,r_0)$ such that
$
\pt_z s(z_4,\eps)=\pt_z s(z_5,\eps) $, and then there exist $z_6\in B( -\eta/2,r_0)$ such that $\pt_{zz} s(z_6,\eps)=0$. This is a contradiction to $\pt_{zz} s(-\eta/2,0)=-2/\eta.$
Thus, the equation $D_1(z,s,\eps)=0$ admits exactly two continuous solutions $z_j(s,\eps): I_2\to J_1$ for $j=\pm 1$.

Furthermore, it is straightforward to check that $z_1(s,\eps)=z_{-1}(s,\eps)$ if and only if
$$z_{\pm1}=\frac12R_{22}(\eps^2z_{\pm1},\eps s), \quad R_{22}(\eps^2z_{\pm1},\eps s)^2-4s^2=0,$$
which is equivalent to
$$z_{\pm1}=-s, \quad R_{22}(-\eps^2s,\eps s)+2s=0, \quad s>0.$$
Denote
$D_2(s,\eps)=R_{22}(-\eps^2s,\eps s)+2s $ with $(s,\eps)\in \C\times (-1,1)$.
It holds that
$$D_2(\eta/2,0)=0,\quad \pt_s D_2(\eta/2,0)=2.$$
By the implicit function theorem,  there exist small constants $r_0,r_1>0$ such that the equation $D_2(s,\eps)=0$ has a unique solution $s=\vartheta_0(\eps): (-r_0,r_0)\to B(\eta/2,r_1)$. Since $\overline{D_2(s,\eps)}=D_2(\overline{s},\eps)$, it follows that $\vartheta_0(\eps)=\overline{\vartheta_0}(\eps)$.
Thus, the solutions $z_1(s,\eps)=z_{-1}(s,\eps)$ if and only if $s=\vartheta_0(\eps)$ and $\eps\in (-r_0,r_0)$.
This proves the lemma.
\end{proof}

With  Lemmas \ref{eigen} and \ref{eigen_2a}, we have the following lemma about
 the eigenvalue $\lambda_j(|\xi|,\eps)$ and the corresponding eigenfunction $\mathcal{U}_j(\xi,\eps)$
of the operator $\tilde{\AA}_{\eps}(\xi)$  for $\eps(1+|\xi|)\le r_0$.

\begin{lem}\label{eigen_4a}
{\rm (1)} There exists a small constant $r_0>0$ such that $ \sigma(\tilde{\AA}_{\eps}(\xi))\cap \{\lambda\in \mathbb{C}\,|\, \mathrm{Re}\lambda>-\mu /2\} $ consists of five points $\{\lambda_j(s,\eps),\ j=0,1,2,3,4\}$ for   $\eps(1+|s|)\le  r_0$ and $s=|\xi|$. The eigenvalues $\lambda_j(s,\eps)$  are $C^\infty$ functions of $s$ and $\eps $, and admit the following asymptotic expansions for $\eps(1+|s|) \le r_0 $:
 \bma\label{specr0}
 \lambda_{0}(s,\eps) &= \eps^2b_{0}(s) +O(\eps^4 (1+s^2)^2), \\
 \lambda_k(s,\eps)&=\eps^2b_k(s)+\left\{\bal
O( \eps^4|b_k(s)|), & |\eta^2-4s^2|\ge r_0,\\
O(\eps^3),& |\eta^2-4s^2|< r_0,
\ea\right. \label{specr1}
 \ema
where $k=1,2,3,4,$  $\lambda_1=\lambda_2$ and $\lambda_3=\lambda_4$,  and
 \be \label{bj}
 \left\{\bln
 &b_0=-\eta(1+s^2), \quad b_{1}=b_{2}=-\frac{\eta}2-\frac{\sqrt{\eta^2-4s^2}}2, \\
&b_{3}=b_{4}=-\frac{\eta}2+\frac{\sqrt{\eta^2-4s^2}}2.
\eln\right.
 \ee
Moreover, $\lambda_1(s,\eps)=\lambda_3(s,\eps)$ if and only if $(s,\eps)=(\vartheta_0(\eps),\eps)$ with $\vartheta_0(\eps)$ being a real continuous function given in Lemma \ref{eigen_2a}.

{\rm (2)} The eigenfunctions $\mathcal{U}_j(\xi,\eps)=(u_j(\xi,\eps),X_j(\xi,\eps),Y_j(\xi,\eps))$, $j=0,1,2,3,4$ are $C^\infty$  in $s$ and $\eps$, and satisfy for $\eps(1+|s|) \le r_0$ and $(s,\eps)\ne (\vartheta_0(\eps),\eps)$:
  \be            \label{eigf2}
  \left\{\bln
  &( \mathcal{U}_i ,\mathcal{U}^*_j )_{\xi}=:(u_i,\overline{u_j})_{\xi}-(X_i,\overline{X_j})-(Y_i,\overline{Y_j})=\delta_{ij}, \ \ 0\le i,j\le 4,  \\
  &u_0=P_du_0+P_ru_0,\quad (X_0,Y_0)\equiv(0,0),\\
&P_{ d}u_0 = \frac{s}{\sqrt{1+s^2}}\[ 1+O(\eps^2(1+s^2))\]\chi_0,\\
&P_ru_0 =  i\eps \sqrt{1+s^2} L^{-1}_1(v\cdot\omega)\chi_0+O(\eps^2(1+s^{2})),\\
&u_k=\frac{\eps \lambda_k \Theta_k}{\sqrt{\lambda_1\lambda_3-\lambda_k^2}}\[L^{-1}_1  (v\cdot  e_k)\chi_0 +O(\eps(1+|s|)) \],\\
&(X_k,Y_k) =\frac{ \lambda_k \Theta_k}{\sqrt{\lambda_1\lambda_3-\lambda_k^2}} \(\omega\times e_k, \frac{i\eps^2s e_k}{\lambda_k}\),\quad k=1,2,3,4,
  \eln\right.
  \ee
where $   \mathcal{U}^*_j=(\overline{u_j},-\overline{X_j},-\overline{Y_j})$, 
and $e_k$, $k=1,2,3,4$ are normal vectors satisfying $e_1=e_3$, $e_2=e_4$, and $e_k\cdot\omega=e_1\cdot e_2=0$, and
\be \label{Ak1}
\Theta_k=1+\left\{\bal
O( \eps^2), & |\eta^2-4s^2|\ge r_0,\\
O(\eps ),& |\eta^2-4s^2|< r_0.
\ea\right.
\ee
\end{lem}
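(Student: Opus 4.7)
The plan is to reduce the eigenvalue problem $\tilde{\AA}_{\eps}(\xi)U=\lambda U$ to the two scalar equations $D_0(z,s,\eps)=0$ and $D_1(z,s,\eps)=0$ already analyzed in Lemmas \ref{eigen} and \ref{eigen_2a}. Writing $\lambda=\eps^2 z$ and decomposing $f=C_0\sqrt{M}+f_1$ with $f_1\in N_1^\perp$, I would use the resolvent identity \eqref{A_4} to express $f_1$ in terms of $(C_0,X)$; substituting into \eqref{A_2} and \eqref{L_2a} and exploiting the rotational invariance \eqref{B_1a} of $L_1$ (which decouples the $C_0$-equation from the cross-terms with $X$ because $\omega\times X\perp\omega$), I obtain the decoupled system \eqref{A_9}--\eqref{A_10}. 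The equation \eqref{A_10} is then combined with \eqref{A_8} to eliminate $Y$: using $\xi\times\xi\times X=-s^2X$ for $X\in\mathbb{C}^3_\xi$, one arrives at $\eps^4 D_1(z,s,\eps)X=0$. Consequently the eigenvalues split into solutions of $D_0=0$ (with $X=Y=0$, giving a one-dimensional eigenspace) and solutions of $D_1=0$ (with $X$ free in the two-dimensional space $\mathbb{C}^3_\xi$, so each root produces a two-dimensional eigenspace).

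From Lemma \ref{eigen} I would read off the simple eigenvalue $\lambda_0=\eps^2 z_0(s,\eps)$ with expansion \eqref{specr0}, and from Lemma \ref{eigen_2a} the two double eigenvalues $\lambda_1=\lambda_2=\eps^2 z_{-1}(s,\eps)$ and $\lambda_3=\lambda_4=\eps^2 z_1(s,\eps)$ with expansion \eqref{specr1}. The degeneracy assertion $\lambda_1=\lambda_3\Leftrightarrow(s,\eps)=(\vartheta_0(\eps),\eps)$ transfers directly from the corresponding coincidence statement for $z_{\pm 1}$ in Lemma \ref{eigen_2a}. To confirm that these five points (counted with multiplicity) exhaust $\sigma(\tilde{\AA}_\eps(\xi))\cap\{\mathrm{Re}\lambda>-\mu/2\}$, I would invoke Lemma \ref{spectrum2}(2), which for $\eps(1+|\xi|)\le r_0$ confines the spectrum in that half-plane to the small disk $\{|\lambda|\le\delta\}$, inside which $D_0$ and $D_1$ admit exactly the roots constructed in Lemmas \ref{eigen}--\ref{eigen_2a}.

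For the eigenfunctions I would construct them explicitly from the reduction. For $\lambda_0$ one sets $X_0=Y_0=0$, picks $C_0=s(1+s^2)^{-1/2}[1+O(\eps^2(1+s^2))]$, and reads off the microscopic piece from \eqref{A_4}; the convergence $R(\lambda,\eps\xi)\to L_1^{-1}$ on $N_1^\perp$ as $\eps(1+|\xi|)\to 0$ delivers the leading behaviour $P_r u_0\approx i\eps\sqrt{1+s^2}\,L_1^{-1}(v\cdot\omega)\chi_0$, and the normalization constant is fixed by requiring $\|u_0\|_\xi=1$. For $k=1,2,3,4$, one takes $C_0=0$, chooses $X_k$ proportional to $\omega\times e_k$ with $e_k$ a unit vector normal to $\omega$ (with $e_1=e_3$, $e_2=e_4$ spanning the transverse plane and furnishing the two independent eigenvectors for each double eigenvalue), deduces $Y_k=-i\eps^2 s e_k/\lambda_k$ from \eqref{A_8}, and uses \eqref{A_4} to obtain $u_k=\eps R(\lambda_k,\eps\xi)v\chi_0\cdot(\omega\times X_k)$. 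The prefactor $\lambda_k\Theta_k/\sqrt{\lambda_1\lambda_3-\lambda_k^2}$ is then forced by the bilinear pairing \eqref{eigf2}, using the Vi\`ete-type identity $z_1 z_{-1}=s^2$ (read off from the constant coefficient of $D_1(\cdot,s,0)$) to evaluate $\lambda_1\lambda_3$. The off-diagonal orthogonality $(\mathcal{U}_i,\mathcal{U}_j^*)_\xi=0$ for distinct eigenvalues follows from the identity $\lambda_i(\mathcal{U}_i,\mathcal{U}_j^*)_\xi=(\tilde{\AA}_\eps(\xi)\mathcal{U}_i,\mathcal{U}_j^*)_\xi=(\mathcal{U}_i,\tilde{\AA}_\eps^*(\xi)\mathcal{U}_j^*)_\xi=\lambda_j(\mathcal{U}_i,\mathcal{U}_j^*)_\xi$, once one checks (using the explicit form of $\tilde{\AA}_\eps^*(\xi)$ from Lemma \ref{SG_1}) that $\mathcal{U}_j^*=(\overline{u_j},-\overline{X_j},-\overline{Y_j})$ is the adjoint eigenvector associated with $\lambda_j$.

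The principal technical obstacle is the degenerate regime $|\eta^2-4s^2|<r_0$, where the two roots of $D_1$ nearly coalesce, the implicit function argument fails, and one must fall back on the Brouwer fixed-point / continuity argument of Lemma \ref{eigen_2a}; this forces the coarser error $O(\eps^3)$ in \eqref{specr1} and $\Theta_k=1+O(\eps)$ in \eqref{Ak1}, and prevents one from claiming $C^\infty$ smoothness of $\lambda_k$ in that regime (only continuity survives). A secondary subtlety is the exclusion $(s,\eps)\ne(\vartheta_0(\eps),\eps)$ in part (2), forced by the vanishing of the denominator $\sqrt{\lambda_1\lambda_3-\lambda_k^2}=\sqrt{\lambda_1(\lambda_3-\lambda_1)}$ at the critical point where $\lambda_1=\lambda_3$ even though the eigenvalues themselves remain well-defined there; away from this single codimension-one locus the bilinear normalization yields the clean formulas \eqref{eigf2}.
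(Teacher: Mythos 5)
Your proposal follows the paper's overall strategy faithfully: reduce via \eqref{A_2}--\eqref{A_8} to the two scalar equations $D_0=0$ and $D_1=0$, import the roots from Lemmas \ref{eigen} and \ref{eigen_2a}, confine the spectrum with Lemma \ref{spectrum2}(2), and build the eigenvectors from \eqref{A_4} with the $C_0=0$ or $X=Y=0$ dichotomy and the biorthogonality argument.

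The one place where your account is too loose to reproduce the paper's formula is the normalization constant $a_k$ for $k=1,\ldots,4$. Your plan is to "evaluate $\lambda_1\lambda_3$" from the Vi\`ete relation $b_1b_3=s^2$; but that identity holds only at $\eps=0$, because $D_1(z,s,\eps)=z^2-R_{22}(\eps^2 z,\eps s)z+s^2$ is not a polynomial in $z$. An approximate $\lambda_1\lambda_3\approx\eps^4 s^2$ is useless here precisely because the denominator $\sqrt{\lambda_1\lambda_3-\lambda_k^2}=\sqrt{\lambda_k(\lambda_{k'}-\lambda_k)}$ must vanish \emph{exactly} at the degeneracy point $\lambda_1=\lambda_3$, and the whole point of the factorization is to exhibit the singularity explicitly and show that $\Theta_k$ remains $1+O(\eps)$ through it. The paper's device is the off-diagonal biorthogonality $(\mathcal{U}_1,\mathcal{U}_3^*)_\xi=0$, which yields the exact identity $-1+\eps^4 s^2/(\lambda_1\lambda_3)+\eps^2 D_{13}=0$; substituting this into the normalization equation $a_k^2(-1+\eps^4 s^2/\lambda_k^2+\eps^2 D_{kk})=1$ lets one factor out $(\lambda_3-\lambda_1)/\lambda_k$ exactly (cf.\ \eqref{C_5}--\eqref{C_5b}) and arrive at $a_k=\lambda_k\Theta_k/\sqrt{\lambda_1\lambda_3-\lambda_k^2}$ with $\Theta_k$ given by \eqref{Ak}. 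Your proposal identifies the right target but does not supply this identity, which is the technical heart of part~(2); without it the expansion \eqref{Ak1} and the controlled blow-up of $u_k$, $X_k$, $Y_k$ near $(s,\eps)=(\vartheta_0(\eps),\eps)$ do not follow. Your other observations — the two-dimensional eigenspaces from $\omega\times e_k$, the loss of smoothness in the near-degenerate zone $|\eta^2-4s^2|<r_0$, and the excluded critical locus — are all correct and match the paper.
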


\begin{proof}
The eigenvalue $\lambda_j(s,\eps)$ and the eigenfunction $\mathcal{U}_j(\xi,\eps)$ for $j=0,1,2,3,4$ can be constructed as follows. For $j=0,$ we take $\lambda_0=\eps^2 z_0(s,\eps)$ with $ z_0(s,\eps)$ being the solution of the equation  $D_0(z,s,\eps)=0$ given in Lemma \ref{eigen}, and choose $X=Y=0$ in \eqref{A_6}--\eqref{A_8}. And  the corresponding eigenfunction $\mathcal{U}_0(\xi,\eps)=(u_0(\xi,\eps),X_0(\xi,\eps),Y_0(\xi,\eps))$ is defined by
 \be  \label{C_2}
 \left\{\bln
  u_0(\xi,\eps)&=a_0(s,\eps)\chi_0   +i \eps\(s+\frac1s\) a_0(s,\eps)(L_1-\lambda_0-i\eps s P_r(v\cdot\omega))^{-1}(v\cdot\omega)\chi_0,\\
  X_0(\xi,\eps)&=Y_0(\xi,\eps)\equiv 0,
  \eln\right.
\ee
where $a_0(s,\eps)$ is a complex value function determined later.

For $j=1,2,3,4,$ we take $\lambda_1=\lambda_2=\eps^2 z_{-1}(s,\eps)$ and $\lambda_3=\lambda_4=\eps^2 z_{1}(s,\eps)$ with $ z_{\pm1}(s,\eps)$ being the solution of the equation  $D_1(z,s,\eps)=0$ given in Lemma \ref{eigen_2a}, and choose $C_0=0$ in \eqref{A_6}--\eqref{A_8}. Then we define the corresponding eigenfunction $\mathcal{U}_j(\xi,\eps)=(u_j(\xi,\eps),X_j(\xi,\eps),Y_j(\xi,\eps))$   by
 \be  \label{C_3}
 \left\{\bln
  u_j(\xi,\eps)&=  \eps a_j(s,\eps)(L_1-\lambda_j-i \eps s P_r(v\cdot\omega))^{-1} (v\cdot e_j)\chi_0,  \\
   X_j(\xi,\eps)&=a_j(s,\eps)\omega\times e_j, \quad  Y_j(\xi,\eps)=i\frac{\eps^2sa_j(s,\eps)}{\lambda_j(s,\eps)}e_j,
  \eln\right.
\ee
where $e_j$, $j=1,2,3,4$ are normal vectors satisfying $e_1=e_3$, $e_2=e_4$ and $e_k\cdot\omega=e_1\cdot e_2=0$,  and $a_j(s,\eps)$ is a complex value function determined later. It's easy to verify that $(\mathcal{U}_1,\mathcal{U}^*_2)_{\xi}=(\mathcal{U}_3,\mathcal{U}^*_4)_{\xi}=0, $  where $ \mathcal{U}^*_j=(\overline{u_j},-\overline{X_j},-\overline{Y_j})$ is the eigenvector of $\tilde{\AA}^*_{\eps}(\xi)$ corresponding to the eigenvalue $\overline{\lambda_{j}(s,\eps)}$.

Rewrite the eigenvalue problem as
$$\tilde{\AA}_{\eps}(\xi)\mathcal{U}_j(\xi,\eps)=\lambda_j(s,\eps)\mathcal{U}_j(\xi,\eps), \quad  j=0,1,2,3, 4.$$
Taking the inner product $(\cdot,\cdot)_{\xi}$ of the above equation with $\mathcal{U}^*_k(\xi,\eps)$ and using the facts that
 \bgrs
 (\tilde{\AA}_{\eps}(\xi)U,V)_{\xi} =(U,\tilde{\AA}^*_{\eps}(\xi)V)_{\xi},\quad U,V\in D(\tilde{\AA}_{\eps}(\xi)),
\\
 \tilde{\AA}^*_{\eps}(\xi)\mathcal{U}^*_j(\xi,\eps) =\overline{\lambda_j(s,\eps)}\mathcal{U}^*_j(\xi,\eps) ,
 \egrs
we have
$$
(\lambda_j(s,\eps)-\lambda_k(s,\eps))(\mathcal{U}_j(\xi,\eps),\mathcal{U}^*_k(\xi,\eps))_{\xi}=0,\quad 0\le j, k\le 4.
$$
For $\eps(1+|s|)\le r_0$ and $(s,\eps)\ne (\vartheta_0(\eps),\eps)$, we have $\lambda_j(s,\eps)\neq \lambda_k(s,\eps)$ for
$j, k\in\{0,1,3\}$ and $j\ne k$.  Thus,
$$
\(\mathcal{U}_j(\xi,\eps),\mathcal{U}^*_k(\xi,\eps)\)_{\xi}=0,\quad 0\leq j\neq k\leq 4.
$$
We can normalize $\mathcal{U}_j(\xi,\eps)$ by taking
$$\(\mathcal{U}_j(\xi,\eps),\mathcal{U}^*_j(s,\eps)\)_{\xi} =1,\quad 0\le j\le 4.$$

The coefficients $a_j(s,\eps) $, $0\le j\le 4$ satisfy the normalization conditions $(\mathcal{U}_j,\mathcal{U}^*_j)_{\xi}=1$ as
 \bma
 &a_0(s,\eps)^2\bigg(1+\frac1{s^2}+\eps^2\bigg(s+\frac1s\bigg)^2D_0(s,\eps)\bigg)=1, \label{C_4}\\
 &a_k(s,\eps)^2\bigg(-1+\frac{\eps^4s^2}{\lambda_k(s,\eps)^2}+\eps^2 D_{kk}(s,\eps)\bigg)=1,\quad k=1,2,3,4,\label{C_4a}
 \ema
 where
 $$
 \left.\bln
 D_0(s,\eps)&=(R(\lambda_0,\eps s e_1) \chi_1, R(\overline{\lambda_0},-\eps s e_1) \chi_1), \\
 D_{jk}(s,\eps)&=(R(\lambda_j,\eps s e_1)  \chi_2, R(\overline{\lambda_k},-\eps se_1) \chi_2), \ \ j,k=1,2,3,4.
 \eln\right.
 $$

 Substituting \eqref{specr0} into \eqref{C_4}, we obtain
 \bma
 a_0(s,\eps)&=\frac{s}{\sqrt{1+s^2}}\Big(1+\eps^2(1+s^2)D_0(s,\eps)\Big)^{-\frac12}\nnm\\
 &=\frac{s}{\sqrt{1+s^2}}\[1+O(\eps^2 (1+s^2))\]. \label{C_4b}
 \ema
Due to the fact that $\lambda_1=\lambda_2$ and $\lambda_3=\lambda_4$, we have $a_1=a_2$ and $a_3=a_4$. Note that
$$
\(\mathcal{U}_1(\xi,\eps),\mathcal{U}^*_3(s,\eps)\)_{\xi}=0 \Longrightarrow  -1+\frac{\eps^4s^2}{\lambda_1\lambda_3}+\eps^2D_{13}(s,\eps)=0.
$$
Thus, it follows that for $\lambda_1\ne \lambda_3$,
\bma
-1+\frac{\eps^4s^2}{\lambda_1(s,\eps)^2}+\eps^2 D_{11}(s,\eps)&=\frac{\eps^4s^2}{\lambda_1^2}-\frac{\eps^4s^2}{\lambda_1\lambda_3}+\eps^2(D_{11}-D_{13})\nnm\\
&= \frac{\eps^4s^2}{\lambda_1^2\lambda_3}(\lambda_3-\lambda_1)+\eps^2D_{113}(\lambda_3-\lambda_1)\nnm\\
&=\frac{\lambda_3-\lambda_1}{\lambda_1}\(\frac{\eps^4s^2}{\lambda_1\lambda_3} +\eps^2\lambda_1 D_{113}\),\label{C_5}
\ema
where $D_{ijk}(s,\eps)=(R(\lambda_i,\eps s)  \chi_2, R(\overline{\lambda_j},-\eps s) R(\overline{\lambda_k},-\eps s) \chi_2)$. Similarly,
\be
-1+\frac{\eps^4s^2}{\lambda_3(s,\eps)^2}+\eps^2 D_{33}(s,\eps)=\frac{\lambda_1-\lambda_3}{\lambda_3}\(\frac{\eps^4s^2}{\lambda_1\lambda_3} +\eps^2\lambda_3 D_{313}\). \label{C_5a}
\ee
Thus, it follows from \eqref{C_4a}, \eqref{C_5} and \eqref{C_5a} that
\be
a_k(s,\eps)= \frac{\lambda_k}{\sqrt{\lambda_1\lambda_3-\lambda^2_k}}\(\frac{\eps^4s^2}{\lambda_1\lambda_3} +\eps^2\lambda_k D_{k13}\)^{-\frac12},\quad k=1,3. \label{C_5b}
\ee
Let
\be \label{Ak}
\Theta_k=\(\frac{\eps^4s^2}{\lambda_1\lambda_3} +\eps^2\lambda_k D_{k13}\)^{-\frac12}, \quad k=1,2,3,4.
\ee
By \eqref{specr1}, it holds for $|\eta^2-4s^2|\ge r_0$  that
\bma
\Theta_k&=\(\frac{ s^2}{(b_1+O(\eps^2|b_1|))(b_3+O(\eps^2|b_3|))} +\eps^4b_k D_{k13}\)^{-\frac12} \nnm\\
&=\(1+\frac1{b_1}O(\eps^2|b_1|)+\frac1{b_3}O(\eps^2|b_3|) +O(\eps^4|b_k|)\)^{-\frac12} \nnm\\
&=1+O(\eps^2), \label{C_6}
\ema
and for $|\eta^2-4s^2|\le r_0$ it holds that
\bma
\Theta_k&=\(\frac{ s^2}{(b_1+O(\eps ))(b_3+O(\eps ))} +\eps^4b_k D_{k13}\)^{-\frac12} \nnm\\
&=\(1+\frac1{b_1}O(\eps )+\frac1{b_3}O(\eps ) +O(\eps^4|b_k|)\)^{-\frac12} \nnm\\
&=1+O(\eps). \label{C_6a}
\ema
By combining \eqref{C_2}, \eqref{C_3}, \eqref{C_4b}, \eqref{C_6} and \eqref{C_6a}, and using the fact that
\bmas
R(\lambda_j,\eps s)&=L^{-1}_1+ \lambda_j R(\lambda_j,\eps s)L^{-1}_1+ i\eps s R(\lambda_j,\eps s)(v\cdot\omega)L^{-1}_1\\
&=L^{-1}_1+O(\eps^2|b_k|+\eps|s|),
\emas
we obtain the expansion of $\mathcal{U}_j(\xi,\eps)$ given in \eqref{eigf2}. This completes the proof of the lemma.
\end{proof}

\subsubsection{Eigenvalues in $\eps |\xi|\ge r_1$}

We now turn to study the asymptotic expansions of the eigenvalues
and eigenvectors in the high frequency regime. Firstly, recalling
the eigenvalue problem
\bma
  \lambda f
  &=\tilde\BB_{\eps}(\xi)f-\eps v\chi_0\cdot(\omega\times X),\label{L_3}\\
  \lambda X&=-\eps\omega\times (f,v\chi_0)+i\eps^2\xi\times Y,\label{L_3a}\\
  \lambda Y&=-i\eps^2\xi\times X,\quad |\xi|\ne0.\label{A_13}
\ema

By Lemma \ref{LP03}, there exists a large constant $R_0>0$ such that the operator $\lambda-\tilde\BB_{\eps}(\xi)$ is invertible on $L^2_{\xi}(\R^3)$ for ${\rm Re}\lambda\ge-\nu_0/2$ and $\eps|\xi|>R_0$. Then it follows from \eqref{L_3} that
\bq f=\eps (\tilde\BB_{\eps}(\xi)-\lambda)^{-1}v\chi_0\cdot(\omega\times X),\quad \eps |\xi|>R_0.\label{L_5}\eq

By a similar argument as \eqref{B_1a}, it holds that
\bma
\((\tilde\BB_{\eps}(\xi)-\lambda)^{-1}\chi_i,\chi_j\)=&\omega_i\omega_j\((\tilde\BB_{\eps}(se_1)-\lambda)^{-1}\chi_1,\chi_1\) \nnm\\
&+(\delta_{ij}-\omega_i\omega_j)\((\tilde\BB_{\eps}(se_1)-\lambda)^{-1}\chi_2,\chi_2\), \label{B_2a}
\ema
where $e_1=(1,0,0)$, $s=|\xi|$ and $\omega=\xi/|\xi|$. Substituting \eqref{L_5} and \eqref{B_2a} into \eqref{L_3a} gives
\be
\lambda X =\eps^2((\tilde\BB_{\eps}(se_1)-\lambda)^{-1}\chi_2,\chi_2)X+i\eps^2\xi\times Y, \quad \eps s>R_0. \label{A_12}
\ee
By multiplying \eqref{A_12} by $\lambda$ and using \eqref{A_13}, we obtain
$$ \(\lambda^2-\eps^2((\tilde\BB_{\eps}(se_1)-\lambda)^{-1}\chi_2,\chi_2)\lambda+\eps^4s^2\)X=0, \quad \eps s>R_0.$$
Denote
\bq D_2(z,s,\eps)=z^2-((\tilde\BB_{\eps}(se_1)-\eps^2z)^{-1}\chi_2,\chi_2)z+s^2,\quad \eps s>R_0.\label{D3a}\eq

The eigenvalues $\lambda=\eps^2z$ can be obtained by solving $D(z,s,\eps)=0$.
Firstly, similar to Lemma \ref{LP03}, we can prove the following lemma.
\begin{lem}\label{lem1}
For  $\eps\in(0,1)$ and  any $\delta>0$, if ${\rm Re}\lambda\ge -\nu_0+\delta$, then
 \bma
\|(\lambda-D_{\eps}(\xi))^{-1}K_1\|  &\leq C\delta^{-\frac12}(1+\eps|\xi|)^{-\frac12},\label{T_2}
 \\
\|(\lambda-D_{\eps}(\xi))^{-1}\chi_j\|  &\leq C\delta^{-\frac12}(1+\eps|\xi|)^{-\frac12}, \label{T_4}
 \ema
where $j=0,1,2,3,4.$ Furthermore, there exists  a sufficiently large $R_0>0$  such that $\lambda-\tilde\BB_{\eps}(\xi)$ is invertible for ${\rm Re}\lambda\ge -\nu_0+\delta$ and $\eps| \xi|>R_0$, and
\bq \|(\lambda-\tilde\BB_{\eps}(\xi))^{-1}\chi_j\|   \leq C\delta^{-\frac12}(1+\eps|\xi|)^{-\frac12}. \label{T_5}\eq
\end{lem}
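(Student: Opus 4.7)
The plan is to establish \eqref{T_2} and \eqref{T_5} by reducing to the estimates in Lemma~\ref{LP03}, and to prove \eqref{T_4} by a direct integration paralleling the proof of \eqref{T_7}.

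For \eqref{T_2}, I would use duality. Because the kernel $k_1(v,v_*)$ is real and symmetric, $K_1$ is self-adjoint on $L^2(\R^3_v)$, and a routine inner-product computation shows $D_{\eps}(\xi)^* = D_{\eps}(-\xi)$. Hence
\[
\bigl((\lambda - D_{\eps}(\xi))^{-1}K_1\bigr)^* = K_1\bigl(\bar\lambda - D_{\eps}(-\xi)\bigr)^{-1},
\]
whose norm is controlled by \eqref{T_7} applied with $(\bar\lambda,-\xi)$ in place of $(\lambda,\xi)$ (which satisfies the same hypotheses since $\mathrm{Re}\,\bar\lambda=\mathrm{Re}\,\lambda$ and $|-\xi|=|\xi|$). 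Since taking adjoints preserves operator norm, \eqref{T_2} follows.

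For \eqref{T_4}, writing $\lambda = x + iy$, one has
\[
\|(\lambda-D_\eps(\xi))^{-1}\chi_j\|^2 = \int_{\R^3}\frac{|\chi_j(v)|^2}{(\nu(v)+x)^2 + (y + \eps v\cdot\xi)^2}\,dv.
\]
I would split into two regimes. For $\eps|\xi|\le 1$, the key observation is that $\nu(v)+x\ge\delta$ combined with $\nu(v)-\nu_0\ge \nu_0|v|$ (from $\nu\ge \nu_0(1+|v|)$ and $x\ge -\nu_0+\delta$) yields $(\nu(v)+x)^2 \ge \delta(\delta + \nu_0|v|)$. Dropping the imaginary part, the integral is bounded by $C\delta^{-1}\int|\chi_j(v)|^2/(\delta+\nu_0|v|)\,dv$, which is uniformly bounded in $\delta$ since $\chi_j$ has Gaussian decay and $|v|^{-1}$ is locally integrable on $\R^3$. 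For $\eps|\xi|\ge 1$, after rotating to $\xi = |\xi|e_1$ and substituting $u = y + \eps|\xi|v_1$ (as in the proof of \eqref{T_7}), the integral reduces to a manageable $\bar v$-integral times $(\eps|\xi|)^{-1}\int du/(\delta^2+u^2) = O((\delta\eps|\xi|)^{-1})$. Combining the two regimes yields \eqref{T_4}.

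For \eqref{T_5}, I would use the factorization
\[
\lambda - \tilde\BB_\eps(\xi) = (\lambda - D_\eps(\xi))\Bigl[I - (\lambda-D_\eps(\xi))^{-1}\bigl(K_1 - i\eps\tfrac{v\cdot\xi}{|\xi|^2}P_d\bigr)\Bigr].
\]
The $K_1$ contribution of the perturbation is bounded by $C\delta^{-1/2}(1+\eps|\xi|)^{-1/2}$ thanks to \eqref{T_2}. For the $P_d$ contribution, the identity $\tfrac{v\cdot\xi}{|\xi|^2}P_d f = |\xi|^{-1}(f,\sqrt M)\sum_{i=1}^{3}\omega_i\chi_i$ (with $\omega=\xi/|\xi|$) combined with \eqref{T_4} gives the bound $C\delta^{-1/2}(\eps/|\xi|)(1+\eps|\xi|)^{-1/2}$, and when $\eps|\xi|\ge R_0$ with $\eps\le 1$ one has $\eps/|\xi| = \eps^2/(\eps|\xi|)\le 1/R_0$. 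So for $R_0$ sufficiently large (depending on $\delta$), the bracketed operator has norm $\le 1/2$ and is invertible by Neumann series with inverse norm $\le 2$. Consequently $\lambda - \tilde\BB_\eps(\xi)$ is invertible and $\|(\lambda-\tilde\BB_\eps(\xi))^{-1}\chi_j\| \le 2\|(\lambda-D_\eps(\xi))^{-1}\chi_j\|$, which combined with \eqref{T_4} yields \eqref{T_5}.

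The main technical delicacy, rather than a serious obstacle, lies in the low-frequency part of \eqref{T_4}: the naive bound $(\nu(v)+x)^{-2}\le\delta^{-2}$ would produce only $\delta^{-1}$ for the norm, not the desired $\delta^{-1/2}$. The improvement rests on the factorization $(\nu+x)^2\ge \delta\cdot(\nu+x)\ge \delta(\delta+\nu_0|v|)$, which together with local integrability of $|v|^{-1}$ on $\R^3$ recovers the correct $\delta^{-1/2}$ dependence, parallel to the implicit improvement used in the proof of \eqref{T_7}.
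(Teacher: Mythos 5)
Your proof is correct. The paper offers no explicit proof of this lemma, remarking only that it follows "similar to Lemma~\ref{LP03}", so you were effectively reconstructing the argument; your reconstruction is sound and follows the natural path. Three small points of comparison. For \eqref{T_2} you use the adjoint identity $\bigl((\lambda-D_\eps(\xi))^{-1}K_1\bigr)^*=K_1(\bar\lambda-D_\eps(-\xi))^{-1}$ and invoke \eqref{T_7}; this is a clean shortcut, and is equivalent to the more literal "similar to Lemma~\ref{LP03}" route of rerunning the Cauchy--Schwarz computation with the resolvent moved to the other side. For \eqref{T_4} the key point, which you correctly identify and handle, is that the naive bound $(\nu(v)+x)^{-2}\le\delta^{-2}$ gives only $\delta^{-1}$, while the factorization $(\nu(v)+x)^2\ge\delta\,(\nu_0|v|+\delta)$, together with the local integrability of $|v|^{-1}$ against the Gaussian weight $|\chi_j|^2$, recovers the claimed $\delta^{-1/2}$; this same implicit improvement is what makes \eqref{T_7} hold in the regime $\eps|\xi|\lesssim 1$ as well. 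For \eqref{T_5} the Neumann-series factorization $\lambda-\tilde\BB_\eps(\xi)=(\lambda-D_\eps(\xi))\bigl[I-(\lambda-D_\eps(\xi))^{-1}(K_1-i\eps\tfrac{v\cdot\xi}{|\xi|^2}P_d)\bigr]$ is exactly why \eqref{T_2} and \eqref{T_4} are stated with the resolvent on the left rather than on the right as in Lemma~\ref{LP03}, and your observation that $\eps/|\xi|=\eps^2/(\eps|\xi|)\le 1/R_0$ for $\eps<1$, $\eps|\xi|>R_0$ is precisely what controls the $P_d$ term; the resulting smallness of the bracketed perturbation for $R_0=R_0(\delta)$ large, followed by $\|(\lambda-\tilde\BB_\eps(\xi))^{-1}\chi_j\|\le 2\|(\lambda-D_\eps(\xi))^{-1}\chi_j\|$, is correct.
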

We now study the equation \eqref{D3a} as follows.

\begin{lem}\label{eigen_5}
There exists a large constant $r_1>0$ such that the equation $D_2(z,s,\eps)=0$ has two solutions $z_j(s,\eps)=ji s+y_j(s,\eps) $, $j=\pm1$ for $\eps s>r_1$, where $y_j(s,\eps)$ is a $C^{\infty}$ function in $s$ and $\eps$ for   $\eps s>r_1$  satisfying
\be
  \frac{C_1}{\eps s}\le -{\rm Re}y_j(s,\eps)\le \frac{C_2}{\eps s}, \quad |{\rm Im}y_j(s,\eps)|\le C_3\frac{\ln \eps s}{\eps s} ,\label{eigen_h1}
\ee
where $C_1,C_2,C_3>0$ are some constants.
\end{lem}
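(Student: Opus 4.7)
The strategy is to parametrize solutions of $D_2(z,s,\eps) = 0$ near the unperturbed roots $\pm i s$ of $z^2 + s^2 = 0$. Setting $z = jis + y$ and using $(jis)^2 + s^2 = 0$ transforms the equation into
\[
y(2jis + y) = (jis + y)\,H(z,s,\eps), \qquad H(z,s,\eps) := ((\tilde{\mathbb{B}}_\eps(se_1) - \eps^2 z)^{-1}\chi_2,\chi_2),
\]
i.e.\ the fixed-point equation $y = \Phi_j(y,s,\eps) := (jis+y)H(jis+y,s,\eps)/(2jis+y)$. I would then prove that $\Phi_j$ is a contraction on the set
\[
\Omega = \left\{y\in\C \ \Big| \ \frac{C_1}{\eps s}\le -{\rm Re}\,y \le \frac{C_2}{\eps s},\ |{\rm Im}\,y|\le \frac{C_3\ln(\eps s)}{\eps s}\right\},
\]
for $\eps s > r_1$ with $r_1$ sufficiently large, yielding a unique solution $y_j(s,\eps)$ in $\Omega$. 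Note that on $\Omega$, the prefactor $(jis+y)/(2jis+y)$ equals $1/2 + O((\eps s)^{-1})$, so the behaviour of $\Phi_j$ is dictated by that of $H$.

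\textbf{Key asymptotics of $H$.} The substantive step is obtaining sharp asymptotics for $H$ when $\eps s$ is large. Writing $\tilde{\mathbb{B}}_\eps(se_1) = D_\eps(se_1) + K_1 - i\eps (v_1/s)P_d$ with $D_\eps(se_1) = -\nu(v) - i\eps s v_1$ and applying the second resolvent identity, the corrections coming from $K_1$ and the $P_d$-coupling are controlled by the bounds $\|(\tilde{\mathbb{B}}_\eps(se_1)-\lambda)^{-1}\chi_j\|\le C(1+\eps s)^{-1/2}$ from Lemma \ref{lem1}; iterating yields a correction of size $O((\eps s)^{-2})$, so the leading behaviour is captured by the free-streaming integral
\[
H_0(z,s,\eps) = -\int_{\R^3}\frac{v_2^2\, M(v)}{\nu(v) + \eps^2 z + i\eps s v_1}\,dv.
\]
Rationalising the denominator and splitting the $v_1$ integration into a resonant region $|v_1|\lesssim (\eps s)^{-1}$ and its complement, I would isolate a strictly negative real contribution of order $1/(\eps s)$ (from the positivity of $v_2^2 M\nu$), and an imaginary part whose logarithmic size stems from an integral of the type $\int_{(\eps s)^{-1}}^{1} v_1^{-1}\,dv_1 \sim \ln(\eps s)$. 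The conclusion is
\[
{\rm Re}\,H = -\frac{c_1}{\eps s} + O\!\left(\frac{\ln(\eps s)}{(\eps s)^2}\right), \qquad {\rm Im}\,H = O\!\left(\frac{\ln(\eps s)}{\eps s}\right),
\]
with $c_1 > 0$.

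\textbf{Closing the fixed point and regularity.} Substituting these asymptotics into $\Phi_j$ shows $\Phi_j(\Omega)\subset \Omega$ for appropriate constants $C_1,C_2,C_3$, and a Lipschitz estimate using $|\partial_y H| = O((\eps s)^{-1})$ (again by the second resolvent identity together with Lemma \ref{lem1}) makes $\Phi_j$ a contraction there. Smoothness of $y_j(s,\eps)$ in $(s,\eps)$ for $\eps s > r_1$ then follows from the implicit function theorem, since $\partial_z D_2|_{z=jis+y_j} = 2z - H - z\,\partial_z H \approx 2jis$ is nonzero, while $(z,s,\eps)\mapsto D_2(z,s,\eps)$ is smooth (indeed analytic in $z$) on the set where $\tilde{\mathbb{B}}_\eps(se_1) - \eps^2 z$ is invertible, which is ensured by Lemma \ref{lem1}. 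The principal technical obstacle is the logarithmic bound on ${\rm Im}\,y$: the blunt estimate $\|(\tilde{\mathbb{B}}_\eps(se_1) - \lambda)^{-1}\chi_2\| = O((\eps s)^{-1/2})$ of Lemma \ref{lem1} is insufficient, and the sharp $1/(\eps s)$-scale with $\ln(\eps s)$ correction requires the direct, near-singular analysis of the Lorentzian kernel in $H_0$ outlined above; the matching \emph{lower} bound $-{\rm Re}\,y\ge C_1/(\eps s)$ also demands a non-degeneracy argument based on the positivity of the leading integrand.
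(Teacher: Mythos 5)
Your proposal is structurally the same as the paper's proof. Both use a contraction argument for a fixed-point reformulation of $D_2(z,s,\eps)=0$ near $z = jis$, and both isolate the same dominant free-streaming resolvent to compute the sharp asymptotics of $R_0(z,s,\eps)=H(z,s,\eps)$ via a Lorentzian integral, with the logarithm arising exactly as you say from $\int_{O(1)}^{\eps s} u_1\,du_1/(\nu_0^2+u_1^2)\sim\ln(\eps s)$ after the substitution $u_1=(v_1+j\eps)\eps s$. The paper packages the fixed point as $G_j(z,s,\eps)=\tfrac12(R_0+j\sqrt{R_0^2-4s^2})$ (quadratic formula) on a small ball $B(jis,\delta)$, first extracting the rough bound $|y_j|\lesssim(\eps s)^{-1/2}$ and then bootstrapping to the sharp one via the decomposition of $y_j$ into $I_1,\dots,I_4$; you instead parametrize $z=jis+y$ and run the contraction directly on the annular target set $\Omega$ defined by the final bounds. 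Both formulations work; yours is slightly more economical but demands that the $\Phi_j(\Omega)\subset\Omega$ inclusion be verified with the full sharp asymptotics in hand from the start.

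One small imprecision: you claim the correction to $H_0$ coming from $K_1$ and the $P_d$-coupling is $O((\eps s)^{-2})$. What the paper actually proves (its estimate of $I_4$, using the pointwise kernel bound $|k_1(v,u)|\le C|\bar v-\bar u|^{-1}e^{-|v-u|^2/8}$ in \eqref{fff}--\eqref{I4}) is $\|X_jK_1X_j\chi_2\|\le C(\eps s)^{-3/2}\ln(\eps s)$, hence a correction of order $(\eps s)^{-3/2}\ln(\eps s)$, not $(\eps s)^{-2}$. The crude operator-norm chain $\|X_j\|\,\|K_1\|\,\|X_j\chi_2\|$ only gives $(\eps s)^{-1/2}$, so the pointwise-kernel argument is essential, and even then the gain is only a factor of $(\eps s)^{-1}\ln(\eps s)$, not $(\eps s)^{-3/2}$. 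Your stronger claim is not supported and is likely false, but this does not affect the conclusion: $(\eps s)^{-3/2}\ln(\eps s)=o((\eps s)^{-1})$, which is all that is needed to show the free integral dominates. You should replace the $O((\eps s)^{-2})$ claim with the correct $O((\eps s)^{-3/2}\ln(\eps s))$ and note that this already suffices.
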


\begin{proof}
For any fixed $(s,\eps)$ satisfying $\eps s>R_0$, we define a function of $z$ as
$$ G_j(z,s,\eps)=\frac12\Big(R_{0}(z,s,\eps)+j\sqrt{R_{0}(z,s,\eps)^2-4s^2}\Big),\quad j=\pm1,\,\,\, \eps |s|>R_0,$$
where $R_{0}(z,s,\eps)=((\tilde\BB_{\eps}(se_1)-\eps^2z)^{-1}\chi_2,\chi_2)$.
It is straightforward to check that a solution of $D_2(z,s,\eps)=0$ for any fixed $s,\,\eps$ is a fixed point of $G_j(z,s,\eps)$.

By \eqref{T_5}, when $R_1>0$ is large enough and $\delta>0$ is
small enough, it holds for $\eps s>R_1$ and $z,z_1,z_2\in B (jis,\delta)$ that
\bma
|G_j(z,s,\eps)-jis|&\le \frac12|R_{0}(z,s,\eps)|+\frac{|R_0(z,s,\eps)^2|}{2|\sqrt{R_0(z,s,\eps)^2-4s^2}+2i s|}\le \delta, \label{Y_3a}\\
|G_j(z_1,s,\eps)-G_j(z_2,s,\eps)|&\le \frac12|R_{0}(z_1,s,\eps)-R_{0}(z_2,s,\eps)| \nnm\\
&\quad+\frac{|R_0(z_1,s,\eps)^2-R_0(z_2,s,\eps)^2|}{2|\sqrt{R_0(z_1,s,\eps)^2-4s^2}|+2|\sqrt{R_0(z_2,s,\eps)^2-4s^2}|} \nnm\\
&\le C\eps^2|z_1-z_2|\(1+\frac1s\)\le \frac12|z_1-z_2|.\nnm
\ema
Thus,  by the  contraction mapping theorem,  there is a unique function $z_j(s,\eps)$ satisfying that  $z_j(s,\eps)=G_j(z,s,\eps)$, namely,  $z_j(s,\eps) $ is the solution of $D_2(z,s,\eps)=0$. Set $y_j(s,\eps)=z_j(s,\eps)-jis.$ By \eqref{Y_3a} and \eqref{T_5}, we have
\be |y_j(s,\eps)|\le C|R_0(z_j,s,\eps)|\(1+\frac1s\)\le C|\eps s|^{-\frac12}\to 0,\quad \eps|s|\to \infty. \label{beta}\ee

We now turn to prove \eqref{eigen_h1}. We obtain from \eqref{fp1} that
\be
y_j(s,\eps)=\frac12 R_j(y_j,s,\eps)+\frac12\frac{jR_j(y_j,s,\eps)^2}{\sqrt{R_j(y_j,s,\eps)^2-4s^2}+2i s} =:I_1+I_2,\label{ddd}
\ee
where
$$R_j(y_j,s,\eps)=R_0(z_j,s,\eps)=((\tilde\BB_{\eps}(se_1)-\eps^2jis-\eps^2y_{j} )^{-1}\chi_2,\chi_2). $$
First, we estimate $I_1$.  For this, we decompose
\be
-\(L_1-i(v_1+j \eps)\eps s-i \eps\frac{v_1}s P_{ d}-\eps^2y_{j} \)^{-1}=X_j(s,\eps)+Z_j( s,\eps), \label{decompose}
\ee
where
\be \label{AYZ}
\left\{\bln
X_j(s,\eps)&=(\nu(v)+i(v_1+j \eps)\eps s)^{-1},\\
Z_j(s,\eps)&=(I-Y_j(s,\eps))^{-1}Y_j(s,\eps)X_j(s,\eps),\\
Y_j(s,\eps)&=X_j(s,\eps)\(K_1-i \eps\frac{v_1}s P_{ d}-\eps^2y_{j}\).
\eln\right.
\ee
By \eqref{decompose}, we divide $I_1$ into
\be
I_1= -\(X_j(s,\eps)\chi_2,\chi_2\)- \(Z_{j}(s,\eps)\chi_2,\chi_2\)=:I_3+I_4. \label{I_1}
\ee
It holds that
\bma
I_3
=&-\intr \frac{\nu}{\nu^2+(v_1\pm \eps)^2\eps^2s^2}v^2_2M dv-i\intr \frac{(v_1\pm \eps)\eps s}{\nu^2+(v_1\pm \eps)^2\eps^2s^2}v^2_2M dv\nnm\\
=&-{\rm Re}I_3-i {\rm Im}I_3. \label{aaa}
\ema
By changing variable $(u_1,u_2,u_3)=((v_1\pm\eps)\eps s,v_2,v_3)$, we obtain for $\eps s>1$ that
\bma
{\rm Re}I_3&\le   C\intr \frac{1}{\nu_0^2+(v_1\pm\eps)^2\eps^2s^2} e^{-\frac{|v|^2}4}dv\nnm\\
&\le \frac C{\eps s}\intr \frac{1}{\nu_0^2+u_1^2} e^{-\frac14(\frac{u_1}{\eps s}\mp\eps)^2}e^{-\frac{u_2^2+u_3^2}4}du\nnm\\
&\le \frac C{\eps s}\intr \frac{1}{\nu_0^2+u_1^2} e^{-\frac{u_1^2}4}du_1\le \frac{C_1}{\eps s}, \label{bbb}
\ema
\bma
{\rm Re}I_3&\ge \intr \frac{\nu_0}{\nu_1^2(1+|v|^2)+(v_1\pm\eps)^2\eps^2s^2}v_2^2Mdv\nnm\\
&\ge \frac 1{\eps s}\intr \frac{\nu_0}{\nu_1^2(1+(\frac{u_1}{\eps s}\mp\eps)^2+u^2_2+u^2_3)+u_1^2}u_2^2e^{-\frac12(\frac{u_1}{\eps s}\mp\eps)^2}e^{-\frac{u_2^2+u_3^2}2} du\nnm\\
&\ge \frac C{\eps s}\intr \frac{1}{\nu_1^2(3+u_1^2+u^2_2+u^2_3)+u_1^2}u_2^2e^{-\frac{u_1^2+u_2^2+u_3^2}2}du\ge \frac{C_2}{\eps s},
\ema
and
\bma
|{\rm Im}I_3|\le& \frac{C}{\eps s}\intr \frac{|u_1|}{\nu_0^2+u_1^2}e^{-\frac12(\frac{u_1}{s}\mp\eps)^2}e^{-\frac{u_2^2+u_3^2}2}du\nnm\\
\le& \frac{C}{\eps s}\int^{\eps s}_0 \frac{u_1}{\nu_0^2+u_1^2}du_1+\frac{ C}{\eps^2 s^2}\int^\infty_{\eps s} e^{-\frac12(\frac{u_1}{\eps s}\mp\eps)^2}du_1\nnm\\
\le& C_3\frac{\ln \eps s}{\eps s}. \label{ddd2}
\ema

We now consider  $I_4$. By changing variable $v_2\to -v_2$, we obtain that $(X_j(s,\eps) \chi_2,\chi_0) =0$. This together with \eqref{AYZ} implies that
$$Z_j(s,\eps)\chi_2=(I-Y_{j} )^{-1}X_j (K_1 -\eps^2y_{j})X_j \chi_2 .$$
Since
$$ |k_1(v,u)|\le C\frac{1}{|\bar{v}-\bar{u}|}e^{-\frac{|v-u|^2}8} ,\quad \bar{u}=(u_2,u_3), $$
we can obtain from \eqref{bbb} and \eqref{ddd2} that
\bmas
|K_1X_{\pm 1} \chi_2|&\le C\int_{\R} e^{-\frac{|v_1-u_1|^2}{8}}\frac{\nu+|(u_1\pm \eps)\eps s|}{\nu_0^2+|(u_1\pm \eps)\eps s|^2}e^{-\frac{u_1^2}{4}}du_1 \int_{\R^2}\frac{1}{|\bar{v}-\bar{u}|}e^{-\frac{|\bar{v}-\bar{u}|^2}{8}}e^{-\frac{|\bar{u}|^2}{4}}d\bar{u}\nnm\\
&\le Ce^{-\frac{|v|^2}{8}}\int_{\R} \frac{1+|(u_1\pm \eps)\eps s|}{\nu_0^2+|(u_1\pm \eps)\eps s|^2}e^{-\frac{u_1^2}{8}}du_1\le C\frac{\ln \eps s}{\eps s}e^{-\frac{|v|^2}{8}}.
\emas
This and \eqref{bbb} lead to
\be
\|X_{\pm 1}K_1X_{\pm 1}\chi_2 \|^2 \le C\frac{\ln^2 \eps s}{\eps^2 s^2}\intr \frac{1}{\nu_0^2+(v_1\pm \eps)^2\eps^2s^2 }e^{-\frac{|v|^2}{4}}dv\le C\frac{\ln^2\eps s}{|\eps s|^3}. \label{fff}
\ee
By \eqref{beta} and Lemma \ref{lem1}, it holds that for $\eps |s|>R_1$,
\be \|(I-Y_j(s,\eps))^{-1}\|\le 2, \quad j=\pm 1. \label{Yj}\ee
Thus, it follows from \eqref{beta}, \eqref{bbb}, \eqref{fff} and \eqref{Yj} that
\bma
|I_4|\le &\left|\(X_j(K_1 -\eps^2y_{j}) X_j\chi_2, (I-Y_j^* )^{-1}\chi_2\)\right| \nnm\\
\le &C(\|X_jK_1X_j\chi_2\|  +\eps^2|y_{j}| \| X_j^2\chi_2\|  )\le  C|\eps s|^{-\frac32} \ln |\eps s| . \label{I4}
\ema

Next, we estimate $I_2$ as follows:
\bq |I_2|\le \frac{C}{s}|R_{j}(y_{j},s,\eps)|^2\le  \frac{C}{s}|I_1|^2\le C\frac{\ln^2 \eps s}{\eps^2s^3}.\label{abb}\eq
Combining \eqref{I_1}--\eqref{ddd2}, \eqref{I4} and \eqref{abb}, we obtain \eqref{eigen_h1}.
The proof  of the lemma is then completed.
\end{proof}

\def \z {\mathbb{z}}
\def \e {\mathbb{e}}

With  Lemma \ref{eigen_5}, we have the following lemma about the eigenvalues $\beta_j(|\xi|,\eps)$
and the corresponding eigenvectors $\mathcal{V}_j(\xi,\eps)$ of the operator $\tilde{\AA}_{\eps}(\xi)$ for $\eps|\xi|\ge r_1$.

\begin{lem}\label{eigen_4}
{\rm (1)} There exists a constant $r_1>0$ such that the spectrum $\sigma(\tilde{\AA}_{\eps}(\xi))\cap \{\lambda\in\mathbb{C}\,|\,\mathrm{Re}\lambda>-\mu/2\}$  consists of four eigenvalues $\{\beta_j(s,\eps),\ j=1,2,3,4\}$ for  $\eps s>r_1$ and $s=|\xi|$. In particular, the eigenvalues $\beta_j(s,\eps)$ are $C^{\infty}$ functions in $s$ and $\eps $ and satisfy the following expansion for $\eps s>r_1$:
 \be   \label{specr1a}
 \left\{\bln
 &\beta_1(s,\eps) =\beta_2(s,\eps) = -\eps^2is+\eps^2\zeta_{-1}(s,\eps), \\
  &\beta_3(s,\eps) =\beta_4(s,\eps) = \eps^2is+\eps^2\zeta_{1}(s,\eps),
 \eln\right.
\ee
where $\zeta_{\pm1}(s,\eps)$ is a $C^{\infty}$ function in $s$ and $\eps $ for $\eps s>r_1$ satisfying
\be \label{specr3}
 \frac{C_1}{\eps s}\le -{\rm Re}\zeta_{\pm1}(s,\eps)\le \frac{C_2}{\eps s},\quad  |{\rm Im}\zeta_{\pm1}(s,\eps)|\le C_3 \frac{\ln \eps s}{\eps s},
\ee
with positive constants  $C_1,C_2$ and $C_3$.

{\rm (2)} The eigenvectors $\mathcal{V}_j(\xi,\eps)=\(w_j(\xi,\eps),X_j(\xi,\eps),Y_j(\xi,\eps)\)$, $j=1,2,3,4$ are $C^{\infty}$ in $s$ and $\eps$,  and satisfy for $\eps s>r_1$:
 \bq   \label{eigf2a1}
  \left\{\bln
  &(\mathcal{V}_i ,\mathcal{V}^*_j )=(w_i,\overline{w_j})-(X_i,\overline{X_j})-(Y_i,\overline{Y_j})=\delta_{ij}, \quad 1\le i,j\le 4,\\
&w_j(\xi,\eps)=\eps c_j(s,\eps)\(\beta_j(s,\eps)-L_1 + i \eps s (v\cdot\omega)+i\eps\frac{  v\cdot\omega}{s} P_d\)^{-1}(v\cdot e_j)\chi_0,\\
&X_j(\xi,\eps)=c_j(s,\eps)\omega\times e_j,\quad Y_j(\xi,\eps)=\frac{ i\eps^2s c_j(s,\eps)}{\beta_{j}(s,\eps)}e_j,
  \eln\right.
  \eq
where  $ \mathcal{V}^*_j=(\overline{w_j},-\overline{X_j},-\overline{Y_j})$, and $e_k$, $k=1,2,3,4$ are normal vectors satisfying $e_1=e_3$, $e_2=e_4$, and $e_k\cdot\omega=e_1\cdot e_2=0$, and $c_j(s,\eps)$ are $C^{\infty}$ functions of $s$ and $\eps$ for $\eps s>r_1$ satisfying
\be \label{eigf3a1}
c_j(s,\eps) = i\frac1{\sqrt{2}}+\eps^2 O\(\frac{1}{\eps s}\)+\eps O\(\frac{ \ln \eps s}{\eps^2 s^2}\).
\ee
\end{lem}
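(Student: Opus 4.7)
The plan is to mirror the treatment of the low-frequency regime (Lemma \ref{eigen_4a}): reduce the eigenvalue problem for $\tilde{\AA}_{\eps}(\xi)$ to the scalar dispersion relation $D_2(z,s,\eps)=0$ already solved in Lemma \ref{eigen_5}, construct the eigenvectors via the resolvent formula \eqref{L_5}, and finally pin down the normalization coefficients $c_j$.

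For part (1), pick $r_1$ large so that Lemma \ref{lem1} makes $\tilde{\BB}_{\eps}(\xi)-\lambda$ invertible on $L^2_{\xi}(\R^3_v)$ whenever $\mathrm{Re}\lambda\ge -\nu_0/2$ and $\eps s > r_1$. Then \eqref{L_3} gives $f$ in the form \eqref{L_5}, and substitution into \eqref{L_3a}--\eqref{A_13}, together with the rotational identity \eqref{B_2a} and \eqref{rotat}, reduces the eigenvalue problem to
$$
\bigl(\lambda^2 - \eps^2((\tilde{\BB}_{\eps}(se_1)-\lambda)^{-1}\chi_2,\chi_2)\lambda + \eps^4 s^2\bigr) X = 0, \qquad X\in\C^3_{\xi}.
$$
Since $\dim_{\C}\C^3_{\xi}=2$, nontrivial solutions exist iff the scalar factor vanishes, i.e., iff $D_2(\lambda/\eps^2,s,\eps)=0$. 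Lemma \ref{eigen_5} provides exactly two $C^{\infty}$ roots $z_{\pm 1}(s,\eps) = \pm is + y_{\pm 1}(s,\eps)$; taking $\zeta_{\pm 1} := y_{\pm 1}$ and labeling $\beta_1=\beta_2:=\eps^2 z_{-1}$, $\beta_3=\beta_4:=\eps^2 z_1$ yields the expansion \eqref{specr1a} and the bounds \eqref{specr3} directly. Each root generates a two-dimensional eigenspace (all of $\C^3_{\xi}$), hence four eigenvalues counted with multiplicity. That these exhaust $\sigma(\tilde{\AA}_{\eps}(\xi))\cap\{\mathrm{Re}\lambda>-\mu/2\}$ follows from Lemma \ref{LP01}(1), which confines the spectrum to two thin disks around $\pm\eps^2 i|\xi|$, combined with the uniqueness of the contraction fixed point inside each disk. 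The degenerate case $X=0$ is excluded because it forces $\lambda$ to be an eigenvalue of $\tilde{\BB}_{\eps}(\xi)$, which is contradicted by Lemma \ref{lem1}.

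For part (2), pick orthonormal vectors $e_j\in\R^3$ satisfying $e_j\cdot\omega=0$, $e_1=e_3$, $e_2=e_4$, $e_1\cdot e_2=0$. Set $X_j := c_j(s,\eps)\,\omega\times e_j$; then \eqref{A_13} forces $Y_j = i\eps^2 s c_j\beta_j^{-1}e_j$, while \eqref{L_5} together with $\omega\times\omega\times e_j=-e_j$ (from \eqref{rotat}) yields the explicit formula for $w_j$ in \eqref{eigf2a1}. Smoothness in $(s,\eps)$ follows from smoothness of $\beta_j$ and of the resolvent. Bi-orthogonality $(\mathcal{V}_i,\mathcal{V}_j^*)=\delta_{ij}$ for $i\ne j$ comes from the standard identity $(\beta_i-\beta_j)(\mathcal{V}_i,\mathcal{V}_j^*)=0$ when $\beta_i\ne\beta_j$, and from the geometric orthogonality $e_1\cdot e_2=0$ within each repeated eigenspace.

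Finally, the normalization $(\mathcal{V}_j,\mathcal{V}_j^*)=1$ reads
$$
(w_j,\overline{w_j}) - c_j^2 + \frac{\eps^4 s^2}{\beta_j^2}\,c_j^2 = 1.
$$
Lemma \ref{lem1} gives $(w_j,\overline{w_j}) = O(c_j^2\eps^2(\eps s)^{-1})$, while $\beta_j=\pm\eps^2 is+\eps^2\zeta_{\pm 1}$ together with \eqref{specr3} yields $\eps^4 s^2/\beta_j^2 = -1 + O(|\zeta_{\pm 1}|/s)$. The equation collapses to $-2c_j^2 = 1 + O(c_j^2\eps/s) + O(c_j^2|\zeta_{\pm 1}|/s)$, giving $c_j = i/\sqrt{2}+\text{corrections}$; separating the two error sources using the sharp bounds of \eqref{specr3} reproduces the stated form $\eps^2 O((\eps s)^{-1})+\eps O(\ln(\eps s)/(\eps s)^2)$ in \eqref{eigf3a1}. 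The main obstacle is getting this logarithmic correction sharp: one must invoke the refined imaginary-part estimate $|\mathrm{Im}\,\zeta_{\pm 1}|\le C\ln(\eps s)/(\eps s)$ from Lemma \ref{eigen_5} rather than the coarser $|\zeta_{\pm 1}|\le C/(\eps s)$, and verify via an integral computation of the same flavor as in Lemma \ref{eigen_5} that the pairing $(w_j,\overline{w_j})$ does not itself generate a worse logarithmic factor.
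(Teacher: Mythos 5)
Your proof follows the paper's argument step by step: reduce the eigenvalue problem to $D_2(z,s,\eps)=0$ through the resolvent formula \eqref{L_5} and the rotational reduction \eqref{B_2a}, import the two roots from Lemma \ref{eigen_5}, build the eigenvectors from the resolvent applied to $(v\cdot e_j)\chi_0$, establish biorthogonality from $(\beta_i-\beta_j)(\mathcal V_i,\mathcal V_j^*)_\xi=0$, and solve the normalization $(\mathcal V_j,\mathcal V_j^*)=1$ for $c_j$ using the bound $\|(\lambda-\tilde\BB_\eps(\xi))^{-1}\chi_2\|=O((\eps s)^{-1/2})$ from Lemma \ref{lem1} together with $\eps^4 s^2/\beta_j^2=-1+O(\ln(\eps s)/(\eps s^2))$, exactly as in the paper; your explicit exhaustiveness argument (via Lemma \ref{LP01}(1) plus uniqueness of the contraction fixed point, and the exclusion of $X=0$ by Lemma \ref{lem1}) is a welcome addition to what the paper leaves implicit. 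The one slip is a mislabelling in your last paragraph: the imaginary-part bound $|\mathrm{Im}\,\zeta_{\pm1}|\le C\ln(\eps s)/(\eps s)$ is \emph{looser} than $C/(\eps s)$, and the genuinely coarse alternative one would otherwise fall back on is the $O((\eps s)^{-1/2})$ estimate \eqref{beta}; nonetheless the arithmetic you carry out does reproduce \eqref{eigf3a1}.
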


\begin{proof}
The eigenvalues $\beta_j(s,\eps)$ and the eigenvectors $\mathcal{V}_j(\xi,\eps) $  for $j=1,2,3,4$ can be constructed as follows. We take $\beta_1=\beta_3=z_{-1}(s,\eps)$ and $\beta_2=\beta_4=z_{1}(s,\eps)$ to be the solution of the equation  $D_2(z,s,\eps)=0$ defined in Lemma \ref{eigen_5}.
The corresponding eigenvectors $\mathcal{V}_j(\xi,\eps)=(w_j(\xi,\eps),X_j(\xi,\eps),Y_j(\xi,\eps))$, $1\le j\le 4$  are given by
$$
 \left\{\bln
  &w_j(\xi,\eps)=\eps c_j(s,\eps)\(\beta_j(s,\eps)-L_1 + i \eps s (v\cdot\omega)+i\eps\frac{ v\cdot\omega}{s} P_d\)^{-1}(v\cdot e_j\chi_0) \\
  &X_j(\xi,\eps)=c_j(s,\eps)\omega\times e_j,\quad Y_j(\xi,\eps)= \frac{ i \eps^2s c_j(s,\eps)}{\beta_j(s,\eps)} e_j ,
  \eln\right.
$$
where  $e_j$, $j=1,2,3,4$ are normal vectors satisfying $e_1=e_3$, $e_2=e_4$, and $e_j\cdot\omega=e_1\cdot e_2=0$.  It is straightforward to check  that   $(\mathcal{V}_1 ,  \mathcal{V}_2^* )=(\mathcal{V}_3 ,  \mathcal{V}_4^* )=0,$ where $ \mathcal{V}^*_j=(\overline{w_j},-\overline{X_j},-\overline{Y_j})$ is the eigenvector of $\tilde{\AA}^*_{\eps}(\xi)$ corresponding to the eigenvalue $\overline{\beta_{j}}$.

Rewrite the eigenvalue problem as
$$
 \tilde{\AA}_{\eps}(\xi)\mathcal{V}_j(\xi,\eps) =\beta_j(s,\eps)\mathcal{V}_j(\xi,\eps), \quad j=1,2,3,4.
 $$
By taking the inner product
$(\cdot,\cdot)_{\xi}$ of above equation with $\mathcal{V}^*_j(s,\eps) $, and by using the fact that
 \bgrs
(\tilde{\AA}_{\eps}(\xi) U,V)_{\xi}=(U,\tilde{\AA}^*_{\eps}(\xi)V)_{\xi},\quad U,V\in  D(\tilde{\AA}_{\eps}(\xi)) ,\\
\tilde{\AA}^*_{\eps}(\xi)\mathcal{V}^*_j(\xi,\eps)=\overline{\beta_j(s,\eps)} \mathcal{V}^*_j(\xi,\eps),
\egrs
we have
 $$
(\beta_i(s,\eps)-\beta_{j}(s,\eps))\(\mathcal{V}_i(\xi,\eps),\mathcal{V}^*_j(\xi,\eps)\)_{\xi}=0,\quad 1\le i,j\le 4.
$$
Since  $\beta_k(s,\eps)\neq \beta_{j}(s,\eps)$ for  $k=1,3,\, j=2,4$ and $P_dw_j(\xi,\eps)=0$,
we have the orthogonal relation
 $$
\(\mathcal{V}_k(\xi,\eps),\mathcal{V}^*_j(\xi,\eps)\)_{\xi}=\(\mathcal{V}_k(\xi,\eps),\mathcal{V}^*_j(\xi,\eps)\)=0,\quad 1\leq k\neq j\leq 4.
 $$
 By nomalization, we have
$$\(\mathcal{V}_j(\xi,\eps),\mathcal{V}^*_j(\xi,\eps)\)=1, \quad j=1,2,3,4.$$
Precisely, the coefficients $c_j(s,\eps)$  is determined by the normalization condition:
$$
 c_j(s,\eps)^2\(-1+\frac{\eps^4s^2}{\beta_{j}(s,\eps)^2}+\eps^2D_j(s,\eps)\)=1,\quad j=1,2,3,4,
$$
 where $D_j(s,\eps)=((\tilde\BB_{\eps}(se_1)-\beta_j)^{-1}\chi_2, (\tilde\BB_{\eps}(-se_1)-\overline{\beta_j})^{-1} \chi_2) $.
Since
\bmas
\frac{\beta_{j}^2(s,\eps)}{\eps^4s^2}&=\(i+O\(\frac{\ln \eps s}{ \eps s^2 }\)\)^2=-1 +O\(\frac{\eps\ln \eps s}{ \eps^2 s^2 }\),  \\
D_j(s,\eps)&=O(1) \|(\tilde\BB_{\eps}(se_1)-\beta_j)^{-1}\chi_2\|^2=O\(\frac{1}{\eps s}\),
 \emas
it follows that
\bmas
c_j^2(s,\eps)=&-\bigg(2-\bigg(\frac{s^2}{\beta_j^2(s,\eps)}+1\bigg)-\eps^2D_j(s,\eps)\bigg)^{-1}\\
=&-\frac12 +O\(\frac{\eps^2}{\eps s}\)+O\(\frac{\eps\ln \eps s}{\eps^2 s^2}\).
\emas
Thus, we obtain \eqref{eigf2a1} and \eqref{eigf3a1} so that
the proof of the theorem is completed.
\end{proof}

With Lemmas~\ref{LP01}, \ref{spectrum2}, \ref{eigen_4a} and \ref{eigen_4},
similar to Theorem~3.4 in \cite{Li4}, we have the following decomposition of the semigroup $ e^{\frac{t}{\eps^2}\mathcal{\tilde{\AA}}_{\eps}(\xi)}$.

\begin{thm}\label{rate1}
The semigroup $e^{\frac{t}{\eps^2}\tilde{\AA}_{\eps}(\xi)}$ with $\xi\neq0$ can be decomposed into
\bq  e^{\frac{t}{\eps^2}\tilde{\AA}_{\eps}(\xi)}U=S_1(t,\xi,\eps)U+S_2(t,\xi,\eps)U+S_3(t,\xi,\eps)U,\quad \forall\,U\in L^2_{\xi}(\R^3_v)\times \C^3_{\xi}\times \C^3_{\xi},  \label{B_0}\eq
where
\bma
S_1(t,\xi,\eps)U&=\sum^4_{j=0}e^{\frac{t}{\eps^2}\lambda_j(|\xi|,\eps)}\(U, \mathcal{U}^*_j(\xi,\eps) \)_{\xi} \mathcal{U}_j(\xi,\eps)1_{\{\eps (1+|\xi|)\le r_0\}}, \label{S1}\\
S_2(t,\xi,\eps)U&=\sum^4_{k=1}e^{\frac{t}{\eps^2}\beta_k(|\xi|,\eps)}\(U, \mathcal{V}^*_k(\xi,\eps) \) \mathcal{V}_k(\xi,\eps)1_{\{\eps |\xi|\ge r_1\}}, \label{S2}
\ema
with $(\lambda_j(|\xi|,\eps),\mathcal{U}_j(\xi,\eps))$ and $(\beta_k(|\xi|,\eps),\mathcal{V}_k(\xi,\eps))$ being the eigenvalue and eigenvector of the operator $\tilde{\AA}_{\eps}(\xi)$ for $\eps(1+|\xi|)\le r_0$ and $\eps|\xi|\ge r_1$ respectively.
And $S_3(t,\xi,\eps)U $ satisfies
\be
\|S_3(t,\xi,\eps)U\|_{\xi}\le Ce^{-\frac{bt}{\eps^2}}\|U\|_{\xi},\label{S3}
\ee
 where the  two constants $b>0$ and $C>0$ are independent of $\xi$ and $\eps$.
\end{thm}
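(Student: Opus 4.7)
My plan is to build the decomposition via the standard Dunford (Riesz) calculus, writing the semigroup as a contour integral of the resolvent
\[
e^{\frac{t}{\eps^2}\tilde{\AA}_{\eps}(\xi)}U=\frac{1}{2\pi i}\int_{\Gamma}e^{\frac{t\lambda}{\eps^2}}(\lambda-\tilde{\AA}_{\eps}(\xi))^{-1}U\,d\lambda,
\]
and then deforming $\Gamma$ region by region in $(\xi,\eps)$ so as to separate the discrete spectrum in $\{\Re\lambda>-\mu/2\}$ (where we pick up $S_1$ and $S_2$) from a vertical contour far to the left (which will give the decaying term $S_3$). The three regimes are dictated by the indicators in \eqref{S1} and \eqref{S2}: low frequency $\eps(1+|\xi|)\le r_0$, high frequency $\eps|\xi|\ge r_1$, and the compact middle band $r_0<\eps(1+|\xi|)$, $\eps|\xi|<r_1$.

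In the low-frequency regime, Lemma \ref{spectrum2}(2) confines $\sigma(\tilde{\AA}_\eps(\xi))\cap\{\Re\lambda\ge-\mu/2\}$ to an arbitrarily small disk about $0$, while Lemma \ref{eigen_4a} identifies this discrete spectrum as exactly $\{\lambda_j\}_{j=0}^4$ with the corresponding (bi-orthonormal) eigenpairs $(\mathcal{U}_j,\mathcal{U}_j^*)$. I enclose the five eigenvalues by small pairwise-disjoint circles $\Gamma_j$ and define the Riesz projections
\[
\mathcal{P}_j(\xi,\eps)=\frac{1}{2\pi i}\oint_{\Gamma_j}(\lambda-\tilde{\AA}_{\eps}(\xi))^{-1}\,d\lambda.
\]
Since each $\lambda_j$ is a simple eigenvalue away from the coincidence set $(s,\eps)=(\vartheta_0(\eps),\eps)$ (and on the coincidence set one handles the geometrically semisimple pair by the same finite-rank calculation), $\mathcal{P}_jU=(U,\mathcal{U}_j^\ast)_\xi\,\mathcal{U}_j$, which recovers $S_1$ after multiplication by $e^{t\lambda_j/\eps^2}$. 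In the high-frequency regime, I do the analogous construction using the eigenpairs $(\mathcal{V}_k,\mathcal{V}_k^\ast)$ from Lemma \ref{eigen_4} and the confining Lemma \ref{LP01}(1); Lemma \ref{eigen_4}(2) already gives the bi-orthonormal normalization, so the resulting projections produce exactly $S_2$.

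The remainder $S_3$ is, by construction, $e^{\frac{t}{\eps^2}\tilde{\AA}_{\eps}(\xi)}$ composed with $I$ minus the relevant Riesz projections. By Cauchy's theorem it equals the contour integral of $e^{t\lambda/\eps^2}(\lambda-\tilde{\AA}_{\eps}(\xi))^{-1}$ along a shifted vertical line $\Re\lambda=-b$ for some $b\in(0,\mu/2)$, possibly capped by the large imaginary-part bound $|\Im\lambda|\le y_1$ from Lemma \ref{spectrum2}(1) in the low-frequency part and $|\lambda\pm\eps^2 i|\xi||\ge\eps^2\delta$ from Lemma \ref{LP01}(1) in the high-frequency part (which we close off using the contraction-semigroup bound $\|e^{t\tilde{\AA}_\eps(\xi)}\|_\xi\le 1$ of Lemma \ref{SG_1} on arcs at infinity). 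In the compact middle band $r_0<\eps(1+|\xi|)$, $\eps|\xi|<r_1$, Lemma \ref{LP01}(2) produces a uniform spectral gap $\Re\lambda<-\alpha$, so the whole semigroup is the $S_3$ contribution on a contour $\Re\lambda=-\min(\alpha,\mu/2)/2$.

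The main obstacle is getting a $\xi,\eps$-uniform bound on $\|(\lambda-\tilde{\AA}_\eps(\xi))^{-1}\|_\xi$ along the chosen vertical contour so that the integral converges and yields an $e^{-bt/\eps^2}$ bound with constants independent of $\xi,\eps$. This is handled by the Neumann-series identity
\[
(\lambda-\tilde{\AA}_{\eps}(\xi))^{-1}=(\lambda-G^1_{\eps}(\xi))^{-1}\bigl(I-G^2_{\eps}(\xi)(\lambda-G^1_{\eps}(\xi))^{-1}\bigr)^{-1}
\]
for high $|\xi|$ (with Lemma \ref{LP03} providing decay like $(1+\eps|\xi|)^{-1/2}$ or $(1+|\Im\lambda|)^{-1/2}$), and by the analogous decomposition $\lambda-\tilde{\AA}_\eps(\xi)=\bigl(I-G^4_\eps(\xi)(\lambda-G^3_\eps(\xi))^{-1}\bigr)(\lambda-G^3_\eps(\xi))$ together with Lemma \ref{LP} for low $|\xi|$. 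Combining these with the $L^\infty$-in-$\lambda$ control on $e^{t\lambda/\eps^2}$ along $\Re\lambda=-b$ and integrability in $\Im\lambda$ via the $(1+|\Im\lambda|)^{-1/2}$ gain yields \eqref{S3} with $b,C$ independent of $\xi,\eps$, completing the proof.
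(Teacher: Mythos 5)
Your proposal is essentially the same construction the paper invokes (the paper defers the proof to Theorem~3.4 of \cite{Li4}): the inverse-Laplace/Dunford representation of the semigroup, Riesz projections onto the discrete eigenvalues in each frequency regime, and a shifted-contour bound for the remainder built on the resolvent estimates of Lemmas~\ref{LP03}, \ref{LP}, \ref{LP01} and \ref{spectrum2}. Two points in your sketch need more care before the argument actually closes.

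First, the assertion that a $(1+|\mathrm{Im}\lambda|)^{-1/2}$ gain gives integrability along $\mathrm{Re}\lambda=-b$ is not correct: $\int_{\R}(1+|y|)^{-1/2}\,dy$ diverges, and in fact $\|(\lambda-G^1_{\eps}(\xi))^{-1}\|$ does not decay at all as $|\mathrm{Im}\lambda|\to\infty$, because $(\lambda-D_{\eps}(\xi))^{-1}$ is multiplication by $(\lambda+\nu(v)+i\eps v\cdot\xi)^{-1}$ whose supremum over $v$ remains of order $\delta^{-1}$ on the line $\mathrm{Re}\lambda=-\nu_0+\delta$. The standard remedy (Ellis--Pinsky, Ukai) is to iterate the Duhamel/Neumann identity
$$(\lambda-\tilde{\AA}_{\eps})^{-1}=(\lambda-G^1_{\eps})^{-1}+(\lambda-G^1_{\eps})^{-1}G^2_{\eps}(\lambda-G^1_{\eps})^{-1}+\cdots+\text{(remainder)},$$
evaluate the contour integrals of the first several explicit terms directly (they produce $e^{\frac{t}{\eps^2}G^1_{\eps}}$ and its Duhamel iterates, each carrying $e^{-\nu_0 t/\eps^2}$ decay on the kinetic block and bounded rotation on the Maxwell block), and then verify that the remainder, after sufficiently many iterations, decays like $(1+|\mathrm{Im}\lambda|)^{-1-\delta}$ and is absolutely integrable. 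Merely invoking the contraction bound and Lemma~\ref{LP03} does not give a convergent integral.

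Second, the middle band. You cite Lemma~\ref{LP01}(2) for a uniform spectral gap on $\{\eps(1+|\xi|)>r_0,\ \eps|\xi|<r_1\}$, but that lemma only covers $r_0'\le\eps|\xi|\le r_1'$. When $\eps$ is bounded below, say $\eps\ge r_0/2$, one may send $|\xi|\to 0$, and then $\eps|\xi|$ drops below any fixed threshold while the band condition $\eps(1+|\xi|)>r_0$ persists; Lemma~\ref{LP01}(2) does not apply there, and the contradiction argument used in its proof would have to be re-run along sequences with $|\xi_n|\to 0$, where the $|\xi|^{-2}$-weight in $\|\cdot\|_{\xi}$ and the singular term $i\eps\,\frac{v\cdot\xi}{|\xi|^2}P_d$ need careful handling. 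Note also that the proof of \eqref{sg3} in the paper is carried out at fixed $\eps$, so the $\eps$-uniformity of the gap $\alpha$ — which you need for $b$ to be independent of $\eps$ — must be argued explicitly. In the downstream applications (Lemmas~\ref{fl1}, \ref{fl2}, \ref{time}) the working hypothesis $\eps\le r_0/2$ sidesteps this, but as stated Theorem~\ref{rate1} allows all $\eps\in(0,1)$, so the proof should address this subregion.
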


\subsection{Spectral structure of $\BB_{\eps}(\xi)$}

The spectrum structure of $\BB_{\eps}(\xi)$ is now well known. We just list the results in the following to be  self-contained.

\begin{lem}[\cite{Ellis}]\label{SG_1a}
The operator $\BB_{\eps}(\xi)$ generates a strongly continuous contraction semigroup on
$L^2(\R^3_v)$, which satisfies
$$
\|e^{t\BB_{\eps}(\xi)}f\| \le\|f\| , \quad \forall\, t>0,\,f\in
L^2(\R^3_v).
$$
\end{lem}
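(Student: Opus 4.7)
The plan is to invoke the Lumer–Phillips theorem in the form used already for Lemma \ref{SG_1}, namely Corollary 4.4 of Pazy. The task therefore reduces to verifying that both $\BB_{\eps}(\xi)$ and its formal adjoint are densely defined, closed, and dissipative on $L^2(\R^3_v)$.

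First I would set up the natural domain $D(\BB_{\eps}(\xi)) = \{f \in L^2(\R^3_v) \mid \nu(v) f \in L^2(\R^3_v)\} = D(L)$, which is clearly dense. Closedness follows from the fact that $L = K - \nu(v)$ with $K$ compact and $\nu(v)$ a closed multiplication operator (whose closedness is unaffected by the bounded perturbation $-i\eps v\cdot\xi$ on $D(L)$ coming from $|v\cdot\xi| \le |\xi|(1+|v|) \le C_{\xi}\nu(v)$). The second step is the key dissipativity computation: since $L$ is self-adjoint and non-positive on $L^2(\R^3_v)$ (as recorded just before the lemma via the Boltzmann H-theorem bound $(Lf,f)\le -\mu\|P_1 f\|^2$) and since the multiplication operator $-i\eps v\cdot\xi$ is skew-Hermitian on $L^2(\R^3_v)$, for any $f \in D(\BB_{\eps}(\xi))$ we have
\be
\mathrm{Re}(\BB_{\eps}(\xi)f, f) = (Lf,f) + \mathrm{Re}(-i\eps v\cdot\xi f, f) = (Lf,f) \le 0. \nnm
\ee

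Next I would identify the adjoint. A direct computation on $D(L)$ gives $\BB_{\eps}^*(\xi) = L + i\eps v\cdot\xi = \BB_{\eps}(-\xi)$, because $L$ is self-adjoint and the multiplication by $-i\eps v\cdot\xi$ has adjoint $+i\eps v\cdot\xi$. The same computation as above shows $\mathrm{Re}(\BB_{\eps}^*(\xi)f, f) = (Lf,f) \le 0$, so $\BB_{\eps}^*(\xi)$ is also dissipative on the same dense domain. Having verified that $\BB_{\eps}(\xi)$ is a densely defined closed operator with both $\BB_{\eps}(\xi)$ and $\BB_{\eps}^*(\xi)$ dissipative, Corollary 4.4 of Pazy yields immediately that $\BB_{\eps}(\xi)$ generates a $C_0$-contraction semigroup on $L^2(\R^3_v)$, giving $\|e^{t\BB_{\eps}(\xi)}f\|\le\|f\|$ for all $t>0$.

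I do not anticipate a genuine obstacle here: the argument is structurally identical to the proof of Lemma \ref{SG_1}, but simpler because the weighted inner product $(\cdot,\cdot)_{\xi}$ and the auxiliary Maxwell block are absent. The only small point worth double-checking is the closedness of $\BB_{\eps}(\xi)$, which I would dispatch by writing $\BB_{\eps}(\xi) = -\nu(v) + K - i\eps v\cdot\xi$ and noting that $-\nu(v) - i\eps v\cdot\xi$ is a closed multiplication operator on $D(L)$ (it is the generator of the free-transport semigroup $e^{-\nu(v)t - i\eps v\cdot\xi t}$) while $K$ is a bounded perturbation.
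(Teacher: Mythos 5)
Your argument is correct and is structurally identical to the paper's own proof of Lemma \ref{SG_1} (the paper itself only cites \cite{Ellis} for Lemma \ref{SG_1a}): you verify that $\BB_{\eps}(\xi)=L-i\eps v\cdot\xi$ and its adjoint $L+i\eps v\cdot\xi$ are densely defined, closed, and dissipative on $D(L)$ — using the H-theorem bound $(Lf,f)\le 0$ together with the skew-Hermitian nature of the streaming term — and then invoke Corollary 4.4 of \cite{Pazy}. The closedness discussion is handled correctly via the decomposition into the closed multiplication operator $-\nu(v)-i\eps v\cdot\xi$ plus the bounded operator $K$, so nothing is missing.
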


\begin{lem}[\cite{FL-3,Ukai1}]\label{spectrum} The following statements hold.
\begin{enumerate}
  \item[{\rm (1)}] For any $\delta>0$ and all $\xi\in\R^3$, there exists $y_1=y_1(\delta)>0$ such that
  $$
\rho(\BB_{\eps}(\xi))\supset\{\lambda\in\mathbb{C}\,|\,\mathrm{Re}\lambda\ge-\nu_0
+\delta,\,|\mathrm{Im}\lambda|\geq y_1\}\cup\{\lambda\in\mathbb{C}\,|\,\mathrm{Re}\lambda>0\}.
$$
  \item[{\rm (2)}] For any $r_1>0$, there exists a constant $\alpha=\alpha(r_1)>0$ such that for $\eps|\xi|\ge r_1$,
$$ \sigma(\BB_{\eps}(\xi))\subset \{\lambda\in \C\,|\, {\rm Re}\lambda<-\alpha\}. $$

\end{enumerate}
\end{lem}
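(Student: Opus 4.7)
The first observation is that $\BB_{\eps}(\xi)=L-i\eps v\cdot\xi$ enjoys the scaling identity $\BB_{\eps}(\xi)=\BB_1(\eps\xi)$, so it suffices to establish both statements for $\BB_1(\xi)$. The plan is to adapt the decomposition/contradiction strategy already used for $\tilde{\AA}_{\eps}(\xi)$ in Lemmas \ref{LP01} and \ref{spectrum2}, with the simplification that there is no electromagnetic coupling.

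For part (1), I would split $\BB_1(\xi)=D_1(\xi)+K$ with $D_1(\xi)=-\nu(v)-iv\cdot\xi$, and write $\lambda-\BB_1(\xi)=(I-K(\lambda-D_1(\xi))^{-1})(\lambda-D_1(\xi))$, so invertibility on $L^2(\R^3_v)$ reduces to showing $\|K(\lambda-D_1(\xi))^{-1}\|<1$. A repetition of the kernel estimate used in the proof of \eqref{T_8} (with $\eps=1$ and the splitting of the $v$-integral into $|u|\le R$ and $|u|>R$ with $R$ chosen proportional to $|\mathrm{Im}\lambda|$) gives $\|K(\lambda-D_1(\xi))^{-1}\|\le C\delta^{-1}(1+|\mathrm{Im}\lambda|)^{-1/2}$ uniformly in $\xi$ whenever $\mathrm{Re}\lambda\ge -\nu_0+\delta$. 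Choosing $y_1=y_1(\delta)$ so that the right-hand side is at most $1/2$ for $|\mathrm{Im}\lambda|\ge y_1$ yields the first half of the resolvent set; the half-plane $\{\mathrm{Re}\lambda>0\}$ is contained in $\rho(\BB_1(\xi))$ by the contraction-semigroup statement of Lemma \ref{SG_1a}.

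For part (2), by rescaling it suffices to show that for any $r_1>0$ there exists $\alpha(r_1)>0$ with $\sigma(\BB_1(\xi))\subset\{\mathrm{Re}\lambda<-\alpha\}$ whenever $|\xi|\ge r_1$. Applying part (1) with $\delta=\nu_0/2$, the spectrum in the strip $\{\mathrm{Re}\lambda\ge -\nu_0/2\}$ is confined to the bounded rectangle $\{\mathrm{Re}\lambda\ge -\nu_0/2,\ |\mathrm{Im}\lambda|\le y_1\}$. Furthermore, a direct application of the uniform estimate analogous to \eqref{T_7}, namely $\|K(\lambda-D_1(\xi))^{-1}\|\le C(1+|\xi|)^{-1/2}$ on that strip, shows that for $|\xi|$ larger than some $R_1=R_1(r_1)$ the operator $I-K(\lambda-D_1(\xi))^{-1}$ is invertible on the whole rectangle, giving a uniform gap there. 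It remains to treat the compact annulus $r_1\le|\xi|\le R_1$: I would argue by contradiction, supposing a sequence $\{(\xi_n,\lambda_n,f_n)\}$ with $|\xi_n|\in[r_1,R_1]$, $\|f_n\|=1$, $\BB_1(\xi_n)f_n=\lambda_n f_n$, $|\mathrm{Im}\lambda_n|\le y_1$, and $\mathrm{Re}\lambda_n\to 0^-$. Compactness of $K$ and boundedness of $\xi_n$ allow extraction of limits $\xi_*,\lambda_*,f_*$ with $\mathrm{Re}\lambda_*=0$, $\|f_*\|=1$, and $\BB_1(\xi_*)f_*=\lambda_*f_*$; the dissipativity relation $\mathrm{Re}(\BB_1(\xi_*)f_*,f_*)=(Lf_*,f_*)\le -\mu\|P_1f_*\|^2$ forces $f_*\in N_0$, and then the eigen-equation reduces to $\lambda_* f_*=-i(v\cdot\xi_*)f_*$, which is impossible for $f_*\in N_0\setminus\{0\}$ unless $\xi_*=0$, contradicting $|\xi_*|\ge r_1>0$. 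The main obstacle is executing this last compactness/contradiction argument cleanly to rule out accumulation at the imaginary axis; all other ingredients are straightforward adaptations of the estimates already developed for $\tilde{\AA}_{\eps}(\xi)$.
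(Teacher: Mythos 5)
The paper does not actually prove Lemma \ref{spectrum}; it cites it from \cite{FL-3,Ukai1} and uses it as a known result. So there is no in-paper proof to compare against. Your sketch follows the standard Ukai-style strategy (decomposition $\BB_1(\xi)=D_1(\xi)+K$, Neumann-series inversion, compactness, and a contradiction argument on a compact annulus), and this is indeed the approach those references use and the same template the paper employs for $\tilde{\AA}_{\eps}(\xi)$ in Lemmas \ref{LP01} and \ref{spectrum2}. The scaling reduction $\BB_{\eps}(\xi)=\BB_1(\eps\xi)$ and the compactness/contradiction argument for $r_1\le|\xi|\le R_1$ are both handled correctly and in fact parallel the paper's argument closely.

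There is, however, one concrete slip in part (1). You claim that the estimate of \eqref{T_8} alone gives $\|K(\lambda-D_1(\xi))^{-1}\|\le C\delta^{-1}(1+|\mathrm{Im}\lambda|)^{-1/2}$ \emph{uniformly in} $\xi$. It does not: in the proof of \eqref{T_8}, the bound $|y+u_1|\xi||\ge|y|/2$ on $\{|u|\le R\}$ requires $|\xi|\,R\le|y|/2$, which fails when $|\xi|$ is unbounded relative to $|y|$ --- this is precisely why \eqref{T_8} carries the hypothesis $\eps|\xi|\le\tau_0$ and the constant $(1+\tau_0)^{1/2}$. The correct route is the one the paper itself follows for $\tilde{\AA}_{\eps}(\xi)$: split into $|\xi|\ge R_0$ (use the $(1+|\xi|)^{-1/2}$ decay of the \eqref{T_7}-type estimate, which is uniform in $\mathrm{Im}\lambda$) and $|\xi|\le R_0$ (use the \eqref{T_8}-type estimate with $\tau_0=R_0$, small for $|\mathrm{Im}\lambda|\ge y_1(\delta,R_0)$). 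You already invoke the \eqref{T_7}-type estimate in part (2), so you have the ingredient in hand; the fix is only to combine the two regimes in part (1) rather than claim \eqref{T_8} alone is $\xi$-uniform. With that patch the argument is complete, and the final contradiction (dissipativity forces $f_*\in N_0$, then $(\lambda_*+iv\cdot\xi_*)f_*=0$ a.e. with $\xi_*\ne0$ forces $f_*\equiv0$) is correct as written.
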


\begin{thm}[\cite{FL-3,Ukai1}]\label{spect3a}
{\rm (1)} There exists a constant $r_0>0$ such that for $\eps|\xi|\le r_0$,
$$\sigma(\BB_{\eps}(\xi))\cap \{\lambda\in \C\,|\, {\rm Re}\lambda\ge -\frac{\nu_0}{2}\}=\{\gamma_j(|\xi|,\eps),\, j=-1,0,1,2,3\}.$$
In particular, the eigenvalues $\gamma_j(|\xi|,\eps)$, $j=-1,0,1,2,3$  are analytic functions of $\eps |\xi|$ and satisfy the following expansion for $\eps|\xi|\leq r_0$:
 \be
 \gamma_{j}(|\xi|,\eps) =  i\mu_j\eps|\xi|-a_{j}\eps^2|\xi|^2+O\( \eps^3|\xi|^3\),
\ee
 where
 \bq \label{hj}
\left\{\bal
\mu_{\pm1}=\pm \sqrt{\frac53}, \quad \mu_k=0,\quad k=0,2,3,\\
a_{j}=-(L^{-1} P_1(v\cdot\omega)h_j,(v\cdot\omega)h_j)>0 ,\\
h_0(\xi)=\sqrt{\frac25}\chi_0-\sqrt{\frac35}\chi_4 ,\\
h_{\pm1}(\xi) =\sqrt{\frac3{10}}\chi_0\mp\frac{\sqrt2}2(v\cdot\omega)\chi_0+\sqrt{\frac15}\chi_4 ,\\
h_{k}(\xi)= (v\cdot W^k)\chi_0 ,\quad k=2,3,
\ea\right.
\eq
 and $W^j$ $(j=2,3)$ are orthonormal vectors satisfying
$W^j\cdot\omega=0$.

{\rm (2)} The corresponding eigenfunctions $\psi_j(\xi,\eps)=\psi_j(\eps|\xi|,\omega)$, $j=-1,0,1,2,3$ satisfy
 \bq
  \left\{\bln                      \label{eigf1}
 & (\psi_i(\xi,\eps) ,\overline{\psi_j(\xi,\eps)} ) =\delta_{ij}, \quad -1\le i,j\le 3,\\
 & \psi_j(\xi,\eps) =P_0\psi_0(\xi,\eps)+P_1\psi_0(\xi,\eps),\\
&P_0\psi_j(\xi,\eps) =h_j(\xi)+O(\eps|\xi|),\\
&P_1\psi_j(\xi,\eps) =i\eps|\xi| L^{-1}P_1(v\cdot\omega)h_j(\xi)+O(\eps^2|\xi|^2).
  \eln\right.
  \eq
\end{thm}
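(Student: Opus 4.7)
The plan is to exploit the scaling invariance $\BB_{\eps}(\xi)=\BB_1(\eps\xi)$, so that setting $\eta=\eps\xi$ and studying $\BB_1(\eta)=L-i(v\cdot\eta)$ for $|\eta|\le r_0$ reduces the problem to a small-perturbation analysis around $L$. The unperturbed operator $L$ has $0$ as an isolated eigenvalue of multiplicity five with eigenspace $N_0=\mathrm{span}\{\chi_0,\dots,\chi_4\}$, and by the spectral gap \eqref{L_4} its remaining spectrum lies in $\{\mathrm{Re}\lambda\le-\mu\}$. Hence Kato's analytic perturbation theory (Chapter VII of \cite{Kato}) is the natural tool.

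First I would localize the spectrum of $\BB_1(\eta)$ near $0$. Writing $\BB_1(\eta)=(K)+(-\nu-iv\cdot\eta)$ and using that $K$ is compact together with the resolvent estimates from Lemma \ref{LP03} (applied to $L$ rather than $L_1$), Weyl's theorem gives $\sigma_{ess}(\BB_1(\eta))\subset\{\mathrm{Re}\lambda\le-\nu_0\}$. Combined with Lemma \ref{spectrum}, which rules out eigenvalues with $|\mathrm{Im}\lambda|\ge y_1$ in $\{\mathrm{Re}\lambda\ge-\nu_0+\delta\}$ and with the dissipativity $\mathrm{Re}(\BB_1(\eta)f,f)\le 0$, one shows that for $|\eta|\le r_0$ the part of $\sigma(\BB_1(\eta))$ in $\{\mathrm{Re}\lambda\ge-\nu_0/2\}$ lies inside a small disk $\{|\lambda|\le\delta\}$. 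The Riesz projection $P(\eta)=-\frac{1}{2\pi i}\oint_{|\lambda|=\delta}(\lambda-\BB_1(\eta))^{-1}d\lambda$ is then an analytic family of projections of constant rank $5$, reducing the problem to diagonalizing the $5\times5$ matrix $B(\eta):=\BB_1(\eta)P(\eta)$ on the analytic vector bundle it defines.

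Next I would compute the expansion. At first order, the restriction of $-i(v\cdot\eta)$ to $N_0$ is the symmetric pencil whose matrix in $\{\chi_j\}$ has eigenvalues $\pm\sqrt{5/3}|\eta|$ (each simple) and $0|\eta|$ (multiplicity three, corresponding to the transverse directions $W^2,W^3$ and the energy direction $h_0$). This produces the normalized $h_j(\xi)$ of \eqref{hj} and yields $\gamma_j=i\mu_j|\eta|+o(|\eta|)$. Since the leading eigenvalues $\mu_{\pm1}$ are simple, Rellich's theorem gives analyticity of the two acoustic branches. For the three zero branches, the symmetry of $\BB_1(\eta)$ under the orthogonal group stabilizing $\omega$ forces $h_2,h_3$ to stay degenerate and splits them analytically from $h_0$ (which lives in a different invariant subspace). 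The second-order correction is then computed by the standard Rayleigh--Schr\"odinger formula: with $f=P_0f+P_1f$, the microscopic part satisfies $P_1f=i|\eta|L^{-1}P_1(v\cdot\omega)P_0f+O(|\eta|^2)$, and substituting back yields
\begin{equation*}
\gamma_j(|\eta|,1)=i\mu_j|\eta|-(L^{-1}P_1(v\cdot\omega)h_j,(v\cdot\omega)h_j)|\eta|^2+O(|\eta|^3),
\end{equation*}
which gives $a_j>0$ because $-L\ge\mu$ on $\mathrm{Ran}\,P_1$. Rescaling $\eta=\eps\xi$ produces the expansion in the statement, and the eigenfunctions \eqref{eigf1} are read off from $\psi_j=h_j+i|\eta|L^{-1}P_1(v\cdot\omega)h_j+O(|\eta|^2)$ after normalization $(\psi_j,\overline{\psi_j})=1$.

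The main obstacle is the careful justification that the three zero-eigenvalues split into individually analytic branches rather than only an analytic total projection. This requires either invoking the rotational symmetry that decouples the transverse-velocity block (giving $h_2,h_3$) from the energy block (giving $h_0$), or carrying out an implicit-function/contraction-mapping argument analogous to Lemmas \ref{eigen}--\ref{eigen_2a} for each subblock separately. Once this splitting is in hand, the rest of the proof consists of routine perturbative identities and the normalization computation that gives \eqref{eigf1}.
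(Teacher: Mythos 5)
The paper states Theorem~\ref{spect3a} as a cited result from \cite{FL-3,Ukai1} and does not reprove it, so there is no internal proof to compare against; your sketch is essentially the Ellis--Pinsky/Bardos--Ukai argument that those references invoke, and it is substantially correct. You correctly identify the key structural points: the scaling $\BB_{\eps}(\xi)=\BB_1(\eps\xi)$, localization of $\sigma(\BB_1(\eta))$ near $0$ via Weyl's theorem plus dissipativity, the rank-$5$ analytic Riesz projection, the first-order matrix $-iP_0(v\cdot\omega)P_0$ with eigenvalues $\mp\sqrt{5/3}$ and a triple zero, and the crucial role of the $SO(2)$-symmetry fixing $\omega$ in splitting the degenerate zero block into the scalar (energy) sector and the two-dimensional transverse doublet so that all five branches are genuinely analytic in $|\eta|$. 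One small slip: in your displayed second-order formula the sign of the $|\eta|^2$ term is reversed. The Rayleigh--Schr\"odinger computation gives $\gamma_j = i\mu_j|\eta| + \bigl(L^{-1}P_1(v\cdot\omega)h_j,(v\cdot\omega)h_j\bigr)|\eta|^2 + O(|\eta|^3)$, and since $L^{-1}$ is negative definite on ${\rm Ran}\,P_1$ this is $i\mu_j|\eta| - a_j|\eta|^2 + O(|\eta|^3)$ with $a_j = -\bigl(L^{-1}P_1(v\cdot\omega)h_j,(v\cdot\omega)h_j\bigr)>0$ as in \eqref{hj}; as written your formula would give a positive real part, contradicting dissipativity. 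Your concluding sentence about $a_j>0$ shows you have the right mechanism, so this is a transcription error rather than a conceptual gap. It is also worth noting the contrast with the paper's own treatment of the harder VMB operator $\tilde{\AA}_{\eps}(\xi)$ (Lemmas~\ref{eigen}--\ref{eigen_4a}), where the lack of a scaling relation forces a two-parameter contraction-mapping/implicit-function argument instead of Kato's one-parameter analytic perturbation theory; for $\BB_{\eps}(\xi)$ the scaling symmetry makes the Kato route you chose the cleaner one.
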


\begin{thm}[\cite{FL-3,Ukai1}] \label{rate2}
 The semigroup $ e^{\frac{t}{\eps^2}\BB_{\eps}(\xi)}$ with $\xi\in \R^3$ satisfies
\be
e^{\frac{t}{\eps^2}\BB_{\eps}(\xi)}f=S_4(t,\xi,\eps)f+S_5(t,\xi,\eps)f,
     \quad \forall\,f\in L^2(\R^3_v),  \label{E_3}
\ee
 where
$$
 S_4(t,\xi,\eps)f=\sum^3_{j=-1}e^{\frac{t}{\eps^2}\gamma_j(|\xi|,\eps)}
              \(f,\overline{\psi_j(\xi,\eps)}\)\psi_j(\xi,\eps) 1_{\{\eps|\xi|\leq r_0\}},
 $$
and  $S_5(t,\xi,\eps)f$ satisfies
\be
 \|S_5(t,\xi,\eps)f\| \leq Ce^{-\frac{bt}{\eps^2}}\|f\| \label{E_4}
\ee
with  two constants $b>0$ and $C>0$ independent of $\xi$ and $\eps$.
\end{thm}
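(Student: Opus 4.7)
The plan is to establish Theorem~\ref{rate2} via contour integration and spectral projection, splitting into low- and high-frequency regimes determined by whether $\eps|\xi|\le r_0$ or $\eps|\xi|\ge r_0$, where $r_0$ is the constant from Theorem~\ref{spect3a}. Since $\BB_{\eps}(\xi)$ generates a $C_0$-contraction semigroup on $L^2(\R^3_v)$ by Lemma~\ref{SG_1a}, I can represent
\begin{equation*}
e^{\frac{t}{\eps^2}\BB_{\eps}(\xi)}f
=\frac{1}{2\pi i}\int_{\sigma-i\infty}^{\sigma+i\infty}
e^{\frac{t}{\eps^2}\lambda}(\lambda-\BB_{\eps}(\xi))^{-1}f\,d\lambda,
\qquad \sigma>0,
\end{equation*}
for $f$ in a suitable dense subspace, and then deform the contour using the resolvent estimates already available in the paper.

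In the low-frequency regime $\eps|\xi|\le r_0$, Theorem~\ref{spect3a} gives exactly the five eigenvalues $\gamma_j(|\xi|,\eps)$, $j=-1,0,1,2,3$, in the half-plane $\{\mathrm{Re}\lambda\ge-\nu_0/2\}$. Combining with Lemma~\ref{spectrum}(1), there exists a small constant $b>0$ such that the vertical line $\mathrm{Re}\lambda=-b$ lies in $\rho(\BB_{\eps}(\xi))$, with $\gamma_j$ strictly to the right. I would shift the contour from $\{\mathrm{Re}\lambda=\sigma\}$ to $\{\mathrm{Re}\lambda=-b\}$ and pick up residues at each $\gamma_j$. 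The residue at $\gamma_j$ produces the rank-one term $e^{\frac{t}{\eps^2}\gamma_j}(f,\overline{\psi_j})\psi_j$, yielding the expression for $S_4(t,\xi,\eps)f$. The remaining integral along $\{\mathrm{Re}\lambda=-b\}$ gives $S_5$, and the decay $e^{-bt/\eps^2}$ is immediate from $|e^{\lambda t/\eps^2}|=e^{-bt/\eps^2}$ on this line; convergence of the integral for $f$ in the domain follows from the quantitative resolvent estimates in Lemma~\ref{LP03} (and its Boltzmann analogue, which is what Lemma~\ref{lem1} generalizes). The horizontal closing pieces of the contour vanish by Lemma~\ref{spectrum}(1), which gives resolvent control for large $|\mathrm{Im}\lambda|$.

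In the high-frequency regime $\eps|\xi|\ge r_0$, Lemma~\ref{spectrum}(2) furnishes $\alpha=\alpha(r_0)>0$ such that $\sigma(\BB_{\eps}(\xi))\subset\{\mathrm{Re}\lambda<-\alpha\}$. Here $S_4\equiv 0$, and I shift the contour to $\{\mathrm{Re}\lambda=-\min(\alpha,\nu_0/2)/2\}$, so that the whole semigroup is absorbed in $S_5$ and satisfies the claimed bound with $b=\min(\alpha,\nu_0)/2$. The $L^2$-bound $\|S_5(t,\xi,\eps)f\|\le Ce^{-bt/\eps^2}\|f\|$ is then obtained uniformly in $\xi$ and $\eps$ by combining the two regimes and choosing the smaller of the two decay rates.

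The main obstacle will be ensuring uniformity of the constants $b,C$ with respect to both $\xi$ and $\eps\in(0,1)$; this requires that the resolvent estimates of Lemma~\ref{LP03}/Lemma~\ref{lem1} depend on $\eps|\xi|$ only through the scaling-invariant combination, together with a uniform separation of the fluid eigenvalues $\gamma_j$ from the line $\{\mathrm{Re}\lambda=-b\}$ for all $\eps|\xi|\le r_0$. The latter follows from the expansion $\gamma_j=i\mu_j\eps|\xi|-a_j\eps^2|\xi|^2+O(\eps^3|\xi|^3)$ with $a_j>0$: near $\xi=0$ the eigenvalues cluster at $0$ but remain in $\mathrm{Re}\lambda>-b$ for $\eps|\xi|$ small, while for $\eps|\xi|$ close to $r_0$ a continuity/compactness argument combined with Lemma~\ref{Egn}-type strict negativity gives uniform separation. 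The remaining steps are essentially standard Dunford-calculus bookkeeping.
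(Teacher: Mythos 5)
Note first that the paper does not prove Theorem~\ref{rate2}; it is imported verbatim from Bardos--Ukai~\cite{FL-3} and Ukai~\cite{Ukai1}, so there is no internal proof against which to compare. Evaluating your argument on its own terms: the conceptual framework (Laplace inversion, deform the contour past the fluid eigenvalues to pick up residues, let the shifted line integral define $S_5$) is the right one, and the residue computation using the biorthogonal system $(\psi_i,\overline{\psi_j})=\delta_{ij}$ correctly produces $S_4$. However, there is a genuine gap in the step that is supposed to give the bound~\eqref{E_4}.

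After deforming the contour, $S_5$ is represented (formally) by $\frac1{2\pi i}\int_{-b-i\infty}^{-b+i\infty}e^{\lambda t/\eps^2}(\lambda-\BB_\eps(\xi))^{-1}f\,d\lambda$, and you assert the decay is ``immediate'' from $|e^{\lambda t/\eps^2}|=e^{-bt/\eps^2}$ on the line. But this only works if the line integral defines a bounded $L^2\to L^2$ operator, which requires the resolvent to decay sufficiently in $|\mathrm{Im}\lambda|$. It does not: writing $\BB_\eps(\xi)=-\nu(v)-i\eps v\cdot\xi+K$, the multiplicative part satisfies $\|(\lambda+\nu+i\eps v\cdot\xi)^{-1}\|\simeq(\mathrm{Re}\lambda+\nu_0)^{-1}=O(1)$ on $\{\mathrm{Re}\lambda=-b\}$ uniformly in $\mathrm{Im}\lambda$, and this term dominates $\|(\lambda-\BB_\eps(\xi))^{-1}\|$ for $|\mathrm{Im}\lambda|$ large. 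The inversion integral is therefore not absolutely convergent as an operator-valued (nor even as an $L^2$-valued, for general $f\in L^2$) Bochner integral, and the bound does not follow. Observing ``convergence of the integral for $f$ in the domain'' concedes this: that argument yields a bound in terms of $\|f\|_{D(\BB_\eps(\xi))}$, not $\|f\|_{L^2}$, and $\|S_5\|_{L^2\to L^2}\le Ce^{-bt/\eps^2}$ is precisely the nontrivial content of the theorem.

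The classical way to close the gap, and what is actually done in \cite{FL-3,Ukai1}, is to first regularize via the Vidav--Ukai iteration before deforming contours. Write the Duhamel expansion
\begin{equation*}
e^{t\BB_\eps(\xi)/\eps^2}=e^{tD_\eps(\xi)/\eps^2}+\frac1{\eps^2}\int_0^te^{(t-s)D_\eps(\xi)/\eps^2}Ke^{s\BB_\eps(\xi)/\eps^2}\,ds,\qquad D_\eps(\xi)=-\nu(v)-i\eps v\cdot\xi,
\end{equation*}
and iterate it $n$ times. The explicit terms decay like $e^{-\nu_0 t/\eps^2}$ directly (up to polynomial factors in $t/\eps^2$). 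In the remainder, the resolvent is replaced by $(\lambda-D_\eps(\xi))^{-1}\bigl(K(\lambda-D_\eps(\xi))^{-1}\bigr)^{\,n}(\lambda-\BB_\eps(\xi))^{-1}$; the Boltzmann analogue of the estimate \eqref{T_8}/Lemma~\ref{lem1} gives each factor $K(\lambda-D_\eps(\xi))^{-1}$ a decay $O\bigl((1+|\mathrm{Im}\lambda|)^{-1/2}\bigr)$, so after $n\ge3$ iterations the integrand is absolutely integrable along $\{\mathrm{Re}\lambda=-b\}$, and only then can the contour be shifted, the residues collected, and the $e^{-bt/\eps^2}$ decay of the remainder read off. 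Your write-up omits this regularization step, and in its absence the estimate~\eqref{E_4} is not established.
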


\section{Fluid approximations}
\setcounter{equation}{0}
\label{sect3}

In this section,  we will present  the first and second order of  fluid approximations to the semigroups $e^{\frac{t}{\eps^2}\AA_\eps}$ and  $e^{\frac{t}{\eps^2}\BB_\eps}$  that is a  key step to show the convergence  of the solution of the VMB system to the solution of the  NSMF system.

  For any $U_0=(g_0,E_0,B_0)\in H^l$, set
 \bq
 e^{\frac{t}{\eps^2}\AA_{\eps}(\xi)}\hat U_0=\( g ,-\frac{i\xi}{|\xi|^2}(g,\chi_0)-\frac{\xi}{|\xi|}\times X,-\frac{\xi}{|\xi|}\times Y\),
\label{solution1}
  \eq
where
\bmas
e^{\frac{t}{\eps^2}\tilde{\AA}_{\eps}(\xi)}\hat V_0
 &=( g,X,Y)\in L^2_{\xi}(\R^3_v)\times \mathbb{C}^3_{\xi}\times \mathbb{C}^3_{\xi},\\
\hat V_0&=\(\hat g_0,\frac{\xi}{|\xi|}\times \hat E_0,\frac{\xi}{|\xi|}\times \hat B_0\).
\emas

For any $f_0\in H^l $ and any $U_0=(g_0,E_0,B_0) \in H^l$, set
 \be
 \left\{\bln
  e^{\frac{t}{\eps^2}\BB_\eps}f_0&=(\mathcal{F}^{-1}e^{\frac{t}{\eps^2}\BB_\eps(\xi)}\mathcal{F})f_0,\\
  e^{\frac{t}{\eps^2}\AA_\eps}U_0&=(\mathcal{F}^{-1}e^{\frac{t}{\eps^2}\AA_\eps(\xi)}\mathcal{F})U_0.
  \eln\right.
  \ee
Then $e^{\frac{t}{\eps^2}\BB_\eps}f_0$ and $e^{\frac{t}{\eps^2}\AA_{\eps}}U_0$ are the solutions of the systems \eqref{LVMB0} and \eqref{LVMB1} respectively. By Lemmas \ref{SG_1} and \ref{SG_1a}, it holds that
 \bmas
  \|e^{\frac{t}{\eps^2} \BB_{\eps}} f_0\|_{H^l}&=\intr (1+|\xi|^2)^l\|e^{\frac{t}{\eps^2} \BB_{\eps}(\xi)} \hat f_0\|^2 d\xi\le \intr (1+|\xi|^2)^l\|\hat f_0\|^2 d\xi
=\|f_0\|_{H^l},\\
 \|e^{\frac{t}{\eps^2} \AA_{\eps}} U_0\|_{H^l}&=\intr (1+|\xi|^2)^l\|e^{\frac{t}{\eps^2}\tilde{\AA}_{\eps}(\xi)}\hat V_0\|^2_{\xi} d\xi\le \intr (1+|\xi|^2)^l\|\hat V_0\|^2_{\xi} d\xi
=\|U_0\|_{H^l},
\emas
where we have used $\|\hat V_0\|^2_{\xi}=\|\hat U_0\|^2.$

\subsection{Semigroup of the linear NSMF system}

In this subsection, we will  study the solution to the linear bipolar NSMF system. Firstly, we consider  the following linearized bipolar NSMF system to \eqref{NSM_2} for $U_1=(n,m,q)$ and
$U_2=(\rho,E,B)$:
\be \label{LNSM1}
\left\{\bal
\Tdx\cdot m=0,\quad n+\sqrt{\frac23}q=0,\\
\dt m-\kappa_0\Delta_x m +\Tdx p=G_{1}, \\
 \dt q - \kappa_1\Delta_x q= \frac35G_{2},
\ea\right.
\ee
and
\be\label{LNSM2}
\left\{\bal
 \dt E-\Tdx\times B=\eta(\Tdx \rho-E)+G_{3},\\
 \dt B+\Tdx\times E=0, \\
\Tdx\cdot E=\rho,\quad \Tdx\cdot B=0,
\ea\right.
\ee
where $G_{1},\, G_{3}  \in \R^3$ and $G_{2}\in \R$ are given functions, $p$ is the pressure satisfying $p=\Delta^{-1}_x\divx G_{1}$, and the initial data $(n,m,q)(0)$ and $(\rho,E,B)(0)$ satisfy   \eqref{NSP_5i}.

For any $\hat{f}_0=\hat{f}_0(\xi,v)\in N_0$ and $\hat{V}_0=(\hat{\rho}_0(\xi)\chi_0,\hat{E}_0(\xi),\hat{B}_0(\xi))\in N_1\times \C^3_{\xi}\times \C^3_{\xi}$, set
\bma
Y_1(t,\xi)\hat{f}_0&=\sum_{j=0,2,3}e^{-a_j|\xi|^2t}\(\hat{f}_0,h_j(\xi)\) h_j(\xi),\label{v1}\\
\tilde{Y}_2(t,\xi)\hat{V}_0&=\sum^4_{j=0}e^{b_j(|\xi|)t}\(\hat{V}_0,\overline{\mathcal{X}_j(\xi)}\)_{\xi}\mathcal{X}_j(\xi),\quad |\xi|\ne \frac{\eta}{2}, \label{v1a}
\ema
where $b_k(|\xi|)$ $(k=0,1,2,3,4)$ and $(a_j$, $h_j(\xi))$  $(j=0,2,3)$ are defined by \eqref{bj} and \eqref{hj} respectively, and $\mathcal{X}_j(\xi)$, $j=0,1,2,3,4$ are given by
\be \label{X_3}
\left\{\bln
&\mathcal{X}_0(\xi)=\bigg(\frac{|\xi|}{\sqrt{1+|\xi|^2}}\chi_0,0,0\bigg),\\
&\mathcal{X}_k(\xi)=\frac{b_k}{\sqrt{b^2_k-|\xi|^2 }}\(0,\frac{\xi}{|\xi|}\times e_k, \frac{i |\xi| e_k}{b_k}\),\ \ k=1,2,3,4.
\eln\right.
\ee
Here,  $e_k$, $k=1,2,3,4$ are normal vectors satisfying $e_1=e_3$, $e_2=e_4$, and $e_k\cdot\omega=e_1\cdot e_2=0$.

  For any $\hat{U}_0=(\hat{\rho}_0 \chi_0,\hat{E}_0,\hat{B}_0)\in N_1\times \C^3 \times \C^3 $ with $\hat{\rho}_0=i\xi\cdot \hat{E}_0$, set
 \bq
 Y_2(t,\xi)\hat U_0=\( \hat \rho \chi_0,-\frac{i\xi}{|\xi|^2}\hat \rho-\frac{\xi}{|\xi|}\times \hat X,-\frac{\xi}{|\xi|}\times \hat Y\),
\label{solution1a}
  \eq
  with
$$
\left.\bln
\tilde{Y}_2(t,\xi)\hat V_0
 &=(\hat \rho\chi_0,\hat X,\hat Y)\in L^2_{\xi}(\R^3_v)\times \mathbb{C}^3_{\xi}\times \mathbb{C}^3_{\xi},\\
\hat V_0&=\(\hat \rho_0\chi_0,\frac{\xi}{|\xi|}\times \hat E_0,\frac{\xi}{|\xi|}\times \hat B_0\).
\eln\right.
$$
Denote
\be
\left\{\bln
Y_1(t)f_0&=(\mathcal{F}^{-1}Y_1(t,\xi)\mathcal{F})f_0, \label{v2}\\
Y_2(t)U_0&=(\mathcal{F}^{-1}Y_2(t,\xi)\mathcal{F})U_0.
\eln\right.
\ee
It is straightforward to check that
\be
\left\{\bln
&f=P_{||}f+P_{\bot}f, \quad P_{||}Y_1(t)=0, \quad P_{\bot}Y_1(t)=Y_1(t),\\
&\|P_{||}f\|^2=|(f,v\chi_0)_{||}|^2+|(f,\tilde{h}_1)|^2,\\
&\|P_{\bot}f\|^2=|(f,v\chi_0)_{\bot}|^2+|(f,\tilde{h}_0)|^2+\|P_1f\|^2.
\eln\right.
\ee

Then, we can represent the solutions to the NS system \eqref{LNSM1} and NSM type system \eqref{LNSM2} by the semigroups $Y_1(t)$ and $Y_2(t)$ respectively.

\begin{lem} \label{sem}
For any $f_0\in L^2$, $U_0=(g_0,E_0,B_0)\in L^2$ and $G_j\in L^1_t(L^2_x)$, $j=1,2,3$, we define
\bma
u(t,x,v)&=Y_1(t)P_0f_0+\intt Y_1(t-s)H_1(s)ds, \label{U_1}\\
U(t,x,v)&=Y_2(t)P_2U_0+\intt Y_2(t-s)H_2(s)ds, \label{U_2}
\ema
 where 
$$
\left.\bln
H_1(t,x,v)&=G_{1}(t,x)\cdot v\chi_0+G_{2}(t,x)\chi_4,\\
H_2(t,x,v)&=(\Tdx\cdot G_{3}(t,x)\chi_0,G_{3},0).
\eln\right.
$$
Let $(n,m,q)=((u,\chi_0),(u,v\chi_0),(u,\chi_4))$ and  $U=(\rho\chi_0,E,B) $. Then $(n,m,q)(t,x)\in L^\infty_t(L^2_x)$ and $(\rho,E,B)(t,x)\in L^\infty_t(L^2_x)$ are the unique global solutions to the linear NS system \eqref{LNSM1} and NSM type system \eqref{LNSM2} with the initial datas $(n,m,q)(0)$ and $(\rho,E,B)(0)$ satisfying \eqref{NSP_5i}.
\end{lem}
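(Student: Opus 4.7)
The plan is to pass to the spatial Fourier variable $\xi$, where each PDE system becomes an ODE parametrized by $\xi$, and then to verify that the semigroups $Y_1(t,\xi)$ and $Y_2(t,\xi)$ defined in \eqref{v1}--\eqref{solution1a} diagonalize the generators of the linearized NS and NSMF systems. The key structural input is that the eigenvectors $h_j(\xi)$ for $j=0,2,3$ appearing in $Y_1(t,\xi)$ all lie in $N_0$, and the eigenvalues $-a_j|\xi|^2$ coincide, via the rotational invariance identity \eqref{B_1a}, with the viscosity and conductivity constants $\kappa_0,\kappa_1$ from \eqref{coe}; analogously $\mathcal{X}_0$ captures the hydrodynamic $\rho$-mode with eigenvalue $b_0=-\eta(1+|\xi|^2)$, while the four vectors $\mathcal{X}_k$ ($k=1,\dots,4$) are the electromagnetic modes whose eigenvalues solve the dispersion relation $b^2+\eta b+|\xi|^2=0$ of the incompressible Maxwell block.

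For the NS system, I would first note that $H_1=G_1\cdot v\chi_0+G_2\chi_4\in N_0$ and that $Y_1(t)$ preserves $N_0$, so $u(t)\in N_0$ admits the representation $u=n\chi_0+m\cdot v\chi_0+q\chi_4$. Taking the inner products $(\cdot,\chi_0)$, $(\cdot,v\chi_0)$, $(\cdot,\chi_4)$ against the Fourier version of \eqref{U_1} yields scalar ODEs for $\hat n,\hat m,\hat q$. The orthogonality $(h_0,\chi_0)=\sqrt{2/5}$, $(h_0,\chi_4)=-\sqrt{3/5}$ and the structure $h_{2,3}=(v\cdot W^{2,3})\chi_0$ with $W^j\perp\omega$ enforce $\hat n+\sqrt{2/3}\hat q=0$ and $i\xi\cdot\hat m=0$ on the homogeneous part; these identities persist under the Duhamel convolution because $G_1$ enters through the Leray projection implicit in the $h_{2,3}$ span (the pressure $p=\Delta_x^{-1}\mathrm{div}_xG_1$ being exactly the longitudinal part removed) and $G_2$ enters through the $h_0$ direction. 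The diffusion coefficients are recovered from $a_{2,3}=-(L^{-1}P_1(v_1\chi_2),v_1\chi_2)=\kappa_0$ and, after a rotationally invariant computation using $(v\cdot\omega)h_0\in\{v_1\chi_4\}+N_0$, $a_0=\kappa_1$.

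For the NSMF system, $H_2=(\nabla_x\cdot G_3\chi_0,G_3,0)$ lies in $N_1\times\R^3\times\R^3$ and $Y_2(t)$ preserves this structure, so $U=(\rho\chi_0,E,B)$. The Helmholtz decomposition built into \eqref{solution1a} automatically enforces $\mathrm{div}_xE=\rho$ (from the $\mathcal{X}_0$ contribution for the homogeneous part, and from $i\xi\cdot\hat G_3$ feeding both the $\rho$-component and the longitudinal part of $\hat E$ for the Duhamel part). Differentiating $Y_2(t,\xi)\hat U_0$ in $t$, the $\mathcal{X}_0$ mode governs the $\rho$-evolution via $b_0=-\eta(1+|\xi|^2)$, matching the linearization of $\partial_t\rho+\eta(\rho-\mathrm{div}_xE)-\eta\Delta_x\rho=\mathrm{div}_xG_3$ after using $\mathrm{div}_xE=\rho$; the $\mathcal{X}_k$ contributions give the transverse evolution of $\omega\times E,\omega\times B$, and the identity $b_k^2+\eta b_k+|\xi|^2=0$ is precisely the Fourier symbol of $\partial_t^2E+\eta\partial_tE-\Delta_xE=0$ on the divergence-free subspace, confirming the linearized Ampère--Faraday system $\partial_tE-\nabla_x\times B=\eta(\nabla_x\rho-E)+G_3$, $\partial_tB+\nabla_x\times E=0$.

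Initial conditions at $t=0$ reduce to $u(0)=P_0f_0$ and $U(0)=P_2U_0$, which reproduce \eqref{NSP_5i} once the Leray projection and Boussinesq relation are unpacked. Uniqueness in $L^\infty_t(L^2_x)$ follows from standard linear $L^2$ energy estimates: for \eqref{LNSM1} incompressibility annihilates the pressure and yields coercive dissipation $\kappa_0\|\nabla_xm\|^2+\kappa_1\|\nabla_xq\|^2$; for \eqref{LNSM2}, testing with $(E,B)$ together with $\mathrm{div}_xE=\rho$ and $\eta(\nabla_x\rho,E)=\eta(\rho,\rho)$ gives dissipation $\eta\|E\|^2$, and Gronwall closes the argument. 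The step I expect to be the main technical obstacle is the removable singularity at $|\xi|=\eta/2$ in \eqref{v1a} and \eqref{X_3}, where $b_1=b_3$ and the individual eigenvectors blow up while the combination $\mathcal{X}_1+\mathcal{X}_3$ has a finite limit; I would handle it by verifying the semigroup identity on the dense set $\{|\xi|\ne\eta/2\}$ and extending by continuity in $L^2$, or equivalently solving the $2\times2$ coalesced ODE block directly on the sphere.
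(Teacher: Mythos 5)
Your proposal matches the paper's proof in approach: both pass to the Fourier variable $\xi$, reduce the linearized NS and NSMF systems to per-$\xi$ ODEs solved by Duhamel, and then identify the resulting propagators with $Y_1(t)$ and $Y_2(t)$ via $\kappa_0=a_2=a_3$, $\kappa_1=a_0$, the orthogonality $(h_0,v\chi_0)=0$, $(h_{2,3},\chi_0)=(h_{2,3},\chi_4)=0$, and the diagonalization of the Maxwell block by the eigenvectors $\mathcal{X}_j$ with eigenvalues $b_j$ solving $b^2+\eta b+|\xi|^2=0$. One small slip in your uniqueness sketch: integration by parts combined with $\nabla_x\cdot E=\rho$ gives $\eta(\nabla_x\rho,E)=-\eta\|\rho\|^2$, not $+\eta(\rho,\rho)$, so the term is in fact additional dissipation and your Gronwall argument goes through with even less effort; your remarks on the removable coalescence at $|\xi|=\eta/2$ and on uniqueness are welcome additions that the paper's own proof leaves implicit.
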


\begin{proof}
To show \eqref{U_1}, by taking Fourier transform to \eqref{LNSM1}, we have
\bma
&i\xi\cdot \hat{m}=0,\quad \hat{n} +\sqrt{\frac23}\hat{q}=0,\label{LNSP_1a}\\
&\dt \hat{m}-\kappa_0|\xi|^2 \hat{m}+i\xi\hat{p}_1 =\hat{G}_{1},\label{LNSP_3a}\\
&\dt \hat{q} - \kappa_1|\xi|^2 \hat{q}=\frac35\hat{G}_{2},\label{LNSP_4a}
\ema
where the initial data $(\hat{n},\hat{m},\hat{q})(0)$ satisfies
$$
 \hat{m}(0)= (P_0\hat{f}_0,v\chi_0)_{\bot},\quad  \sqrt{\frac32}\hat{n}(0) =-\hat{q}(0)=\sqrt{\frac35}\bigg(P_0\hat{f}_0,\sqrt{\frac25}\chi_0-\sqrt{\frac35}\chi_4\bigg).
$$

Then, it follows from \eqref{LNSP_4a} and \eqref{LNSP_1a} that
\bma
\hat{q}(t,\xi)&=e^{-\kappa_1|\xi|^2t} \hat{q}(0)  +\intt e^{-\kappa_1|\xi|^2(t-s)}\hat{G}_{2}(s)ds\nnm\\
&=e^{-\kappa_1|\xi|^2t} \(P_0\hat{f}_0,h_0(\xi)\) (h_0(\xi),\chi_4)\nnm\\
&\quad +\intt e^{-\kappa_1|\xi|^2(t-s)} \(\hat{H}_1(s),h_0(\xi)\) (h_0(\xi),\chi_4)ds,\label{n3}
\ema
and
\bma
\hat{n}(t,\xi)&=e^{-\kappa_1|\xi|^2t} \(P_0\hat{f}_0,h_0(\xi)\)(h_0(\xi),\chi_0)\nnm\\
&\quad +\intt e^{-\kappa_1|\xi|^2(t-s)} \(\hat{H}_1(s),h_0(\xi)\)(h_0(\xi),\chi_0)ds. \label{n4}
\ema
By \eqref{LNSP_1a} and \eqref{LNSP_3a} and noting that $ \hat m=\hat m_{\bot}$, we have
\bma
\hat{m}(t,\xi)&=e^{-\kappa_0|\xi|^2t} \hat{m}(0)  +\intt e^{-\kappa_0|\xi|^2(t-s)} \hat{G}_1(s)_{\bot}ds\nnm\\
&=\sum_{j=2,3}e^{-\kappa_0|\xi|^2t} \(P_0\hat{f}_0,h_j(\xi)\)(h_j(\xi),v\chi_0) \nnm\\
&\quad+\sum_{j=2,3}\intt e^{-\kappa_0|\xi|^2(t-s)} \(\hat{H}_1(s),h_j(\xi)\)(h_j(\xi),v\chi_0)ds. \label{n5}
\ema
Noting that $\kappa_0=a_2=a_3$, $\kappa_1=a_0$, $(h_0(\xi),v\chi_0)=0$ and $(h_j(\xi),\chi_0)=(h_j(\xi),\chi_4)=0$, $j=2,3$, we obtain \eqref{U_1} by using \eqref{n3}--\eqref{n5}.

Next, we prove \eqref{U_2} as follows.  Taking Fourier transform to \eqref{LNSM2} gives  the system for $(\hat{\rho},\hat{E},\hat{B})$:
\bma
&\dt \hat{\rho}+ \eta(1+|\xi|^2 )\hat{\rho}=i\xi\cdot\hat{G}_{3}, \label{n1}\\
 &\dt \hat{E}=i\xi\times B+\eta(i\xi \hat{\rho}-\hat{E})+\hat{G}_{3}, \label{E-1}\\
 &\dt \hat{B}=-i\xi\times \hat{E},\label{B-2} \\
 &i\xi\cdot \hat{E}=\hat{\rho}, \quad i\xi\cdot \hat{B}=0,\nnm
\ema
where the initial data $ (\hat{\rho},\hat{E},\hat{B})(0)$ satisfies
$$
 \hat{\rho}(0)=i\xi\cdot \hat{E}_0, \quad  \hat{E}(0)= \hat{E}_0,\quad  \hat{B}(0)= \hat{B}_0.
$$
Taking $\omega\times $ to \eqref{E-1} and \eqref{B-2} yields
\bma
 \dt (\omega\times \hat{E})&=i\xi\times (\omega\times \hat{B})-\eta (\omega\times \hat{E})+\omega\times \hat{G}_{3},\label{E-2}\\
 \dt (\omega\times \hat{B})&=-i\xi\times (\omega\times \hat{E}). \label{B-3}
\ema
Let $\hat{V}=(\hat{\rho}\chi_0 ,\omega\times \hat{E},\omega\times \hat{B})^T\in L^2_{\xi}(\R^3)\times \C^3_{\xi}\times \C^3_{\xi}$. Then, the system \eqref{n1}, \eqref{E-2} and \eqref{B-3} can be written as
$$ \dt \hat{V}=A(\xi)\hat{V}+\hat H_3, $$
where $\hat{H}_3=(i\xi\cdot \hat{G}_3\chi_0 ,\omega\times \hat{G}_3,0)^T$, and
$$
A(\xi)=\left(\ba
-\eta(1+|\xi|^2) &0 &0\\
0 &-\eta &i\xi\times\\
0 &-i\xi\times & 0
\ea\right).
$$
 It is straightforward to check  that $A^*(\xi)=\overline{A(\xi)}$ and $A(\xi) $ admits five eigenvalues $b_j(|\xi|)$ given by \eqref{bj} with  eigenfunctions $ \mathcal{X}_j(\xi)$ given by  \eqref{X_3}.  Note that $b_1(|\xi|)=b_3(|\xi|)=-\eta/2$ when $ |\xi|= \eta/2$, and $ \mathcal{X}_j(\xi)$ satisfy the orthnormal relation
$(\mathcal{X}_i(\xi),\overline{\mathcal{X}_j(\xi)})_{\xi}=\delta_{ij}$ for $ |\xi|\ne \eta/2$.
Thus
\bmas
\hat{V}(t,\xi)=&\sum^4_{j=0}e^{b_j(|\xi|)t}\(\hat V(0),\overline{\mathcal{X}_j(\xi)}\)_{\xi} \mathcal{X}_j(\xi)\\
&+\sum^4_{j=0}\intt e^{b_j(|\xi|)(t-s)}\(\hat H_3(s),\overline{\mathcal{X}_j(\xi)}\)_{\xi} \mathcal{X}_j(\xi)ds,
\quad |\xi|\ne \frac{\eta}2.
\emas
This proves \eqref{U_2} and completes the proof of the lemma.
\end{proof}

\subsection{Fluid approximation of $e^{\frac{t}{\eps^2}\AA_{\eps}}$}

The following lemma will be used to study the  fluid dynamical approximations of the semigroups $e^{\frac{t}{\eps^2}\AA_\eps}$ and $e^{\frac{t}{\eps^2}\BB_\eps}$.

\begin{lem} \label{S2a}
For any $V_0\in N_1\times \C^3_{\xi}\times \C^3_{\xi}$ and $f_0\in N_0$, we have
\bma
\|S_{3}(t,\xi,\eps)V_0\|_{\xi}&\le C\(\eps(1+|\xi|)1_{\{\eps(1+|\xi|)\le r_0\}}+1_{\{\eps(1+|\xi|)\ge r_0\}}\)e^{-\frac{bt}{\eps^2}}\|V_0\|_{\xi},\label{S5}\\
\|S_{5}(t,\xi,\eps)f_0\| &\le C\(\eps |\xi| 1_{\{\eps |\xi|\le r_0\}}+1_{\{\eps |\xi| \ge r_0\}}\)e^{-\frac{bt}{\eps^2}}\|f_0\|  ,\label{S6}
\ema
where $S_3(t,\xi,\eps)$ and $S_5(t,\xi,\eps)$ are given in Theorems \ref{rate1} and \ref{rate2} respectively.
\end{lem}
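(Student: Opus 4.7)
The plan is to split by frequency. When $\eps(1+|\xi|)\ge r_0$ in \eqref{S5} or $\eps|\xi|\ge r_0$ in \eqref{S6}, the desired estimates reduce immediately to the a priori decay bounds \eqref{S3} and \eqref{E_4}; the corresponding indicator function absorbs the trivial constant. The substance of the lemma lies in the low--frequency regime, where the additional smallness factor $\eps(1+|\xi|)$ or $\eps|\xi|$ must be extracted from the constraint that $V_0\in N_1\times\C^3_\xi\times\C^3_\xi$ or $f_0\in N_0$.

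The central observation is that the spectral projections $\tilde{\mathcal{P}}:=\sum_j(\cdot,\mathcal{U}_j^*)_\xi\mathcal{U}_j$ (for $\eps(1+|\xi|)\le r_0$) and $\mathcal{P}:=\sum_j(\cdot,\overline{\psi_j})\psi_j$ (for $\eps|\xi|\le r_0$) are idempotent by the orthonormality in \eqref{eigf2} and \eqref{eigf1}, and since their ranges are spanned by eigenfunctions of the respective generators they commute with the semigroups. In the low--frequency regime $S_2\equiv 0$, and a direct computation gives $S_3 V_0=S_3(I-\tilde{\mathcal{P}})V_0$ and $S_5 f_0=S_5(I-\mathcal{P})f_0$. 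Applying \eqref{S3} and \eqref{E_4} to the right--hand sides reduces the task to proving $\|(I-\tilde{\mathcal{P}})V_0\|_\xi\le C\eps(1+|\xi|)\|V_0\|_\xi$ and $\|(I-\mathcal{P})f_0\|\le C\eps|\xi|\|f_0\|$.

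For \eqref{S6}, I will expand $f_0=\sum_k(f_0,h_k)h_k$ using the ONB of $N_0$ from \eqref{basis} and write $\psi_j=h_j+\phi_j$ with $\|P_0\phi_j\|,\|P_1\phi_j\|=O(\eps|\xi|)$ from \eqref{eigf1}. Since $f_0=P_0 f_0$ is orthogonal to $P_1\phi_j$, the inner product reduces to $(f_0,\overline{\psi_j})=(f_0,h_j)+O(\eps|\xi|)\|f_0\|$, and routine algebra yields $\mathcal{P}f_0=f_0+O(\eps|\xi|)\|f_0\|$. For \eqref{S5}, I introduce the leading--order eigenvectors
\[
\mathcal{U}_0^{(0)}=\bigl(\tfrac{s}{\sqrt{1+s^2}}\chi_0,0,0\bigr),\qquad
\mathcal{U}_k^{(0)}=\tfrac{b_k}{\sqrt{b_1 b_3-b_k^2}}\bigl(0,\omega\times e_k,\tfrac{is}{b_k}e_k\bigr),\quad k=1,2,3,4,
\]
and verify using $b_1 b_3=s^2$, $(\omega\times e_i)\cdot(\omega\times e_j)=e_i\cdot e_j$ (since $e_i\perp\omega$), and $e_1\cdot e_2=0$ that $\{\mathcal{U}_j^{(0)}\}_{j=0}^{4}$ is an orthonormal basis of $N_1\times\C^3_\xi\times\C^3_\xi$ under $(\cdot,\cdot)_\xi$. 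The expansions in \eqref{eigf2} then give $\|\mathcal{U}_j-\mathcal{U}_j^{(0)}\|_\xi+\|\mathcal{U}_j^*-\mathcal{U}_j^{(0)*}\|_\xi=O(\eps(1+|\xi|))$, so writing $V_0$ in the leading basis and comparing with $\tilde{\mathcal{P}}V_0$ produces the required projection defect after Cauchy--Schwarz.

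The main technical obstacle is the degenerate set $\{s=\vartheta_0(\eps)\}$ (near $s=\eta/2$), where $\lambda_1=\lambda_3$, the normalization factor $(\lambda_1\lambda_3-\lambda_k^2)^{-1/2}$ in \eqref{eigf2} blows up, and the eigenvectors $\mathcal{U}_1,\mathcal{U}_3$ coalesce so that the basis collapses. This is a $\xi$--negligible set, and since $S_3(t,\xi,\eps)=e^{t\tilde\AA_\eps(\xi)/\eps^2}-S_1-S_2$ with the full semigroup continuous (indeed analytic) in $\xi$, the estimate proved off this set extends by continuity to all $\xi\ne 0$.
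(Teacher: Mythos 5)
Your reduction $S_3V_0=S_3(I-\tilde{\mathcal{P}})V_0$ (low frequency), followed by the projection--defect estimate, is exactly the strategy of the paper, which defines $P_\eps(\xi)$ and writes $S_{31}=S_{31}(I-P_\eps)$ before invoking the estimates behind Lemma \ref{fl1}. So the overall plan is sound. The gap is in how you handle the degenerate region, and it is not a negligible gap.

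You locate the obstacle at the single sphere $\{s=\vartheta_0(\eps)\}$ and propose to extend by continuity. But the set on which your eigenvector--comparison argument fails is not that sphere; it is the whole shell $\{|\eta^2-4s^2|<r_0\}$. Your leading basis $\mathcal{U}_k^{(0)}$ (which, up to a factor of $i$, is $\mathcal{X}_k$ of \eqref{X_3}) has norm $\sim|b_k^2-s^2|^{-1/2}$, which blows up at $s=\eta/2$; the true eigenvector $\mathcal{U}_k$ of \eqref{eigf2} instead blows up at $s=\vartheta_0(\eps)\ne\eta/2$. Hence $\|\mathcal{U}_k-\mathcal{U}_k^{(0)}\|_\xi$ is unbounded (not $O(\eps(1+|\xi|))$) on the shell between the two critical radii, and more generally the comparison constant degenerates as $|\eta^2-4s^2|\to 0$. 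Since the estimate you can actually prove holds with a uniform constant only on $\{|\eta^2-4s^2|\ge r_0\}$, a closed set at positive distance from the shell, continuity of $S_3$ in $\xi$ gets you nowhere: there is no sequence of points on which the bound is established that approaches the interior of the shell. The paper resolves this precisely by not using the eigenbasis there; it regroups the spectral representation of $S_1$ and of $\tilde{Y}_2$ into difference quotients (the identities \eqref{S_1}, \eqref{Y_2} together with \eqref{bkb}) that remain finite as $\lambda_1\to\lambda_3$ and yield the uniform bound $C\eps$ on the shell (\eqref{Y_3}). Your proof needs the same rewriting; no soft argument replaces it.

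A secondary point: your claim that $\{\mathcal{U}_j^{(0)}\}_{j=0}^{4}$ is orthonormal for the positive inner product $(\cdot,\cdot)_\xi$ is not accurate. What holds (and what the paper uses for $\tilde{Y}_2$ and for the $\mathcal{U}_j$) is bi-orthonormality, $(\mathcal{X}_i,\overline{\mathcal{X}_j})_\xi=\delta_{ij}$ and $(\mathcal{U}_i,\mathcal{U}_j^{*})_\xi=\delta_{ij}$ with the sign-flipped $\mathcal{U}_j^{*}$; one has $(\mathcal{X}_k,\mathcal{X}_k)_\xi=(|b_k|^2+s^2)/|b_k^2-s^2|\ne 1$ in general. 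This does not wreck the scheme, but the resolution of $V_0$ must be written with the dual family, and the Cauchy--Schwarz steps pick up factors $\|\mathcal{X}_j\|_\xi$ that are themselves singular near the shell, which is another face of the same gap.
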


\begin{proof}
For any $ V\in L^2_{\xi}(\R^3)\times \C^3_{\xi}\times \C^3_{\xi},$ define a projection $P_{\eps}(\xi)$ by
$$P_{\eps}(\xi)V= \sum^4_{j=0}\(V,\mathcal{U}^*_j(\xi,\eps)\)_{\xi} \mathcal{U}_j(\xi,\eps),\quad \eps(1+|\xi|)\le r_0,$$
where $ \mathcal{U}_j(\xi,\eps)$, $j=0,1,2,3,4$ are the eigenfunctions of $\tilde{\AA}_{\eps}(\xi)$ for $\eps(1+|\xi|)\le r_0$ given in  \eqref{eigf2}.

By Theorem \ref{rate1}, we claim that
\be S_1(t,\xi,\eps)=e^{\frac{t}{\eps^2}\tilde{\AA}_{\eps}(\xi)}P_{\eps}(\xi)1_{\{\eps(1+|\xi|)\le r_0\}}. \label{S1a}
\ee
Indeed, it follows from semigroup theory that for $\kappa>0$,
 \bmas
  e^{\frac{t}{\eps^2}\tilde{\AA}_{\eps}(\xi)}P_{\eps}(\xi)V =&\frac1{2\pi i}\int^{\kappa+ i\infty}_{\kappa- i\infty}
   e^{ \frac{\lambda t}{\eps^2}}(\lambda-\tilde{\AA}_{\eps}(\xi))^{-1}P_{\eps}(\xi)Vd\lambda\\
   =&\frac1{2\pi i}\sum^4_{j=0}\int^{\kappa+ i\infty}_{\kappa- i\infty}
   e^{ \frac{\lambda t}{\eps^2}}(\lambda-\lambda_j(|\xi|,\eps))^{-1}\(V,\mathcal{U}^*_j\)_{\xi} \mathcal{U}_jd\lambda\\
   =&\sum^4_{j=0}e^{\frac{t}{\eps^2}\lambda_j(|\xi|,\eps) }\(V,\mathcal{U}^*_j\)_{\xi} \mathcal{U}_j=S_1(t,\xi,\eps)V   .
 \emas
Thus, by Theorem \ref{rate1} we  can decompose $S_3(t,\xi,\eps)$ into
\be
S_3(t,\xi,\eps)=S_{31}(t,\xi,\eps)+S_{32}(t,\xi,\eps),\label{S3b}
\ee
where
$$
\left\{\bln
S_{31}(t,\xi,\eps)&=e^{\frac{t}{\eps^2}\tilde{\AA}_{\eps}(\xi)}(I- P_{\eps}(\xi))1_{\{\eps(1+|\xi|)\le r_0\}},\\
S_{32}(t,\xi,\eps)&=e^{\frac{t}{\eps^2}\tilde{\AA}_{\eps}(\xi)}1_{\{\eps(1+|\xi|)\ge r_0\}}.
\eln\right.
$$
Moreover, $S_{3k}(t,\xi,\eps)$, $k=1,2$ satisfy
\be
\|S_{3k}(t,\xi,\eps)V\|_{\xi}\le Ce^{-\frac{bt}{\eps^2}}\| V\|_{\xi},\quad k=1,2.
\ee

For any $V_0\in N_1\times \C^3_{\xi}\times \C^3_{\xi}$,  we can obtain by \eqref{Y_4} and \eqref{Y_3} that
$$
\|V_0-P_{\eps}(\xi)V_0\|_{\xi} =\|\tilde{Y}_2(0,\xi)V_0 -S_1(0,\xi,\eps)V_0\|_{\xi}\le C\eps \|V_0\|_{\xi}.
$$
The above estimate  and the the fact $S_{31}(t,\xi,\eps)=S_{31}(t,\xi,\eps)(I- P_{\eps}(\xi))$ imply that
\be
\|S_{31}(t,\xi,\eps)V_0\|_{\xi}\le C\eps(1+|\xi|)1_{\{\eps(1+|\xi|)\le r_0\}}e^{-\frac{bt}{\eps^2}}\|V_0\|_{\xi}.\label{S4}
\ee
By combining \eqref{S3b}--\eqref{S4}, we obtain \eqref{S5}. \eqref{S6} can be proved similarly. And this completes the proof of the lemma.
\end{proof}

The following lemma gives the first order fluid approximation of the semigroup $e^{\frac{t}{\eps^2}\AA_\eps}$.

\begin{lem} \label{fl1}
For any $\eps\ll 1$,  any integer $k,m\ge 0$ and $U_0=(g_0,E_0,B_0)\in L^2\cap L^1 $, it holds that
\bma
 \left\|e^{\frac{t}{\eps^2}\AA_\eps}U_0-Y_2(t)P_2U_0 \right\|_{H^k}
 &\le C \( \eps(1+t)^{-\frac34 }+e^{-\frac{bt}{\eps^2}}\)(\|U_0\|_{H^{k+1} }+\|U_0\|_{L^{1} })\nnm\\
 &\quad +C\eps^m (1+t)^{-m}\|\Tdx^{m}U_0\|_{H^k}, \label{limit1}
\ema
where $Y_2(t)$ is defined by \eqref{solution1a}, $P_2U_0=(P_dg_0 ,E_0,B_0)$,  and $b>0$ is a constant given by \eqref{S3}.
Moreover, if  $U_0=(g_0,E_0,B_0)\in L^2\cap L^1 $ satisfying $P_rg_0=0$, then
\bma
 \left\|e^{\frac{t}{\eps^2}\AA_\eps}U_0-Y_2(t)P_2U_0 \right\|_{H^k}
 &\le C \eps (1+t)^{-\frac34 } (\|U_0\|_{H^{k+1} }+\|U_0\|_{L^{1} })\nnm\\
 &\quad +C\eps^m (1+t)^{-m}\|\Tdx^{m}U_0\|_{H^k}. \label{limit1a}
\ema
\end{lem}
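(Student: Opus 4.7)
The plan is to reduce everything to a Fourier-side mode-by-mode comparison of $e^{t\tilde{\AA}_\eps(\xi)/\eps^2}$ with $\tilde Y_2(t,\xi)$, then split $\xi$-space into three bands aligned with the spectral picture of Theorem~\ref{rate1}. By Parseval, the compatibility $i\xi\cdot\hat E_0=(\hat g_0,\chi_0)$, and the passage $\AA_\eps(\xi)\leftrightarrow\tilde{\AA}_\eps(\xi)$ recorded in Remark~\ref{rem1.1} (under which $\|\hat V_0\|_\xi=\|\hat U_0\|$, where $\hat V_0=(\hat g_0,\omega\times\hat E_0,\omega\times\hat B_0)$), the left-hand side of \eqref{limit1} equals
$$
\Big(\intr(1+|\xi|^2)^k\,\big\|e^{t\tilde{\AA}_\eps(\xi)/\eps^2}\hat V_0-\tilde Y_2(t,\xi)\hat V_0\big\|_\xi^2\,d\xi\Big)^{1/2}.
$$
Writing $e^{t\tilde{\AA}_\eps(\xi)/\eps^2}=S_1+S_2+S_3$, I would restrict to the bands $\eps(1+|\xi|)\le r_0$ (where $S_2=0$), $\eps|\xi|\ge r_1$ (where $S_1=0$), and the intermediate band, handling each separately.

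In the low band I would compare $S_1$ with $\tilde Y_2$ term by term. From Lemma~\ref{eigen_4a}, $\lambda_j(|\xi|,\eps)/\eps^2=b_j(|\xi|)+O(\eps^2(1+|\xi|^2)^2)$ and $\mathcal U_j(\xi,\eps)=\mathcal X_j(\xi)+O(\eps(1+|\xi|))$, so the mean-value theorem applied to each exponential, combined with ${\rm Re}\,b_j\le -c(1+|\xi|^2)\mathbf{1}_{j=0}-c|\xi|^2\mathbf{1}_{j\ne 0}$ (on the non-resonant part $|\eta^2-4|\xi|^2|\ge r_0$), gives the pointwise bound
$$
\|(S_1-\tilde Y_2)(t,\xi)\hat V_0\|_\xi\le C\eps(1+|\xi|)\,e^{-c|\xi|^2t/2}\,\|\hat V_0\|_\xi.
$$
The resonant neighborhood $|\eta^2-4|\xi|^2|<r_0$ has bounded measure, so the reduced accuracy $O(\eps)$ there still contributes $O(\eps)$ in total. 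Applying $\|\hat V_0\|_\xi\le\|U_0\|_{L^1}$ and integrating against $(1+|\xi|^2)^k e^{-c|\xi|^2t}$ yields the $\eps(1+t)^{-3/4}\|U_0\|_{L^1}$ piece; the $S_3$ contribution on this band is absorbed into $\eps e^{-bt/\eps^2}\|U_0\|_{L^1}$ via Lemma~\ref{S2a}.

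In the middle band I would use Lemma~\ref{LP01}(2)/Lemma~\ref{spectrum2} to get $\|e^{t\tilde{\AA}_\eps(\xi)/\eps^2}\|_\xi\le Ce^{-\alpha t/\eps^2}$ and combine with the trivial bound on $\tilde Y_2$; since $\eps(1+|\xi|)\gtrsim r_0$, one extra power of the weight is absorbed to produce $e^{-bt/\eps^2}\|U_0\|_{H^{k+1}}$. In the high band, for $S_2$ I would use $|e^{\beta_j t/\eps^2}|\le e^{-C_1 t/(\eps|\xi|)}$ from Lemma~\ref{eigen_4} together with the elementary $e^{-a}\le C_m a^{-m}$ ($a>0$) to obtain
$$
\|S_2(t,\xi,\eps)\hat V_0\|_\xi\le C_m\,(\eps|\xi|/t)^m\|\hat V_0\|_\xi,
$$
which integrates to $\eps^m(1+t)^{-m}\|\Tdx^m U_0\|_{H^k}$. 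For $\tilde Y_2$ on the high band, the modes with $|\xi|>\eta/2$ satisfy $|e^{b_j t}|\le e^{-\eta t/2}$ and $b_0$ decays faster than any polynomial in $|\xi|$; using the weight trick $1\le C\eps^2(1+|\xi|^2)$ on $|\xi|\ge r_1/\eps$, this piece is absorbed into $\eps(1+t)^{-3/4}\|U_0\|_{H^{k+1}}$. The remaining $S_3$ contribution is handled uniformly by Lemma~\ref{S2a}.

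For \eqref{limit1a}, the hypothesis $P_r g_0=0$ means $\hat V_0$ lies essentially in the range of the fluid projection, so $(I-P_\eps(\xi))\hat V_0=O(\eps(1+|\xi|))\|\hat V_0\|_\xi$ in the low band and the $S_3$-bound of Lemma~\ref{S2a} improves accordingly; the pure $e^{-bt/\eps^2}$ contribution is then replaced by $\eps e^{-bt/\eps^2}\lesssim\eps(1+t)^{-3/4}$, and a parallel argument in the other bands removes the initial-layer term. The main obstacle I anticipate is the careful coordination at the resonant frequency $|\xi|=\eta/2$, where Lemma~\ref{eigen_2a} only provides $C^0$ regularity of $z_j$ and the error in $\Theta_k$ degrades from $O(\eps^2)$ to $O(\eps)$; verifying that the bounded measure of the resonant set compensates this loss, simultaneously for eigenvalue and eigenvector expansions across all five modes, is the place where the argument requires the most care.
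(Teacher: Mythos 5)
Your overall band decomposition (low frequency $\eps(1+|\xi|)\le r_0$, high $\eps|\xi|\ge r_1$, remainder $S_3$) matches the paper's, and your treatment of the high band via $e^{-a}\le C_m a^{-m}$, of the $\tilde Y_2$ tail via the weight trick $1\le C\eps^2(1+|\xi|^2)$ on $|\xi|\ge r_0/\eps$, and of $S_3$ via Lemma~\ref{S2a} all align with what the paper does. The improvement \eqref{limit1a} via the refined $S_3$-bound of Lemma~\ref{S2a} is also correct.

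However, there is a genuine gap in your treatment of the resonant band $|\eta^2-4|\xi|^2|<r_0$. You write that "the bounded measure of the resonant set compensates" the reduced $O(\eps)$ accuracy, but a mode-by-mode comparison of $S_1$ with $\tilde Y_2$ is not pointwise $O(\eps)$ there, because the individual eigenvectors are \emph{unbounded} near the double eigenvalue. From \eqref{eigf2}, $u_k$ carries the prefactor $\lambda_k\Theta_k/\sqrt{\lambda_1\lambda_3-\lambda_k^2}$, and likewise $\mathcal X_k$ carries $b_k/\sqrt{b_k^2-|\xi|^2}=b_k/\sqrt{b_k(b_k-b_{k'})}$ with $b_1-b_3=-\sqrt{\eta^2-4|\xi|^2}\to 0$. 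So $\|\mathcal U_k\|_\xi,\|\mathcal X_k\|_\xi\sim |\eta^2-4|\xi|^2|^{-1/4}$, and the per-mode eigenprojection $(\cdot,\mathcal U_k^*)\mathcal U_k$ has operator norm $\sim |\eta^2-4|\xi|^2|^{-1/2}$. The error in approximating $\mathcal U_k$ by $\mathcal X_k$ is therefore $O(\eps|\eta^2-4|\xi|^2|^{-1/2})$ on the level of eigenprojections, and
$$
\int_{|\eta^2-4|\xi|^2|<r_0}\frac{\eps^2}{|\eta^2-4|\xi|^2|}\,d\xi
$$
diverges logarithmically (in 3D, $|\eta^2-4|\xi|^2|\sim 4\eta\,\big||\xi|-\eta/2\big|$, and $\int |u|^{-1}du$ diverges). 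So the bounded-measure heuristic fails; the error is not uniformly $O(\eps)$ and its square is not integrable mode by mode. The paper resolves this by \emph{not} comparing eigenprojections individually: it rewrites both $S_1$ and $\tilde Y_2$ using the difference quotients $\tilde{V}_{jk}$, $\tilde W_{jk}$ (see the displayed identities before \eqref{bkb}), replacing the singular combination of the $\lambda_1$- and $\lambda_3$-modes by the finite expression $z_3 e^{z_3 t}\int_0^t e^{\tau(z_1-z_3)}d\tau$ and bounded vectors $\tilde{\mathcal U}_{jk}$, $X_{jk}$; only then does the $O(\eps)$ pointwise bound \eqref{bkb} hold uniformly on the resonant band. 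Your proposal needs this algebraic cancellation step to close; without it, the resonant contribution is not controlled.

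A secondary (minor) imprecision: the pointwise bound $\|(S_1-\tilde Y_2)\hat V_0\|_\xi\le C\eps(1+|\xi|)e^{-c|\xi|^2 t/2}\|\hat V_0\|_\xi$ is not uniform over the whole low band, since the damped modes $b_0,b_1$ only supply $e^{-ct}$ (not $e^{-c|\xi|^2 t}$) for $|\xi|$ up to $r_0/\eps$; the correct statement splits the low band as the paper does in \eqref{Y_4}, into $|\xi|\le r_0$ (diffusive decay, $L^1$ input) and $r_0\le 1+|\xi|\le r_0/\eps$ (uniform spectral gap, $H^1$ input). This is easily fixable and does not change the final estimate.
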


\begin{proof}
For brevity, we only prove the case when $k=0$ because the proof for $k>0$ is similar.
 By \eqref{B_0} and by taking $\eps\le r_0/2$ with $r_0>0$ given in Lemma \ref{eigen_4a}, we have
\bma
\left\|e^{\frac{t}{\eps^2}\AA_\eps}U_0-Y_2(t)P_2U_0\right\|^2_{L^2}&=\intr \left\|e^{\frac{t}{\eps^2}\tilde{\AA}_\eps(\xi)}\hat{V}_0-\tilde{Y}_2(t,\xi)P_2\hat{V}_0\right\|^2_{\xi}d\xi \nnm\\
&\le 4\int_{1+|\xi|\le \frac{r_0}{\eps} } \left\|S_1(t,\xi,\eps)\hat{V}_0-\tilde{Y}_2(t,\xi)P_2\hat{V}_0\right\|^2_{\xi}d\xi\nnm\\
&\quad+4\int_{|\xi|\ge \frac{r_1}{\eps}} \left\|S_2(t,\xi,\eps)\hat{V}_0\right\|^2_{\xi}d\xi+4\intr \left\|S_3(t,\xi,\eps)\hat{V}_0\right\|^2_{\xi}d\xi\nnm\\
&\quad +4\int_{1+|\xi|\ge \frac{r_0}{\eps}}  \left\|\tilde{Y}_2(t,\xi)P_2\hat{V}_0\right\|^2_{\xi}d\xi \nnm\\
&=:I_1+I_2+I_3+I_4, \label{S_4aa}
\ema
where $\hat V_0=(\hat g_0,\omega\times \hat E_0,\omega\times \hat B_0)$.

First,  we decompose $I_1$ into
\bma
I_1&=  4 \(\int_{1+|\xi|\le \frac{r_0}{\eps},|\eta^2-4|\xi|^2|\ge r_0}+\int_{ |\eta^2-4|\xi|^2|\le r_0} \) \left\|S_1(t,\xi,\eps)\hat{V}_0-\tilde{Y}_2(t,\xi)P_2\hat{V}_0 \right\|^2_{\xi}d\xi\nnm\\
&=:I_{11}+I_{12} .
\ema
We estimate $I_{11}$ and $I_{12}$ separately as follows.
By Lemma \ref{eigen_4a}, it holds that for $|\eta^2-4|\xi|^2| \ge r_0$ and $\eps(1+|\xi|)\le r_0$ with $r_0\ll1$,
\be \label{bk}
\frac1{\eps^2}\lambda_k=b_k(1+O(\eps^2)),\quad
\frac{\lambda_k\Theta_k}{\sqrt{\lambda_1\lambda_3-\lambda_k^2}}=\frac{ib_k}{\sqrt{b_k^2-|\xi|^2}}(1+O(\eps^2)), \quad k=1,2,3,4.
\ee
Thus, we can obtain by \eqref{bk} and \eqref{S1} that for $\eps(1+|\xi|)\le r_0$ and  $|\eta^2-4|\xi|^2| \ge r_0$,
\bma
S_1(t,\xi,\eps)\hat{V}_0
=&e^{b_0(|\xi|)t+ O(\eps^2(1+|\xi|^2)^2)t}\[\(\hat{V}_0, \overline{\mathcal{X}_0(\xi)} \)_{\xi}\mathcal{X}_0(\xi)+T_0(\xi,\eps)\]\nnm\\
&+\sum^4_{k=1}e^{b_k(|\xi|)t+ O(\eps^2|b_k(|\xi|)|)t}\[\(\hat{V}_0, \overline{\mathcal{X}_k(\xi)} \) \mathcal{X}_k(\xi)+T_k(\xi,\eps)\], \label{S1b}
\ema
where
\be \label{S1c}
\left\{\bln
\|T_0(\xi,\eps)\|_{\xi}&=O(1) \|\mathcal{U}_0-\mathcal{X}_0\|_{\xi}\|\hat{V}_0\|_{\xi}=O(\eps\sqrt{1+|\xi|^2})\|\hat{U}_0\| ,\\
\|T_k(\xi,\eps)\|_{\xi}&=O(1)\|\mathcal{U}_k-\mathcal{X}_k\| \|\hat{V}_0\| =  O(\eps) \|\hat{U}_0\|, \ \ k=1,2,3,4.
\eln\right.
\ee
Note that
\be \label{bka}
\left\{\bln
&{\rm Re}b_1(|\xi|)\le -c_1|\xi|^2,\,\,\,  |\xi|\le r_0;\quad {\rm Re}b_1(|\xi|)\le -c_2, \,\,\,  |\xi|\ge r_0,\\
&{\rm Re}b_3(|\xi|)\le -\eta/2, \,\,\,   \xi\in \R^3; \quad  b_1(|\xi|)=b_2(|\xi|),  \,\,\, b_3(|\xi|)=b_4(|\xi|),
\eln\right.
\ee
where $c_1,c_2$ are two positive constants. Thus, we have
\bma
I_{11}&\le C \int_{1+|\xi|\le \frac{r_0}{\eps}}e^{b_0t} \[r_0^2\eps^2(1+|\xi|^2)^3 t^2+\eps^2(1+|\xi|^2)\]\|\hat{U}_0\|^2 d\xi \nnm\\
&\quad+C\sum^4_{j=1}\int_{1+|\xi|\le \frac{r_0}{\eps}}e^{{\rm Re}b_jt} \(\eps^4|b_j|^2 t^2+\eps^2\)\|\hat{U}_0\|^2  d\xi \nnm\\
&\le C\int_{ |\xi|\le r_0}\eps^2e^{-c_1|\xi|^2t}(1+\eps^2|\xi|^4t^2)\|\hat{U}_0\|^2d\xi\nnm\\
&\quad +C\int_{1+|\xi|\le \frac{r_0}{\eps} }\eps^2e^{-c_2t}(1+|\xi|^2)\|\hat{U}_0\|^2 d\xi \nnm\\
&\le C\eps^2\(\sup_{|\xi|\le r_0}\|\hat{U}_0\|^2\int_{ |\xi|\le r_0}e^{-c_1|\xi|^2t}d\xi+e^{-c_2t}\intr (1+|\xi|^2)\|\hat{U}_0\|^2 d\xi\) \nnm\\
&\le C\eps^2\[(1+t)^{-3/2}\|U_0\|_{L^1}^2+e^{-c_2t}\|U_0\|_{H^1}^2\], \label{Y_4}
\ema
where we have used
$$\sup_{|\xi|\le r_0}\|\hat{U}_0\|^2\le C \intr \|g_0\|^2_{ L^1_x }dv +C\|(E_0,B_0) \|^2_{L^1_x} \le C\|U_0\|^2_{L^1}.$$

Note that when $ |\eta^2-4|\xi|^2| \to 0$, it holds that $\lambda_1\to \lambda_3$ and $b_1\to b_3$. This implies that $\|\mathcal{X}_k\|_{\xi}\to \infty$ and $\|\mathcal{U}_k\|_{\xi}\to \infty$ for $k=1,2,3,4$. In this situation,  the expansions \eqref{S1b}--\eqref{S1c} do not hold. To overcome this difficulty, we rewrite $S_1(t,\xi,\eps) $ and $\tilde{Y}_2(t,\xi)$ in  other forms. Indeed, since
  \bmas
&e^{\frac{t}{\eps^2}\lambda_1}\frac{\lambda_1}{\lambda_3-\lambda_1}\Theta^2_1+e^{\frac{t}{\eps^2}\lambda_3}\frac{\lambda_3}{\lambda_1-\lambda_3}\Theta^2_3 \\
=&e^{z_1t}\Theta^2_1+ e^{z_1t}\frac{z_3}{z_3-z_1}(\Theta^2_1-\Theta^2_3)
+\(e^{z_1t}-e^{z_3t}\)\frac{z_3}{z_3-z_1} \Theta^2_3,
 \emas
 where $\lambda_j=\eps^2z_j $, and $\Theta_j$, $j=1,2,3,4$ are defined by \eqref{Ak},  we can rewrite $S_1(t,\xi,\eps)$ for $ |\eta^2-4|\xi|^2| < r_0$  as
\bmas
S_1(t,\xi,\eps)\hat{V}_0&=e^{z_0t} \tilde{V}_0+e^{z_1t}\Theta^2_1(\tilde{V}_1+\tilde{V}_2)+(e^{z_1t}-e^{z_3t})\frac{z_3}{z_3-z_1} \Theta^2_3(\tilde{V}_1+\tilde{V}_2)\\
&\quad+e^{z_3t} \frac{z_3}{z_3-z_1}\Theta^2_3\[(\tilde{V}_1-\tilde{V}_3)+(\tilde{V}_2-\tilde{V}_4)\]\\
&\quad+ e^{z_1t}\frac{z_3}{z_3-z_1}(\Theta^2_1-\Theta^2_3) (\tilde{V}_1+\tilde{V}_2),
\emas
where
$$
\left\{\bln
\tilde{V}_0&=(\hat{V}_0,\mathcal{U}^*_0)_{\xi}\mathcal{U}_0, \ \ \tilde{V}_k=(\hat{V}_0,\tilde{\mathcal{U}}^*_k)\tilde{\mathcal{U}}_k,\ \  k=1,2,3,4,\\
\tilde{\mathcal{U}}_k&=\(\eps R(\lambda_k,\eps \xi)(v\cdot e_k)\chi_0, \omega\times e_k, \frac{i se_k}{z_k}\)
\eln\right.
$$ with $R(\lambda,\eps \xi)=(L_1-\lambda-i \eps  P_r(v\cdot\xi))^{-1}$.
Note that
\bmas
&(e^{z_1t}-e^{z_3t})\frac{z_3}{z_3-z_1} =-z_3e^{z_3t}\int^t_0e^{\tau(z_1-z_3)}d\tau,\\
&e^{z_3t} \frac{z_3}{z_3-z_1} \[(\tilde{V}_1-\tilde{V}_3)+(\tilde{V}_2-\tilde{V}_4)\]=z_3e^{z_3t} ( \tilde{V}_{13} + \tilde{V}_{24}),\\
& e^{z_1t}\frac{z_3}{z_3-z_1}(\Theta^2_1-\Theta^2_3)=O(1)\eps^4z_3 e^{z_1t},
\emas where
\be \label{ujk}
\left\{\bln
\tilde{V}_{jk}&=(\hat{V}_0,\tilde{\mathcal{U}}^*_{jk})\tilde{\mathcal{U}}_j+(\hat{V}_0,\tilde{\mathcal{U}}^*_{k})\tilde{\mathcal{U}}_{jk}, \ \  j,k=1,2,3,4,  \\
\tilde{\mathcal{U}}_{jk}&=\( \eps^3 R(\lambda_j,\eps \xi)R(\lambda_k,\eps \xi)(v\cdot e_j)\chi_0, 0, \frac{i se_j}{z_jz_k}\).
\eln\right.
\ee
We have
\bma
S_1(t,\xi,\eps)\hat{V}_0&=e^{z_0t} \tilde{V}_0+e^{z_1t}\Theta^2_1(\tilde{V}_1+\tilde{V}_2)+z_3e^{z_3t}\Theta^2_3 ( \tilde{V}_{13}+ \tilde{V}_{24})\nnm\\
&\quad-z_3e^{z_3t}\int^t_0e^{\tau(z_1-z_3)}d\tau \Theta^2_3 (\tilde{V}_1+\tilde{V}_2)+O(1)\eps^4z_3 e^{z_1t} (\tilde{V}_1+\tilde{V}_2). \label{S_1}
\ema

Similarly, we rewrite $\tilde{Y}_2(t,\xi)$ as
\bma
\tilde{Y}_2(t,\xi)P_2\hat{V}_0
&= e^{b_0t} \tilde{W}_0+e^{b_1t}(\tilde{W}_1+\tilde{W}_2) -b_3e^{b_3t}\int^t_0e^{\tau(b_1-b_3)}d\tau (\tilde{W}_1+\tilde{W}_2) \nnm\\
&\quad+b_3e^{b_3t} (\tilde{W}_{13} + \tilde{W}_{24}), \label{Y_2}
\ema
where
\be \label{xjk}
\left\{\bln
&\tilde{W}_0=(P_2\hat{V}_0,\overline{\mathcal{X}_0})_{\xi}\mathcal{X}_0,\quad \tilde{W}_k=(P_2\hat{V}_0,\overline{X_k})X_k, \\
&\tilde{W}_{jk}=(P_2\hat{V}_0,\overline{X_{jk}})X_j+(P_2\hat{V}_0,\overline{X_{k}})X_{jk}, \ \  j,k=1,2,3,4,\\
 &X_k=\(0, \omega\times e_k, \frac{ise_k}{b_k}\), \ \ X_{jk}=\(0, 0, \frac{i se_j}{b_jb_k}\).
\eln\right.
\ee

Since it follows from Lemma \ref{eigen_4a} that for $|\eta^2-4|\xi|^2|\le r_0$, 
\be \label{bkb}
\left\{\bln
&z_0=b_0+O(\eps^{2}), \quad z_k=b_k+O(\eps), \quad \Theta_k=1+O(\eps), \\
&|\tilde{V}_k-\tilde{W}_k|+|\tilde{V}_{jk}-\tilde{W}_{jk}|=O(\eps)\|\hat{U}_0\|, \quad  j,k=1,2,3,4,
\eln\right.
\ee
we obtain by  \eqref{S_1} and \eqref{Y_2} that
\bma
I_{12}\le & C\int_{ |\eta^2-4|\xi|^2|\le r_0} e^{-\frac{\eta}2t}\(|z_0-b_0|^2 +
\|\mathcal{U}_0-\mathcal{X}_0\|^2_{\xi}\) \|\hat{U}_0\|^2 d\xi \nnm\\
& +C\sum_{k=1,3}\int_{ |\eta^2-4|\xi|^2|\le r_0}e^{-\frac{\eta}2t}\(\eps^2+|z_k-b_k|^2  +|\Theta^2_k-1|^2 \) \|\hat{U}_0\|^2 d\xi \nnm\\
\le&C\eps^2e^{-\frac{\eta}2t} \|U_0\|^2_{L^2} . \label{Y_3}
\ema

Thus, it follows from \eqref{Y_4} and \eqref{Y_3} that
\be
I_{1}\le  C\eps^2  \( e^{-\frac{\eta}2t}\|U_0\|^2_{L^2}+ (1+t)^{-\frac32}\|U_0\|^2_{L^1}\). \label{I1b}
\ee

By \eqref{S2} and Lemma \ref{eigen_4}, we have
$$
S_2(t,\xi,\eps)\hat{V}_0=\sum^4_{k=1}e^{\frac{t}{\eps^2}\beta_k(|\xi|,\eps) } \(\hat{V}_0,  \mathcal{V}^*_k(\xi,\eps)  \) \mathcal{V}_k(\xi,\eps), \quad \eps|\xi|\ge r_1,
$$
which gives
\bma
I_2&=4\int_{|\xi|\ge \frac{r_1}{\eps}} \left\|S_2(t,\xi,\eps)\hat{V}_0\right\|^2_{\xi}d\xi \le C\int_{ |\xi|\ge \frac{r_1}{\eps}} e^{-\frac{ct}{\eps|\xi|}}\|\hat{V}_0\|^2  d\xi \nnm\\
&\le C\sup_{|\xi|\ge \frac{r_1}{\eps}}\frac1{|\xi|^{2m}}e^{-\frac{c t}{\eps|\xi|}}\int_{ |\xi|\ge \frac{r_1}{\eps}} |\xi|^{2m}\|\hat{U}_0\|^2  d\xi
 \le C \eps^{2m}(1+t)^{-2m}\|\Tdx^mU_0\|^2_{L^2 } . \label{I2b}
\ema

By \eqref{S3}, we have
\be
I_3=4\intr \left\|S_3(t,\xi,\eps)\hat{V}_0\right\|^2_{\xi}d\xi\le C\intr e^{-2\frac{bt}{\eps^2}}\|\hat{V}_0\|^2_{\xi} d\xi \le Ce^{-2\frac{bt}{\eps^2}}\|U_0\|^2_{L^2}.
\ee

For $I_4$, it holds that
\bma
I_4&=4\int_{1+|\xi|\ge \frac{r_0}{\eps}}  \left\|\tilde{Y}_2(t,\xi)P_2\hat{V}_0\right\|^2_{\xi}d\xi\le C\int_{1+|\xi|\ge \frac{r_0}{\eps}}e^{-\eta t }\|\hat{V}_0\|^2_{\xi} d\xi \nnm\\
&\le  C\frac{\eps^2}{r_0^2} e^{-\eta t }\int_{1+|\xi|\ge \frac{r_0}{\eps}}(1+|\xi|)^2\|\hat{U}_0\|^2 d\xi
\le  C\eps^2 e^{-\eta t } \|U_0\|^2_{H^1} . \label{S_6}
\ema
Therefore, it follows from \eqref{S_4aa} and  \eqref{I1b}--\eqref{S_6} that
\be
\left\|e^{\frac{t}{\eps^2}\AA_\eps}U_0-Y_2(t)P_2U_0\right\|^2_{L^2 }
\le C\(\eps^2(1+t)^{-\frac32}+e^{-2\frac{bt}{\eps^2}}\)(\|U_0\|^2_{H^1 }+\|U_0\|^2_{L^1}). \label{aaa}
\ee

We now turn to \eqref{limit1a}. Since $f_0\in N_0,$ by Lemma \ref{S2a} we have
\bma
I_3&\le C\int_{1+|\xi|\le \frac{r_0}{\eps}}\eps^2 (1+|\xi|^2)e^{-2\frac{bt}{\eps^2}}\| \hat{V}_0\|^2_{\xi}d\xi+C\int_{1+|\xi|\ge \frac{r_0}{\eps}}e^{-2\frac{bt}{\eps^2}}\| \hat{V}_0\|^2_{\xi}d\xi \nnm\\
&\le C\eps^2 e^{-2\frac{bt}{\eps^2}}\(\int_{1+|\xi|\le \frac{r_0}{\eps}} (1+|\xi|^2) \| \hat{U}_0\|^2 d\xi +\int_{1+|\xi|\ge \frac{r_0}{\eps}} |\xi|^2\| \hat{U}_0\|^2 d\xi\)\nnm\\
&\le C\eps^2 e^{-2\frac{bt}{\eps^2}}\|U_0\|^2_{H^1 }.\label{S_7a-1}
\ema
Thus, by \eqref{I1b}, \eqref{I2b}, \eqref{S_6} and \eqref{S_7a-1} we obtain \eqref{limit1a} for $k=0$.
And this completes the proof of  the lemma.
\end{proof}

The following lemma gives the second order fluid approximation of the semigroup $e^{\frac{t}{\eps^2}\AA_\eps}$.

\begin{lem}\label{fl2}
For any $\eps\ll 1$, any integer $k,m\ge 0$ and  $U_0=(g_0,0,0)\in H^{k+2}\cap L^1 $ satisfying $P_{ d}g_0=0$, we have
\bma
\bigg\|\frac1{\eps}e^{\frac{t}{\eps^2}\AA_\eps}U_0-Y_2(t)Z_0\bigg\|_{H^k}
&\le C \( \eps(1+t)^{-\frac34 }+ \frac1{\eps}e^{-\frac{bt}{\eps^2}}\)(\|U_0\|_{H^{k+2} }+\|U_0\|_{L^{1} })\nnm\\
 &\quad +C \eps^{m}(1+t)^{-m}\|\Tdx^{m}U_0\|_{H^k}, \label{limit2}
\ema
where $Y_2(t)$ is defined in \eqref{solution1a}, $Z_0=( P_d(v\cdot\Tdx L^{-1}_1g_0), (v L^{-1}_1g_0,\chi_0),0),$ and $b>0$ is a constant given by \eqref{S3}.
\end{lem}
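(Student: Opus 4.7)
The plan is to adapt the proof of Lemma \ref{fl1} by going one order deeper in the low-frequency expansion. The key observation is that since $P_dg_0=0$, each eigenprojection coefficient satisfies $(\hat V_0,\mathcal{U}_j^*)_\xi = O(\eps)$ for every $j=0,1,2,3,4$ (where $\hat V_0=(\hat g_0,0,0)$): indeed, from \eqref{eigf2} one reads off that $P_du_0$ contributes only through its $O(\eps)$ coupling with the already microscopic $\hat g_0$, while $P_ru_0$ itself is $O(\eps)$, and similarly each $u_k$ ($k=1,2,3,4$) carries an explicit factor of $\eps$. Thus the naive first-order limit $\tilde Y_2(t,\xi)P_2\hat V_0$ vanishes, and the genuine leading contribution of $e^{\frac{t}{\eps^2}\tilde\AA_\eps(\xi)}\hat V_0$ is of order $\eps$; dividing by $\eps$ produces the correct nontrivial fluid limit. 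Matching the explicit expansion to the eigenfunction $\mathcal{X}_j(\xi)$ of the limiting operator via \eqref{X_3} identifies this limit as $\tilde Y_2(t,\xi)\hat V_{Z_0}$, where $\hat V_{Z_0}=(i\xi\cdot(vL_1^{-1}\hat g_0,\chi_0)\chi_0,\,\omega\times(vL_1^{-1}\hat g_0,\chi_0),\,0)$ is the $\tilde\AA$-representative of $\hat Z_0$.

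By Plancherel, it suffices to bound $\|\frac{1}{\eps}e^{\frac{t}{\eps^2}\tilde\AA_\eps(\xi)}\hat V_0-\tilde Y_2(t,\xi)\hat V_{Z_0}\|_\xi$ in $L^2_\xi$ with weight $(1+|\xi|^2)^k$, and by Theorem \ref{rate1} this splits into the three pieces $S_1,S_2,S_3$ plus the tail of $\tilde Y_2\hat V_{Z_0}$ on $\{\eps(1+|\xi|)\ge r_0\}$. For the low-frequency $S_1$-piece on $\{\eps(1+|\xi|)\le r_0\}$, I would use \eqref{eigf2} together with \eqref{specr0}--\eqref{specr1} to write $\frac{1}{\eps}(\hat V_0,\mathcal{U}_j^*)_\xi\mathcal{U}_j = e^{b_j(|\xi|)t}(\hat V_{Z_0},\overline{\mathcal{X}_j(\xi)})_\xi\mathcal{X}_j(\xi) + R_j(t,\xi,\eps)$ with $\|R_j(t,\xi,\eps)\|_\xi\le C\eps e^{{\rm Re}\,b_j(|\xi|)\,t}\|\hat U_0\|$, where the $\eps$ arises from the $O(\eps)$ error in the eigenfunctions, the $O(\eps^2)$ error in $\eps^{-2}\lambda_j-b_j$ (which costs one factor of $t$ absorbed by the decay), and the $\eps$ error in the normalization factor $\Theta_k$. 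The sum of $\|R_j\|$ is bounded by $C\eps(1+t)^{-3/4}\|U_0\|_{L^1}$ on $|\xi|\le r_0$ (using $\sup_\xi\|\hat U_0\|\le\|U_0\|_{L^1}$ as in \eqref{Y_4}) and by $C\eps e^{-ct}\|U_0\|_{H^{k+1}}$ on the annulus $r_0\le|\xi|\le r_0/\eps$. The $S_2$-piece on $\{\eps|\xi|\ge r_1\}$ contributes $C\eps^m(1+t)^{-m}\|\Tdx^m U_0\|_{H^k}$ exactly as in \eqref{I2b}, while the $S_3$-piece gives directly $\frac{C}{\eps}e^{-bt/\eps^2}\|U_0\|_{H^k}$ by \eqref{S3}. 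The tail of $\tilde Y_2\hat V_{Z_0}$ on $\{\eps(1+|\xi|)\ge r_0\}$ decays exponentially in $t$; extracting an $\eps^2(1+|\xi|)^2$ from the indicator and bounding $|\hat V_{Z_0}|\lesssim(1+|\xi|)\|L_1^{-1}\hat g_0\|$ costs one additional derivative and explains why $\|U_0\|_{H^{k+2}}$ (rather than $H^{k+1}$) appears on the right.

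The main technical obstacle is again the resonance $|\xi|=\eta/2$, where $\lambda_1,\lambda_3$ and $b_1,b_3$ coalesce and the prefactor $\lambda_k/\sqrt{\lambda_1\lambda_3-\lambda_k^2}$ in \eqref{eigf2} becomes singular. Following the device used in Lemma \ref{fl1}, I would rewrite $\frac{1}{\eps}S_1$ and $\tilde Y_2$ in the non-singular regularized forms \eqref{S_1} and \eqref{Y_2} (replacing the difference $e^{\frac{t}{\eps^2}\lambda_1}-e^{\frac{t}{\eps^2}\lambda_3}$ by the integral $-z_3e^{z_3t}\int_0^t e^{\tau(z_1-z_3)}d\tau$, and likewise for the $b_j$'s), and then match them term by term. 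Using \eqref{bkb} together with the fact that $\Theta_k=1+O(\eps)$ near the resonance, each matched difference is $O(\eps)$ uniformly, and combined with the exponential time decay $e^{-\eta t/2}$ this yields the stated $\eps(1+t)^{-3/4}$ rate after absorbing an $\|U_0\|_{L^2}$. Summing all contributions and repeating for general $k$ by inserting the weight $(1+|\xi|^2)^k$ into each Fourier integral gives \eqref{limit2}.
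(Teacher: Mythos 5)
Your proposal is correct and follows essentially the same route as the paper's proof: it exploits the key fact that $P_dg_0=0$ forces the low-frequency eigenprojection coefficients to be $O(\eps)$, matches the $S_1$-piece against $\tilde Y_2(t,\xi)\hat Z_1$ term by term using \eqref{eigf2} and \eqref{specr0}--\eqref{specr1}, handles the resonance $|\xi|=\eta/2$ by the regularized forms \eqref{S_1b}, \eqref{Y_2a} with the integral $\int_0^t e^{\tau(z_1-z_3)}d\tau$, and bounds the $S_2$, $S_3$, and tail pieces exactly as the paper does (including your correct observation that the extra derivative in $\|U_0\|_{H^{k+2}}$ comes from the factor $|\xi|$ in $\hat Z_1$ combined with extracting $\eps^2(1+|\xi|)^2$ from the high-frequency indicator). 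Your identification of the limit as the $\tilde\AA$-representative of $\hat Z_0$ is the same as the paper's $\hat Z_1$, and the decomposition into the four pieces $I_1,\dots,I_4$ with the $I_1$ further split into non-resonant/resonant regions is identical in structure.
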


\begin{proof} Again we only prove the case when $k=0$ because the proof for $k>0$ is similar.
By \eqref{B_0} and by taking $\eps\le r_0/2$ with $r_0>0$ given in Lemma \ref{eigen_4a}, we have
\bma
\left\|\frac1{\eps}e^{\frac{t}{\eps^2}\AA_\eps}U_0-Y_2(t)Z_0\right\|^2_{L^2}
&\le 4\int_{1+|\xi|\le \frac{r_0}{\eps}} \left\|\frac1{\eps}S_1(t,\xi,\eps)\hat{U}_0-\tilde{Y}_2(t,\xi)\hat{Z}_1\right\|^2_{\xi}d\xi\nnm\\
&\quad+4\int_{|\xi|\ge \frac{r_1}{\eps}} \left\|\frac1{\eps}S_2(t,\xi,\eps)\hat{U}_0\right\|^2_{\xi}d\xi+4\intr \left\|\frac1{\eps}S_3(t,\xi,\eps)\hat{U}_0\right\|^2_{\xi}d\xi\nnm\\
&\quad+4\int_{1+|\xi|\ge \frac{r_0}{\eps}}  \left\|\tilde{Y}_2(t,\xi)\hat{Z}_1\right\|^2_{\xi}d\xi\nnm\\
&=:I_1+I_2+I_3+I_4, \label{S_7}
\ema
where  $\hat{Z}_1=(i P_d(v\cdot\xi L^{-1}_1\hat{g}_0), \omega\times (v L^{-1}_1\hat{g}_0,\chi_0),0)$.

Firstly,  we decompose $I_1$ into
\bma
I_1&=  4 \(\int_{1+|\xi|\le \frac{r_0}{\eps},|\eta^2-4|\xi|^2|\ge r_0}+\int_{ |\eta^2-4|\xi|^2|\le r_0} \)
\left\|\frac1{\eps}S_1(t,\xi,\eps)\hat{V}_0-\tilde{Y}_2(t,\xi)\hat{Z}_1\right\|^2_{\xi}d\xi\nnm\\
&=:I_{11}+I_{12} ,
\ema
By Lemma \ref{eigen_4a} and Theorem \ref{rate1}, it holds for any $U_0=(g_0,0,0) $ with $P_{ d}g_0=0$ that
\be
S_1(t,\xi,\eps)\hat{U}_0=\sum^4_{j=0}e^{\frac{t}{\eps^2}\lambda_j(|\xi|,\eps) } \eps\big(\hat{g}_0,  \tilde{u}_j(\xi,\eps) \big) \mathcal{U}_j(\xi,\eps), \label{S_1c}
\ee
where
$$
\left\{\bln
\tilde{u}_0&=\eps^{-1}P_r\overline{u_0}=i \sqrt{1+s^2}\[1+O(\eps^2(1+s^2))\]R(\overline{\lambda_0},-\eps \xi)(v\cdot \omega)\chi_0, \\
\tilde{u}_k&=\eps^{-1}P_r\overline{u_k}= \frac{\lambda_k\Theta_k}{\sqrt{\lambda_1\lambda_3-\lambda_k^2}}R(\overline{\lambda_k},-\eps \xi)(v\cdot e_k)\chi_0,\quad k=1,2,3,4
\eln\right.
$$  with $R(\lambda,\eps \xi)=(L_1-\lambda-i \eps  P_r(v\cdot\xi))^{-1}$.

Thus, we can obtain by \eqref{S_1c} and \eqref{bk} that for $\eps(1+|\xi|)\le r_0$ and  $|\eta^2-4|\xi|^2| \ge r_0$,
\bma
\frac1{\eps}S_1(t,\xi,\eps)\hat{U}_0
&= e^{\frac{t}{\eps^2}\lambda_0(|\xi|,\eps) }\[i\sqrt{1+|\xi|^2}(v\cdot\omega L^{-1}_1 \hat{g}_0,\chi_0)\mathcal{X}_0(\xi)+R_0(\xi,\eps)\]\nnm\\
&\quad+ \sum^4_{k=1}e^{\frac{t}{\eps^2}\lambda_k(|\xi|,\eps) }\bigg[\frac{b_k}{\sqrt{|\xi|^2-b_k^2}}(v\cdot e_k L^{-1}_1 \hat{g}_0,\chi_0)\mathcal{X}_k(\xi)+R_k(\xi,\eps)\bigg]\nnm\\
&= e^{b_0(|\xi|)t+ O(\eps^2(1+|\xi|^2)^2)t}\[\(\hat{Z}_1, \overline{\mathcal{X}_0(\xi)} \)_{\xi}\mathcal{X}_0(\xi)+R_0(\xi,\eps)\]\nnm\\
&\quad+ \sum^4_{k=1}e^{b_k(|\xi|)t+ O(\eps^2b_k(|\xi|))t} \[\(\hat{Z}_1,\overline{\mathcal{X}_k(\xi)}\) \mathcal{X}_k(\xi)+R_k(\xi,\eps)\], \label{S1a}
\ema
where
$$
\left\{\bln
\|R_0(\xi,\eps)\|_{\xi}&=O(\eps(1+|\xi|^2))\|\hat{U}_0\| ,\\
\|R_k(\xi,\eps)\|_{\xi}&= O(\eps\sqrt{1+|\xi|^2})\|\hat{U}_0\|, \quad k=1,2,3,4
\eln\right.
$$
Thus, it follows form \eqref{bka} and \eqref{S1a} that
\bma
I_{11}&\le C \int_{1+|\xi|\le \frac{r_0}{\eps}}e^{b_0t} \[r_0^2\eps^2(1+|\xi|^2)^4 t^2+\eps^2(1+|\xi|^2)^2\]\|\hat{U}_0\|^2 d\xi \nnm\\
&\quad+C\sum^4_{j=1}\int_{1+|\xi|\le \frac{r_0}{\eps}}e^{{\rm Re}b_jt} \(\eps^4|b_j|^2 t^2+\eps^2\)(1+|\xi|^2)\|\hat{U}_0\|^2  d\xi \nnm\\
&\le C\int_{ |\xi|\le r_0}\eps^2e^{-c_1|\xi|^2t} \|\hat{U}_0\|^2d\xi +C\int_{1+|\xi|\le \frac{r_0}{\eps} }\eps^2e^{-c_2t}(1+|\xi|^2)^2\|\hat{U}_0\|^2 d\xi \nnm\\
&\le C\eps^2\[(1+t)^{-3/2}\|U_0\|_{L^1}^2+e^{-c_2t}\|U_0\|_{H^2}^2\]. \label{I11}
\ema

For $ |\eta^2-4|\xi|^2| < r_0$, we rewrite $\frac1{\eps}S_1(t,\xi,\eps)$  as
\bma
\frac1{\eps}S_1(t,\xi,\eps)\hat{U}_0&= e^{z_0t} \tilde{V}_0+ e^{z_1t}\Theta^2_1(\tilde{V}_1+\tilde{V}_2)+ z_3e^{z_3t} ( \tilde{V}_{13}+ \tilde{V}_{24})\nnm\\
&\quad- z_3e^{z_3t}\int^t_0e^{\tau(z_1-z_3)}d\tau \Theta^2_3 (\tilde{V}_1+\tilde{V}_2)+O(1)\eps^4 z_3e^{z_1t} (\tilde{V}_1+\tilde{V}_2), \label{S_1b}
\ema
where
$$
\left\{\bln
&\tilde{V}_0= (\hat{g}_0, \tilde{u}_0) \mathcal{U}_0,\quad \tilde{V}_k= (\hat{g}_0, \tilde{u}_k)\tilde{\mathcal{U}}_k, \\
&\tilde{V}_{jk}= (\hat{g}_0,\tilde{u}_{jk})\tilde{\mathcal{U}}_j+ (\hat{g}_0,\tilde{u}_{k})\tilde{\mathcal{U}}_{jk}, \quad j,k=1,2,3,4,  \\
&\tilde{u}_{jk} = \eps^2 R(\overline{\lambda_j},-\eps \xi)R(\overline{\lambda_k},-\eps \xi)(v\cdot e_j)\chi_0
\eln\right.
$$ with $\tilde{\mathcal{U}}_k$, $\tilde{\mathcal{U}}_{jk}$ defined by \eqref{ujk}.
Similarly, we rewrite $\tilde{Y}_2(t,\xi)$ as
\bma
\tilde{Y}_2(t,\xi)\hat{Z}_1&= e^{b_0t} \tilde{W}_0+e^{b_1t}(\tilde{W}_1+\tilde{W}_2) -b_3e^{b_3t}\int^t_0e^{\tau(b_1-b_3)}d\tau (\tilde{W}_1+\tilde{W}_2) \nnm\\
&\quad+b_3e^{b_3t} (\tilde{W}_{13} + \tilde{W}_{24}), \label{Y_2a}
\ema
where
$$
\left\{\bln
&\tilde{W}_0=(\hat{Z}_1,\overline{\mathcal{X}_0})_{\xi}\mathcal{X}_0,\quad \tilde{W}_k=(\hat{Z}_1,\overline{X_k})X_k, \\
&\tilde{W}_{jk}=(\hat{Z}_1,X_{jk})X_j+(\hat{Z}_1,X_{k})X_{jk}, \quad j,k=1,2,3,4
\eln\right.
$$   with $X_k$, $X_{jk}$ defined by \eqref{xjk}.

Thus, it follows from \eqref{bkb}, \eqref{S_1b} and \eqref{Y_2a} that
\be
I_{12}\le  C\int_{ |\eta^2-4|\xi|^2|\le r_0} e^{-\frac{\eta}2t}\eps^2 \|\hat{U}_0\|^2 d\xi\le C\eps^2e^{-\frac{\eta}2t} \|U_0\|^2_{L^2} . \label{I12}
\ee

By combining \eqref{S1a}, \eqref{I11} and \eqref{I12}, we obtain
\be
I_{1}\le  C\eps^2 (1+t)^{-\frac32} ( \|U_0\|^2_{H^2}+\|U_0\|^2_{L^1}). \label{I_1a}
\ee

By \eqref{S2} and Lemma \ref{eigen_4}, it holds for $U_0=(g_0,0,0)$ with $P_{ d}g_0=0$ that
$$
S_2(t,\xi,\eps)\hat{U}_0=\sum^4_{k=1}e^{\frac{t}{\eps^2}\beta_k(|\xi|,\eps)} \(\hat{g}_0,  \overline{w_k(\xi,\eps)}  \) \mathcal{V}_k(\xi,\eps) , \quad \eps|\xi|\ge r_1,
$$
which gives
\bma
I_2&\le C\int_{ |\xi|\ge \frac{r_1}{\eps}} \frac{1}{\eps|\xi|}e^{-\frac{ct}{\eps|\xi|}}\|\hat{U}_0\|^2  d\xi
\le C\frac{1}{r_1}\sup_{|\xi|\ge \frac{r_1}{\eps}}\frac1{|\xi|^{2m}}e^{-\frac{c t}{\eps|\xi|}}\int_{ |\xi|\ge \frac{r_1}{\eps}} |\xi|^{2m}\|\hat{U}_0\|^2  d\xi \nnm\\
&\le C \eps^{2m}(1+t)^{-2m}\|\Tdx^mU_0\|^2_{L^2 } . \label{I_2}
\ema

By \eqref{S3} and $P_{ d}g_0=0$, we have
\be
I_3 \le C\intr \frac1{\eps^2} e^{-2\frac{bt}{\eps^2}}\|\hat{U}_0\|^2 d\xi
\le C\frac1{\eps^2}e^{-2\frac{bt}{\eps^2}}\|U_0\|^2_{L^2}. \label{I_3}
\ee

For $I_4$, it holds that
\bma
I_4&\le C\int_{1+|\xi|\ge \frac{r_0}{\eps}}e^{-\eta t }\|\hat{Z}_1\|^2 d\xi\le C \frac{\eps^2}{r_0^2} e^{-\eta t }\int_{1+|\xi|\ge \frac{r_0}{\eps}}(1+|\xi|)^4\|\hat{g}_0\|^2 d\xi \nnm\\
&\le  C\eps^2 e^{-\eta t } \|U_0\|^2_{H^2} . \label{S_6b}
\ema
By combining \eqref{I_1a}, \eqref{I_2},  \eqref{I_3} and \eqref{S_6b}, we obtain \eqref{limit2}
for $k=0$.
And this completes the proof of the lemma.
\end{proof}

In the following lemma, we will present the time decay rates of the semigroup $e^{\frac{t}{\eps^2}\AA_\eps}$.

\begin{lem}\label{time}
For any $\eps\ll 1$, $\alpha\in \mathbb{N}^3$, any integer $k\ge 0$ and $U_0=(g_0,E_0,B_0) $, we have
\bma
 \left\| \dxa P_2e^{\frac{t}{\eps^2}\AA_\eps}U_0 \right\|_{L^{2} }
 &\le C (1+t)^{-\frac34-\frac{m}2}\(\|\dx^{\alpha'} U_0\|_{L^{1} }+\|\dxa U_0\|_{L^2}\)\nnm\\
 &\quad+ C\eps^k(1+t)^{-k}\|\Tdx^{|\alpha|+k} U_0\|_{L^{2} }, \label{time1a}
 \\
  \left\| \dxa P_3e^{\frac{t}{\eps^2}\AA_\eps}U_0 \right\|_{L^{2} }
 &\le C\(\eps (1+t)^{-\frac34-\frac{m}2}+ e^{-\frac{bt}{\eps^2}}\)\(\|\dx^{\alpha'} U_0\|_{L^{1} }+\|\dxa U_0\|_{H^1}\)\nnm\\
 &\quad+ C\eps^{k+1}(1+t)^{-k}\|\Tdx^{|\alpha|+k} U_0\|_{L^{2} }, \label{time2a}
\ema
where $P_2,P_3$ are defined by  \eqref{P23}, $\alpha'\le \alpha$, $m=|\alpha-\alpha'|$,  and $b>0$ is a constant given by \eqref{S3}.

Moreover, if $U_0=(g_0,0,0)$ satisfies that $P_dg_0=0$, then
\bma
 \left\| \dxa P_2e^{\frac{t}{\eps^2}\AA_\eps}U_0 \right\|_{L^{2} }
 &\le C\(\eps (1+t)^{-\frac34-\frac{m}2}+ e^{-\frac{bt}{\eps^2}}\)\(\|\dx^{\alpha'} U_0\|_{L^{1} }+\|\dxa U_0\|_{H^1}\)\nnm\\
 &\quad+ C\eps^{k+1}(1+t)^{-k}\|\Tdx^{|\alpha|+k} U_0\|_{L^{2} }, \label{time3a}
 \\
 \left\| \dxa P_3e^{\frac{t}{\eps^2}\AA_\eps}U_0 \right\|_{L^{2} }
 &\le C\(\eps^2 (1+t)^{-\frac34-\frac{m}2}+ e^{-\frac{bt}{\eps^2}}\)\(\|\dx^{\alpha'} U_0\|_{L^{1} }+\|\dxa U_0\|_{H^2}\)\nnm\\
 &\quad+ C\eps^{k+2}(1+t)^{-k}\|\Tdx^{|\alpha|+k} U_0\|_{L^{2} }. \label{time4a}
\ema
\end{lem}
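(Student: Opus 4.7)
The plan is to work entirely on the Fourier side and combine the three‐piece decomposition $e^{\frac{t}{\eps^2}\tilde{\AA}_\eps(\xi)}=S_1+S_2+S_3$ from Theorem \ref{rate1} with the eigen-structure from Lemma \ref{eigen_4a} and Lemma \ref{eigen_4}. By Plancherel, the quantities to be bounded become
\[
\|\dxa P_j e^{\frac{t}{\eps^2}\AA_\eps}U_0\|_{L^2}^2
= \int_{\R^3}|\xi|^{2|\alpha|}\|P_j e^{\frac{t}{\eps^2}\tilde\AA_\eps(\xi)}\hat V_0\|_\xi^{2}\,d\xi, \qquad j=2,3,
\]
and we split the $\xi$ integral into the low-frequency region $\eps(1+|\xi|)\le r_0$, the high-frequency region $\eps|\xi|\ge r_1$, and the rest, matching the supports of $S_1$, $S_2$, and $S_3$ respectively. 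Throughout, we write $|\xi|^{2|\alpha|}=|\xi|^{2|\alpha'|}|\xi|^{2m}$ and absorb the first factor into $|\widehat{\dx^{\alpha'}U_0}|$ so that $\|\widehat{\dx^{\alpha'}U_0}\|_{L^\infty_\xi}\le \|\dx^{\alpha'}U_0\|_{L^1}$.

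For the low-frequency term, expand $S_1(t,\xi,\eps)$ via the eigenfunctions $\mathcal{U}_j$, using $\lambda_0=\eps^{2}b_0(|\xi|)+O(\eps^4(1+|\xi|^2)^2)$ and $\lambda_k=\eps^{2}b_k(|\xi|)+O(\eps^{3})$, together with $\mathrm{Re}\,b_0=-\eta(1+|\xi|^2)$, $\mathrm{Re}\,b_k\le -c_1\min(|\xi|^2,1)$. The crucial dispersive estimate
\[
\int_{|\xi|\le 1}|\xi|^{2m}e^{-c|\xi|^2 t}\,d\xi\le C(1+t)^{-3/2-m}
\]
combined with $\|\widehat{\dx^{\alpha'}U_0}\|_{L^\infty_\xi}\le \|\dx^{\alpha'}U_0\|_{L^1}$ yields the $(1+t)^{-3/4-m/2}$ rate for $\|\dxa P_2 S_1\ast U_0\|_{L^2}$. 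The extra $\eps$ factor in $\|\dxa P_3 S_1\ast U_0\|_{L^2}$ is automatic because in \eqref{eigf2} the micro components satisfy $P_r u_0=O(\eps)$ and $u_k=O(\eps)$ for $k=1,\dots,4$. For the improved estimates \eqref{time3a}--\eqref{time4a}, one more $\eps$ is gained in the pairing $(\hat V_0,\mathcal{U}_j^*)_\xi=(\hat g_0,\overline{P_r u_j})$ because $P_d g_0=0$ forces only the $O(\eps)$ micro part of $\mathcal U_j$ to contribute; combined with the output-side $\eps$ gain for $P_3$, this produces the $\eps^2$ in \eqref{time4a}.

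For the high-frequency term $S_2$, the key is Lemma \ref{eigen_4}, which gives $|e^{\beta_k t/\eps^2}|\le e^{-ct/(\eps|\xi|)}$. Setting $r=\eps|\xi|\ge r_1$ and optimizing the supremum
\[
\sup_{|\xi|\ge r_1/\eps}|\xi|^{-2k}e^{-2ct/(\eps|\xi|)}\le C\eps^{2k}(1+t)^{-2k}
\]
trades $k$ extra spatial derivatives for a factor $\eps^{k}(1+t)^{-k}$, producing the last terms in \eqref{time1a}--\eqref{time4a}. The $S_3$ piece is handled by $\|S_3(t,\xi,\eps)V\|_\xi\le Ce^{-bt/\eps^2}\|V\|_\xi$; for the micro component $P_3$ and for the special initial data with $P_d g_0=0$, we upgrade this to the refined bound of Lemma \ref{S2a}, where the decomposition $S_3=S_{31}+S_{32}$ combined with $\|V_0-P_\eps(\xi)V_0\|_\xi=O(\eps(1+|\xi|))\|V_0\|_\xi$ supplies the additional power of $\eps$ (and one extra derivative on $U_0$) needed to produce the $\eps e^{-bt/\eps^2}$ and $\eps^2 e^{-bt/\eps^2}$ remainders in \eqref{time2a} and \eqref{time4a}.

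The main obstacle is the near-resonant region $|\eta^2-4|\xi|^2|<r_0$, where $\lambda_1\to\lambda_3$ and the normalization constants $\Theta_k/\sqrt{\lambda_1\lambda_3-\lambda_k^{2}}$ appearing in \eqref{eigf2} blow up. Exactly as in the proof of Lemma \ref{fl1} (see \eqref{S_1}), one must rewrite $S_1(t,\xi,\eps)\hat V_0$ in a divided-difference form
\[
\bigl(e^{\frac{t}{\eps^2}\lambda_1}-e^{\frac{t}{\eps^2}\lambda_3}\bigr)\tfrac{z_3}{z_3-z_1}
= -\eps^{-2}\lambda_3 e^{\frac{t}{\eps^2}\lambda_3}\int_0^t e^{\tau(\lambda_1-\lambda_3)/\eps^2}\,d\tau,
\]
which removes the singular denominator while preserving the pointwise decay of each summand. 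A secondary obstacle is the careful bookkeeping of the two independent $\eps$-gains (input pairing versus output micro component) required to separate \eqref{time1a}--\eqref{time2a} from their sharpened counterparts \eqref{time3a}--\eqref{time4a}; the argument in fact runs in parallel to, but simpler than, that of Lemmas \ref{fl1} and \ref{fl2}, since no subtraction of a fluid limit is present and all comparison errors collapse into pure norm bounds.
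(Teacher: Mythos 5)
Your overall framework is the same as the paper's: Plancherel and the three-piece decomposition $e^{t\tilde{\AA}_{\eps}(\xi)/\eps^2}=S_1+S_2+S_3$ from Theorem \ref{rate1}, the eigen-expansion with the divided-difference rewriting \eqref{S_1} near the resonance $|\eta^2-4|\xi|^2|<r_0$, the standard $\int_{|\xi|\le 1}|\xi|^{2m}e^{-c|\xi|^2t}\,d\xi\lesssim(1+t)^{-3/2-m}$ plus $\|\widehat{\dx^{\alpha'}U_0}\|_{L^\infty_\xi}\le\|\dx^{\alpha'}U_0\|_{L^1}$ for the low-frequency algebraic decay, the supremum trade $\sup_{\eps|\xi|\ge r_1}|\xi|^{-2k}e^{-2ct/(\eps|\xi|)}\lesssim\eps^{2k}(1+t)^{-2k}$ for $S_2$, and the bookkeeping of the two independent $\eps$-gains (output micro projection for $P_3$, input pairing when $P_dg_0=0$) — all of this is exactly the paper's route.

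There is, however, a concrete error in your treatment of the $S_3$ remainder. You claim you need to upgrade the basic bound $\|S_3(t,\xi,\eps)V\|_\xi\le Ce^{-bt/\eps^2}\|V\|_\xi$ via Lemma \ref{S2a} in order to ``produce the $\eps e^{-bt/\eps^2}$ and $\eps^2 e^{-bt/\eps^2}$ remainders in \eqref{time2a} and \eqref{time4a}.'' First, no such factors appear in the statement: in \eqref{time2a}, \eqref{time3a}, \eqref{time4a} the exponential remainder is simply $e^{-bt/\eps^2}$, so no upgrade is needed — the paper's proof just uses the raw bound, giving $Ce^{-bt/\eps^2}\|\dxa U_0\|_{L^2}$ as in \eqref{t3b}. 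Second, the proposed upgrade is not available: Lemma \ref{S2a} requires $V_0\in N_1\times\C^3_\xi\times\C^3_\xi$, i.e.\ $P_r\hat g_0=0$, which is the \emph{opposite} of the hypothesis $P_d g_0=0$ in the sharpened case, and is also false for general $U_0$. So if you were to pursue this step as written, it would stall; fortunately the statement does not require it, and once you drop the spurious $\eps$-factors on the exponential term the rest of your plan is correct and matches the paper.
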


\begin{proof}
By Theorem \ref{rate2}, we have for $j=2,3$ that
\bma
\left\|\dxa P_je^{\frac{t}{\eps^2}\AA_\eps}U_0\right\|^2_{L^2 }
=& \intr  \left\|\xi^{\alpha}P_je^{\frac{t}{\eps^2}\tilde{\AA}_{\eps}(\xi)}\hat{V}_0 \right\|^2_{\xi}d\xi  \nnm\\
\le &  \int_{1+|\xi|\le \frac{r_0}{\eps}} \left\|\xi^{\alpha}P_jS_1(t,\xi,\eps)\hat{V}_0\right\|^2_{\xi}d\xi +\int_{|\xi|\ge \frac{r_1}{\eps}} \left\|\xi^{\alpha}P_jS_2(t,\xi,\eps)\hat{V}_0\right\|^2_{\xi}d\xi\nnm\\
 &+\intr \left\|\xi^{\alpha}S_3(t,\xi,\eps)\hat{V}_0\right\|^2_{\xi}d\xi, \label{D1c}
\ema
where $\hat V_0=(\hat g_0,\omega\times \hat E_0,\omega\times \hat B_0)$.
By noting
$\|\hat V_0\|^2_{\xi}=\|\hat U_0\|^2,$
 we can estimate the third term on the right hand side of \eqref{D1c} as follows:
\be
\intr (\xi^\alpha)^2 \|S_3(t,\xi,\eps)\hat{V}_0\|^2_{\xi}d\xi \le C e^{-2\frac{bt}{\eps^2}}\intr(\xi^\alpha)^2\|\hat{V}_0\|^2_{\xi}d\xi \le Ce^{-2\frac{bt}{\eps^2}} \|\dxa U_0\|^2_{L^{2}} .\label{t3b}
\ee

By \eqref{S1b} and \eqref{S_1}, we have
\bma
&\quad \int_{1+|\xi|\le \frac{r_0}{\eps}}\|\xi^{\alpha} P_2S_1(t,\xi,\eps) \hat{V}_0\|^2_{\xi}d\xi\nnm\\
&\le  C\int_{ |\xi|\le r_0} e^{-c_1|\xi|^2t}(\xi^\alpha)^2\|\hat{V}_0\|^2_{\xi}d\xi +C\int_{ 1+|\xi|\le \frac{r_0}{\eps}} e^{-c_2t}(\xi^\alpha)^2\|\hat{V}_0\|^2_{\xi}d\xi\nnm\\
&\le C\sup_{ |\xi|\le r_0}\|(\xi)^{\alpha'}\hat{U}_0\|^2 \int_{ |\xi|\le r_0} e^{-c_1|\xi|^2t}|\xi|^{2|\alpha-\alpha'|}d\xi \nnm\\
&\quad + C e^{-c_2 t}\int_{|\xi|\le \frac{r_0}{\eps}}(\xi^\alpha)^2\|\hat{U}_0\|^2 d\xi\nnm\\
&\le C(1+t)^{-\frac32-m} \(\|\dx^{\alpha'} U_0\|^2_{L^{1} }+\|\dxa U_0\|^2_{L^2}\), \label{t1a}
\ema
where $\alpha'\le \alpha$, $m=|\alpha-\alpha'|$, and $c_1,c_2>0$ are some generic constants. Combining \eqref{D1c}--\eqref{t1a} and \eqref{I2b} gives \eqref{time1a}.

By \eqref{S1a} and \eqref{S_1b}, it holds that for $V_0=(g_0,0,0)$ with $P_dg_0=0$,
\bma
&\quad \int_{1+|\xi|\le \frac{r_0}{\eps}}\|\xi^{\alpha} P_2S_1(t,\xi,\eps)\hat{V}_0\|^2 d\xi \nnm\\
&\le  C\eps^2\int_{ |\xi|\le r_0} e^{-c_1|\xi|^2t}(\xi^\alpha)^2\|\hat{V}_0\|^2 d\xi
 +C\eps^2\int_{1+|\xi|\le \frac{r_0}{\eps}} e^{-c_2t}(\xi^\alpha)^2(1+|\xi|^2)\|\hat{V}_0\|^2 d\xi\nnm\\
&\le C\eps^2 (1+t)^{-\frac32-m}\(\|\dx^{\alpha'} U_0\|^2_{L^{1} }+\|\dxa U_0\|^2_{H^1}\),\label{t3a}
\ema
Combining \eqref{D1c}, \eqref{t3a} and \eqref{I_2} yields  \eqref{time3a}.  \eqref{time2a} and \eqref{time4a} can be proved similarly. And this completes the proof of the lemma.
\end{proof}

\begin{lem}\label{timey}
For any $1\le q\le 2$, $\alpha\in \mathbb{N}^3$,  and any  $U_0=(\rho_0 \chi_0,E_0,B_0)$ with $\rho_0=\Tdx\cdot E_0$, we have
\be
 \left\|  \dxa Y_2(t) U_0 \right\|_{L^{2} }
 \le C(1+t)^{-\frac32(\frac1q-\frac12)-\frac{m}2}\(\|\dx^{\alpha'} U_0\|_{L^{q} }+\|\dxa U_0\|_{L^2}\), \label{time5a}
\ee where  $\alpha'\le \alpha$ and $m=|\alpha-\alpha'|$.  Moreover, if  $U_0=(\Tdx\cdot E_0 \chi_0,E_0,0)$, then we have
\be
 \left\|  \dxa Y_2(t) U_0 \right\|_{L^{2} }
 \le C\((1+t)^{-\frac32(\frac1q-\frac12)-\frac{m+1}2}+t^{-\frac12}e^{-\frac{\eta}{2} t}\)\(\|\dx^{\alpha'} E_0\|_{L^{q} }+\|\dxa E_0\|_{L^2}\). \label{time5b}
\ee
\end{lem}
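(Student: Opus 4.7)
The plan is Fourier analysis based on the spectral representation \eqref{v1a}. By Plancherel's theorem and the identity $\|\hat V_0\|_\xi^2=\|\hat U_0\|^2$, the task reduces to controlling
\begin{equation*}
\int_{\R^3}|\xi|^{2|\alpha|}\Big\|\sum_{j=0}^4 e^{b_j(|\xi|)t}\bigl(\hat V_0,\overline{\mathcal X_j(\xi)}\bigr)_\xi \mathcal X_j(\xi)\Big\|_\xi^2\,d\xi.
\end{equation*}
I would split the frequency domain into a low-frequency ball $|\xi|\le r$ with $r<\eta/2$, a transition shell around $|\xi|=\eta/2$, and a high-frequency tail $|\xi|\ge R>\eta/2$. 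By \eqref{bj}, the eigenvalues $b_0=-\eta(1+|\xi|^2)$ and $b_{1,2}$ always satisfy ${\rm Re}\,b_k\le -\eta/2$, producing pure exponential decay in time, while $b_{3,4}=-\eta/2+\sqrt{\eta^2-4|\xi|^2}/2$ behave like the heat eigenvalue $-|\xi|^2/\eta+O(|\xi|^4)$ for small $|\xi|$ and drop below $-\eta/2$ once $|\xi|\ge\eta/2$.

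For \eqref{time5a}, the non-trivial contribution is the heat-like block $b_{3,4}$ on the low-frequency region. I would combine the Hausdorff--Young inequality $\|\hat g\|_{L^{q'}}\le C\|g\|_{L^q}$ (valid for $1\le q\le 2$) with the standard estimate
\begin{equation*}
\bigl\|e^{-c|\xi|^2 t}|\xi|^k\hat g\bigr\|_{L^2_\xi}\le C(1+t)^{-\frac{3}{2}(\frac{1}{q}-\frac{1}{2})-\frac{k}{2}}\|g\|_{L^q},
\end{equation*}
which follows from H\"older with $\|e^{-c|\xi|^2 t}|\xi|^k\|_{L^{2q/(2-q)}_\xi}\le Ct^{-\frac{3}{2}(\frac{1}{q}-\frac{1}{2})-\frac{k}{2}}$. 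Taking $k=m$ and $g=\partial_x^{\alpha'}U_0$ recovers the claimed rate $(1+t)^{-\frac{3}{2}(\frac{1}{q}-\frac{1}{2})-\frac{m}{2}}\|\partial_x^{\alpha'}U_0\|_{L^q}$. The exponentially decaying modes $b_0,b_{1,2}$ in the low-frequency ball, together with every mode on the shell and in the high-frequency tail, are absorbed by $\|\dxa U_0\|_{L^2}$ through the uniform lower bound on $-{\rm Re}\,b_j$ available there.

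For the sharper \eqref{time5b}, I would exploit two structural consequences of $\hat B_0=0$: first, $\hat\rho_0=i\xi\cdot\hat E_0$ injects an extra factor $|\xi|$ in the macroscopic component; second, for the heat-like modes $k=3,4$ the coefficient $b_k/\sqrt{b_k^2-|\xi|^2}$ of $\mathcal X_k$ paired with $\omega\times\hat E_0$ is itself $O(|\xi|)$ at small $|\xi|$ since $b_k\sim -|\xi|^2/\eta$ and $b_k^2-|\xi|^2\sim -|\xi|^2$. Together these contribute an additional $|\xi|^2$ to the quadratic form, so that the heat-kernel estimate above applies with $k=m+1$ in place of $k=m$ and with $g=\partial_x^{\alpha'}E_0$, yielding the first summand in \eqref{time5b}. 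The principal obstacle is the degeneracy at $|\xi|=\eta/2$, where $b_1=b_3=-\eta/2$ and $\sqrt{b_k^2-|\xi|^2}=0$ so that the pointwise spectral formula \eqref{v1a} breaks down. I would resolve it via the Jordan-type regularization already used in the proof of Lemma \ref{fl1} (compare \eqref{S_1} and \eqref{Y_2a}): the divergent pair $e^{b_1 t}\mathcal X_1\otimes\overline{\mathcal X_1}+e^{b_3 t}\mathcal X_3\otimes\overline{\mathcal X_3}$ is rewritten analytically as $e^{b_3 t}(\cdots)+\tfrac{e^{b_1 t}-e^{b_3 t}}{b_1-b_3}(\cdots)$, whose finite-difference quotient is uniformly bounded by $te^{-\eta t/2}$ across the shell. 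A subsequent one-dimensional radial integration across $|\xi|=\eta/2$ together with Plancherel's identity $\|\hat E_0\|_{L^2_\xi}=\|E_0\|_{L^2_x}$ delivers the auxiliary additive term $t^{-1/2}e^{-\eta t/2}\|\dxa E_0\|_{L^2}$ in \eqref{time5b}.
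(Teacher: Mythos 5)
Your overall strategy — Plancherel reduction, Hausdorff--Young/Hölder heat-kernel estimates on the low-frequency heat modes, exponential absorption of the remaining modes, and the Jordan-type regularization of the expansion near the degenerate circle $|\xi|=\eta/2$ — matches the paper's proof closely, and your treatment of \eqref{time5a} is sound. However, there is a genuine gap in your accounting for the second estimate \eqref{time5b}.

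The auxiliary term $t^{-1/2}e^{-\eta t/2}\|\dxa E_0\|_{L^2}$ does \emph{not} come from the Jordan block at the shell as you claim. After the regularization $e^{b_1t}\mathcal X_1\otimes\overline{\mathcal X_1}+e^{b_3t}\mathcal X_3\otimes\overline{\mathcal X_3}=e^{b_3t}(\cdots)+b_3e^{b_3t}\int_0^t e^{\tau(b_1-b_3)}d\tau(\cdots)$, the finite-difference factor is bounded pointwise by $Cte^{-\eta t/2}$ on the shell, and integrating this against $|\hat E_0|^2$ yields $Cte^{-\eta t/2}\|\dxa E_0\|_{L^2}$ after the square root, not $t^{-1/2}e^{-\eta t/2}$; there is no "one-dimensional radial integration" that converts the $t$ into a $t^{-1/2}$. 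The shell contribution is in fact of order $e^{-\eta t/4}$ and is dominated by the other two terms of \eqref{time5b}. The actual source of the $t^{-1/2}e^{-\eta t/2}$ term is the $\mathcal X_0$ mode, whose exponent $b_0=-\eta(1+|\xi|^2)$ is \emph{not} heat-like at $\xi=0$: when $\hat B_0=0$ the coefficient produces a factor $\sqrt{1+|\xi|^2}\,|\hat E_0|$, and the quadratic form carries $(1+|\xi|^2)|\hat E_0|^2e^{-2\eta(1+|\xi|^2)t}$ over all of $\R^3$. On the high-frequency tail the polynomial weight $(1+|\xi|^2)$ must be absorbed by the Gaussian, $\sup_{\xi}(1+|\xi|^2)e^{-2\eta|\xi|^2t}\lesssim t^{-1}$ for small $t$, and with the built-in $e^{-2\eta t}$ this gives $t^{-1}e^{-\eta t}$ in the squared norm, i.e.\ $t^{-1/2}e^{-\eta t/2}$ after the root. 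Your claim that "the heat-kernel estimate above applies with $k=m+1$" lumps this mode into the same bound as $b_{3,4}$, but the $\mathcal X_0$ contribution at high frequency is not controlled by that estimate (it would give an $L^q$-based bound with an unbounded weight), and without the separate short-time $t^{-1}$ argument for this block your proof would miss the $t^{-1/2}e^{-\eta t/2}$ term entirely.
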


\begin{proof}
By  \eqref{solution1a}, we have
\be
\left\|\dxa Y_2(t) U_0\right\|^2_{L^2 }
= \intr  \left\|\xi^{\alpha}\tilde{Y}_2(t,\xi) \hat{V}_0 \right\|^2_{\xi}d\xi, \label{D1d}
\ee
where $\hat V_0=(\hat \rho_0\chi_0,\omega\times \hat E_0,\omega\times \hat B_0)$.
Since
$$\|\mathcal{X}_0(\xi)\|^2_{\xi}=1, \quad \|\mathcal{X}_k(\xi)\|^2_{\xi}\le \frac{|b_k|^2+|\xi|^2}{|b_k^2-|\xi|^2|},\,\,\, k=1,2,3,4,$$
it follows that for  $|\eta^2-4|\xi|^2| \ge r_0$ with $r_0\ll 1$,
\bma
\int_{|\eta^2-4|\xi|^2| \ge r_0}\|\xi^{\alpha}\tilde{Y}_2(t,\xi) \hat{V}_0\|^2_{\xi}d\xi
&\le  C\sum^4_{j=0}\int_{|\eta^2-4|\xi|^2| \ge r_0}(\xi^\alpha)^2e^{2{\rm Re}b_j(|\xi|)t } \|\hat{V}_0\|^2_{\xi}  \|\mathcal{X}_j(\xi)\|^4_{\xi}d\xi\nnm\\
&\le  C\int_{|\xi|\le r_0} e^{-c_1|\xi|^2t } (\xi^\alpha)^2\|\hat{U}_0\|^2 d\xi+ C\intr e^{-c_2t } (\xi^\alpha)^2\|\hat{U}_0\|^2 d\xi\nnm\\
&\leq C\(\int_{|\xi|\leq r_0} |\xi|^{2pm} e^{-c_1 p|\xi|^2t}d\xi\)^{1/p}\(\int_{|\xi|\leq r_0} \|\xi^{\alpha'}\hat{U}_0\|^{2p'}
d\xi\)^{1/p'} \nnm\\
&\quad+ C\intr e^{-c_2t } (\xi^\alpha)^2\|\hat{U}_0\|^2 d\xi\nnm\\
&\le C(1+t)^{-\frac32(\frac2{q}-1)-m} \(\|\dx^{\alpha'} U_0\|^2_{L^{q} }+\|\dxa U_0\|^2_{L^2}\), \label{t5a}
\ema
where
$1/q+1/p'=1$, $1/(2p')+1/q=1$,  $\alpha'\le \alpha$, $m=|\alpha-\alpha'|$ and $c_1,c_2>0$ are
constants.

For  $|\eta^2-4|\xi|^2| \le r_0$, it follows from \eqref{Y_2} that
\bma
\int_{|\eta^2-4|\xi|^2| \le r_0}\|\xi^{\alpha}\tilde{Y}_2(t,\xi) \hat{V}_0\|^2_{\xi}d\xi
&\le  C\int_{|\eta^2-4|\xi|^2| \le r_0}(\xi^\alpha)^2 e^{-\frac{\eta}{2}t } \|\hat{V}_0\|^2_{\xi}  d\xi\nnm\\
&\le Ce^{-\frac{\eta}{2}t } \|\dxa U_0\|^2_{L^2}. \label{t6a}
\ema
Combining \eqref{D1d}  and \eqref{t5a}--\eqref{t6a} gives \eqref{time5a}.

For $\hat V_0=(i (\hat{E}_0\cdot\xi)\chi_0, \omega\times \hat E_0,0)$, we obtain
\bmas
\tilde{Y}_2(t,\xi) \hat{V}_0&=e^{-\eta (1+|\xi|^2)t} (i (\hat{E}_0\cdot\xi)\chi_0, 0 ,0 ) \\
&\quad+e^{b_1t}\frac{b_1}{b_3-b_1}\sum^2_{k=1} (\omega\times \hat E_0,\omega\times e_k) (0,\omega\times e_k,\frac{i|\xi|e_k}{b_1})\\
&\quad+e^{b_3t}\frac{b_3}{b_1-b_3}\sum^4_{k=3} (\omega\times \hat E_0,\omega\times e_k) (0,\omega\times e_k,\frac{i|\xi|e_k}{b_3}).
\emas
Thus, it follows that for $|\eta^2-4|\xi|^2| \ge r_0$ with $r_0\ll 1$,
\bma
\int_{|\eta^2-4|\xi|^2| \ge r_0}\|\xi^{\alpha}\tilde{Y}_2(t,\xi) \hat{V}_0\|^2_{\xi} d\xi
&\le  C\intr e^{-2\eta (1+|\xi|^2)t}(\xi^\alpha)^2(1+|\xi|^2) |\hat{E}_0 |^2 d\xi \nnm\\
&\quad+  C\int_{|\xi|\le r_0} e^{-c_1|\xi|^2t } (\xi^\alpha)^2|\xi|^2|\hat{E}_0 |^2 d\xi+ C\intr e^{-c_2t } (\xi^\alpha)^2|\hat{E}_0 |^2 d\xi\nnm\\
&\le C\((1+t)^{-\frac32(\frac2{q}-1)-m-1}+t^{-1}e^{-\eta t}\) \(\|\dx^{\alpha'} E_0\|^2_{L^{q} }+\|\dxa E_0\|^2_{L^2}\).\label{t5b}
\ema
For  $|\eta^2-4|\xi|^2| \le r_0$, it follows from \eqref{Y_2} that
\be
\int_{|\eta^2-4|\xi|^2| \le r_0}\|\xi^{\alpha}\tilde{Y}_2(t,\xi) \hat{V}_0\|^2_{\xi} d\xi
 \le Ce^{-\frac{\eta}{2}t } \|\dxa E_0\|^2_{L^2}. \label{t6b}
\ee
Combining \eqref{D1d}  and \eqref{t5b}--\eqref{t6b} yields \eqref{time5b}.
And this completes the proof of the lemma.
\end{proof}

\subsection{Fluid approximation of $e^{\frac{t}{\eps^2}\BB_{\eps} }$}

The following preliminary lemma is for the study of the  fluid dynamic approximation of the semigroup $e^{\frac{t}{\eps^2}\BB_\eps}$.

\begin{lem} \label{S3a}
For any function $\phi(r)$ satisfying $| \phi^{(k)}(r)|\le C(1+|r|)^{-2-k-\delta}$ for any $\delta>0$  and $k=0,1$, we have
$$
\bigg|\intr e^{ix\cdot\xi} e^{i\vartheta |\xi|}\alpha(\omega)\phi(|\xi|)d\xi\bigg| \le C|\vartheta|^{-1},
$$
where $\alpha(\omega)$ is a smooth function for $\omega=\xi/|\xi|\in\S^2$.
\end{lem}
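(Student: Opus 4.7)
The plan is to pass to spherical coordinates $\xi = \rho\omega$ with $\rho = |\xi|$ and $\omega \in \S^2$, which converts the three--dimensional integral into a one--dimensional oscillatory integral in $\rho$ whose frequency is exactly $\vartheta$. Specifically, write
$$
I(x,\vartheta):=\intr e^{ix\cdot\xi} e^{i\vartheta|\xi|}\alpha(\omega)\phi(|\xi|)d\xi
 =\int_0^\infty e^{i\vartheta\rho}\rho^2\phi(\rho)A(\rho,x)d\rho,
$$
where $A(\rho,x):=\int_{\S^2}e^{i\rho x\cdot\omega}\alpha(\omega)d\omega$. Since the oscillation of $e^{i\vartheta|\xi|}$ in the $\xi$-space is purely radial, we want to exploit the factor $e^{i\vartheta\rho}$ by integrating by parts once in $\rho$. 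The boundary terms vanish: at $\rho=0$ the weight $\rho^2$ kills everything, and at $\rho=\infty$ one has $\rho^2|\phi(\rho)|\le C\rho^{-\delta}\to 0$ while $A$ is bounded by $4\pi\|\alpha\|_{L^\infty}$. This yields
$$
I(x,\vartheta)=-\frac{1}{i\vartheta}\int_0^\infty e^{i\vartheta\rho}\,\partial_\rho\!\big[\rho^2\phi(\rho)A(\rho,x)\big]d\rho,
$$
so the task reduces to showing that $\int_0^\infty |\partial_\rho[\rho^2\phi A]|\,d\rho$ is bounded by a constant uniform in $x\in\R^3$.

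Expanding the derivative produces three pieces: $2\rho\phi A$, $\rho^2\phi' A$, and $\rho^2\phi\,\partial_\rho A$. Using the trivial bound $|A(\rho,x)|\le 4\pi\|\alpha\|_{L^\infty}$ and the decay hypotheses on $\phi$ and $\phi'$, the first two are handled directly by
$$
\int_0^\infty\!\!\rho(1+\rho)^{-2-\delta}d\rho+\int_0^\infty\!\!\rho^2(1+\rho)^{-3-\delta}d\rho\le C,
$$
uniformly in $x$. The only genuinely delicate term is $\rho^2\phi\,\partial_\rho A$, and this is where the main obstacle lies: one must bound $\partial_\rho A$ without losing integrability in $\rho$ while keeping the constant independent of $x$.

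The key ingredient is the non--stationary phase estimate on $\S^2$
$$
|\partial_\rho A(\rho,x)|\le C|x|(1+\rho|x|)^{-1},
$$
which I would justify by combining the trivial bound $|\partial_\rho A|\le C|x|$ (since $|x\cdot\omega|\le|x|$) with the standard asymptotic expansion of $\int_{\S^2}e^{i\rho x\cdot\omega}\beta(\omega)d\omega$ at the two antipodal critical points $\omega=\pm x/|x|$ of the phase, which furnishes the decay $|\partial_\rho A|\le C/\rho$ for $\rho|x|\ge 1$. Substituting this into the remaining piece, one splits the $\rho$-integral at $\rho=1/|x|$: on $\{\rho\le 1/|x|\}$ use $|x|(1+\rho|x|)^{-1}\le|x|$ together with the polynomial decay of $\phi$, and on $\{\rho\ge 1/|x|\}$ use $|x|(1+\rho|x|)^{-1}\le 1/\rho$, reducing the integrand to $\rho(1+\rho)^{-2-\delta}$, which is integrable. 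A short case analysis ($|x|\le 1$ versus $|x|\ge 1$) then shows
$$
\int_0^\infty\rho^2(1+\rho)^{-2-\delta}\cdot|x|(1+\rho|x|)^{-1}\,d\rho\le C
$$
uniformly in $x$. Combining the three estimates yields $|I(x,\vartheta)|\le C|\vartheta|^{-1}$, as claimed. The most technical step is the uniform bound on $\partial_\rho A$ with the $(1+\rho|x|)^{-1}$ factor; everything else is elementary once this is in hand.
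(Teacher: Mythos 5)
Your proposal is correct and follows essentially the same path as the paper's proof: pass to polar coordinates $\xi=\rho\omega$, isolate $A(\rho,x)=\int_{\S^2}e^{i\rho x\cdot\omega}\alpha(\omega)\,d\omega$, integrate by parts once in $\rho$ to produce the factor $1/\vartheta$, check the boundary terms vanish, and then control the three resulting integrals uniformly in $x$; the crucial estimate in both arguments is $|\partial_\rho A(\rho,x)|\le C|x|(1+\rho|x|)^{-1}$, handled by the same split at $\rho=1/|x|$. The only point of methodological divergence is how that bound is established: the paper rotates $x$ to the north pole and performs an explicit integration by parts in the polar angle $\theta$ on $\S^2$, which is elementary and fully self-contained, whereas you invoke the standard stationary-phase asymptotics at the two antipodal critical points $\omega=\pm x/|x|$. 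Both routes are legitimate, and the constants are uniform in $x$ either way since the amplitude $(x\cdot\omega/|x|)\alpha(\omega)$ has $C^\infty$-norms bounded independently of $x$. You also observe that the trivial $L^\infty$ bound $|A|\le 4\pi\|\alpha\|_\infty$ already suffices for the two terms $2\rho\phi A$ and $\rho^2\phi' A$, which is a slight economy: the paper also proves the sharper $|A|\le C(1+\rho|x|)^{-1}$, but it is not actually needed for the stated conclusion.
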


\begin{proof}
	Firstly, note that
$$
\intr e^{ix\cdot\xi} e^{i\vartheta |\xi|}\alpha(\omega)\phi(|\xi|)d\xi =\int^{\infty}_0e^{i\vartheta r}g(x,r)\phi(r)r^2 dr,
$$
where
$$  g(x,r)=  \int_{\S^2}e^{i r x\cdot\omega} \alpha(\omega) d\omega.
$$

For any function $\alpha(\omega)\in C^{\infty}(\S^2)$, by change of variable $\omega\to O_x\omega$,  where $O_x=(a_{ij})_{3\times 3}$ is an orthogonal matrix satisfying $O^T_xx=(0,0,|x|)$, we obtain
\bmas
g(x,r)&=\int^{2\pi}_0 d \varphi\int^{\pi}_0e^{i |x|r\cos\theta}\sin\theta\alpha(O_x\omega)  d\theta\\
&=-\frac1{i|x|r}\int^{2\pi}_0 d \varphi\int^{\pi}_0 \alpha(O_x\omega)  de^{i|x|r\cos\theta}\\
&=-\frac1{i|x|r}\int^{2\pi}_0\alpha(O_x\omega) e^{ir|x|\cos\theta}\bigg|^{\pi}_0d \varphi\\
&\quad+\frac1{i|x|r}\int^{2\pi}_0 d \varphi\int^{\pi}_0 e^{ir|x|\cos\theta}\nabla\alpha(O_x\omega)\cdot O_x\pt_\theta \omega  d\theta,
\emas
which gives
$$
|g(x,r)|\le C(1+|x|r)^{-1}.
$$
Similarly, we can prove
\bmas
|\pt_r g(x,r)|&=\bigg|\int^{2\pi}_0 d \varphi\int^{\pi}_0i |x| \cos\theta e^{i |x|r\cos\theta}\sin\theta\alpha(O_x\omega)  d\theta\bigg|\\
&\le C|x|(1+|x|r)^{-1}.
\emas
Thus
\bmas
\intr e^{ix\cdot\xi} e^{i\vartheta |\xi|} \alpha(\omega)\phi(|\xi|)d\xi &=\frac1{i\vartheta}\int^{\infty}_0 g(x,r) \phi(r)r^2 de^{i\vartheta r}\\
&=\frac1{i\vartheta}g(x,r) \phi(r)r^2 e^{i\vartheta r}\bigg|^{\infty}_0-\frac1{i\vartheta}\int^{\infty}_0 e^{i\vartheta r}\pt_r g(x,r) \phi(r)r^2 dr\\
&\quad-\frac1{i\vartheta}\int^{\infty}_0 e^{i\vartheta r}g(x,r) (\phi'(r)r^2+2\phi(r)r) dr,
\emas
which yields
$$
\bigg|\intr e^{ix\cdot\xi} e^{i\vartheta |\xi|}\alpha(\omega)\phi(|\xi|)d\xi\bigg|\le C|\vartheta|^{-1}.
$$
And this completes the proof of the lemma.
\end{proof}

\begin{lem}\label{limit5}
{\rm (1)} For any $\eps\ll 1$, any integer $k\ge 0$  and  $f_0\in L^2 $, it holds that
\bma
  &\Big\| P_{||} e^{\frac{t}{\eps^2}\BB_\eps}f_0  \Big\|_{W^{k,\infty} }
   \le C\bigg( \eps(1+t)^{-\frac52}+\(1+\frac{t}{\eps}\)^{-1}\bigg)(\| f_0\|_{H^{k+3} }+\| f_0\|_{W^{k+3,1}}), \label{limit4}
\\
  &\Big\| P_{\bot} e^{\frac{t}{\eps^2}\BB_\eps}f_0-Y_1(t)P_0f_0 \Big\|_{H^{k} }
  \le C\( \eps(1+t)^{-\frac54}+e^{-\frac{bt}{\eps^2}} \)(\| f_0\|_{H^{k+1} }+\| f_0\|_{L^{1}}), \label{limit4aa}
\ema
where $Y_1(t)$ is defined in \eqref{v2},  and $b>0$ is a constant given by \eqref{E_4}.  Moreover, if $f_0$ satisfies \eqref{initial}, i.e., $P_{||}f_0=P_1f_0=0,$ then
\bma
&\left\|P_{||} e^{\frac{t}{\eps^2}\BB_\eps}f_0 \right\|_{W^{k,\infty} }
 \le C\eps (1+t)^{-\frac52}\(\|f_0\|_{H^{k+3}}+\|f_0\|_{L^{1}}\),\label{limit3}\\
 &\left\|P_{\bot} e^{\frac{t}{\eps^2}\BB_\eps}f_0-Y_1(t)P_0f_0\right\|_{H^{k} }
 \le C\eps (1+t)^{-\frac54}\(\|f_0\|_{H^{k+1}}+\|f_0\|_{L^{1}}\). \label{limit4a}
\ema
{\rm (2)} For any $\eps\ll 1$, any integer $k\ge 0$ and $f_0\in L^2 $ satisfying $P_0f_0=0$, it holds that
\bma
&\left\|\frac1\eps P_{||} e^{\frac{t}{\eps^2}\BB_\eps}f_0  \right\|_{W^{k,\infty}}
 \le C\bigg(\eps (1+t)^{-\frac72}+\(1+\frac{t}{\eps}\)^{-1}+\frac1{\eps}e^{-\frac{bt}{\eps^2}}\bigg)\(\|f_0\|_{H^{k+4}}+\|f_0\|_{W^{k+4,1}}\), \label{limit4c}\\
&\left\|\frac1\eps P_{\bot} e^{\frac{t}{\eps^2}\BB_\eps}f_0-Y_1(t)Z_2 \right\|_{H^{k} }
 \le C\bigg(\eps (1+t)^{-\frac74}+\frac1{\eps}e^{-\frac{bt}{\eps^2}}\bigg)\(\|f_0\|_{H^{k+2}}+\|f_0\|_{L^{1}}\), \label{limit4d}
\ema where $Z_2=P_0(v\cdot\Tdx L^{-1}f_0)$.
\end{lem}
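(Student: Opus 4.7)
The plan is to adapt the Fourier-analytic strategy used for the VMB operator in Lemmas~\ref{fl1}--\ref{fl2} to the Boltzmann operator $\BB_\eps$, with the essential new input being the oscillating-integral bound of Lemma~\ref{S3a}, which is responsible for the $L^\infty$ initial-layer rate $(1+t/\eps)^{-1}$. Starting from the decomposition $e^{\frac{t}{\eps^2}\BB_\eps(\xi)}=S_4(t,\xi,\eps)+S_5(t,\xi,\eps)$ of Theorem~\ref{rate2}, with $S_5=O(e^{-bt/\eps^2})$ uniformly, I expand $S_4$ for $\eps|\xi|\le r_0$ using Theorem~\ref{spect3a}. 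The five eigenvalues $\gamma_j=i\mu_j\eps|\xi|-a_j\eps^2|\xi|^2+O(\eps^3|\xi|^3)$ split naturally into the acoustic pair $j=\pm 1$ (with $\mu_{\pm 1}=\pm\sqrt{5/3}$, eigenfunctions $h_{\pm 1}$), which drives the $P_{||}$ part and oscillates on the timescale $t/\eps$, and the hydrodynamic triple $j=0,2,3$ (with $\mu_j=0$, eigenfunctions $h_0,h_2,h_3$), which drives the $P_\bot$ part and converges to $Y_1(t)P_0 f_0$.

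For the $H^k$ estimate of $P_\bot e^{\frac{t}{\eps^2}\BB_\eps}f_0-Y_1(t)P_0 f_0$, I would proceed exactly as in Lemma~\ref{fl1}, splitting the Fourier integral into three zones. In the low-frequency zone $\eps|\xi|\le r_0$ the expansions $\gamma_j-(-a_j\eps^2|\xi|^2)=O(\eps^3|\xi|^3)$ and $\psi_j-h_j=O(\eps|\xi|)$ produce a uniform size-$\eps(1+|\xi|)$ discrepancy from the target $Y_1(t,\xi)P_0\hat f_0$; multiplying by the Gaussian $e^{-a_j|\xi|^2 t}$ and integrating against $\|\hat f_0\|_{L^\infty_\xi}\le C\|f_0\|_{L^1}$ yields the bound $\eps(1+t)^{-5/4}$. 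The high-frequency zone $\eps|\xi|\ge r_0$ and the $S_5$-remainder are both exponentially decaying in $t/\eps^2$ and are absorbed using the spectral gap (Lemma~\ref{spectrum}(2)) and one extra Sobolev derivative on $f_0$, which explains the presence of $\|f_0\|_{H^{k+1}}$.

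The $W^{k,\infty}$ estimate for $P_{||}e^{\frac{t}{\eps^2}\BB_\eps}f_0$ is the heart of the proof and the main obstacle. After applying $\dxa$ with $|\alpha|\le k$ and Fourier-inverting, the dominant contribution for $j=\pm 1$ takes the form
\[
\intr e^{ix\cdot\xi+i(\mu_j t/\eps)|\xi|}\,e^{-a_j|\xi|^2 t}\bigl(1+O(\eps|\xi|^3 t)\bigr)\,(i\xi)^\alpha\bigl(\hat f_0,\overline{\psi_j(\xi,\eps)}\bigr)\,\psi_j(\xi,\eps)\,d\xi.
\]
I intend to cast the integrand in the form $e^{ix\cdot\xi+i\vartheta|\xi|}\alpha(\omega)\phi(|\xi|)$ with $\vartheta=\mu_j t/\eps$, and invoke Lemma~\ref{S3a} to extract the crucial factor $|\vartheta|^{-1}=(t/\eps)^{-1}$ that produces the initial layer. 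The hypothesis $|\phi^{(\ell)}(r)|\le C(1+r)^{-2-\ell-\delta}$ dictates that $\hat f_0$ must carry three effective powers of $|\xi|^{-1}$ in $L^\infty_\xi$, which is supplied by the norm $\|f_0\|_{W^{k+3,1}}$; the complementary norm $\|f_0\|_{H^{k+3}}$ controls the outside region $\eps|\xi|\ge r_0$ where the spectral expansion breaks down. The lower-order correction $O(\eps|\xi|^3 t)$ in the exponent, treated by Taylor-expanding $e^{O(\eps|\xi|^3 t)}-1$ and integrating the resulting $|\xi|^3 t$ against the Gaussian, contributes at most $\eps(1+t)^{-5/2}$. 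For the improved bounds \eqref{limit3}--\eqref{limit4a} under the well-prepared condition $P_{||}f_0=P_1 f_0=0$, a direct computation with the basis in \eqref{hj} shows that the compatibility constraints $\hat m\cdot\omega=0$ and $\sqrt{3/10}\hat n+\sqrt{1/5}\hat q=0$ force $(\hat f_0,h_{\pm 1})\equiv 0$; hence only the correction $(\hat f_0,\psi_{\pm 1}-h_{\pm 1})=O(\eps|\xi|)$ persists, which kills the initial layer and produces the uniform extra factor $\eps$.

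For Part~(2), where $P_0 f_0=0$, I would exploit the identity $(\hat f_0,\overline{\psi_j(\xi,\eps)})=(\hat f_0,P_1\overline{\psi_j(\xi,\eps)})$ together with $P_1\psi_j=i\eps|\xi|L^{-1}P_1(v\cdot\omega)h_j+O(\eps^2|\xi|^2)$ from Theorem~\ref{spect3a}(2) to extract one power of $\eps$ from every spectral coefficient, absorbing the $1/\eps$ prefactor. At leading order, the hydrodynamic part of $\eps^{-1}P_\bot e^{\frac{t}{\eps^2}\BB_\eps}f_0$ reduces to $\sum_{j\in\{0,2,3\}}e^{-a_j|\xi|^2 t}\,\eps^{-1}(\hat f_0,-i\eps|\xi|L^{-1}P_1(v\cdot\omega)h_j)\,h_j$, which Fourier-inverts to $Y_1(t)Z_2$ with $Z_2=P_0(v\cdot\Tdx L^{-1}f_0)$. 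The additional factor $|\xi|$ enhances the Gaussian time decay by $(1+t)^{-1/2}$, yielding $\eps(1+t)^{-7/4}$ and $\eps(1+t)^{-7/2}$ and forcing one extra Sobolev derivative on $f_0$. The remaining term $\eps^{-1}e^{-bt/\eps^2}$ originates from dividing the exponentially small $S_5$ contribution by $\eps$, where the $O(\eps)$ spectral smallness is not available. The principal technical difficulty throughout is the bookkeeping for the oscillating-integral step: one must organise the Taylor expansion of $e^{\gamma_j t/\eps^2}\psi_j$ so that the prefactor of the fast oscillation $e^{i\mu_j|\xi|t/\eps}$ satisfies the decay hypothesis of Lemma~\ref{S3a}, requiring a careful balance between Sobolev regularity (to gain $\xi$-decay) and $L^1$-regularity (to gain $L^\infty_\xi$ control).
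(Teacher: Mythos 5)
Your proposal follows essentially the same route as the paper: Theorem~\ref{rate2} for the $S_4/S_5$ split, the low/high-frequency partitioning, the separation of the oscillating acoustic pair $j=\pm 1$ (which lives entirely in $P_{||}$) from the non-oscillating modes $j=0,2,3$ (which live in $P_\bot$ and converge to $Y_1(t)P_0f_0$), Lemma~\ref{S3a} as the key tool for the $(1+t/\eps)^{-1}$ rate, and the use of $P_1\psi_j=O(\eps|\xi|)$ to extract the $\eps$ gain in Part~(2).

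One spot that is glossed over and would need to be fixed when writing out the details: you say you will ``cast the integrand in the form $e^{ix\cdot\xi+i\vartheta|\xi|}\alpha(\omega)\phi(|\xi|)$'' with $\hat f_0$ carrying three powers of $|\xi|^{-1}$. This cannot be done literally, since $\hat f_0$ is not a radial function of $\xi$ and the Gaussian factor $e^{-a_j|\xi|^2 t}$ is $t$-dependent, so the resulting $\phi$ would not satisfy the hypotheses of Lemma~\ref{S3a}. What the paper actually does is split the oscillating integral into a \emph{triple convolution} $G_{jk}(t,\cdot)*H_j(t,\cdot)*F(\cdot)$, where $G_{jk}=\mathcal F^{-1}\bigl[e^{i\mu_j|\xi|t/\eps}(1+|\xi|)^{-3}\alpha_k(\omega)\bigr]$ is the only piece fed to Lemma~\ref{S3a} (giving $\|G_{jk}\|_{L^\infty_x}\le C(t/\eps)^{-1}$), $H_j$ is the heat kernel with $\|H_j(t)\|_{L^1_x}=C$, and $F=\mathcal F^{-1}\bigl[(1+|\xi|)^3(\text{macroscopic moments of }\hat f_0)\bigr]$ with $\|F\|_{L^1_x}\lesssim\|P_0f_0\|_{W^{3,1}}$; Young's inequality then closes the estimate. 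Incorporating this factorization is what actually justifies the appearance of $\|f_0\|_{W^{k+3,1}}$ in \eqref{limit4}, and the same device is needed for \eqref{limit4c}. With that adjustment, the rest of your outline (the $P_\bot$ estimate, the vanishing of the acoustic coefficients under the well-prepared condition giving \eqref{limit3}--\eqref{limit4a}, and the extraction of $\eps$ from $P_1\psi_j$ in Part~(2) leading to $Y_1(t)Z_2$) matches the paper's argument.
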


\begin{proof}
 Again we only prove the case when  $k=0$ because the proof for $k>0$ is similar. By \eqref{E_3} and     by taking $\eps\le r_0$ with $r_0>0$ given in Theorem \ref{spect3a}, we have
\bma
e^{\frac{t}{\eps^2}\BB_\eps}f_0-Y_1(t)P_0f_0 &= \intr e^{ix\cdot\xi}\(e^{\frac{t}{\eps^2}\BB_\eps(\xi)}\hat{f}_0-Y_1(t,\xi)P_0\hat{f}_0\) d\xi \nnm\\
&= \int_{|\xi|\le \frac{r_0}{\eps} }e^{ix\cdot\xi}\(S_4(t,\xi,\eps)\hat{f}_0-Y_1(t,\xi)P_0\hat{f}_0\)  d\xi \nnm\\
&\quad+\intr e^{ix\cdot\xi}S_5(t,\xi,\eps)\hat{f}_0  d\xi +\int_{|\xi|\ge \frac{r_0}{\eps}} e^{ix\cdot\xi}Y_1(t,\xi)P_0\hat{f}_0  d\xi \nnm\\
&=:I_1+I_2+I_3. \label{S_4e}
\ema

We estimate $I_j$, $j=1,2,3$ one by one as follows. Since
$$
S_4(t,\xi,\eps)\hat{f}_0=\sum^4_{j=0}e^{\frac{i\mu_j|\xi|}{\eps}t-a_j|\xi|^2t+ O(\eps|\xi|^3)t}\[\(P_0\hat{f}_0, h_j\) h_j+O(\eps|\xi|)\],
$$
it follows that
\bma
I_{1}&= \sum^3_{j=-1}\int_{ |\xi|\le \frac{r_0}{\eps} }e^{ix\cdot\xi}\bigg\{e^{ \frac{i\mu_j|\xi|}{\eps}t-a_j|\xi|^2t+ O(\eps|\xi|^3)t}\[\(P_0\hat{f}_0, h_j \) h_j+O(\eps|\xi|)\] \nnm\\
&\qquad-e^{\frac{i\mu_j|\xi|}{\eps}t-a_j|\xi|^2t }\(P_0\hat{f}_0, h_j\) h_j \bigg\}d\xi\nnm\\
&\quad+ \sum_{j=\pm1} \int_{ |\xi|\le \frac{r_0}{\eps} }e^{ix\cdot\xi}e^{\frac{i\mu_j|\xi|}{\eps}t-a_j|\xi|^2t }
\(P_0\hat{f}_0, h_j\) h_jd\xi \nnm\\
&=:I_{11}+I_{12}. \label{S4a}
\ema
For $I_{11}$, it holds that
\bma
\|P_{||}I_{11}\|_{L^{\infty}}&\le C\eps\int_{|\xi|\le \frac{r_0}{\eps}}e^{-c|\xi|^2t}\(|\xi|^3t\|P_0\hat{f}_0\| +|\xi|\|\hat{f}_0\|\) d\xi \nnm\\
&\le C\eps\sup_{|\xi|\le 1}\|\hat{f}_0\| \int_{|\xi|\le 1}e^{-c|\xi|^2t}(|\xi|^2t  +1)|\xi| d\xi \nnm\\
&\quad+C\eps \bigg(\int_{|\xi|> 1}e^{-c|\xi|^2t}\frac{(1+|\xi|^2t)^2}{ |\xi|^4}d\xi\bigg)^{1/2} \bigg(\int_{|\xi|> 1} |\xi|^6\|\hat{f}_0\|^2 d\xi\bigg)^{1/2} \nnm\\
&\le C\eps (1+t)^{-\frac52}\(\|f_0\|_{H^3}+\| f_0\|_{L^{1}}\), \label{S_3b-1}
\\
\|P_{\bot}I_{11}\|_{L^{2}}&\le C\eps\bigg(\int_{|\xi|\le \frac{r_0}{\eps}}e^{-2c |\xi|^2t}\(|\xi|^6t^{2}\|P_0\hat{f}_0\|^{2} +|\xi|^{2} \|\hat{f}_0\|^{2} \) d\xi\bigg)^{1/2} \nnm\\
&\le C\eps\sup_{|\xi|\le 1}\|\hat{f}_0\| \bigg(\int_{|\xi|\le 1}e^{-c |\xi|^2t} |\xi|^2d\xi\bigg)^{1/2}+C\eps \bigg(\int_{|\xi|> 1} e^{- c|\xi|^2 t} |\xi|^2\|\hat{f}_0\|^2 d\xi\bigg)^{1/2} \nnm\\
&\le C\eps (1+t)^{-\frac5{4}}\(\|f_0\|_{H^1}+\| f_0\|_{L^{1}}\). \label{S_3a}
\ema
To estimate  $I_{12}$, we first note that
\be P_{||}I_{12}=I_{12}, \quad P_{\bot}I_{12}=0. \label{S5c}\ee
 It is straightforward to verify that
\be
\|P_{||}I_{12}\|_{L^{\infty}}\le C\int_{|\xi|\le \frac{r_0}{\eps}}e^{-c|\xi|^2t} \|P_0\hat{f}_0\| d\xi\le C  \|P_0f_0\|_{H^2} . \label{S_3e}
\ee
Set
\bma I_{12}&=\sum_{j=\pm1} \(\intr -\int_{ |\xi|\ge \frac{r_0}{\eps} }\)e^{i x\cdot\xi } e^{\frac{\mu_j|\xi|}{\eps}t-a_j|\xi|^2t }\(P_0\hat{f}_0, h_j\) h_jd\xi \nnm\\
&=:I_{13}+I_{14}. \label{S4b}
\ema
For $I_{14}$,  it holds that
\bma
\|P_{||}I_{14}\|_{L^{\infty}}&\le  C\int_{ |\xi|\ge \frac{r_0}{\eps}} e^{-c|\xi|^2t}\|P_0\hat{f}_0\| d\xi \nnm\\
&\le C \(\int_{ |\xi|\ge \frac{r_0}{\eps}}e^{-\frac{2c r_0^2}{\eps^2}t}\frac{1}{(1+|\xi|^2)^2}d\xi\)^{1/2}
\(\int_{ |\xi|\le \frac{r_0}{\eps}}( 1+|\xi|^2)^2 \|\hat{f}_0\|^2 d\xi\)^{1/2} \nnm\\
&\le  Ce^{-\frac{c r_0^2}{\eps^2}t} \|f_0\|_{H^2} .\label{S_7-4}
\ema
Note that
\bmas
 (I_{13},v\chi_0)_{||}&=-\sum_{j=\pm 1} \frac12\intr e^{ix\cdot\xi}e^{\frac{i\mu_j|\xi|}{\eps}t-a_j|\xi|^2t }\bigg[j\bigg(\sqrt{\frac3{5}}\hat{n}_0 + \sqrt{\frac2{5}}\hat{q}_0\bigg)-(\hat{m}_0\cdot\omega)\bigg]\omega d\xi,\\
 (I_{13}, \tilde{h}_1)&=\sum_{j=\pm 1}\frac12 \intr e^{ix\cdot\xi}e^{\frac{i\mu_j|\xi|}{\eps}t-a_j|\xi|^2t }\bigg[\bigg(\sqrt{\frac3{5}}\hat{n}_0 + \sqrt{\frac2{5}}\hat{q}_0\bigg)-j(\hat{m}_0\cdot\omega)\bigg] d\xi,
\emas
where $$(n_0,m_0,q_0)=((f_0,\chi_0),(f_0,v\chi_0),(f_0,\chi_4)).$$
Set
$$
\left\{\bln
&G_{jk}(t,x)=\intr e^{i x\cdot\xi } e^{\frac{i\mu_j|\xi|}{\eps}t }(1+|\xi|)^{-3}\alpha_k(\omega)d\xi, \ \ k=0,1,2, \\
&H_j(t,x)=\intr e^{i x\cdot\xi }e^{ -a_j|\xi|^2t }d\xi=Ct^{-\frac32}e^{-\frac{|x|^2}{4a_jt}},\\
&\alpha_0(\omega)=1, \ \ \alpha_1(\omega)=\omega, \ \  \alpha_2(\omega)=\omega\otimes \omega.
\eln\right.
$$
Then by Lemma \ref{S3a}, we have
\bma
 \|P_{||}I_{13}\|_{L^{\infty}}&\le \|(I_{13},v\chi_0)_{||}\|_{L^{\infty}_x} + \|(I_{13}, \tilde{h}_1)\|_{L^{\infty}_x} \nnm\\
 &\le C\sum_{j=\pm 1}\sum^2_{k=0} \|G_{jk}(t)\|_{L^{\infty}_x}\|H_j(t)\|_{ L^1_x}\| (n_0+\sqrt{\frac23}q_0,m_0)\|_{W^{3,1}_x}\nnm\\
&\le  C\(\frac{t}{\eps}\)^{-1} \|P_0f_0\|_{W^{3,1}}.\label{S_4}
\ema
By combining \eqref{S_3e}--\eqref{S_4}, we obtain
\be
\|P_{||}I_{12}\|_{L^{\infty}} \le C \(1+\frac{t}{\eps}\)^{-1} \(\|P_0f_0\|_{H^2}+\|P_0 f_0\|_{W^{3,1}}\). \label{S4e}
\ee

Thus, it follows from \eqref{S4a}--\eqref{S5c} and \eqref{S4e} that
\bma
\|P_{||}I_{1}\|_{L^{\infty}}&\le C\bigg(\eps (1+t)^{-\frac52}+\(1+\frac{t}{\eps}\)^{-1}\bigg)\(\|f_0\|_{H^3}+\| f_0\|_{W^{3,1}}\), \label{S_3b}
\\
\|P_{\bot}I_{1}\|_{L^{2}}&\le C\eps (1+t)^{-\frac5{4}}\(\|f_0\|_{H^1}+\| f_0\|_{L^{1}}\). \label{S_3c}
\ema
$I_2$ and $I_3$ can be estimated directly as follows.
\bma
\|I_2\|^2_{L^2}&\le \intr \|S_5(t,\xi,\eps)\hat{f}_0 \|^2  d\xi\le C\intr e^{-\frac{2b }{\eps^2}t} \|\hat{f}_0\|^2d\xi\le Ce^{-\frac{2b }{\eps^2}t}\|f_0\|^2_{L^2}, \label{S5a}
\\
\|I_3\|^2_{L^2}&\le C\int_{ |\xi|\ge \frac{r_0}{\eps}} e^{-2c|\xi|^2t}\|P_0\hat{f}_0\|^2 d\xi\le C\eps^2\int_{ |\xi|\ge \frac{r_0}{\eps}} e^{-\frac{2c r_0^2}{\eps^2}t} |\xi|^2\|P_0\hat{f}_0\|^2d\xi\nnm\\
&\le C\eps^2e^{-\frac{2c r_0^2}{\eps^2}t}\|P_0f_0\|^2_{H^1}. \label{S5b}
\ema
By \eqref{S_3b}, \eqref{S_3c}, \eqref{S5a} and \eqref{S5b}, and the Sobolev embedding theorem, we  obtain \eqref{limit4} and \eqref{limit4aa}.

We now turn to \eqref{limit4a}. If $f_0$ satisfies \eqref{initial}, we have
$$
\(P_0\hat{f}_0,h_j(\xi)\) =0,\quad j=-1,1,
$$
which implies that $I_{12}=0$. 
By Lemma \ref{S2a}, we have
\bmas
\|I_2\|^2_{L^2}&\le C\int_{|\xi|\le \frac{r_0}{\eps}}\eps^2 |\xi|^2e^{-\frac{2bt}{\eps^2}}\| \hat{f}_0\|^2d\xi+C\int_{|\xi|\ge \frac{r_0}{\eps}}e^{-\frac{2bt}{\eps^2}}\| \hat{f}_0\|^2 d\xi\\
&\le C\eps^2 e^{-\frac{2bt}{\eps^2}} \|f_0\|^2_{H^1} .
\emas
Thus, \eqref{limit3} and \eqref{limit4a} hold.

Finally, we prove \eqref{limit4c} and \eqref{limit4d}. By \eqref{E_3}, we have
\bma
\frac1\eps e^{\frac{t}{\eps^2}\BB_\eps}f_0-Y_1(t)Z_2
&=  \int_{|\xi|\le \frac{r_0}{\eps} }e^{ix\cdot\xi}\(\frac1\eps S_4(t,\xi,\eps)\hat{f}_0-Y_1(t,\xi)(iv\cdot\xi L^{-1}\hat{f}_0)\)  d\xi \nnm\\
&\quad+\intr \frac1\eps e^{ix\cdot\xi}S_5(t,\xi,\eps)\hat{f}_0  d\xi +\int_{|\xi|\ge \frac{r_0}{\eps}}  e^{ix\cdot\xi}Y_1(t,\xi)(iv\cdot\xi L^{-1}\hat{f}_0)  d\xi \nnm\\
&=:I_4+I_5+I_6. \label{S_4e}
\ema
For any $f_0\in L^2 $ satisfying $P_0f_0=0$, we obtain
\be
S_4(t,\xi,\eps)\hat{f}_0=\eps\sum^4_{j=0}e^{\frac{i\mu_j|\xi|}{\eps}t-a_j|\xi|^2t+ O(\eps^3|\xi|^3)t}\[\(i(v\cdot\xi)L^{-1}\hat{f}_0, h_j\) h_j+O(\eps|\xi|^2)\].
\ee
Thus, it follows that
\bma
I_{4}&= \sum^3_{j=-1}\int_{ |\xi|\le \frac{r_0}{\eps} } e^{ix\cdot\xi}\bigg\{e^{\frac{i\mu_j|\xi|}{\eps}t-a_j|\xi|^2t+ O(\eps|\xi|^3)t}\[\(i(v\cdot\xi)L^{-1}\hat{f}_0, h_j\) h_j+O(\eps|\xi|^2)\] \nnm\\
&\qquad-e^{\frac{i\mu_j|\xi|}{\eps}t-a_j|\xi|^2t }\(i(v\cdot\xi)L^{-1}\hat{f}_0,  h_j\) h_j\bigg\} d\xi\nnm\\
&\quad+ \sum_{j=\pm1}\int_{ |\xi|\le \frac{r_0}{\eps} }e^{ix\cdot\xi}e^{\frac{i\mu_j|\xi|}{\eps}t-a_j|\xi|^2t }\(i(v\cdot\xi)L^{-1}\hat{f}_0,  h_j\) h_jd\xi\nnm\\
&=:I_{41}+I_{42}. \label{S4a1}
\ema
For $I_{41}$, it holds that
\bma
\|P_{||}I_{41}\|_{L^{\infty}}&\le C\eps\int_{|\xi|\le \frac{r_0}{\eps}}e^{-c|\xi|^2t}\(|\xi|^4t\|P_0\hat{f}_0\| +|\xi|^2\|\hat{f}_0\|\) d\xi \nnm\\
&\le C\eps (1+t)^{-\frac72}\(\|f_0\|_{H^4}+\| f_0\|_{L^{1}}\), \label{S_3b1}
\\
\|P_{\bot}I_{41}\|_{L^{2}}&\le C\eps\bigg(\int_{|\xi|\le \frac{r_0}{\eps}}e^{-2c |\xi|^2t}\(|\xi|^8t^{2}\|P_0\hat{f}_0\|^{2} +|\xi|^{4} \|\hat{f}_0\|^{2} \) d\xi\bigg)^{1/2} \nnm\\
&\le C\eps (1+t)^{-\frac7{4}}\(\|f_0\|_{H^2}+\| f_0\|_{L^{1}}\). \label{S_3a1}
\ema
Denote
\bma I_{42}&=\sum_{j=\pm1} \(\intr -\int_{ |\xi|\ge \frac{r_0}{\eps} }\)e^{i x\cdot\xi } e^{\frac{i\mu_j|\xi|}{\eps}t-a_j|\xi|^2t }\(i(v\cdot\xi)L^{-1}\hat{f}_0, h_j\) h_jd\xi \nnm\\
&=:I_{43}+I_{44}. \label{S4b1}
\ema
For $I_{44}$,  it holds that
\be
\|P_{||}I_{44}\|_{L^{\infty}}\le  C\int_{ |\xi|\ge \frac{r_0}{\eps}} e^{-c|\xi|^2t}\|P_0(v\cdot\xi L^{-1}\hat{f}_0)\| d\xi \le  Ce^{-\frac{c r_0^2}{\eps^2}t} \|f_0\|_{H^3} .\label{S_7a}
\ee
Note that
\bmas
 (I_{43},v\chi_0)_{||}&=-\sum_{j=\pm 1} \frac12\intr e^{ix\cdot\xi}e^{\frac{i\mu_j|\xi|}{\eps}t-a_j|\xi|^2t }\bigg[ j \sqrt{\frac2{5}}\hat{F}_4 -(\hat{F}_1\cdot\omega)\bigg]\omega,\\
 (I_{43}, \tilde{h}_1)&=\sum_{j=\pm 1}\frac12 \intr e^{ix\cdot\xi}e^{\frac{i\mu_j|\xi|}{\eps}t-a_j|\xi|^2t }\bigg[  \sqrt{\frac2{5}}\hat{F}_4 -j(\hat{F}_1\cdot\omega)\bigg],
\emas
where $$(F_1,F_4)=( (v\cdot\Tdx L^{-1}f_0,v\chi_0),(v\cdot\Tdx L^{-1}f_0,\chi_4)).$$
Hence, by Lemma \ref{S3a}, we have
\bma
 \|P_{||}I_{43}\|_{L^{\infty}}
 &\le C\sum_{j=\pm 1}\sum^2_{k=0} \|G_{jk}(t)\|_{L^{\infty}_x}\|H_j(t)\|_{ L^1_x}\|(F_1,F_4)\|_{W^{3,1}_x}\nnm\\
&\le  C \( \frac{t}{\eps}\)^{-1} \|f_0\|_{W^{4,1}}. \label{S_4c}
\ema
Thus, it follows from \eqref{S4a1}--\eqref{S_3a1} and \eqref{S4b1}--\eqref{S_4c} that
\bma
\|P_{||}I_{4}\|_{L^{\infty}}&\le C\bigg(\eps (1+t)^{-\frac52}+\(1+\frac{t}{\eps}\)^{-1}\bigg)\(\|f_0\|_{H^4}+\| f_0\|_{W^{4,1}}\), \label{S_3b1}
\\
\|P_{\bot}I_{4}\|_{L^{2}}&\le C\eps (1+t)^{-\frac5{4}}\(\|f_0\|_{H^2}+\| f_0\|_{L^{1}}\). \label{S_3c1}
\ema

Finally, $I_5$ and $I_6$ can be estimated directly as follows.
\bma
\|I_5\|^2_{L^2}&\le\intr \frac1{\eps^2}\|S_5(t,\xi,\eps)\hat{f}_0 \|^2  d\xi\le C\intr \frac{1}{\eps^2}e^{-\frac{2b }{\eps^2}t} \|\hat{f}_0\|^2d\xi\le C \frac1{\eps^2}e^{-\frac{2b }{\eps^2}t}\|f_0\|^2_{L^2}, \label{S5a1}
\\
\|I_6\|^2_{L^2}&\le C\int_{ |\xi|\ge \frac{r_0}{\eps}} e^{-2c|\xi|^2t}\|P_0(v\cdot\xi L^{-1}\hat{f}_0)\|^2 d\xi\le C\eps^2\int_{ |\xi|\ge \frac{r_0}{\eps}} e^{-\frac{2c r_0^2}{\eps^2}t} |\xi|^4\| \hat{f}_0\|^2d\xi\nnm\\
&\le C\eps^2e^{-\frac{2c r_0^2}{\eps^2}t}\|f_0\|^2_{H^2}. \label{S5b1}
\ema
By \eqref{S_3b1}, \eqref{S_3c1}, \eqref{S5a1} and \eqref{S5b1}, and the Sobolev embedding theorem, we  obtain \eqref{limit4c} and \eqref{limit4d}. And this completes the proof of the lemma.
\end{proof}


\begin{lem}\label{time6a}
For any $\eps\in (0,1)$, $\alpha\in \mathbb{N}^3$ and any $f_0\in L^2$, we have
\bma
 \left\|P_0\dxa e^{\frac{t}{\eps^2}\BB_\eps}f_0 \right\|_{L^{2}}
 &\le C (1+t)^{-\frac34-\frac{m}2}\(\|\dxa f_0\|_{L^{2} }+\|\dx^{\alpha'}f_0\|_{L^{1} }\), \label{time1}
\\
 \left\|P_1\dxa e^{\frac{t}{\eps^2}\BB_\eps}f_0 \right\|_{L^{2}}  &\le C\(\eps (1+t)^{-\frac54-\frac{m}2} + e^{-\frac{bt}{\eps^2}}\)\(\|\dxa f_0\|_{H^{1} }+\|\dx^{\alpha'}f_0\|_{L^{1} }\),  \label{time2}
\ema
where $\alpha'\le \alpha$, $m=|\alpha-\alpha'|$, and $b>0$ is a constant given by \eqref{E_4}. 
Moreover,  if $P_0f_0=0$, then
\bma
 \left\|P_0\dxa e^{\frac{t}{\eps^2}\BB_\eps}f_0 \right\|_{L^{2} }
 &\le C\( \eps(1+t)^{-\frac54-\frac{m}2}+ e^{-\frac{bt}{\eps^2}}\)\(\|\dxa f_0\|_{H^{1} }+\|\dx^{\alpha'}f_0\|_{L^{1} }\),   \label{time5}
\\
 \left\|P_1\dxa e^{\frac{t}{\eps^2}\BB_\eps}f_0 \right\|_{L^{2}}  &\le C\(\eps^2 (1+t)^{-\frac74-\frac{m}2} + e^{-\frac{bt}{\eps^2}}\)\(\|\dxa f_0\|_{H^{2} }+\|\dx^{\alpha'}f_0\|_{L^{1} }\) . \label{time6}
\ema
\end{lem}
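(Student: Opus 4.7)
The plan is to mirror the argument used for Lemma \ref{time}, replacing the VMB semigroup $e^{\frac{t}{\eps^2}\AA_\eps}$ by the Boltzmann semigroup $e^{\frac{t}{\eps^2}\BB_\eps}$ and invoking the fluid/remainder decomposition $e^{\frac{t}{\eps^2}\BB_\eps(\xi)} = S_4(t,\xi,\eps) + S_5(t,\xi,\eps)$ from Theorem \ref{rate2}. After taking Fourier transform and applying Parseval, the task reduces to bounding a weighted $L^2_\xi$ integral that I split into the low-frequency region $|\xi|\le r_0/\eps$ (where both $S_4$ and $S_5$ contribute) and the high-frequency region $|\xi|\ge r_0/\eps$ (where $S_4$ vanishes). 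On the high-frequency part, \eqref{E_4} immediately produces the $e^{-bt/\eps^2}$ factors in \eqref{time2} and \eqref{time6}; when $P_0f_0=0$ and we want to recover an extra $\eps$, I trade $1\le (\eps|\xi|/r_0)^j$ for $j=1,2$ to convert the extra smallness into the higher-norm $\|\cdot\|_{H^1}$ or $\|\cdot\|_{H^2}$ appearing on the right.

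On the low-frequency piece I expand $S_4$ using Theorem \ref{spect3a}. For \eqref{time1}, the projection onto $P_0$ gives
\[
P_0S_4(t,\xi,\eps)\hat f_0 = \sum_{j=-1}^{3}e^{\gamma_j(|\xi|,\eps) t/\eps^2}(\hat f_0,\overline{\psi_j})\,P_0\psi_j,
\]
and since ${\rm Re}\,\gamma_j/\eps^2\le -c|\xi|^2$ for $\eps|\xi|\le r_0$, the oscillatory phase $e^{i\mu_j|\xi|t/\eps}$ plays no role in $L^2_\xi$. Splitting $\int_{|\xi|\le r_0/\eps}= \int_{|\xi|\le 1}+\int_{1\le |\xi|\le r_0/\eps}$, using $\sup_{|\xi|\le 1}\|\hat f_0\|\le C\|f_0\|_{L^1}$ on the first integral and the $L^2$ bound on the second, together with the Gaussian estimate $\int |\xi|^{2m}e^{-2c|\xi|^2 t}d\xi\lesssim (1+t)^{-3/2-m}$, gives the claimed $(1+t)^{-3/4-m/2}$ decay. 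The proof of \eqref{time2} is the same, except that we use $P_1\psi_j = i\eps|\xi|L^{-1}P_1(v\cdot\omega)h_j + O(\eps^2|\xi|^2)$ from \eqref{eigf1} to pick up an extra factor $\eps|\xi|$, which improves the rate by $\eps(1+t)^{-1/2}$.

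For \eqref{time5}--\eqref{time6} under the additional hypothesis $P_0f_0=0$, I write $(\hat f_0,\overline{\psi_j})=(\hat f_0,\overline{P_1\psi_j})$, which is itself $O(\eps|\xi|)\|\hat f_0\|$ since $h_j\in N_0$. Thus every coefficient in $S_4\hat f_0$ carries an additional $\eps|\xi|$; combining this with the $P_0$/$P_1$ analysis above upgrades \eqref{time1} by $\eps(1+t)^{-1/2}$ into \eqref{time5} and upgrades \eqref{time2} by a further $\eps(1+t)^{-1/2}$ into \eqref{time6}. The general $k>0$ case follows by applying the same argument to $\dxa$.

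The only technical obstacle is ensuring that the $O(\eps|\xi|^3)t$ correction in $\gamma_j(|\xi|,\eps)t/\eps^2 = i\mu_j|\xi|t/\eps - a_j|\xi|^2 t + O(\eps|\xi|^3)t$ is absorbed into the Gaussian $e^{-c|\xi|^2t}$, which is achieved by choosing $r_0$ small enough that $|O(\eps|\xi|^3)|\le (a_j/2)|\xi|^2$ whenever $\eps|\xi|\le r_0$. This is the same bookkeeping already carried out in the proof of Lemma \ref{fl1} (see the estimates \eqref{Y_4} and \eqref{I1b}), so no new ideas beyond careful tracking of powers of $\eps$ and $|\xi|$ are required.
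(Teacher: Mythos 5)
Your proposal matches the paper's proof in both structure and substance: you decompose via Theorem \ref{rate2} into $S_4+S_5$, bound $S_5$ by \eqref{E_4} to get the $e^{-bt/\eps^2}$ factor, and use Theorem \ref{spect3a} on the low-frequency fluid part — extracting the extra factors $\eps|\xi|$ from $P_1\psi_j$ and, when $P_0 f_0=0$, from the coefficient $(\hat f_0,\overline{P_1\psi_j})$ — together with the standard $L^1/L^2$ splitting to obtain the algebraic decay. The small digression about trading $1\le(\eps|\xi|/r_0)^j$ on the high-frequency piece is unnecessary (the $e^{-bt/\eps^2}$ terms in \eqref{time5}--\eqref{time6} carry no $\eps$ prefactor, and $\|\dxa f_0\|_{L^2}\le\|\dxa f_0\|_{H^j}$ already closes the estimate), but it does no harm.
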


\begin{proof}
By  \eqref{E_3},  for $j=0,1$ we have
\bma
\left\|P_j\dxa e^{\frac{t}{\eps^2}\BB_\eps}f_0\right\|^2_{L^{2} }
=& \intr  \left\|P_j\xi^{\alpha}e^{\frac{t}{\eps^2}\BB_{\eps}(\xi)}\hat{f}_0 \right\|^2 d\xi  \nnm\\
\le &  \int_{|\xi|\le \frac{r_0}{\eps}} \left\|\xi^{\alpha}P_jS_4(t,\xi,\eps)\hat{f}_0\right\|^2 d\xi +\intr \left\|\xi^{\alpha}S_5(t,\xi,\eps)\hat{f}_0\right\|^2 d\xi. \label{D1a}
\ema
 By \eqref{E_4},  we can estimate the second term on the right hand side of \eqref{D1a} as follows.
\be
\intr (\xi^\alpha)^2 \|S_5(t,\xi,\eps)\hat{f}_0\|^2 d\xi\le C e^{-2\frac{bt}{\eps^2}}\intr(\xi^\alpha)^2\|\hat{f}_0\|^2 d\xi \le Ce^{-2\frac{bt}{\eps^2}} \|\dxa f_0\|^2_{L^{2}} .\label{t3b}
\ee

By Theorems \ref{rate2} and \ref{spect3a}, we have
\bma
\int_{|\xi|\le \frac{r_0}{\eps}}\|\xi^{\alpha}P_0S_4(t,\xi,\eps)\hat{f}_0\|^2 d\xi
&\le  C\int_{|\xi|\le \frac{r_0}{\eps}} e^{-c|\xi|^2t}(\xi^{\alpha})^2\|\hat{f}_0\|^2 d\xi\nnm\\
&\le C(1+t)^{-\frac32-m}\(\|\dxa f_0\|^2_{L^{2} }+\|\dx^{\alpha'}f_0\|^2_{L^{1} }\), \label{t1}
\\
\int_{|\xi|\le \frac{r_0}{\eps}}\|\xi^{\alpha}P_1S_4(t,\xi,\eps)\hat{f}_0\|^2d\xi
&\le  C\int_{|\xi|\le \frac{r_0}{\eps}} e^{-c|\xi|^2t} \eps^2|\xi|^{2 }(\xi^{\alpha})^2\|\hat{f}_0\|^2 d\xi\nnm\\
&\le C\eps^2(1+t)^{-\frac52-m}\(\|\dxa f_0\|^2_{H^{1} }+\|\dx^{\alpha'}f_0\|^2_{L^{1} }\),\label{t2}
\ema
where $\alpha'\le \alpha$ and $m=|\alpha-\alpha'|$. Combining \eqref{D1a}--\eqref{t2} yields \eqref{time1} and \eqref{time2}.

Moreover, it holds that  for $P_0f_0=0$,
\bma
\int_{|\xi|\le \frac{r_0}{\eps}}\|\xi^{\alpha}P_0S_4(t,\xi,\eps)\hat{f}_0\|^2  d\xi
&\le  C\int_{|\xi|\le \frac{r_0}{\eps}} e^{-c|\xi|^2t} \eps^2|\xi|^{2 }(\xi^{\alpha})^2\|\hat{f}_0\|^2 d\xi\nnm\\
&\le C\eps^2(1+t)^{-\frac52-m}\(\|\dxa f_0\|^2_{H^{1} }+\|\dx^{\alpha'}f_0\|^2_{L^{1} }\),\label{t5}
\\
\int_{|\xi|\le \frac{r_0}{\eps}}\|\xi^{\alpha}P_1S_4(t,\xi,\eps)\hat{f}_0\|^2  d\xi
&\le  C\int_{|\xi|\le \frac{r_0}{\eps}} e^{-c|\xi|^2t} \eps^4|\xi|^{4 }(\xi^{\alpha})^2\|\hat{f}_0\|^2 d\xi\nnm\\
&\le C\eps^4(1+t)^{-\frac72-m}\(\|\dxa f_0\|^2_{H^{2} }+\|\dx^{\alpha'}f_0\|^2_{L^{1} }\).\label{t6}
\ema
Combining \eqref{D1a}, \eqref{t5} and \eqref{t6} gives  \eqref{time5}--\eqref{time6}. The proof of the lemma  is completed.
\end{proof}

\begin{lem} \label{timev}
For any $1\le q\le 2$,  $\alpha\in \mathbb{N}^3$ and any $u_0\in N_0$, we have
\be
\|\dxa Y_1(t)u_0\|_{L^{2}} \le C (1+t)^{-\frac32(\frac1q-\frac12)-\frac{m}2} \|\dx^{\alpha'}u_0\|_{L^{q}}+Ct^{-\frac{k}{2}}e^{-ct}\|\dx^{\alpha''} u_0\|_{L^{2}}, \label{time1b}
\ee
where $\alpha',\alpha''\le \alpha$, $m=|\alpha-\alpha'|$ and $k=|\alpha-\alpha''|$.
\end{lem}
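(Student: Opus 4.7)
The plan is to pass to the frequency side via Plancherel, exploit the Gaussian pointwise bound on $Y_1(t,\xi)$, and then separate low and high frequencies, treating them by Hausdorff--Young plus a heat-kernel rescaling and a pointwise sup-estimate respectively.

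From the spectral representation \eqref{v1}, the orthonormality of $\{h_j(\xi)\}_{j=0,2,3}$ in $L^2_v$, and the positivity of $a_0,a_2,a_3$, there exists $c>0$ such that $\|Y_1(t,\xi)\hat u_0\|\le Ce^{-c|\xi|^2 t}\|\hat u_0\|$ for all $\xi\in\R^3$. Applying Plancherel and splitting at $|\xi|=1$ gives
$$
\|\dxa Y_1(t)u_0\|_{L^2}^2 \le C\int_{|\xi|\le 1}(\xi^\alpha)^2 e^{-2c|\xi|^2 t}\|\hat u_0\|^2 d\xi + C\int_{|\xi|\ge 1}(\xi^\alpha)^2 e^{-2c|\xi|^2 t}\|\hat u_0\|^2 d\xi =: J_1+J_2.
$$

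For $J_1$ with $q\in[1,2)$, I would write $\xi^\alpha=\xi^{\alpha-\alpha'}\xi^{\alpha'}$ and apply H\"older with the exponent pair $(s,q'/2)$ where $1/s=2/q-1$. The factor $\|(\xi^{\alpha'}\hat u_0)^2\|_{L^{q'/2}}=\|\xi^{\alpha'}\hat u_0\|_{L^{q'}}^2$ is controlled by the Hausdorff--Young inequality as $C\|\dx^{\alpha'}u_0\|_{L^q}^2$. The remaining heat factor is estimated via the rescaling $\eta=\sqrt{t}\xi$, giving
$$
\Big\|(\xi^{\alpha-\alpha'})^2 e^{-2c|\xi|^2 t}\Big\|_{L^s(|\xi|\le 1)}\le C(1+t)^{-3/(2s)-m}= C(1+t)^{-3(1/q-1/2)-m},
$$
for $t\ge 1$, with the bound uniform for $t\le 1$. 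Taking a square root yields the first claimed term. The limiting case $q=2$ corresponds to $s=\infty$ and is handled by the uniform bound on the heat multiplier.

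For $J_2$, I would extract the factor $|\xi|^{2(|\alpha|-|\alpha''|)}$ from $(\xi^\alpha)^2=(\xi^{\alpha''})^2|\xi^{\alpha-\alpha''}|^2$ and use the elementary maximum bound $\sup_{|\xi|\ge 1}|\xi|^{2k}e^{-c|\xi|^2 t}\le Ct^{-k}$ along with $e^{-c|\xi|^2 t}\le e^{-ct}$ on $|\xi|\ge 1$ to obtain $J_2\le Ct^{-k}e^{-ct}\|\dx^{\alpha''}u_0\|_{L^2}^2$, whose square root is the second term (after relabelling the constant $c$). The whole argument is routine because $Y_1(t)$ is essentially a heat semigroup acting on a fixed finite-dimensional fiber; the only real bookkeeping point is verifying that the exponent produced by the rescaling equals $3/2(1/q-1/2)$, which follows directly from $1/s=2/q-1$.
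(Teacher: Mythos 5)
Your proposal is correct and reproduces the paper's argument essentially verbatim: Plancherel, the Gaussian bound $\|Y_1(t,\xi)\hat u_0\|\le Ce^{-c|\xi|^2t}\|\hat u_0\|$, a split at $|\xi|=1$, then H\"older plus Hausdorff--Young with a heat-kernel rescaling on the low-frequency piece and a sup bound $\sup_{|\xi|\ge1}|\xi|^{2k}e^{-c|\xi|^2t}\le Ct^{-k}e^{-ct}$ on the high-frequency piece. Your exponent $s$ with $1/s=2/q-1$ is exactly the paper's $p$ (the paper's conjugate $p'$ satisfies $1/(2p')+1/q=1$), so the two computations coincide.
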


\begin{proof}
By \eqref{v1}, we have
\bmas
 \left\|\dxa Y_1(t)u_0\right\|^2_{L^{2}} &\le C \(\int_{|\xi|\leq 1}+\int_{|\xi|\ge 1}\) (\xi^{\alpha})^2e^{-2c|\xi|^2t}\|\hat{u}_0\|^2d\xi \\
 &\le  C\(\int_{|\xi|\leq 1} |\xi|^{2pm} e^{-2c p|\xi|^2t}d\xi\)^{1/p}\(\int_{|\xi|\leq 1} \|\xi^{\alpha'}\hat{u}_0\|^{2p'}
d\xi\)^{1/p'}  \\
 &\quad+\sup_{|\xi|\ge 1} \(|\xi|^{2k}e^{-2c|\xi|^2t}\)\int_{|\xi|\ge 1} (\xi^{\alpha''})^2\|\hat{u}_0\|^2d\xi \\
 &\le C(1+t)^{-\frac32(\frac2q-1)-m}  \|\dx^{\alpha'} u_0\|^2_{L^{q}}+Ct^{-k}e^{-2ct}\|\dx^{\alpha''}u_0\|^2_{L^{2}} ,
\emas
where $1/q+1/p'=1$, $1/(2p')+1/q=1$,  $\alpha'\le \alpha$ and $m=|\alpha-\alpha'|$. This completes the proof of the lemma.
\end{proof}

\section{Diffusion limit}
\setcounter{equation}{0}
\label{sect4}
In this section, we will study the diffusion limit of  the nonlinear VMB system \eqref{VMB4}--\eqref{VMB4d} based on the fluid approximations of  the linear VMB system given in Section 3.

Since the operators $\AA_{\eps}$ and $\BB_{\eps}$ generate contraction semigroups in $H^k$, the solution $U_{\eps}(t)=(f_{\eps},V_{\eps} )(t)$ with $V_{\eps}(t)=( g_{\eps},E_{\eps},B_{\eps})(t)$ to the VMB system \eqref{VMB4}--\eqref{VMB2i} can be represented by
\bma
f_{\eps}(t)&=e^{\frac{t}{\eps^2}\BB_{\eps}}f_0+\intt e^{\frac{t-s}{\eps^2}\BB_{\eps}}\(G_{1}(s)+\frac1{\eps}G_{2}(s)\) ds, \label{fe}
\\
V_{\eps}(t)&=e^{\frac{t}{\eps^2}\AA_{\eps}}V_0+\intt e^{\frac{t-s}{\eps^2}\AA_{\eps}} \(G_{3}(s)+\frac1{\eps}G_{4}(s)\)  ds, \label{ue}
\ema
where $V_0=( g_{0},E_{0},B_{0})$, and the nonlinear terms $G_k$, $k=1,2,3,4$ are given by
\bmas
&G_{1}=\frac12v\cdot E_{\eps}g_{\eps}-E_{\eps}\cdot\Tdv g_{\eps}-\frac1{\eps}P_0(v\times B_{\eps})\cdot\Tdv P_rg_{\eps},\\
&G_{2}=-P_1(v\times B_{\eps})\cdot\Tdv P_rg_{\eps}+\Gamma(f_{\eps},f_{\eps}),\\
&G_{3}=(G_{31},0,0), \quad G_{31}=\frac12v\cdot E_{\eps}f_{\eps}-E_{\eps}\cdot\Tdv f_{\eps}, \\
&G_{4}=(G_{41},0,0), \quad G_{41}=-(v\times B_{\eps})\cdot\Tdv f_{\eps}+\Gamma(g_{\eps},f_{\eps}).
\emas
 Also, the solution $U(t)=(u_1,V_1 )(t)$ with $u_1 =n\chi_0+m\cdot v\chi_0+q\chi_4$ and $V_1 =(\rho \chi_0,E,B)$ to the NSMF system  \eqref{NSM_2}--\eqref{NSP_5i} can be represented by
\bma
u_1(t)&=Y_1(t)P_0f_0+\intt Y_1(t-s)(H_1(s)+\Tdx\cdot H_2(s)) ds,\label{fe1}
\\
V_1(t)&=Y_2(t)P_2V_0+\intt Y_2(t-s) H_3(s)ds,\label{ue1}
\ema
where
$$
\left.\bln
&H_1=(\rho E+j\times B)\cdot v\chi_0, \quad H_2=-(m\otimes m)\cdot v\chi_0-\frac53 (qm)\chi_4,\\
&H_3=-(\Tdx\cdot H_4\chi_0, H_4,0), \quad H_4=\rho m-\eta m\times B.
\eln\right.
$$


\subsection{Energy estimate}

We first derive some energy estimates.
Let $N\ge 1$ be a positive integer and $U_{\eps}=(f_{\eps},g_{\eps},E_{\eps},B_{\eps})$, and
\bma
E_{N,k}(U_{\eps})&=\sum_{|\alpha|+|\beta|\le N}\|\nu^k\dxa\dvb (f_{\eps},g_{\eps})\|^2_{L^2 }+\sum_{|\alpha|\le N}\|\dxa(E_{\eps},B_{\eps})\|^2_{L^2_x},\label{energy3}
\\
D_{N,k}(U_{\eps})&=\sum_{|\alpha|+|\beta|\le N}\frac1{\eps^2}\|\nu^{\frac12+k}\dxa\dvb  ( P_1f_{\eps},P_rg_{\eps})\|^2_{L^2 }+\sum_{1\le |\alpha|\le N-1}\|\dxa(E_{\eps},B_{\eps})\|^2_{L^2_x}\nnm\\
&\quad+\sum_{|\alpha|\le N-1}\|\dxa\Tdx  ( P_0f_{\eps},P_{d}g_{\eps})\|^2_{L^2 } +\| E_{\eps}\|^2_{L^2_x} ,
\ema
for $k\ge 0$. For brevity, we denote $E_N(U_{\eps})=E_{N,0}(U_{\eps})$   and $D_N(U_{\eps})=D_{N,0}(U_{\eps})$.

Firstly, by taking the inner product of $\chi_j\ (j=0,1,2,3,4)$ and \eqref{VMB4}, we have   a  compressible Euler-Maxwell type system
\bma
&\dt n_{\eps}+\frac{1}{\eps}\divx  m_{\eps}=0,\label{G_3}\\
&\dt  m_{\eps}+\frac{1}{\eps}\Tdx n_{\eps}+\frac{1}{\eps}\sqrt{\frac23}\Tdx q_{\eps}=\rho_{\eps} E_{\eps}+\frac{1}{\eps}u_{\eps}\times B_{\eps}-\frac{1}{\eps}( v\cdot\Tdx( P_1f_{\eps}), v\chi_0),\label{G_5}\\
&\dt q_{\eps}+\frac{1}{\eps}\sqrt{\frac23}\divx m_{\eps}=\sqrt{\frac23} E_{\eps}\cdot u_{\eps}-\frac{1}{\eps}(v\cdot\Tdx( P_1f_{\eps}), \chi_4 ), \label{G_6}
\ema
where
\be
(n_{\eps},m_{\eps},q_{\eps})=((f_{\eps},\chi_0),(f_{\eps},v\chi_0),(f_{\eps},\chi_4)),\quad (\rho_{\eps},u_{\eps})=((g_{\eps},\chi_0),(g_{\eps},v\chi_0)). \label{macro}
\ee
Taking the microscopic projection $ P_1$ on \eqref{VMB4} gives
\be
\dt( P_1f_{\eps})+ \frac{1}{\eps}P_1(v\cdot\Tdx  P_1f_{\eps})-\frac{1}{\eps^2}L( P_1f_{\eps})=- \frac{1}{\eps}P_1(v\cdot\Tdx P_0f_{\eps})+ P_1  G_{1}- \frac{1}{\eps}P_1G_{2} .\label{GG1}
\ee

By \eqref{GG1}, we can express the microscopic part $ P_1f_{\eps}$ as
\bq   \frac{1}{\eps}P_1f_{\eps}= L^{-1}[\eps\dt( P_1f_{\eps})+P_1(v\cdot\Tdx  P_1f_{\eps})-\eps P_1  G_{1}-P_1  G_{2}]+ L^{-1} P_1(v\cdot\Tdx P_0f_{\eps}). \label{p_1}\eq
By substituting \eqref{p_1} into \eqref{G_3}--\eqref{G_6}, we obtain
a compressible Navier-Stokes-Maxwell type system
\bma
&\dt n_{\eps}+\frac{1}{\eps}\divx  m_{\eps}=0,\label{G_9}\\
&\dt  m_{\eps}+\eps\dt R_1+\frac{1}{\eps}\Tdx n_{\eps}+\frac{1}{\eps}\sqrt{\frac23}\Tdx q_{\eps}\nnm\\
&=\kappa_0 \(\Delta_x m_{\eps}+\frac13\Tdx{\rm div}_x m_{\eps}\)+\rho_{\eps} E_{\eps}+\frac{1}{\eps}u_{\eps}\times B_{\eps}+R_3,\label{G_7}\\
&\dt q_{\eps}+\eps\dt R_2+\frac{1}{\eps}\sqrt{\frac23}\divx m_{\eps}=\kappa_1 \Delta_x q_{\eps}+\sqrt{\frac23} E_{\eps}\cdot u_{\eps}+R_4,\label{G_8}
\ema
where the remainder terms $R_1, R_2, R_3, R_4$ are given by
 \bmas
 R_1&=( v\cdot\Tdx L^{-1}( P_1f_{\eps}),v\chi_0), \ \ R_2=( v\cdot\Tdx L^{-1}( P_1f_{\eps}),\chi_4),\\
 R_3&=( v\cdot\Tdx L^{-1}[ P_1(v\cdot\Tdx  P_1f_{\eps})-\eps P_1  G_{1}-P_1  G_{2}],v\chi_0),\\
 R_4&=( v\cdot\Tdx L^{-1}[ P_1(v\cdot\Tdx  P_1f_{\eps})-\eps P_1  G_{1}-P_1  G_{2}],\chi_4).
 \emas

By taking the inner product between $\sqrt M$ and \eqref{VMB4a}, we obtain
\be
\dt \rho_{\eps}+\frac{1}{\eps}\divx u_{\eps}=0. \label{G_3a}
\ee
Taking the microscopic projection $ P_r$ on \eqref{VMB4a} gives
\bma
&\dt( P_rg_{\eps})+ \frac{1}{\eps}P_r(v\cdot\Tdx  P_rg_{\eps})-\frac{1}{\eps}v \sqrt{M}\cdot E_{\eps}-\frac{1}{\eps^2}L_1( P_rg_{\eps})\nnm\\
&=-\frac{1}{\eps} (v\cdot\Tdx \rho_{\eps})\sqrt{M}+ G_{31}+\frac{1}{\eps}G_{41} .\label{GG2}
\ema
By \eqref{GG2}, we can express the microscopic part $ P_rg_{\eps}$ as
\bq \frac{1}{\eps}P_rg_{\eps}= L_1^{-1}[\eps\dt( P_rg_{\eps})+ P_r(v\cdot\Tdx  P_rg_{\eps})-\eps G_{31}-G_{41}]- L_1^{-1}[v\chi_0\cdot (E_{\eps}-\Tdx \rho_{\eps})] . \label{p_c}\eq
Substituting \eqref{p_c} into \eqref{G_3a} and \eqref{VMB4b} gives
\bma
\dt \rho_{\eps}+\eps\dt\divx R_5&=-\eta \rho_{\eps}+\eta \Delta_x \rho_{\eps}-\divx R_6,\label{G_9a}\\
\dt E_{\eps}+\eps\dt R_5&=\Tdx\times B_{\eps}+\eta\Tdx \rho_{\eps}-\eta E_{\eps}+R_6,\label{G_9b}\\
\dt B_{\eps}&=-\Tdx\times E_{\eps},\label{G_9c}
\ema
where the remainder terms $R_5, R_6$ are defined by
\bmas
R_5=(L_1^{-1}P_r g_{\eps},v\chi_0), \quad
R_6=(L_1^{-1}(P_r(v\cdot\Tdx P_rg_{\eps})-\eps G_{31}-G_{41}),v\chi_0).
\emas

Similar to \cite{Duan4,Li4,Strain}, we have the existence and the energy estimate for the solution $U_{\eps}=(f_{\eps},g_{\eps},E_{\eps},B_{\eps})$ to the VMB system \eqref{VMB4}--\eqref{VMB2i}.

\begin{lem}[Macroscopic dissipation] \label{macro-en} Given $N\ge 2$.  Let $(n_{\eps},m_{\eps},q_{\eps})$ and $(\rho_{\eps},E_{\eps},B_{\eps})$ be the strong solutions to \eqref{G_9}--\eqref{G_8} and \eqref{G_9a}--\eqref{G_9c} respectively. Then, there are two constants $s_0,s_1>0$ such that for any $\eps\in (0,1)$,
\bmas
&\Dt \sum_{|\alpha|\le N-1}s_0\(\|\dxa(n_{\eps}, m_{\eps},q_{\eps})\|^2_{L^2_x}+2\eps\intr \dxa R_1\dxa m_{\eps}dx+2\eps\intr \dxa R_2\dxa q_{\eps}dx\)\nnm\\
&+\Dt \sum_{|\alpha|\le N-1}4\eps\intr \dxa m_{\eps} \dxa\Tdx n_{\eps}dx+\sum_{|\alpha|\le N-1} \|\dxa\Tdx (n_{\eps}, m_{\eps},q_{\eps})\|^2_{L^2_x}
\nnm\\
\le & C\sqrt{E_N(U_{\eps})}D_N(U_{\eps})+C\sum_{|\alpha|\le N-1}\|\dxa\Tdx P_1f_{\eps}\|^2_{L^2 },
\\
&\Dt \sum_{|\alpha|\le N-1}s_1\(\|\dxa (\rho_{\eps},E_{\eps},B_{\eps})\|^2_{L^2_x}+2\eps \intr\dxa\divx R_5\dxa \rho_{\eps} dx+2\eps \intr\dxa R_5\dxa E_{\eps} dx\)\nnm\\
&-\Dt \sum_{|\alpha|\le N-2}4\intr \dxa E_{\eps}\dxa(\Tdx\times B_{\eps})dx\nnm\\
&+\sum_{|\alpha|\le N-1}(\|\dxa \rho_{\eps}\|^2_{L^2_x}+\|\dxa \Tdx \rho_{\eps}\|^2_{L^2_x} +\|\dxa E_{\eps}\|^2_{L^2_x})+\sum_{1\le |\alpha|\le N-1}\|\dxa B_{\eps}\|^2_{L^2_x}
\nnm\\
\le& CE_N(U_{\eps})D_N(U_{\eps})+\frac{C}{\eps^2}\sum_{|\alpha|\le N}\|\dxa  P_rg_{\eps}\|^2_{L^2 }.
\emas
\end{lem}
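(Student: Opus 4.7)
The two inequalities are macroscopic energy estimates for the compressible Navier-Stokes--Maxwell-type subsystems \eqref{G_9}--\eqref{G_8} and \eqref{G_9a}--\eqref{G_9c}. The general strategy is classical: take spatial derivatives $\dxa$ of the system, test against natural multipliers, and combine with carefully weighted cross terms to extract dissipation for those macroscopic components that do not possess intrinsic damping. My plan is to derive each inequality separately, because the first relies on the hyperbolic-parabolic balance in the $(n_\eps,m_\eps,q_\eps)$ block, while the second exploits the electromagnetic damping provided by Ohm's law plus the curl structure for $B_\eps$.

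For the first inequality, I would apply $\dxa$ with $|\alpha|\le N-1$ to \eqref{G_9}--\eqref{G_8}, take the $L^2_x$ inner product against $\dxa(n_\eps,m_\eps,q_\eps)$, and sum. The singular $\tfrac1{\eps}$ hyperbolic terms cancel via integration by parts thanks to the skew-symmetric coupling between $(n,q)$ and $\divx m$. The viscous pieces yield $\kappa_0\|\dxa\Tdx m_\eps\|^2+\kappa_1\|\dxa\Tdx q_\eps\|^2$ plus divergence-type contributions absorbed into the gradient norm. The electromagnetic forcing $\rho_\eps E_\eps+\tfrac1\eps u_\eps\times B_\eps$ and the Lorentz-like term $\sqrt{2/3}E_\eps\cdot u_\eps$ are controlled by $\sqrt{E_N(U_\eps)}D_N(U_\eps)$ after using $\|u_\eps\|\le C\|P_rg_\eps\|$ and the fact that $u_\eps=(P_rg_\eps,v\chi_0)$ carries the $1/\eps$ that matches the dissipation rate in $D_N$. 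The remainder terms $\eps\dt R_1,\eps\dt R_2$ are absorbed into the energy by moving $\dt$ onto the test function, producing the correction terms $2\eps\intr\dxa R_i\cdot\dxa(\cdot)\,dx$ in the modified energy functional; the factor $\eps$ renders this correction a lower-order perturbation controlled by $\|P_1f_\eps\|$. To recover dissipation for $\Tdx n_\eps$ (and, through the coupling with $\Tdx q_\eps$, also $\divx m_\eps$), I add the cross term $4\eps\intr\dxa m_\eps\cdot\dxa\Tdx n_\eps\,dx$: differentiating in time and substituting \eqref{G_9} and \eqref{G_7} produces $-\|\dxa\Tdx n_\eps\|^2-\tfrac23\|\dxa\Tdx q_\eps\|^2 - \|\dxa\divx m_\eps\|^2$ plus terms bounded by $\|\dxa\Tdx P_1f_\eps\|^2$, nonlinear contributions, and lower-order pieces handled by Cauchy-Schwarz. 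Choosing $s_0$ large enough makes the combined functional positive and extracts $\|\dxa\Tdx(n_\eps,m_\eps,q_\eps)\|^2$ on the left.

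For the second inequality the structure is similar but simpler since Maxwell already carries damping. Applying $\dxa$ to \eqref{G_9a}--\eqref{G_9c} for $|\alpha|\le N-1$ and testing against $\dxa(\rho_\eps,E_\eps,B_\eps)$ produces the direct dissipation $\eta(\|\dxa\rho_\eps\|^2+\|\dxa\Tdx\rho_\eps\|^2+\|\dxa E_\eps\|^2)$ after the $\Tdx\times$ coupling terms between $E_\eps$ and $B_\eps$ cancel by integration by parts. The remainder $R_5,R_6$ contribute $\eps$-corrections of the same type as above, producing the modified energy terms $2\eps\intr\dxa\divx R_5\cdot\dxa\rho_\eps\,dx$ and $2\eps\intr\dxa R_5\cdot\dxa E_\eps\,dx$. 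To capture dissipation for $B_\eps$, which has no intrinsic damping, I add the cross term $-4\intr\dxa E_\eps\cdot\dxa(\Tdx\times B_\eps)\,dx$ for $|\alpha|\le N-2$; differentiating in time and using \eqref{G_9b}--\eqref{G_9c} yields $\|\dxa\Tdx\times B_\eps\|^2$, and the constraint $\Tdx\cdot B_\eps=0$ upgrades this to $\|\dxa\Tdx B_\eps\|^2$ (in frequency space $|\xi\times\hat B|=|\xi||\hat B|$), together with terms controlled by $\|\dxa\Tdx\times E_\eps\|^2\le\|\dxa\Tdx E_\eps\|^2$ (absorbed by the dissipation of $E_\eps$ and its derivatives coming from $\eta E_\eps$ in high-order estimates) plus quadratic remainders. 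Together with the nonlinear forcing $v\cdot E_\eps f_\eps,E_\eps\cdot\Tdv f_\eps,(v\times B_\eps)\cdot\Tdv f_\eps,\Gamma(g_\eps,f_\eps)$ hidden in $R_6$, these contributions are controlled by $E_N(U_\eps)D_N(U_\eps)+\tfrac{C}{\eps^2}\sum_{|\alpha|\le N}\|\dxa P_rg_\eps\|^2$ using standard Sobolev product estimates and the weighted collision estimate $\|\nu^{-1/2}\Gamma(f,g)\|\le C\|f\|_{D^0_{1/2}}\|g\|_{D^0_{1/2}}$.

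The main technical obstacle I anticipate is the treatment of the remainder terms $R_3,R_4$ and $R_6$ with uniform $\eps$-dependence: these contain $P_1(v\cdot\Tdx P_1f_\eps)$ and $P_r(v\cdot\Tdx P_rg_\eps)$ which need one additional spatial derivative that is \emph{not} available for free in the macroscopic energy, forcing me to pair the weighted $L^{-1}$ (resp.\ $L_1^{-1}$) smoothing of the collision operator with the $\nu$-weighted dissipation norm inside $D_N(U_\eps)$. The nonlinear terms $P_1\Gamma(f_\eps,f_\eps)$ and $P_rG_{41}$ hidden in $R_3,R_6$ must be split into high-low frequency pieces and bounded by $\sqrt{E_N}D_N$ using that one factor always carries at least one derivative. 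Properly tuning the constants $s_0,s_1$ (choosing $s_0,s_1\gg 1$ so that the positive macroscopic dissipation dominates the cross terms) and ordering the absorption of terms between the two inequalities is the remaining bookkeeping step.
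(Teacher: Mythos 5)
Your overall strategy -- test the $\dxa$-differentiated macroscopic systems against themselves, add $\eps$-weighted cross corrections to the energy functional, and choose $s_0,s_1$ large to extract gradient dissipation for the undamped macroscopic components -- is the correct one, and it is what the references cited by the paper (Duan--Strain, Li--Yang--Zhong, Strain) actually carry out. The paper itself gives no proof of this lemma, only the pointer ``Similar to [Duan4, Li4, Strain],'' so your sketch is filling that gap with the right framework. That said, there are two places where the details you wrote down would not go through as stated.

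First, a sign issue. You claim that $\Dt$ of the cross term $4\eps\intr\dxa m_\eps\cdot\dxa\Tdx n_\eps\,dx$ ``produces $-\|\dxa\Tdx n_\eps\|^2 - \tfrac23\|\dxa\Tdx q_\eps\|^2 - \|\dxa\divx m_\eps\|^2$.'' Only the first term is a genuine negative contribution. Substituting $\dt n_\eps = -\tfrac1\eps\divx m_\eps$ into $4\eps\intr\dxa m_\eps\cdot\dxa\Tdx\dt n_\eps\,dx$ and integrating by parts gives $+4\|\dxa\divx m_\eps\|^2$, a \emph{positive} contribution that must be paid for out of the viscous dissipation $\kappa_0\|\dxa\Tdx m_\eps\|^2$ scaled by $s_0$. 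Likewise the $q_\eps$ part is a signed cross term $-4\sqrt{2/3}\intr\dxa\Tdx q_\eps\cdot\dxa\Tdx n_\eps\,dx$, not a dissipation $-\tfrac23\|\dxa\Tdx q_\eps\|^2$; it is controlled by Cauchy--Schwarz at a small cost in $\|\dxa\Tdx n_\eps\|^2$ and an absorbable cost in $\|\dxa\Tdx q_\eps\|^2$. Your remark ``choosing $s_0$ large makes the combined functional positive'' suggests you implicitly understand this, but the explicit sign claim is wrong and would mislead a reader trying to verify the step.

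Second, and more seriously: you propose to compute $\Dt$ of the cross term by ``substituting \eqref{G_9} and \eqref{G_7}.'' Equation \eqref{G_7} carries the explicit viscous operator $\kappa_0(\Delta_x m_\eps + \tfrac13\Tdx\divx m_\eps)$. Plugged into $4\eps\intr\dxa\dt m_\eps\cdot\dxa\Tdx n_\eps\,dx$ at the top order $|\alpha|=N-1$, this produces a pairing of $\dxa\Delta m_\eps$ (i.e.\ $N+1$ derivatives on $m_\eps$) with $\dxa\Tdx n_\eps$, or, after integration by parts, $\dxa\Tdx m_\eps$ against $\dxa\Tdx^2 n_\eps$ ($N+1$ derivatives on $n_\eps$). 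Neither is controlled by $E_N(U_\eps)$ or $D_N(U_\eps)$, and the small factor $\eps$ does not save you since the lemma must hold uniformly in $\eps\in(0,1)$. The standard resolution, which is what makes the RHS $C\sum_{|\alpha|\le N-1}\|\dxa\Tdx P_1f_\eps\|^2$ appear, is to compute the cross-term time derivative using the \emph{Euler-type} form \eqref{G_5}: there the troublesome viscous term is replaced by $-\tfrac1\eps(v\cdot\Tdx P_1f_\eps,v\chi_0)$, which after the $\eps$ cancellation pairs with $\dxa\Tdx n_\eps$ at exactly $|\alpha|+1\le N$ derivatives on each factor, well within the energy. (The two equations are of course algebraically equivalent, but using the form that has already absorbed $L^{-1}$ and thus raised the derivative count on $m_\eps$ is the wrong choice inside the cross-term.) The same remark applies to the $R_5,R_6$ remainders on the Maxwell side: the cross term $-4\intr\dxa E_\eps\cdot\dxa(\Tdx\times B_\eps)\,dx$ should be fed the untransformed Amp\`ere law so that $P_r g_\eps$ appears with at most $N$ derivatives, which is precisely why the RHS there is $\tfrac{C}{\eps^2}\sum_{|\alpha|\le N}\|\dxa P_r g_\eps\|^2$. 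With those two corrections the proof sketch you wrote does close.
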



\begin{lem}[Microscopic dissipation]
\label{micro-en}
 Given $N\ge 2$. Let $U_{\eps}=(f_{\eps},g_{\eps},E_{\eps},B_{\eps})$ be a strong solution to VMB system  \eqref{VMB4}--\eqref{VMB2i}.
Then, there are constants $p_k>0$, $1\le k\le N$ and $\mu_1>0$ such that for any $\eps\in (0,1)$,
\bmas
&\frac12\Dt \sum_{|\alpha|\le N} (\|\dxa (f_{\eps},g_{\eps})\|^2_{L^2 }+\|\dxa (E_{\eps},B_{\eps})\|^2_{L^2_x})+\frac{\mu_1}{\eps^2} \sum_{|\alpha|\le N}\|\nu^{\frac12} \dxa (P_1f_{\eps},P_rg_{\eps})\|^2_{L^2 }\nnm\\
&\le C\sqrt{E_N(U_{\eps})}D_N(U_{\eps}), 
\\
&\Dt \sum_{1\le k\le N}p_k\sum_{|\beta|=k \atop |\alpha|+|\beta|\le N}\|\dxa\dvb (P_1f_{\eps},P_rg_{\eps})\|^2_{L^2 } +\frac{\mu_1}{\eps^2} \sum_{|\beta|\ge 1 \atop |\alpha|+|\beta|\le N}\|\nu^{\frac12}\dxa\dvb (P_1f_{\eps},P_rg_{\eps})\|^2_{L^2 }\nnm\\
&\le C\sum_{|\alpha|\le N-1}(\|\dxa\Tdx ( f_{\eps}, g_{\eps})\|^2_{L^2 }+\|\dxa E_{\eps}\|^2_{L^2_x}) +C\sqrt{E_N(U_{\eps})}D_N(U_{\eps}). 
\emas
\end{lem}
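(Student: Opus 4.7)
\medskip
\noindent\textbf{Proof proposal for Lemma \ref{micro-en}.}
The plan is to derive both dissipation estimates by standard macro-micro energy methods applied directly to \eqref{VMB4}--\eqref{VMB4d}, relying on the coercivity \eqref{L_4} of $L$ on $N_0^\perp$ and of $L_1$ on $N_1^\perp$, together with the skew-symmetric structure of the field-kinetic coupling and of Maxwell's equations. For the first estimate I would apply $\dxa$ with $|\alpha|\le N$ to equations \eqref{VMB4}--\eqref{VMB4d}, take the $L^2_{x,v}$ inner products of the $f_\eps$-equation with $\dxa f_\eps$, of the $g_\eps$-equation with $\dxa g_\eps$, and the $L^2_x$ inner products of \eqref{VMB4b}--\eqref{VMB4c} with $\dxa E_\eps$ and $\dxa B_\eps$ respectively, then sum. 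The streaming terms $\frac{1}{\eps} v\cdot\Tdx$ vanish after integration by parts; the electromagnetic field contributions $\dxa(\Tdx\times B_\eps)\cdot\dxa E_\eps - \dxa(\Tdx\times E_\eps)\cdot\dxa B_\eps$ cancel, and the coupling $-\frac{1}{\eps}\dxa(v\sqrt{M}\cdot E_\eps)\cdot \dxa g_\eps$ is absorbed by the source $-\frac{1}{\eps}\intr \dxa g_\eps v\sqrt{M}dv\cdot \dxa E_\eps$ from Amp\`ere's law. Applying \eqref{L_4} gives the $\frac{\mu_1}{\eps^2}\|\nu^{1/2}\dxa(P_1f_\eps,P_rg_\eps)\|^2_{L^2}$ dissipation, while the remaining nonlinear terms from $H^1_\eps, H^2_\eps$ are bounded by $C\sqrt{E_N(U_\eps)}D_N(U_\eps)$ using Sobolev embedding, the trilinear estimates for $\Gamma(f,g)$ of the Guo type (absorbing the $\frac{1}{\eps}$ prefactor into the microscopic dissipation after an $L^\infty_x L^2_v$ vs.\ $L^2_x L^2_v$ split), and the fact that derivatives falling on $B_\eps$ or $\Tdv$ can be rearranged by integration by parts.

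For the second estimate I would proceed by induction on $k=|\beta|$ from $1$ to $N$, choosing the coefficients $p_k$ inductively so that each level absorbs the error produced by the next. Applying $\dxa\dvb$ to the microscopic equations \eqref{GG1} and \eqref{GG2} (with the $P_1$, respectively $P_r$, projections commuted past the $v$-derivatives modulo lower-order commutators) and pairing with $\dxa\dvb(P_1 f_\eps, P_r g_\eps)$, the streaming term $\frac{1}{\eps}\dvb(v\cdot\Tdx)$ yields, via $[\dvb, v\cdot\Tdx] = \sum_{i}\beta_i \dv^{\beta-e_i}\dxi$, a loss of one $v$-derivative in exchange for one $x$-derivative; this loss is what forces the induction on decreasing $|\beta|$ and the small weights $p_k\ll p_{k-1}$. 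The coercivity of $L$ and $L_1$ on the microscopic subspace is preserved under $\dvb$ up to commutators $[\dvb, L]$ and $[\dvb, L_1]$ which, by the standard estimates on the kernel $k(v,v_*)$ in \eqref{L_1}, are controlled by a small constant times $\|\nu^{1/2}\dxa\dvb(P_1f_\eps,P_rg_\eps)\|^2$ plus the lower-level terms already handled. The source terms $\dxa\dvb[\frac{1}{\eps}P_1(v\cdot\Tdx P_0 f_\eps)]$ and $\dxa\dvb[\frac{1}{\eps}(v\cdot\Tdx\rho_\eps)\sqrt{M} - \frac{1}{\eps}v\sqrt{M}\cdot E_\eps]$ produce exactly the right-hand side contribution $\sum_{|\alpha|\le N-1}(\|\dxa\Tdx(f_\eps,g_\eps)\|^2 + \|\dxa E_\eps\|^2)$ after Cauchy-Schwarz with small parameter, since $P_0 f_\eps$ and $\rho_\eps \sqrt{M}$ are of the form (scalar $x$-function)$\times$(fixed smooth $v$-function) so $\dvb$ acts trivially on them.

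The main obstacle, as usual in the two-species VMB setting, will be the estimate of the nonlinear Lorentz terms $\dvb[\frac{1}{\eps}(v\times B_\eps)\cdot\Tdv f_\eps]$ and the analogous term with $g_\eps$. Here $\dvb$ applied to $(v\times B_\eps)\cdot\Tdv$ produces a commutator $[(v\times B_\eps)\cdot\Tdv,\dvb]$ which lowers $|\beta|$ by one and produces a factor of $B_\eps$ with no compensating smallness; the $\frac{1}{\eps}$ prefactor is absorbed by the fact that $\dvb \Tdv f_\eps$ can be decomposed into $\dvb\Tdv P_0 f_\eps$ (a finite-dimensional factor times $x$-derivatives of the fluid moments, controllable by $D_N$ with gain of $\eps$ from the macroscopic dissipation) plus $\dvb\Tdv P_1 f_\eps$ (absorbed into the microscopic dissipation $\frac{\mu_1}{\eps^2}\|\nu^{1/2}\dxa\dvb P_1 f_\eps\|^2$). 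Combining the three coercive inequalities with small-parameter Cauchy-Schwarz to absorb cross terms, and summing over $1\le k\le N$ with the cascading weights $p_k$, then yields the stated bound. The first inequality follows similarly but more directly, since no velocity derivatives appear and the commutator issues are absent.
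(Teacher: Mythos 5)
The paper does not supply a proof of this lemma; it is stated right after the remark ``Similar to \cite{Duan4,Li4,Strain}, we have the existence and the energy estimate\dots'', so the intended argument is the standard macro--micro energy method transplanted from those references to the present $\eps$-scaled setting. Your proposal follows exactly this route, and the main structural points are all correctly identified: the streaming terms drop after integration by parts in $x$; the rotation terms $\Tdx\times B_\eps$ and $\Tdx\times E_\eps$ cancel pairwise; the $-\frac1\eps v\sqrt{M}\cdot E_\eps$ coupling in the $g_\eps$-equation cancels the Amp\`ere-law source $-\frac1\eps\intr g_\eps v\sqrt{M}\,dv$; the dissipation comes from coercivity of $L$ on $N_0^\perp$ and $L_1$ on $N_1^\perp$ (you cite \eqref{L_4}; strictly one needs the $\nu$-weighted version $-(Lf,f)\gtrsim\|\nu^{1/2}P_1f\|^2$, which holds for hard spheres and is what produces the $\nu^{1/2}$ in the stated dissipation); the $[\dvb,v\cdot\Tdx]$ commutator trading a $v$-derivative for an $x$-derivative forces the descending cascade in $|\beta|$ with weights $p_k$; and the $\frac1\eps$-weighted Lorentz and collision nonlinearities are absorbed into $\sqrt{E_N}D_N$. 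In particular your observation that the Lorentz operator $(v\times B_\eps)\cdot\Tdv$ annihilates or remacroscopizes the fluid part is the right mechanism: $(v\times B_\eps)\cdot\Tdv P_dg_\eps=0$ and $(v\times B_\eps)\cdot\Tdv P_0f_\eps=-[(m_\eps\times B_\eps)\cdot v]\chi_0$, which pairs with $\dxa g_\eps$ to give $(u_\eps,v\chi_0)$-moments that lie in $P_rg_\eps$ and hence carry an extra $\eps$ from the microscopic dissipation, absorbing the $\frac1\eps$ prefactor. This matches what the cited references do, so no genuinely different route is taken and no essential step is missing.
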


\begin{lem}\label{energy1}  Let $N\ge 2$. For any $\eps\in (0,1)$, there exists a small constant $\delta_0>0$ and an energy functional $\mathcal{E}_{N}(U_{\eps})\sim E_{N}(U_{\eps})$ such that if the initial data $U_0=(f_{0},g_{0},E_{0},B_{0})$ satisfies  $E_N(U_0)\le \delta_0^2$, then the  system \eqref{VMB4}--\eqref{VMB2i} admits a unique global solution $U_{\eps}=(f_{\eps},g_{\eps},E_{\eps},B_{\eps})$ satisfying
\be
   \Dt \mathcal{E}_N(U_{\eps}(t))+  D_N(U_{\eps}(t)) \le 0. \label{G_1}
\ee

Moreover, there exists an energy functional $\mathcal{E}_{N,1}(U_{\eps})\sim E_{N,1}(U_{\eps})$  such that if the initial data $U_0$ satisfies $E_{N,1}(U_0)\le \delta_0^2$, then
\be
   \Dt \mathcal{E}_{N,1}(U_{\eps}(t))+  D_{N,1}(U_{\eps}(t)) \le 0. \label{G_1b}
\ee
\end{lem}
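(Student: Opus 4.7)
The plan is to establish the a priori estimate \eqref{G_1} by building $\mathcal{E}_N(U_\eps)$ as a suitable linear combination of the energy quantities from Lemmas \ref{macro-en} and \ref{micro-en}, then close the nonlinear argument via smallness of $\sqrt{E_N(U_\eps)}$ and use a standard local-existence plus continuation scheme to upgrade to a global solution.

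First, I would invoke a local existence result for the VMB system \eqref{VMB4}--\eqref{VMB2i} (e.g.\ of Strain--Guo type, as in \cite{Strain,Duan4}): for initial data with $E_N(U_0)\le \delta_0^2$ there is $T_*>0$ and a unique strong solution on $[0,T_*]$ with $\sup_{[0,T_*]}E_N(U_\eps(t))\le C\delta_0^2$. This reduces the lemma to a uniform-in-$\eps$ a priori estimate on any interval of existence.

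Next I would construct $\mathcal{E}_N$ by taking
\begin{equation*}
\mathcal{E}_N(U_\eps)=\frac12\sum_{|\alpha|\le N}\bigl(\|\dxa(f_\eps,g_\eps)\|_{L^2}^2+\|\dxa(E_\eps,B_\eps)\|_{L^2_x}^2\bigr)+\kappa_1 \mathcal{M}_1+\kappa_2 \mathcal{M}_2+\kappa_3 \mathcal{V},
\end{equation*}
where $\mathcal{M}_1,\mathcal{M}_2$ are the macroscopic correctors from Lemma \ref{macro-en} (the ones containing the cross terms $\eps\int \dxa R_i\,\dxa(\cdot)$ and $\eps\int \dxa m_\eps\,\dxa\Tdx n_\eps$, $\int\dxa E_\eps\cdot\dxa(\Tdx\times B_\eps)$, etc.) and $\mathcal{V}$ is the velocity-derivative functional $\sum_{k\ge 1}p_k\sum_{|\beta|=k,|\alpha|+|\beta|\le N}\|\dxa\dvb(P_1f_\eps,P_rg_\eps)\|_{L^2}^2$ from Lemma \ref{micro-en}. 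Choosing $0<\kappa_3\ll\kappa_2\ll\kappa_1\ll 1$ sufficiently small guarantees that the cross terms $\eps\int \dxa R_i\dxa(\cdot)$ (which are bounded by $\eps E_N\le E_N$) can be absorbed into the main $L^2$-energy, giving $\mathcal{E}_N(U_\eps)\sim E_N(U_\eps)$. Summing the four differential inequalities from Lemmas \ref{macro-en}--\ref{micro-en} with weights $1,\kappa_1,\kappa_2,\kappa_3$ and again choosing the $\kappa_i$ hierarchically small produces a right-hand side of the form
\begin{equation*}
-c D_N(U_\eps)+C\sqrt{E_N(U_\eps)}\,D_N(U_\eps)
\end{equation*}
for some $c>0$ independent of $\eps\in(0,1)$, where the crucial point is that the ``bad'' terms $\|\dxa\Tdx P_1f_\eps\|_{L^2}^2$ and $\frac{1}{\eps^2}\|\dxa P_rg_\eps\|_{L^2}^2$ appearing on the RHS of the macroscopic estimates are already dominated by $D_N$, since these are exactly the microscopic dissipation components.

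Taking $\delta_0$ so small that $C\sqrt{\delta_0^2}\le c/2$ and running a bootstrap/continuation argument on $T_*$ yields \eqref{G_1} globally, hence $E_N(U_\eps(t))\le C E_N(U_0)$ uniformly in $t\ge 0$ and $\eps\in(0,1)$, which in turn prevents blow-up and delivers the global strong solution. The weighted version \eqref{G_1b} follows the same recipe: one differentiates the weighted norm, uses $\nu(v)\ge\nu_0>0$ together with the collision estimates for $L,L_1$ in the weighted space $D^k_1$, and notes that all nonlinear terms $\Gamma(\cdot,\cdot)$, $v\times B\cdot\Tdv$, etc., admit estimates in the $\nu$-weighted norm identical in structure to the unweighted ones (this is standard for the hard-sphere collision kernel since $\Gamma$ maps $D^k_1\times D^k_1$ into itself with the $\nu$-weight).

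The main obstacle is the careful bookkeeping of the hierarchy of small constants $\kappa_i$: each macroscopic inequality carries an $\eps$-weighted corrector in $\mathcal{E}_N$ and an RHS term involving higher-order norms that must be controlled by the microscopic dissipation or by $\sqrt{E_N}\cdot D_N$. In particular, the term $\frac{C}{\eps^2}\sum_{|\alpha|\le N}\|\dxa P_rg_\eps\|_{L^2}^2$ in the second estimate of Lemma \ref{macro-en} is singular in $\eps$ but is exactly matched by the microscopic dissipation $\frac{\mu_1}{\eps^2}\|\nu^{1/2}\dxa P_rg_\eps\|_{L^2}^2$, so the weight $\kappa_2$ for the electromagnetic macroscopic functional must be chosen small relative to $\mu_1$, but large relative to $\kappa_3$ so that the velocity-derivative contributions do not spoil the top-order $L^2$ bound. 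Once this ordering is fixed and $\delta_0$ is chosen accordingly, \eqref{G_1} and \eqref{G_1b} follow by a routine argument.
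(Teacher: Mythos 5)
Your proposal is correct and takes essentially the same route as the paper: the paper gives no explicit proof of Lemma \ref{energy1}, instead stating Lemmas \ref{macro-en} and \ref{micro-en} (appealing to \cite{Duan4,Li4,Strain}) and leaving the assembly implicit, and the hierarchical linear combination you describe—weighting the macroscopic correctors and the velocity-derivative functional by small constants, absorbing the $\frac{C}{\eps^2}\|\dxa P_r g_{\eps}\|^2$ and $\|\dxa\Tdx P_1 f_{\eps}\|^2$ source terms into the $\frac{\mu_1}{\eps^2}\|\nu^{1/2}\dxa(P_1 f_{\eps},P_r g_{\eps})\|^2$ dissipation, and closing via smallness of $\sqrt{E_N}$—is exactly that standard scheme, with the $\nu$-weighted version \eqref{G_1b} obtained in parallel. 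The only cosmetic point is that you impose $\kappa_3\ll\kappa_2\ll\kappa_1\ll 1$, which is slightly more than needed (one really only needs $\kappa_3\ll\min(\kappa_1,\kappa_2)$ and $\kappa_1+\kappa_2$ small relative to $\mu_1$), but this does not affect the conclusion.
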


With Lemmas \ref{time}, \ref{time6a} and \ref{energy1}, we have the optimal  time decay rate of $U_{\eps}=(f_{\eps},g_{\eps},E_{\eps},B_{\eps})$ stated in the following lemma.

\begin{lem}\label{time7} Let $N\ge 3$. For any $\eps\in (0,1)$,  there exists a small constant $\delta_0>0$ such that if the initial data $U_0=(f_{0},g_{0},E_{0},B_{0})$ satisfies that $E_{N+2,1}(U_0)+\|U_0\|^2_{L^1 }\le \delta_0^2$, then the solution
   $U_{\eps}(t)=(f_{\eps},g_{\eps},E_{\eps},B_{\eps})$ to the system \eqref{VMB4}--\eqref{VMB2i} has the following time-decay rate estimates:
\be
 \|(f_{\eps},g_{\eps})\|_{D^N_1}+\| (E_{\eps},B_{\eps})\|_{H^N_x}\le C\delta_0 (1+t)^{-\frac34}. \label{G_11}
\ee
In particular, we have
\bma
\|P_1f_{\eps}(t)\|_{H^{N-3}}\le C\delta_0\(\eps(1+t)^{-\frac54}+e^{-\frac{bt}{4\eps^2}}\),\label{G_5}\\
\|P_rg_{\eps}(t)\|_{H^{N-3}}\le C\delta_0\(\eps(1+t)^{-\frac34} +e^{-\frac{bt}{4\eps^2}}\),\label{G_5a}
\ema
where $b>0$ is a constant.
\end{lem}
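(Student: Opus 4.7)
The plan is to close a time-weighted bootstrap using Duhamel's formulas \eqref{fe}--\eqref{ue} together with the linear semigroup decay estimates of Lemmas~\ref{time} and \ref{time6a} and the uniform energy bound of Lemma~\ref{energy1}. First, since $E_{N+2,1}(U_0) \le \delta_0^2$, Lemma~\ref{energy1} yields global existence together with the uniform-in-time estimate $\sup_{t\ge 0} E_{N+2,1}(U_\eps(t)) \le C\delta_0^2$ and the integral bound $\int_0^\infty D_{N+2,1}(U_\eps(s))\,ds \le C\delta_0^2$. Then define the time-weighted quantity
\[
M(t) := \sup_{0\le s\le t}(1+s)^{3/4}\bigl(\|(f_\eps,g_\eps)(s)\|_{D^N_1}+\|(E_\eps,B_\eps)(s)\|_{H^N_x}\bigr),
\]
and aim to show $M(t) \le C\delta_0 + C M(t)^2$; smallness of $\delta_0$ then gives $M(t)\le C\delta_0$, which is \eqref{G_11}.

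For the linear parts of \eqref{fe}--\eqref{ue}, the bounds $\|e^{(t/\eps^2)\BB_\eps}f_0\|_{H^N}\le C\delta_0(1+t)^{-3/4}$ and $\|e^{(t/\eps^2)\AA_\eps}V_0\|_{H^N}\le C\delta_0(1+t)^{-3/4}$ come directly from Lemmas~\ref{time6a} and \ref{time} with $\alpha=0$ because $\|U_0\|_{L^1}+\|U_0\|_{H^{N+1}}\le C\delta_0$. For the derivative/velocity-weighted $D^N_1$ piece, one combines the $H^N_x$ decay with the strong collision frequency dissipation via the standard macro-micro splitting: apply $P_0,P_d$ and $P_1,P_r$ separately so that the velocity-weighted and $v$-derivative parts of the microscopic components are absorbed by the $\nu^{1/2}/\eps$ dissipation in $D_{N+2,1}$ integrated in $s$, while the hydrodynamic parts are controlled in $H^N_x$.

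For the nonlinear integrals, one uses bilinear estimates of the schematic form $\|G_i(s)\|_{L^2_x(L^2_v)}+\|H_j(s)\|_{L^2_x}\le CE_{N+2,1}(U_\eps(s))^{1/2}\cdot\bigl(\|U_\eps(s)\|_{D^N_1}+\|(E_\eps,B_\eps)\|_{H^N_x}\bigr)$, together with Sobolev embedding $H^2(\R^3)\hookrightarrow L^\infty$ which works since $N\ge 3$. This gives $\|G_i(s)\|\lesssim \delta_0 M(s)(1+s)^{-3/4}$, and then the crucial time-decay convolution
\[
\int_0^t (1+t-s)^{-3/4}\cdot \delta_0 M(s)(1+s)^{-3/4}\,ds \le C\delta_0 M(t)(1+t)^{-3/4}
\]
closes the bootstrap. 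The main obstacle is the singular $\eps^{-1}$ appearing in front of $G_2$ and $G_4$. This is overcome by observing that $G_2\in\mathrm{Ran}(P_1)$ (since $\Gamma(f,f)\perp N_0$ and $P_1$ is applied to the Maxwell term), and likewise the relevant component of $G_4$ lies in $\mathrm{Ran}(P_r)$; hence we may invoke the \emph{microscopic} parts of Lemmas~\ref{time6a} and \ref{time} (estimates \eqref{time5}--\eqref{time6} and \eqref{time3a}--\eqref{time4a}), which produce an extra factor $\eps$ (plus a harmless $e^{-bt/\eps^2}$) that cancels the $\eps^{-1}$ prefactor. The $L^1_x$ norms of these nonlinear terms are likewise bounded by $\delta_0 M(s)(1+s)^{-3/4}$ via $\|f g\|_{L^1}\le \|f\|_{L^2}\|g\|_{L^2}$.

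For the microscopic refinements \eqref{G_5}--\eqref{G_5a}, the linear contribution comes from applying $P_1$ (resp.\ $P_3$) to Lemmas~\ref{time6a}, \ref{time}, which gives the rate $\eps(1+t)^{-5/4}+e^{-bt/\eps^2}$ (resp.\ the analogue for $P_r g_\eps$). For the nonlinear contributions, since $P_1\BB_\eps$, $P_3\AA_\eps$ on microscopic data gain a factor $\eps$ via \eqref{time2}, \eqref{time2a}, and since the nonlinear sources themselves are microscopic, the Duhamel convolution yields one extra $\eps$; the competing decay between the polynomial piece $\eps(1+t-s)^{-5/4}$ and the exponential piece $e^{-b(t-s)/\eps^2}$ is handled by splitting $[0,t]=[0,t/2]\cup[t/2,t]$ so that on each half one of the two factors dominates, giving the stated rate after using $M(t)\le C\delta_0$ already established.
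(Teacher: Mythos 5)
Your plan correctly identifies the two main building blocks (the Duhamel formulas with the linear semigroup decay from Lemmas~\ref{time}, \ref{time6a}, and the microscopic $\eps$-gain that cancels the $\eps^{-1}$ prefactor on $G_2,G_4$), but the proposed bootstrap quantity $M(t)$ — based on the full $D^N_1$ norm fed directly into Duhamel — cannot close, because the linear estimates you invoke lose derivatives. Specifically, \eqref{time2}, \eqref{time5}, \eqref{time6} bound $\|\dxa P_1 e^{(t/\eps^2)\BB_\eps}f_0\|_{L^2}$ by $\|\dxa f_0\|_{H^1}$ or $\|\dxa f_0\|_{H^2}$, i.e.\ one or two extra $x$-derivatives of the source; similarly \eqref{time2a}, \eqref{time3a}, \eqref{time4a} for $\AA_\eps$. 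If you try to run Duhamel at the top level $|\alpha|=N$ you need $\|G_i(s)\|_{H^{N+1}}$ or $\|G_i(s)\|_{H^{N+2}}$, yet the bilinear estimate you write only controls $\|G_i(s)\|_{H^{N-1}}\lesssim \delta_0 M(s)(1+s)^{-3/4}$ (quadratic structure loses roughly one derivative). So the right-hand side of your claimed inequality $M(t)\le C\delta_0+CM(t)^2$ is not actually available from $M(t)$ alone, and the self-improvement fails at the highest order.

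The paper avoids this by \emph{not} running Duhamel at level $N$. It defines a bootstrap quantity $Q_\eps(t)$ that tracks only $(1+s)^{3/4}E_{N,1}^{1/2}$ together with the scaled microscopic norm $\|P_rg_\eps\|_{H^{N-3}}$; Duhamel is invoked only for $\|P_0f_\eps\|_{L^2}$, $\|V_\eps\|_{L^2}$ (no derivative loss) and for $\|P_1f_\eps\|_{H^{N-3}}$, $\|P_rg_\eps\|_{H^{N-3}}$ (where the source norms $H^{N-1}$ or $H^{N-2}$ are still within reach). The transfer from the $L^2$ decay to the full $E_{N,1}$ decay is accomplished by a time-weighted energy iteration built on Lemma~\ref{energy1}: multiplying \eqref{G_1b} by $(1+t)^l$ and iterating over $n$, $n+1$, $n+2$, using the structural inequality $E_{n,1}\lesssim D_{n+1,1}+\|P_0f_\eps\|_{L^2}^2+\|B_\eps\|_{L^2_x}^2$, the paper trades one time power per derivative and lands at the $(1+t)^{-3/2}$ rate for $E_{N,1}$. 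This weighted-energy interpolation is the step your proposal alludes to vaguely ("absorbed by the $\nu^{1/2}/\eps$ dissipation in $D_{N+2,1}$") but never develops into a closed argument; it is precisely where the requirement $E_{N+2,1}(U_0)\le\delta_0^2$ (not just $E_N$) enters. One further point: $G_1$ itself contains the singular piece $\frac1\eps P_0(v\times B_\eps)\cdot\Tdv P_rg_\eps$, which is why the bootstrap quantity must carry $\|P_rg_\eps\|_{H^{N-3}}$ with the weight $(\eps(1+t)^{-3/4}+e^{-bt/(4\eps^2)})^{-1}$; your $M(t)$ does not track this, so the bound \eqref{h1} for $G_1$ would not follow from $M(t)$ either.
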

\begin{proof}
Define
\bmas
Q_{\eps}(t)= \sup_{0\le s\le t}\Big\{(1+s)^{\frac34}E_{N,1}(U_{\eps}(s))^{\frac12}
 +\|P_rg_{\eps}(s)\|_{H^{N-3}} \(\eps(1+t)^{-\frac34} +e^{-\frac{bt}{4\eps^2}}\)^{-1} \Big\}.
\emas
We claim that
  \be
 Q_{\eps}(t)\le C\delta_0.  \label{assume}
 \ee
It is straightforward  to check that the estimates \eqref{G_11}  and \eqref{G_5a} follow  from \eqref{assume}.

Since $P_0G_{2} =0$, it follows from Lemma \ref{time6a} and \eqref{ue}  that
 \bma
 \| P_0f_{\eps} (t)\|_{L^2}
 &\le
 C(1+t)^{-\frac34}\(\| f_0\|_{L^2 }+\|f_0\|_{L^1 }\)
 \nnm\\
&\quad+C \intt (1+t-s)^{-\frac34}\(\| G_{1}(s)\|_{L^2 }
  +\| G_{1}(s)\|_{L^1 }\)ds
  \nnm\\
  &\quad+C \intt \((1+t-s)^{-\frac54}+\frac1{\eps}e^{-\frac{b(t-s)}{\eps^2}}\)\nnm\\
&\qquad\qquad\times\(\| G_{2}(s)\|_{H^1 }
  +\| G_{2}(s)\|_{L^1 }\)ds
  \nnm\\
&\le
 C\delta_0(1+t)^{-\frac34}+CQ_{\eps}(t)^2 (1+t)^{-\frac34},   \label{density_1}
 \ema
 where we have used
\bma
\| G_{1}(s)\|_{H^{N-1} }+\| G_{1}(s)\|_{L^{1} }&\le CQ_{\eps}(t)^2 \[(1+s)^{-\frac32}+\frac{1}{\eps}(1+s)^{-\frac34}e^{-\frac{bs}{4\eps^2}}\]  ,  \label{h1}\\
\| G_{2}(s)\|_{H^{N-1} }+\| G_{2}(s)\|_{L^{1} }&\le CQ_{\eps}(t)^2 (1+s)^{-\frac32}. \label{h2}
\ema
Let $V_{\eps}=(g_{\eps},E_{\eps},B_{\eps})$ and $V_{0}=(g_{0},E_{0},B_{0})$. Since  $P_2G_{31} =P_2G_{41} =0$,
it follows from Lemma \ref{time} and \eqref{fe}  that
  \bma
\| V_{\eps} (t)\|_{L^2 }&\le
C(1+t)^{-\frac34}\(\| V_0\|_{Z^{2} }+\|V_0\|_{L^{1} }+\|\Tdx^2 V_0\|_{Z^{2} }\) \nnm\\
&\quad+C\sum^4_{k=3} \int^{t}_0 \( (1+t-s)^{-\frac34}+\frac1{\eps}e^{-\frac{b(t-s)}{\eps^2}}\)\nnm\\
&\qquad\times\(\| G_k(s)\|_{H^{1} }+\|G_k(s)\|_{L^{1} }+\|\Tdx^2 G_k(s)\|_{Z^{2} }\)ds\nnm\\
&\le
  C\delta_0(1+t)^{-\frac34}+CQ_{\eps}(t)^2 (1+t)^{-\frac34},   \label{density_1a}
 \ema
 where we have used
\be
\| G_k(s)\|_{H^{N-1} }+\| G_k(s)\|_{L^{1} } \le CQ_{\eps}(t)^2 (1+s)^{-\frac32} , \quad k=3,4.\label{h3}
\ee
 Let $1<l< 2$ and $n\ge 2$. Multiplying \eqref{G_1b} by $(1 + t)^l$ and
then taking time integration over $[0, t]$ gives
\bma
&(1+t)^lE_{n,1}(U_{\eps})(t)+ \intt (1+s)^lD_{n,1}(U_{\eps})(s)ds
\nnm\\
\le &CE_{n,1}(U_0)+Cl\intt (1+s)^{l-1}E_{n,1}(U_{\eps})(s)ds\nnm\\
\le &CE_{n,1}(U_0)+Cl\intt (1+s)^{l-1}D_{n+1,1}(U_{\eps})(s)ds\nnm\\
&+Cl\intt (1+s)^{l-1}(\|P_0f_{\eps}(s)\|^2_{L^2}+\|B_{\eps}(s)\|^2_{L^2_x})ds, \label{l5}
\ema
where we have used
$$E_{n,1}(U_{\eps})\le CD_{n+1,1}(U_{\eps})+C(\|P_0f_{\eps} \|^2_{L^2}+\|B_{\eps} \|^2_{L^2_x}).$$
Similarly, we can obtain the estimate for $n+1$ as follows.
 \bma
&\quad (1+t)^{l-1}E_{n+1,1}(U_{\eps})(t)+ \intt (1+s)^{l-1}D_{n+1,1}(U_{\eps})(s)ds
\nnm\\
&\le CE_{n+1,1}(U_0)+C(l-1)\intt (1+s)^{l-2}(\|P_0f_{\eps}(s)\|^2_{L^2}+\|B_{\eps}(s)\|^2_{L^2_x})ds\nnm\\
&\quad+C(l-1)\intt (1+s)^{l-2}D_{n+2,1}(U_{\eps}) ds .
\ema
And it follows from \eqref{G_1b} that
 \be
 E_{n+2,1}(U_{\eps})(t)+ \intt D_{n+2,1}(U_{\eps})(s)ds \le  CE_{n+2,1}(U_{0}). \label{l6}
\ee

By \eqref{l5}--\eqref{l6}, we obtain
 \bmas
&\quad (1+t)^lE_{n,1}(U_{\eps})(t)+ \intt (1+s)^lD_{n,1}(U_{\eps})(s)ds
\nnm\\
&\le CE_{n+2,1}(U_0)+C\intt (1+s)^{l-1}(\|P_0f_{\eps}(s)\|^2_{L^2}+\|B_{\eps}(s)\|^2_{L^2_x})ds
\emas
for $1<l<2$ and $n\ge 2$. Taking $l=3/2+\epsilon$ for a fixed constant $0<\epsilon<1/2$  yields
\bmas
&(1+t)^{\frac32+\epsilon}E_{n,1}(U_{\eps})(t)+ \intt (1+s)^{\frac32+\epsilon}D_{n,1}(U_{\eps})(s)ds
\nnm\\
\le& CE_{n+2,1}(U_0)+C(\delta_0+Q_{\eps}^2(t))^2 \intt (1+s)^{\frac12+\epsilon}(1+s)^{-\frac32}ds\nnm\\
\le& CE_{n+2,1}(U_0)+C(1+t)^{\epsilon}(\delta_0+Q_{\eps}^2(t))^2.
\emas  This gives
\be
E_{n,1}(U_{\eps})(t)\le C(1+t)^{-\frac32}(\delta_0+Q_{\eps}^2(t))^2.\label{J_6z}
\ee

Let  $0\le k\le N-3$. It follows from Lemma \ref{time6a} and \eqref{h1}  that
 \bma
\| P_1f_{\eps} (t)\|_{H^k }&\le
C\(\eps(1+t)^{-\frac54 }+e^{-\frac{bt}{\eps^2}}\)\(\| f_0\|_{H^{k+1}}+\|f_0\|_{L^1 }\)\nnm\\
&\quad+C \intt \(\eps(1+t-s)^{-\frac54 }+ e^{-\frac{b(t-s)}{\eps^2}}\)\nnm\\
&\qquad\times\(\| G_{1}(s)\|_{H^{k+1} }+\|G_{1}(s)\|_{L^1 }\)ds\nnm\\
&\quad+C \intt \(\eps(1+t-s)^{-\frac74 }+\frac1{\eps}e^{-\frac{b(t-s)}{\eps^2}}\)\nnm\\
&\qquad\times\(\| G_{2}(s)\|_{H^{k+2} }+\| G_{2}(s)\|_{L^1 }\)ds\nnm\\
&\le
C(\delta_0 +Q_{\eps}(t)^2)\(\eps(1+t)^{-\frac54}+e^{-\frac{bt}{4\eps^2}}\). \label{micro_1}
 \ema
 And by  Lemma \ref{time} and \eqref{h3}, we have
 \bma
\| P_rg_{\eps} (t)\|_{H^k }&\le
C\(\eps(1+t)^{-\frac34}+e^{-\frac{bt}{\eps^2}}\)\(\| V_0\|_{H^{k+1} }+\|V_0\|_{L^{1} }+\|\Tdx^2 V_0\|_{H^{k} }\)\nnm\\
&\quad+C\sum^4_{k=3} \int^{t}_0 \( \eps(1+t-s)^{-\frac34}+\frac1{\eps}e^{-\frac{b(t-s)}{\eps^2}}\)\nnm\\
&\qquad\times\(\| G_k(s)\|_{H^{k+2} }+\|G_k(s)\|_{L^{1} }+ \|\Tdx^2 G_k(s)\|_{H^{k}}\)ds\nnm\\
&\le
C(\delta_0 +Q_{\eps}(t)^2)\(\eps(1+t)^{-\frac34}+e^{-\frac{bt}{4\eps^2}}\). \label{micro_1a}
 \ema
 Combining \eqref{l6} and \eqref{micro_1a} gives
$$
Q_{\eps}(t)\le C\delta_0+CQ_{\eps}(t)^2 ,
$$
which shows \eqref{assume} provided $\delta_0>0$  being sufficiently small. Finally,  \eqref{G_5} follows  from \eqref{micro_1}.
The proof of the lemma is then completed.
\end{proof}

By \eqref{fe1}, \eqref{ue1} and Lemma \ref{timev}, we will prove the following lemma.

\begin{lem}\label{energy3} Let $N\ge 2$. There exists a small constant $\delta_0>0$ such that if $\|U_{0}\|_{H^N}+\|U_0\|_{L^1}\le \delta_0$, then
the NSMF system \eqref{NSM_2}--\eqref{NSP_5i} admits a unique global solution $\tilde{U}(t,x)=(n,m,q,\rho,E,B)(t,x)$ satisfying
\be
  \|\tilde{U} (t)\|_{H^N } \le C\delta_0 (1+t)^{-\frac34}. \label{time8}
\ee
\end{lem}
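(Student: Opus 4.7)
\medskip
\noindent\textbf{Proof plan for Lemma \ref{energy3}.}
My plan is to combine a standard local existence result for the incompressible NSMF system \eqref{NSM_2}--\eqref{NSP_5i} with a bootstrap argument based on the mild formulation \eqref{fe1}--\eqref{ue1} and the linear decay estimates of Lemmas \ref{timev} and \ref{timey}. First I would establish a local solution by the usual Friedrichs mollification / energy method for hyperbolic--parabolic systems (treating Ohm's law and the incompressibility constraint); the $H^N$ energy inequality
$$\frac{d}{dt}\|\tilde U\|_{H^N}^2+c\bigl(\|\Tdx m\|_{H^N}^2+\|\Tdx q\|_{H^N}^2+\|\Tdx\rho\|_{H^N}^2+\|(E,\rho)\|_{H^N}^2\bigr)\le C\|\tilde U\|_{H^N}\|\tilde U\|_{H^N}^2$$
then gives a uniform $H^N$ bound, hence a global solution, as long as $\|\tilde U(t)\|_{H^N}$ stays small; smallness will be inherited from the decay bootstrap below.

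Next I would run the decay estimate by introducing
$$M(t)=\sup_{0\le s\le t}(1+s)^{3/4}\|\tilde U(s)\|_{H^N},$$
and bounding each term of \eqref{fe1}--\eqref{ue1} in $H^N$. For the free evolution, Lemma \ref{timev} with $q=1$ gives $\|Y_1(t)P_0f_0\|_{H^N}\le C\delta_0(1+t)^{-3/4}$, and Lemma \ref{timey} with $q=1$ gives $\|Y_2(t)P_2V_0\|_{H^N}\le C\delta_0(1+t)^{-3/4}$, using $\|U_0\|_{L^1}+\|U_0\|_{H^N}\le\delta_0$. For the Duhamel integrals, since all nonlinearities $H_1,H_2,H_3$ are quadratic in $(n,m,q,\rho,E,B)$, the Sobolev embedding $H^N\hookrightarrow L^\infty$ (valid for $N\ge 2$) yields
$$\|H_j(s)\|_{L^1}+\|H_j(s)\|_{H^{N-1}}\le C\|\tilde U(s)\|_{H^N}^2\le CM(t)^2(1+s)^{-3/2},\qquad j=1,2,3,$$
and hence using Lemma \ref{timev}/\ref{timey} (allowing one derivative loss from $\Tdx\cdot H_2$ or $\Tdx\cdot H_4$)
$$\Big\|\int_0^tY_1(t-s)(H_1+\Tdx\cdot H_2)(s)\,ds\Big\|_{H^N}\le CM(t)^2\int_0^t(1+t-s)^{-3/4}(1+s)^{-3/2}ds\le CM(t)^2(1+t)^{-3/4},$$
and similarly for the $Y_2$ integral. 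Combining these estimates gives $M(t)\le C\delta_0+CM(t)^2$, so $M(t)\le 2C\delta_0$ by continuity whenever $\delta_0$ is sufficiently small, which is exactly \eqref{time8}.

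The main obstacle I expect is handling the pieces of $H_3=(-\Tdx\cdot H_4\chi_0,H_4,0)$ carefully, because the $\Tdx\cdot H_4$ feeds into $\rho$ (a Klein--Gordon/damped-diffusion channel with extra dissipation) while $H_4$ itself drives $E$ through a Maxwell-with-damping channel; naively estimating the integral $\int_0^tY_2(t-s)H_3(s)ds$ in $H^N$ costs one derivative, which forces the nonlinear bound in $H^{N-1}$ rather than $L^1$. The remedy is to integrate by parts in $x$ so the extra derivative lands on $Y_2$, and to invoke the refined electric-field estimate \eqref{time5b} of Lemma \ref{timey} (which gives the extra $(1+t)^{-1/2}$ from the factor $\xi$ in $\Tdx\cdot E$) whenever an $E$-type source appears; this restores the $(1+t)^{-3/4}$ time integrability. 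The pressure $p=\Delta_x^{-1}\divx H_1$ requires the Helmholtz projection $P_\bot$ of Lemma \ref{sem}, which is built into $Y_1$; the incompressibility constraint and the Gauss law $\Tdx\cdot E=\rho$ are preserved by the semigroups $Y_1,Y_2$ by construction, so no extra work is needed to close the argument.
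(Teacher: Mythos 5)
Your proposal follows essentially the same route as the paper's proof: the mild formulation \eqref{fe1}--\eqref{ue1}, the weighted sup $M(t)$ (the paper calls it $Q(t)$), the linear decay estimates of Lemmas \ref{timev} and \ref{timey} with $q=1$, and closure via $M(t)\le C\delta_0+CM(t)^2$; the extra derivative from $\Tdx\cdot H_2$ and $\Tdx\cdot H_4$ is absorbed exactly as you describe, using the $t^{-k/2}e^{-ct}$ term in \eqref{time1b} (equivalently the derivative indices $\alpha',\alpha''$ in Lemmas \ref{timev}/\ref{timey}). The only cosmetic difference is that the paper dispatches existence by contraction mapping on the same mild form rather than by the hyperbolic-parabolic energy method you sketch, but that does not affect the decay bootstrap, which is the real content.
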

\begin{proof}
Define
$$
Q(t)=\sup_{0\le s\le t}\left\{(1+s)^{3/4}\(\|u_1(s)\|_{H^N}+\|V_1(s)\|_{H^N }\)\right\},
$$ where $u_1$ and $V_1$ are defined by \eqref{fe1} and \eqref{ue1} respectively.

Then, it follows from Lemmas \ref{timey} and \ref{timev} that
 \bma
 \|  u_1 (t)\|_{H^N}&\le  C(1+t)^{-\frac34}(\| P_0f_0\|_{H^N}+\|P_0f_0\|_{L^1})
 \nnm\\
&\quad+C\sum^2_{k=1}\intt \((1+t-s)^{-\frac34}+(t-s)^{-\frac12}e^{-c(t-s)}\)\nnm\\
&\qquad\qquad\times(\| H_k(s)\|_{H^N} +\| H_k(s)\|_{ L^1 })ds
  \nnm\\
&\le  C\delta_0(1+t)^{-\frac34}+CQ(t)^2 (1+t)^{-\frac34},\label{density_1}
\\
 \| V_1 (t)\|_{H^N}&\le  C(1+t)^{-\frac34}(\| P_2V_0\|_{H^N}+\|P_2V_0\|_{L^1})
 \nnm\\
&\quad+C \intt \((1+t-s)^{-\frac34}+(t-s)^{-\frac12}e^{-c(t-s)}\)\nnm\\
&\qquad\qquad\times(\| H_4(s)\|_{H^N} +\| H_4(s)\|_{ L^1 })ds
  \nnm\\
&\le  C\delta_0(1+t)^{-\frac34}+CQ(t)^2 (1+t)^{-\frac34}.\label{density_2}
 \ema
By \eqref{density_1} and \eqref{density_2}, we can obtain
$$
Q(t)\le C\delta_0 ,
$$
 provided $\delta_0>0$  being sufficiently small. This proves \eqref{time8}.
The existence of the solution can be proved by the contraction mapping theorem with
the details omitted.
 Then the  proof of the lemma is completed.
\end{proof}

Note that Theorem \ref{thm1.1}  follows directly from Lemmas \ref{time7} and \ref{energy3}.

\subsection{Optimal convergence rate}

In this section, we will complete the proof of Theorem \ref{thm1.2} about the optimal
convergence rate of the diffusion limit.

\begin{lem}[\cite{Li1}]\label{gamma1}For any $i,j=1,2,3,$ it holds that
\bma
\Gamma_*(v_i\chi_0,v_j\chi_0) &=-\frac12LP_1(v_iv_j\chi_0), \\
\Gamma_*(v_i\chi_0,|v|^2\chi_0) &=-\frac12LP_1(v_i|v|^2\chi_0), \\
\Gamma_*(|v|^2\chi_0,|v|^2\chi_0)&=-\frac12LP_1(|v|^4\chi_0),
\ema
where
$$\Gamma_*(f,g) =\frac12[\Gamma(f,g)+\Gamma(g,f)].$$
\end{lem}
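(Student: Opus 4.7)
The plan is to reduce the three identities to a single polarization computation at the level of the bilinear operator $Q$ on Maxwellian-weighted polynomials, and then exploit the conservation laws \eqref{conser} to produce the cancellation. I will first rewrite the objects on both sides in a common form. For any polynomials $a(v), b(v)$,
\begin{equation*}
\sqrt{M}\,\Gamma_*(a\sqrt{M}, b\sqrt{M}) = \tfrac{1}{2}\bigl[Q(Ma, Mb) + Q(Mb, Ma)\bigr], \qquad \sqrt{M}\,L(ab\sqrt{M}) = Q(M, M\,ab) + Q(M\,ab, M),
\end{equation*}
so the three identities in the lemma all reduce to showing
\begin{equation}\label{pl}
Q(Ma, Mb) + Q(Mb, Ma) + Q(M, M\,ab) + Q(M\,ab, M) = 0,
\end{equation}
for the specific pairs $(a,b) = (v_i, v_j)$, $(v_i, |v|^2)$, $(|v|^2, |v|^2)$.

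Next I will verify \eqref{pl} using only the definition \eqref{binay_collision} of $Q$ and the Maxwellian identity $M(v')M(v_*') = M(v)M(v_*)$, which is an immediate consequence of \eqref{conser}. Substituting $F = Ma, G = Mb$ (and the three analogous choices), each of the four terms on the left of \eqref{pl} acquires a common factor $M(v)M(v_*)|(v-v_*)\cdot\omega|$, and the remaining integrand becomes
\begin{equation*}
\bigl[a(v')b(v_*') + b(v')a(v_*') + a(v_*')b(v_*') + a(v')b(v')\bigr] - \bigl[a(v)b(v_*) + b(v)a(v_*) + a(v_*)b(v_*) + a(v)b(v)\bigr],
\end{equation*}
which factors as $(a(v')+a(v_*'))(b(v')+b(v_*')) - (a(v)+a(v_*))(b(v)+b(v_*))$. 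For the three chosen pairs the quantities $a(v)+a(v_*)$ and $b(v)+b(v_*)$ are precisely the collision invariants $v_i+v_{*i}$ and $|v|^2+|v_*|^2$, so by \eqref{conser} the bracket vanishes identically on the collision sphere, and \eqref{pl} follows.

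Finally, since $L$ vanishes on its null space $N_0$, we have $LP_0 = 0$ and hence $L = LP_1$; therefore $L(v_iv_j\chi_0) = LP_1(v_iv_j\chi_0)$, and similarly for the other two right-hand sides. Combining with the reduction to \eqref{pl} and dividing by $\sqrt{M}$ gives all three identities simultaneously. There is no real obstacle here: the only delicate point is to keep the bookkeeping straight when expanding the four quadratic terms, and that is a direct consequence of the polarization identity $(x+y)(z+w) = xz+xw+yz+yw$ together with the conservation laws. The scheme works uniformly for any pair $(a,b)$ built from the collision invariants $1, v, |v|^2$, which is exactly the generality needed by Lemma \ref{gamma1}.
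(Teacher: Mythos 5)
Your proof is correct. The paper itself gives no proof of Lemma~\ref{gamma1}, citing \cite{Li1} instead, so there is nothing to compare it against directly; but your argument is essentially the same idea the paper uses for the companion Lemma~\ref{gamma2} (exploit the Maxwellian identity $M(v')M(v_*')=M(v)M(v_*)$ and the collision invariance relations \eqref{conser}), packaged more symmetrically. The reduction to the single polarization identity
\[
Q(Ma,Mb)+Q(Mb,Ma)+Q(M,Mab)+Q(Mab,M)=0
\]
is clean: after pulling out the common factor $M(v)M(v_*)|(v-v_*)\cdot\omega|$, the bracket indeed factors as
\[
\bigl(a(v')+a(v_*')\bigr)\bigl(b(v')+b(v_*')\bigr)-\bigl(a(v)+a(v_*)\bigr)\bigl(b(v)+b(v_*)\bigr),
\]
which vanishes pointwise on the collision sphere because $a,b\in\{v_i,|v|^2\}$ are collision invariants. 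The final observation that $L=LP_1$ (since $LP_0=0$ on $N_0$) correctly recovers the $P_1$ in the stated right-hand sides. This is a legitimate, self-contained proof, and its generality (any pair $(a,b)$ built from $1,v,|v|^2$) is a modest bonus over treating the three cases separately.
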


\begin{lem}\label{gamma2}
For any $j=1,2,3,$ it holds that
\bma
\Gamma(\chi_0,v_j\chi_0) &=-L_1(v_j\chi_0), \\
\Gamma(\chi_0,|v|^2\chi_0) &=-L_1(|v|^2\chi_0).
\ema
\end{lem}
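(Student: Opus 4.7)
The plan is to exploit the algebraic relation between $\Gamma(\chi_0,\cdot)$, $L$, and $L_1$ that follows directly from the definitions in \eqref{gamma}, together with the identification of the nullspace $N_0$ of $L$ given by \eqref{basis}.

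First, I would observe that since $\sqrt{M}\chi_0 = M$, the definitions yield
\begin{equation*}
\Gamma(\chi_0,g) = \frac{1}{\sqrt{M}} Q(\sqrt{M}\chi_0,\sqrt{M}g) = \frac{1}{\sqrt{M}}Q(M,\sqrt{M}g),
\end{equation*}
while from the decomposition of $L$,
\begin{equation*}
Lg - L_1 g = \frac{1}{\sqrt{M}}\bigl[Q(M,\sqrt{M}g) + Q(\sqrt{M}g,M)\bigr] - \frac{1}{\sqrt{M}}Q(\sqrt{M}g,M) = \frac{1}{\sqrt{M}}Q(M,\sqrt{M}g).
\end{equation*}
Thus $\Gamma(\chi_0,g) = (L-L_1)g$ for every $g \in D(L)$.

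Next, I would apply this identity to the two specific choices of $g$. For $g = v_j\chi_0$, note that $v_j\chi_0 = v_j\sqrt{M} = \chi_j \in N_0$ by \eqref{basis}, so $L(v_j\chi_0) = 0$ and hence $\Gamma(\chi_0,v_j\chi_0) = -L_1(v_j\chi_0)$. For $g = |v|^2\chi_0$, the identity
\begin{equation*}
|v|^2\chi_0 = |v|^2\sqrt{M} = \sqrt{6}\,\chi_4 + 3\chi_0
\end{equation*}
shows that $|v|^2\chi_0$ is a linear combination of basis elements of $N_0$, so again $L(|v|^2\chi_0) = 0$, giving $\Gamma(\chi_0,|v|^2\chi_0) = -L_1(|v|^2\chi_0)$.

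There is no real obstacle here; the only point requiring care is verifying the algebraic identity $\Gamma(\chi_0,g) = (L-L_1)g$ from the three definitions in \eqref{gamma}, after which the conclusion is immediate from the nullspace structure. No cancellation or kernel computation of the type appearing in Lemma \ref{gamma1} is required, because we do not need to symmetrize the collision operator.
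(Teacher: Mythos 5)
Your proof is correct but takes a genuinely different route from the paper's. The paper proves the two identities by a direct computation on the collision integral: it writes $\Gamma(\chi_0,\phi\chi_0)=\sqrt{M}\int\int|(v-v_*)\cdot\omega|\,(\phi(v'_*)-\phi(v_*))M(v_*)\,d\omega\,dv_*$, then uses the pointwise conservation relations $v_j+v_{*,j}=v'_j+v'_{*,j}$ and $|v|^2+|v_*|^2=|v'|^2+|v'_*|^2$ from \eqref{conser} to flip the sign and re-recognize the result as $-L_1(\phi\chi_0)$. You instead observe the purely algebraic operator identity $\Gamma(\chi_0,\cdot)=L-L_1$ (immediate from \eqref{gamma} since $\sqrt{M}\chi_0=M$), and then invoke the nullspace structure \eqref{basis}: $v_j\chi_0=\chi_j$ and $|v|^2\chi_0=\sqrt{6}\,\chi_4+3\chi_0$ both lie in $N_0=\ker L$, so $L$ annihilates them and the result follows. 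Both arguments ultimately rest on the collision invariance of momentum and energy; the paper re-derives that consequence inside the integral, whereas you quote it in the packaged form $N_0=\mathrm{span}\{\chi_0,\ldots,\chi_4\}$. Your version is shorter and more conceptual; the paper's is self-contained at the level of the collision kernel. Either is acceptable here.
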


\begin{proof}
Let $u=v_*$, $u'=v'_*$.
By \eqref{conser}, it holds that
$$ v_j+u_j=v'_j+u'_j, \ \ j=1,2,3,\quad |v|^2+|u|^2=|v'|^2+|u'|^2. $$
Thus
\bmas
\Gamma(\chi_0,v_j\chi_0)=& \sqrt{M}\intr\ints  |(v-v_*)\cdot\omega|(u'_j-u_j)M(u) d\omega du\\
=&- \sqrt{M}\intr\ints  |(v-v_*)\cdot\omega|(v'_j-v_j)M(u)d\omega du\\
=&- L_1 (v_j\chi_0),
\\
\Gamma(\chi_0, |v|^2\chi_0)=& \sqrt{M}\intr\ints  |(v-v_*)\cdot\omega|(|u'|^2-|u|^2)M(u) d\omega du\\
=&- \sqrt{M}\intr\ints  |(v-v_*)\cdot\omega|(|v'|^2-|v|^2)M(u)d\omega du\\
=&- L_1 (|v|^2\chi_0),
\emas
which proves the lemma.
\end{proof}

\begin{proof}[\underline{\bf Proof of Theorem \ref{thm1.2}}]
First, for  \eqref{limit0}, set
\bma
\Lambda_{\eps}(t)=&\sup_{0\le s\le t}\bigg(\eps|\ln \eps|^2 (1+s)^{-\frac12}+ \bigg(1+\frac{s}{\eps}\bigg)^{-1}\bigg)^{-1} \nnm\\
& \quad\times\Big(\|P_{||} f_{\eps}(s)  \|_{ W^{2,\infty}} +\|P_{\bot}(f_{\eps}-u_1)(s)\|_{ H^{2} }+\| V_{\eps}(s)-V_1(s) \|_{ H^{2} }\Big),
\ema
where $P_{||}$ and $P_{\bot}$ are defined by \eqref{div1}. Note that
\bmas
\|P_{||} f_{\eps}  \|&\sim \left|(m_{\eps})_{||}\right|+\Big|n_{\eps}+\sqrt{\frac23}q_{\eps}\Big|,\\
\|P_{\bot}(f_{\eps}-u_1)\|&\sim \left|(m_{\eps})_{\bot}-m\right|+\Big|n_{\eps}-\sqrt{\frac32}q_{\eps}-n+\sqrt{\frac32}q\Big|+\|P_1f_{\eps}\|.
\emas

We claim that
\be
\Lambda_{\eps}(t) \le C\delta_0 ,\quad \forall\, t>0. \label{limit6}
\ee
It is straightforward to check  that the estimate  \eqref{limit0} follows  from \eqref{limit6}.

 By \eqref{fe} and \eqref{ue}, we have
\bma
 V_{\eps}(t)-V_1(t)
&= \left(e^{\frac{t}{\eps^2}\AA_{\eps}}U_0-Y_2(t)P_2U_0\right)+\intt e^{\frac{t-s}{\eps^2}\AA_{\eps}}G_{3}(s) ds\nnm\\
&\quad+\intt \bigg(\frac1{\eps}e^{\frac{t-s}{\eps^2}\AA_{\eps}}G_{4}(s)-Y_2(t-s) H_3(s)\bigg)ds\nnm\\
&=:I_1+I_2+I_3, \label{c1a}
\\
f_{\eps}(t)-u_1(t)
&= \(e^{\frac{t}{\eps^2}\BB_{\eps}}f_0-Y_1(t)P_0f_0\) +\intt\(e^{\frac{t-s}{\eps^2}\BB_{\eps}}G_{1}(s)-Y_1(t-s)H_1(s)\)ds\nnm\\
&\quad+ \intt \(\frac1{\eps}e^{\frac{t-s}{\eps^2}\BB_{\eps}}G_{2}(s)-Y_1(t-s)\divx H_2(s)\)ds \nnm\\
&=:I_4+I_5+I_6. \label{c1b}
\ema
By \eqref{limit1}, \eqref{limit4} and \eqref{limit4aa}, we can estimate $I_1$ and $I_4$
as follows.
\bma
\|I_1\|_{H^2} &\le C\delta_0\(\eps (1+t)^{-\frac34}+ e^{-\frac{bt}{\eps^2}}\),\label{I1}
\\
\|P_{||}I_4\|_{W^{2,\infty} }&\le C\delta_0\bigg( \eps(1+t)^{-\frac52}+\(1+\frac{t}{\eps}\)^{-1}\bigg) ,
\\
\|P_{\bot}I_4\|_{ H^{2} }&\le C\delta_0\(\eps (1+t)^{-\frac54}+ e^{-\frac{bt}{\eps^2}}\).
\ema

By \eqref{limit1}, \eqref{h3} and noting that $ P_2G_{3} =0$, we have
\bma
\|I_{2}\|_{H^2}&\le C \intt  \(\eps (1+t-s)^{-\frac34}+ e^{-\frac{b(t-s)}{\eps^2}}\)\(\| G_{3}\|_{H^3}+\|G_{3}\|_{L^1}\)ds\nnm\\
 &\quad +C \intt \eps^2 (1+t-s)^{-2}\|\Tdx^2 G_{3}\|_{H^2} ds\nnm\\
&\le C \delta_0^2\intt  \(\eps (1+t-s)^{-\frac34}+ e^{-\frac{b(t-s)}{\eps^2}}\)(1+s)^{-\frac32}ds \nnm\\
&\le C \delta_0^2 \eps (1+t)^{-\frac34}. \label{I2}
\ema
To estimate $I_3$, we decompose
\bma
 I_3 &= \intt  \bigg(\frac1{\eps}e^{\frac{t-s}{\eps^2}\AA_{\eps}}G_{4}(s)-Y_2(t-s)Z_0(s)\bigg)ds\nnm\\
&\quad+\intt\Big(Y_2(t-s)Z_0(s)-Y_2(t-s)H_3(s)\Big)ds\nnm\\
&=:I_{31}+I_{32}, \label{I3}
\ema where $Z_0=( P_d(v\cdot\Tdx L^{-1}_1G_{41}), ( L^{-1}_1G_{41},v\chi_0),0).$
Thus, it follows from  \eqref{limit2} and \eqref{h3} that
\bma
\| I_{31}\|_{H^2} &\le C\intt \( \eps(1+t-s)^{-\frac34}+ \frac1{\eps}e^{-\frac{b(t-s)}{\eps^2}}\)\(\|G_{4}\|_{H^4}+\|G_{4}\|_{L^1}\)ds \nnm\\
 &\quad+C \intt \eps^2 (1+t-s)^{-2} \|\Tdx^2 G_{4}\|_{H^2} ds\nnm\\
&\le C \delta_0^2\intt  \(\eps (1+t-s)^{-\frac34}+ \frac1{\eps}e^{-\frac{b(t-s)}{\eps^2}}\)(1+s)^{-\frac32}ds \nnm\\
&\le C \delta_0^2 \eps (1+t)^{-\frac34}. \label{I31}
\ema

By Lemma \ref{gamma2}, we obtain
\bma
 ( L^{-1}_1G_{41}, v\chi_0)&= ( -L^{-1}_1[(v\times B_{\eps})\cdot\Tdv P_0f_{\eps}]+ L^{-1}_1\Gamma(P_dg_{\eps},P_0f_{\eps})+L^{-1}_1\tilde{R},v\chi_0) \nnm\\
&=( -L^{-1}_1[(v\times B_{\eps})\cdot m_{\eps}\chi_0]- \rho_{\eps}(m_{\eps}\cdot v)\chi_0, v\chi_0)+(L^{-1}_1\tilde{R},v\chi_0) \nnm\\
&=\eta (m_{\eps}\times B_{\eps})- \rho_{\eps}m_{\eps} +(L^{-1}_1\tilde{R},v\chi_0), \label{p_d}
\ema
where 
\be
\tilde{R}= -(v\times B_{\eps})\cdot\Tdv P_1f_{\eps}+ \Gamma(P_rg_{\eps},f_{\eps})+ \Gamma(P_dg_{\eps},P_1f_{\eps}).
\ee
Thus, by noting that $m_{\eps}-m=(m_{\eps})_{||}+(m_{\eps})_{\bot}-m$ and by using \eqref{p_d}, \eqref{G_11}--\eqref{G_5a} and \eqref{time5b}, we have
\bma
\| I_{32}\|_{H^2} \le &C\intt \[(1+t-s)^{-\frac12}+(t-s)^{-\frac12}e^{-\frac{\eta}{2}(t-s)}\]\nnm\\
&\qquad  \times  \| ( L^{-1}_1G_{41}, v\chi_0)+ (\rho m-\eta m\times B)\|_{H^{2} }ds \nnm\\
\le &C\intt (t-s)^{-\frac12} \Big[\|m\|_{H^{2}_x}(\|\rho_{\eps}-\rho\|_{H^{2}_x}+\|B_{\eps}-B\|_{H^{2}_x})\nnm\\
&\qquad +(\|(m_{\eps})_{||} \|_{W^{2,\infty}_x }+\|(m_{\eps})_{\bot}-m\|_{H^{2}_x })(\|\rho_{\eps}\|_{H^{2}_x}+\|B_{\eps}\|_{H^{2}_x})\nnm\\
&\qquad +(\|\rho_{\eps}\|_{H^{2}_x}+\|B_{\eps}\|_{H^{2}_x})\|P_1f_{\eps}\|_{H^{3}}+\|P_rg_{\eps}\|_{H^{2}}\|f_{\eps}\|_{H^{2}} \Big]ds\nnm\\
\le &C \delta_0\Lambda_{\eps}(t)\intt  (t-s)^{-\frac12}
 \bigg(\eps|\ln \eps|^2  (1+s)^{-\frac12}+ \bigg(1+\frac{s}{\eps}\bigg)^{-1}\bigg)(1+s)^{-\frac34} ds\nnm\\
 &+C \delta_0^2\intt  (t-s)^{-\frac12}  \(\eps(1+s)^{-\frac34}+e^{-\frac{bs}{4\eps^2}}\)(1+s)^{-\frac34} ds. \label{I32a}
\ema
Denote
\bma J_0&=\intt  (t-s)^{-\frac12}  \bigg(1+\frac{s}{\eps}\bigg)^{-1} (1+s)^{-\frac34}  ds ,\\
 J_1&=\intt  (t-s)^{-\frac12} e^{-\frac{bs}{4\eps^2}}(1+s)^{-\frac34}  ds. \label{I23}
 \ema
For $J_0$, it holds that
\bma
J_0 &=\(\int^{t/2}_0+\int^{t}_{t/2}\)  (t-s)^{-\frac12} \bigg(1+\frac{s}{\eps}\bigg)^{-1}(1+s)^{-\frac34}  ds\nnm\\
&\le C t^{-\frac12} \int^{t/2}_0\bigg(1+\frac{s}{\eps}\bigg)^{-1}(1+s)^{-\frac34}ds\nnm\\
&\quad +C\bigg(1+\frac{t}{\eps}\bigg)^{-1}(1+t)^{-\frac34}\int^{t}_{t/2}  (t-s)^{-\frac12}ds
\nnm\\
&\le C\eps|\ln\eps|^2 (1+t)^{-\frac{1}2} +C\bigg(1+\frac{t}{\eps}\bigg)^{-1},  \label{I24a}
\ema
where we have used
\bmas
&\quad t^{-\frac12}\int^{t/2}_0\bigg(1+\frac{s}{\eps}\bigg)^{-1}(1+s)^{-\frac34}ds \\
&\le \left\{\bal t^{-\frac12}\int^t_0 ds \le \sqrt{t}\le C\sqrt{\eps}(1+ \frac{t}{\eps})^{-1}, & t\le \eps, \\
t^{-\frac12}\int^1_0 (1+\frac{s}{\eps})^{-1}ds \le Ct^{-\frac12} \eps|\ln\eps| \le C\eps|\ln\eps|^2  + C(1+ \frac{t}{\eps})^{-1}, & \eps< t\le 1, \\
t^{-\frac12}\(\int^1_0 (1+\frac{s}{\eps})^{-1}ds+\eps \int^{t}_{1} s^{-\frac74} ds\) \le C\eps |\ln \eps|(1+t)^{-\frac12}, & t> 1.
\ea\right.
\emas
For $J_1$, it holds that
\bma
J_1 &=\(\int^{t/2}_0+\int^{t}_{t/2}\)  (t-s)^{-\frac12}e^{-\frac{bs}{4\eps^2}}(1+s)^{-\frac34}  ds\nnm\\
&\le C t^{-\frac12} \int^{t/2}_0e^{-\frac{bs}{4\eps^2}}ds +Ce^{-\frac{bt}{4\eps^2}}(1+t)^{-\frac34}\int^{t}_{t/2} (t-s)^{-\frac12}ds
\nnm\\
&\le C\eps^2 t^{-\frac12}(1-e^{-\frac{bt}{4\eps^2}}) +Ct^{\frac12}e^{-\frac{bt}{4\eps^2}}(1+t)^{-\frac34}
\nnm\\
&\le C\eps (1+t)^{-\frac{1}2} ,  \label{I24a}
\ema
where we have used
$$
t^{-\frac12}(1-e^{-\frac{bt}{4\eps^2}})\le
\left\{\bal C\eps^{-1}, & t\le 1, \\ C(1+t)^{-\frac12}, & t>1.
\ea\right.
$$
Thus, it follows from \eqref{I32a}--\eqref{I24a} that
\be
\| I_{32}\|_{H^2}\le C ( \delta^2_0+\delta_0 \Lambda_{\eps}(t)) \bigg(\eps|\ln\eps|^2 (1+t)^{-\frac12}+ \bigg(1+\frac{t}{\eps}\bigg)^{-1}\bigg).\label{I32}
\ee
By combining \eqref{I1}, \eqref{I2}, \eqref{I3}, \eqref{I31} and \eqref{I32}, we obtain
\be
\| V_{\eps}(t)-V_1(t)\|_{H^2}\le C (\delta_0+\delta^2_0+\delta_0 \Lambda_{\eps}(t)) \bigg(\eps|\ln\eps|^2 (1+t)^{-\frac12}+ \bigg(1+\frac{t}{\eps}\bigg)^{-1}\bigg).\label{limit7}
\ee

To estimate $I_5$, we decompose
\bma
I_5&= \intt \left( e^{\frac{t-s}{\eps^2}\BB_{\eps}}G_{1}(s) -Y_1(t-s)P_0G_{1}(s) \right)ds\nnm\\
&\quad+\intt\Big(Y_1(t-s)P_0G_{1}(s)-Y_1(t-s)H_1(s)\Big)ds\nnm\\
&=:I_{51}+I_{52}.\label{I5}
\ema
By Lemma \ref{limit5} and \eqref{h1}, we obtain
\bma
\|P_{||}I_{51}\|_{W^{2,\infty}}&\le C\intt  \bigg(\eps(1+t-s)^{-\frac32}+\(1+\frac{t-s}{\eps}\)^{-1}\bigg) \(\|G_{1}\|_{H^5}+\|G_{1}\|_{W^{5,1}}\)ds \nnm\\
&\le C\delta_0^2\intt \bigg(\eps(1+t-s)^{-\frac32}+\(1+\frac{t-s}{\eps}\)^{-1}\bigg)
 \((1+s)^{-\frac32}+\frac{1}{\eps} e^{-\frac{bs}{4\eps^2}}\) ds \nnm\\
&\le C \delta_0^2 \eps|\ln \eps| (1+t)^{-\frac34} , \label{I51}
\\
\|P_{\bot}I_{51}\|_{H^2} &\le C\intt \( \eps(1+t-s)^{-\frac34}+ e^{-\frac{b(t-s)}{\eps^2}}\)\(\|G_{1}\|_{H^3}+\|G_{1}\|_{L^1}\)ds \nnm\\
&\le C\delta_0^2\intt \( \eps(1+t-s)^{-\frac34}+ e^{-\frac{b(t-s)}{\eps^2}}\)\((1+s)^{-\frac32}+\frac{1}{\eps}e^{-\frac{bs}{4\eps^2}}\) ds \nnm\\
&\le C \delta_0^2 \eps (1+t)^{-\frac34} , \label{I51a}
\ema
where we have used
\bmas
\intt\(1+\frac{t-s}{\eps}\)^{-1}(1+s)^{-\frac32}ds&\le \intt\(1+\frac{t-s}{\eps}\)^{-1}ds\\
&=\eps\ln\(1+\frac{t}{\eps}\)\le \eps|\ln \eps|, \quad t\le 1;
\\
\intt\(1+\frac{t-s}{\eps}\)^{-1}(1+s)^{-\frac32}ds&= \(\int^{t/2}_0+\int^{t}_{t/2}\)\(1+\frac{t-s}{\eps}\)^{-1}(1+s)^{-\frac32}ds\\
&\le C\(1+\frac{t}{\eps}\)^{-1}+C(1+t)^{-\frac32}\eps\ln\(1+\frac{t}{\eps}\)\\
&\le C\eps|\ln \eps|(1+t)^{-1}, \quad t>1.
\emas

Since
$$P_0G_{1} =\( \rho_{\eps} E_{\eps}\)\cdot v\chi_0 +\sqrt{\frac23}(u_{\eps}\cdot E_{\eps})\chi_4+\frac1{\eps}(u_{\eps}\times B_{\eps})\cdot v\chi_0,$$
it follows from \eqref{I5} that
\bma
I_{52}&= \intt Y_1(t-s)\bigg((\rho_{\eps}E_{\eps}-\rho E)\cdot v\chi_0+\sqrt{\frac23} (u_{\eps}\cdot E_{\eps}) \chi_4\bigg) ds \nnm\\
&\quad + \intt Y_1(t-s)\(\frac1{\eps}(u_{\eps}\times B_{\eps})-j\times B \)\cdot v\chi_0ds \nnm\\
&=:J_{51}+J_{52}. \label{I52}
\ema
For $J_{51},$ it follows from Lemmas \ref{timev} and \ref{time7} that
\bma
 \|J_{51}\|_{H^{2}} \le &C\intt (1+t-s)^{-\frac34}  \Big(\|\rho_{\eps}E_{\eps}-\rho E\|_{L^{1}_x\cap H^{2}_x}+\|u_{\eps}\cdot E_{\eps}\|_{L^{1}_x\cap H^{2}_x}\Big)ds\nnm\\
\le &C \delta_0\Lambda_{\eps}(t)\intt  (1+t-s)^{-\frac34}
\bigg(\eps|\ln \eps|^2  (1+s)^{-\frac12}+ \bigg(1+\frac{s}{\eps}\bigg)^{-1}\bigg)(1+s)^{-\frac34} ds\nnm\\
 &+C \delta_0^2 \intt  (1+t-s)^{-\frac34}\(\eps (1+s)^{-\frac34}+ e^{-\frac{bs}{\eps^2}}\)(1+s)^{-\frac34} ds\nnm\\
\le &C ( \delta^2_0+\delta_0 \Lambda_{\eps}(t)) \eps|\ln \eps|^2(1+t)^{-\frac34}  . \label{I22}
 \ema
By \eqref{p_c} and \eqref{p_d}, we have
\bmas
\frac1{\eps} u_{\eps}= &(L_1^{-1}[\eps\dt( P_rg_{\eps})+ P_r(v\cdot\Tdx  P_rg_{\eps})-\eps G_{31}-  G_{41}],v\chi_0)+ \eta(E_{\eps}-\Tdx \rho_{\eps}) \\
= &\eps\dt(L_1^{-1}( P_rg_{\eps}),v\chi_0)+ (L_1^{-1}P_r(v\cdot\Tdx  P_rg_{\eps} -\eps G_{31} - \tilde{R}),v\chi_0)\\
&+\eta(E_{\eps}-\Tdx \rho_{\eps})+ (\rho_{\eps}m_{\eps}-\eta m_{\eps}\times B_{\eps}),
\emas
which leads to
\bmas
\frac1{\eps} (u_{\eps}\times B_{\eps})= &\eps\dt[(L_1^{-1}( P_rg_{\eps}),v\chi_0)\times B_{\eps}]+\eps (L_1^{-1}( P_rg_{\eps}),v\chi_0)\times (\Tdx\times E_{\eps})\\
&+ (L_1^{-1}P_r(v\cdot\Tdx  P_rg_{\eps} -\eps G_{31} - \tilde{R}),v\chi_0)\times B_{\eps}+j_{\eps}\times B_{\eps}.
\emas
Thus
\bma
J_{52}&=\intt  Y_1(t-s) [j_{\eps}\times B_{\eps}-j \times B ]\cdot v\chi_0 ds\nnm\\
&\quad+\eps\intt Y_1(t-s)\pt_s [(L_1^{-1}( P_rg_{\eps}),v\chi_0)\times B_{\eps}]\cdot v\chi_0ds\nnm\\
&\quad+\eps\intt Y_1(t-s) [(L_1^{-1}( P_rg_{\eps}),v\chi_0)\times (\Tdx\times E_{\eps})]\cdot v\chi_0ds\nnm\\
&\quad+\intt Y_1(t-s) [(L_1^{-1}P_r(v\cdot\Tdx  P_rg_{\eps}-\eps G_{31}- \tilde{R}),v\chi_0)\times B_{\eps}]\cdot v\chi_0 ds\nnm\\
&=:  J^1_{52}+ J^2_{52}+ J^3_{52}+ J^4_{52} . \label{J52}
\ema
We estimate $J^i_{52}$, $i=1,2,3,4$ as follows. By Lemma \ref{timev}, we obtain
\bma
\|J^1_{52}\|_{H^2}&\le C\intt \((1+t-s)^{-\frac34}+(t-s)^{-\frac12}e^{-c(t-s)}\) \|j_{\eps}\times B_{\eps}-j \times B \|_{L^{1}_x\cap H^{1}_x}ds \nnm\\
&\le C \delta_0\Lambda_{\eps}(t)\intt \((1+t-s)^{-\frac34}+(t-s)^{-\frac12}e^{-c(t-s)}\)\nnm\\
&\qquad\qquad\qquad \times \bigg(\eps|\ln \eps|^2  (1+s)^{-\frac12}+ \bigg(1+\frac{s}{\eps}\bigg)^{-1}\bigg)(1+s)^{-\frac34} ds \nnm\\
&\le C \delta_0\Lambda_{\eps}(t) \bigg(\eps|\ln \eps|^2  (1+t)^{-\frac34}+ \bigg(1+\frac{t}{\eps}\bigg)^{-1}\bigg),
\ema
\bma
\|J^3_{52}\|_{H^2}&\le C\eps \intt (1+t-s)^{-\frac34}\|(L_1^{-1}( P_rg_{\eps}),v\chi_0)\times (\Tdx\times E_{\eps})\|_{L^{1}_x\cap H^{2}_x}ds \nnm\\
&\le C  \eps \intt  (1+t-s)^{-\frac34}\|P_rg_{\eps}\|_{ H^{2}}\| \Tdx\times E_{\eps}\|_{ H^{2}_x}ds \nnm\\
&\le C \delta_0^2\eps(1+t)^{-\frac34},
\ema
and
\bma
\|J^4_{52}\|_{H^2}
&\le C \intt  (1+t-s)^{-\frac34}\Big[(\|\Tdx P_rg_{\eps}\|_{H^{2}}+\eps\|E_{\eps}\|_{ H^{2}_x}\|f_{\eps}\|_{ H^{2}})\| B_{\eps}\|_{ H^{2}_x} \nnm\\
&\qquad\quad + ( \|(\rho_{\eps},B_{\eps})\|_{ H^{2}_x}\|P_1f_{\eps}\|_{ H^{2}}+\|P_rg_{\eps}\|_{ H^{2}}\|f_{\eps}\|_{ H^{2}})\| B_{\eps}\|_{ H^{2}_x}\Big]ds \nnm\\
&\le C \delta_0^2\intt  (1+t-s)^{-\frac34} \(\eps(1+s)^{-\frac34}+e^{-\frac{bs}{\eps^2}}\)(1+s)^{-\frac34}ds \nnm\\
&\le C \delta_0^2\eps(1+t)^{-\frac32}.
\ema
Moreover, 
 it holds that
\bma
\|J^2_{52}\|_{H^2}&= \eps \bigg\| [(L_1^{-1}( P_rg_{\eps}),v\chi_0)\times B_{\eps}]\cdot v\chi_0-Y_1(t)[(L_1^{-1}( P_rg_{0}),v\chi_0)\times B_{0}]\cdot v\chi_0 \nnm\\
&\quad+\intt \dt Y_1(t-s) [(L_1^{-1}( P_rg_{\eps}),v\chi_0)\times B_{\eps}]\cdot v\chi_0ds\bigg\|_{H^2} \nnm\\
&\le C \delta_0^2\eps(1+t)^{-\frac34}+C\eps \intt  (1+t-s)^{-\frac74}\|P_rg_{\eps}\|_{H^{4}}\| B_{\eps}\|_{ H^{4}_x}ds \nnm\\
&\le C \delta_0^2\eps(1+t)^{-\frac32},\label{J52a}
\ema
where we have used
$$
\|\dt Y_1(t)f_0\|^2_{L^2}\le \intr \bigg|\sum_{j=0,2,3}a_j|\xi|^2e^{-a_j|\xi|^2t}\bigg|^2 \|\hat f_0\|^2d\xi
\le C (1+t)^{-\frac72}\|f_0\|^2_{L^1\cap H^2}.
$$

Thus, it follows from \eqref{I52}--\eqref{J52a} that
\be
\|I_{52}\|_{H^2}\le C ( \delta_0^2+\delta_0 \Lambda_{\eps}(t)) \bigg(\eps|\ln\eps|^2 (1+t)^{-\frac12}+ \bigg(1+\frac{t}{\eps}\bigg)^{-1}\bigg).\label{I25}
\ee

To estimate $I_3$, we decompose
\bma
I_6&= \intt  \(\frac1{\eps}e^{\frac{t-s}{\eps^2}\mathbb{B}_{\eps}}G_{2}-Y_1(t-s)P_0(v\cdot\Tdx L^{-1}G_{2})\)ds\nnm\\
&\quad+\intt \Big(Y_1(t-s)P_0(v\cdot\Tdx L^{-1}G_{2})-Y_1(t-s)\divx H_2\Big) ds\nnm\\
&=:I_{61}+I_{62}. \label{I6}
\ema
By \eqref{limit2}, \eqref{h1}, and noting that $P_0G_{2}=0$, we have
\bma
\|P_{||}I_{61}\|_{W^{2,\infty}}\le &C \intt  \bigg( \eps(1+t-s)^{-\frac32}+\bigg(1+\frac{t-s}{\eps}\bigg)^{-1}+\frac1{\eps}e^{-\frac{b(t-s)}{\eps^2}}\bigg)\nnm\\
 &\quad \times \(\|G_{2}\|_{H^6}+\|G_{2}\|_{W^{6,1}}\)ds\nnm\\
\le &C \delta_0^2\intt \bigg(\eps(1+t-s)^{-\frac32}+ \bigg(1+\frac{t-s}{\eps}\bigg)^{-1}+\frac1{\eps}e^{-\frac{b(t-s)}{\eps^2}}\bigg)(1+s)^{-\frac32}ds\nnm \\
\le &C \delta_0^2 \eps|\ln \eps|(1+t)^{-1} ,\label{I61}
\ema
and
\bma
 \|P_{\bot}I_{61}\|_{H^{2}}&\le C\intt \( \eps(1+t-s)^{-\frac34}+ \frac1{\eps}e^{-\frac{b(t-s)}{\eps^2}}\)\(\|G_{2}\|_{H^4}+\|G_{2}\|_{L^1}\)ds\nnm\\
&\le C\delta_0^2\intt \( \eps(1+t-s)^{-\frac34}+ \frac1{\eps}e^{-\frac{b(t-s)}{\eps^2}}\) (1+s)^{-\frac32} ds\nnm\\
&\le C \delta_0^2 \eps (1+t)^{-\frac34}. \label{I61a}
\ema

To estimate $I_{32}$, we decompose
\bmas
 P_0(v\cdot\Tdx L^{-1}G_{2} )
&=P_0(v\cdot\Tdx L^{-1}\Gamma_*(P_0f_{\eps},P_0f_{\eps}))+2P_0(v\cdot\Tdx L^{-1}\Gamma_*(P_0f_{\eps},P_1f_{\eps}))\\
&\quad+P_0(v\cdot\Tdx L^{-1}\Gamma_*(P_1f_{\eps},P_1f_{\eps}))+P_0(v\cdot\Tdx L^{-1}(v\times B_{\eps})\cdot \Tdv P_rg_{\eps})\\
&=:J_1+J_2+J_3+J_4,
\emas where $2\Gamma_*(f,g)= \Gamma(f,g)+\Gamma(g,f) $.
By Lemma \ref{gamma1}, we can obtain (cf. \cite{Li1})
$$
J_1=-\sum^3_{i, j=1}\pt_i(m^i_{\eps} m^j_{\eps})v_j\chi_0+\frac13\sum^3_{i,j=1}\pt_j( m^i_{\eps})^2 v_j\chi_0 -\frac53\sum^3_{j=1}\pt_j(m^j_{\eps}q_{\eps}) \chi_4.
$$
This  and \eqref{v1} give
\be
Y_1(t)J_1=-Y_1(t)\divx\[(m_{\eps}\otimes m_{\eps})\cdot v\chi_0+\frac53 (q_{\eps}m_{\eps})\chi_4\]=:Y_1(t)\divx J_4. \label{eee1}
\ee
Note that
$$
|m_{\eps}-m|+|q_{\eps}-q|\le C(\|P_{\bot}(f_{\eps}-u_1)\| +\|P_{||}f_{\eps}\| ).
$$
Thus, by  \eqref{time1b}, \eqref{G_11}, \eqref{G_5}  and \eqref{eee1}, we have
\bma
\|I_{62}\|_{H^{2}}\le &\intt  \left\|Y_1(t-s)\divx (J_4-H_2)\right\|_{ H^{2} }ds+\sum^4_{k=2}\intt  \left\|Y_1(t-s)J_k\right\|_{ H^{2}}ds\nnm\\
\le &C\intt  (t-s)^{-\frac12} \|J_4-H_2 \|_{ H^{2} }ds +\sum^4_{k=2}C\intt  (1+t-s)^{-\frac34 } \|J_k \|_{L^1\cap H^{2}}ds\nnm\\
\le  &C\intt  (t-s)^{-\frac12} (\|P_{\bot}(f_{\eps}-u_1)\|_{H^{2} }+\|P_{||}f_{\eps}\|_{W^{2,\infty} })\|(f_{\eps}, u_1)\|_{H^{2}} ds\nnm\\
  &+C \intt (1+t-s)^{-\frac34}\(\|P_1f_{\eps}\|_{ H^{3}} \| f_{\eps}\|_{ H^{3}}+\|B_{\eps}\|_{ H^{3}_x}\|P_rg_{\eps}\|_{ H^{3}}\)ds\nnm\\
\le &C \delta_0\Lambda_{\eps}(t)  \intt  (t-s)^{-\frac12}\bigg(\eps|\ln \eps|^2 (1+s)^{-\frac12}+ \bigg(1+\frac{s}{\eps}\bigg)^{-1}\bigg)(1+s)^{-\frac34} ds \nnm\\
 &+C \delta_0^2 \intt  (1+t-s)^{-\frac34}\(\eps (1+s)^{-\frac34}+ e^{-\frac{bs}{\eps^2}}\)(1+s)^{-\frac34} ds\nnm\\
 \le& C(\delta_0^2+ \delta_0\Lambda_{\eps}(t)) \bigg(\eps|\ln\eps|^2 (1+t)^{-\frac12}+ \bigg(1+\frac{t}{\eps}\bigg)^{-1}\bigg). \label{I62}
\ema

Therefore, it follows from \eqref{I5}--\eqref{I52}, \eqref{I61}, \eqref{I61a} and \eqref{I62} that
\bma
&\quad \|P_{||}f_{\eps} \|_{ W^{2,\infty} } +\|P_{\bot}(f_{\eps} -u_1)  \|_{ H^{2} }  \nnm\\
 &\le C\(\delta_0+\delta^2_0+\delta_0 \Lambda_{\eps}(t)\) \bigg(\eps|\ln\eps|^2 (1+t)^{-\frac12}+ \bigg(1+\frac{t}{\eps}\bigg)^{-1}\bigg).\label{limit8}
\ema

By combining \eqref{limit7} and \eqref{limit8}, we obtain
$$
\Lambda_{\eps}(t)\le  C\(\delta_0+\delta_0^2+\delta_0\Lambda_{\eps}(t)\).
$$
By taking $\delta_0>0$  small enough, we  obtain \eqref{limit6} which gives \eqref{limit0}.

Next, we prove \eqref{limit_1a} as follows.  Set
$$
\Omega_{\eps}(t)=\sup_{0\le s\le t} (\eps|\ln \eps|)^{-1} (1+s)^{\frac12} \(\|P_{||}f_{\eps}(s) \|_{ W^{2,\infty} } +\|P_{\bot}(f_{\eps} -u_1)(s)  \|_{ H^{2} }+\|(V_{\eps}-V_1)(s)\|_{ H^{2}}\).
$$
By \eqref{limit1a}, \eqref{limit3} and \eqref{limit4a},  $I_1$ and $I_4$ are estimated by
\be
\|I_1\|_{ H^{2}}+\|P_{||}I_4\|_{ W^{2,\infty}}+\|P_{\bot}I_4\|_{ H^{2}}\le C\delta_0 \eps (1+t)^{-\frac34}. \label{I1a}
\ee
By \eqref{time5b} and \eqref{p_d}, we have
\bma
\|I_{32}\|_{ H^{2}} \le &C\intt(t-s)^{-\frac12} \| ( L^{-1}_1G_{41}, v\chi_0)+ (\rho m-\eta m\times B)\|_{H^{2}_x }ds \nnm\\
\le  &C (\delta_0^2+\delta_0\Omega_{\eps}(t)) \intt(t-s)^{-\frac12} \(  \eps|\ln \eps| (1+s)^{-\frac12} +e^{-\frac{bs}{4\eps^2}} \)(1+s)^{-\frac34}ds \nnm\\
\le &C(\delta^2_0+\delta_0 \Omega_{\eps}(t)) \eps|\ln \eps|(1+t)^{-\frac12} .\label{I31a}
\ema
By combining \eqref{I1a}, \eqref{I2}, \eqref{I3}, \eqref{I31} and \eqref{I31a}, we obtain
\be
\| V_{\eps}(t)-V_1(t)\|_{H^2}\le C \(\delta_0+\delta^2_0+\delta_0 \Omega_{\eps}(t)\)\eps|\ln \eps| (1+t)^{-\frac12}.\label{limit7c}
\ee
Similarly,
\be
\|P_{||}f_{\eps} \|_{ W^{2,\infty} } +\|P_{\bot}(f_{\eps} -u_1)  \|_{ H^{2} }
 \le C\(\delta_0+\delta^2_0+\delta_0 \Omega_{\eps}(t)\) \eps|\ln \eps| (1+t)^{-\frac12} .\label{limit8c}
\ee
By \eqref{limit7c} and \eqref{limit8c}, we obtain
$$
\Omega_{\eps}(t)\le  C\delta_0+C\delta_0^2+C\delta_0\Omega_{\eps}(t).
$$
By taking $\delta_0>0$  small enough, we  obtain  \eqref{limit_1a}. And this completes
the  proof of the theorem.
\end{proof}

\medskip
\noindent {\bf Acknowledgements:}
The first author was supported by
a fellowship award from the Research Grants Council of the Hong Kong Special Administrative Region, China (Project no. SRFS2021-1S01). The second author was supported by  the National Natural Science Foundation of China  grants No. 12171104, and Guangxi Natural Science Foundation No. 2019JJG110010.


\end{document}